\newcommand{\mM}{\mathcal{M}}
\newcommand{\bM}{\overline{\mathcal{M}}}
\newcommand{\mJ}{\mathcal{J}}
\newcommand{\mC}{\mathcal{C}}
\newcommand{\mE}{\mathcal{E}}
\newcommand{\mQ}{\mathcal{Q}}
\newcommand{\mB}{\mathcal{B}}
\newcommand{\aA}{\mathbb{A}}
\newcommand{\DD}{\mathbb{D}}
\newcommand{\ZZ}{\mathbb{Z}}
\newcommand{\CC}{\mathbb{C}}
\newcommand{\RR}{\mathbb{R}}
\newcommand{\PP}[1]{\mathbb{CP}^{#1}}
\newcommand{\id}{\mbox{id}}
\newcommand{\Sympl}{\mathbb{S}}
\newcommand{\vol}{\mbox{vol}}
\newcommand{\End}{\mbox{End}}
\newcommand{\dbar}[1]{\overline{\partial}_{#1}}
\newcommand{\cov}{\mbox{cov}}
\newcommand{\ev}{\mbox{ev}}
\newcommand{\dom}{\mbox{dom}}
\newcommand{\pr}{\mbox{pr}}
\newtheorem{thm}{Theorem}[section]
\newtheorem{dfn}[thm]{Definition}
\newtheorem{cor}[thm]{Corollary}
\newtheorem{lma}[thm]{Lemma}
\newtheorem{prp}[thm]{Proposition}
\newtheorem{rmk}[thm]{Remark}
\title[Lagrangian spheres in Del Pezzo surfaces]%
{Lagrangian spheres in Del Pezzo surfaces}
\author{J. D. Evans}
\address{DPMMS, Centre for Mathematical Sciences, Wilberforce Road, Cambridge, CB3 0WA}
\email{j.evans@dpmms.cam.ac.uk}
\begin{document}
\begin{abstract}
Lagrangian spheres in the symplectic Del Pezzo surfaces arising as blow-ups of $\PP{2}$ in 4 or fewer points are classified up to Lagrangian isotopy. Unlike the case of the 5-point blow-up, there is no Lagrangian knotting.
\end{abstract}
\maketitle
\tableofcontents

\section{Introduction}

In a symplectic manifold there is a distinguished class of half-dimensional submanifolds on which the symplectic form vanishes. These are the Lagrangian submanifolds. A pair of Lagrangian submanifolds $L$ and $L'$ which are smoothly isotopic but not isotopic through Lagrangian embeddings is said to be \emph{knotted}. A long-standing problem in symplectic topology is to understand Lagrangian knotting. Progress has been made in the case of Lagrangian 2-spheres in symplectic 4-manifolds, as the following examples illustrate:

\begin{thm}[Hind, \cite{Hi03}]\label{Hindthm}
Let $U$ be a neighbourhood of the zero-section in the cotangent bundle of $S^2$, equipped with its canonical symplectic form. Any Lagrangian 2-sphere in $U$ can be isotoped through embedded Lagrangian 2-spheres until it coincides with the zero-section.
\end{thm}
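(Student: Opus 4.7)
The strategy is to import pseudo-holomorphic machinery via compactification. Since $L$ is compact, I may assume it lies in a symplectic embedding $T^*S^2\hookrightarrow Q\cong S^2\times S^2$ which presents $T^*S^2$ as the complement of a smooth conic $D\subset Q$ and under which the zero-section corresponds to the antidiagonal $\bar\Delta$. Choose an $\omega$-tame almost complex structure $J$ on $Q$, integrable near $D$ and generic with respect to $L$. The two ruling classes $F_1,F_2\in H_2(Q)$ satisfy $c_1\cdot F_i=2$, so by automatic regularity, Gromov compactness, and positivity of intersections the moduli space of $J$-holomorphic spheres in class $F_i$ forms a smooth $\PP{1}$-parameter family foliating $Q$ by embedded spheres, coinciding with the standard rulings near $D$.

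Since $[L]=[\bar\Delta]=F_1-F_2\in H_2(Q)$, one has $[L]\cdot F_i=\pm 1$, so for sufficiently generic $J$ each leaf of either foliation meets $L$ transversely in a single point. This yields two smooth maps $p_i\colon L\to\PP{1}$, and a degree argument shows both are diffeomorphisms. The two $J$-foliations thus equip the pair $(Q,L)$ with the same combinatorial structure that the standard rulings impose on $(Q,\bar\Delta)$. I would then interpolate $J$ to the integrable complex structure through a path of tame almost complex structures, so that the corresponding $J_t$-foliations deform smoothly into the standard rulings. Transporting $L$ along vector fields adapted to the interpolating foliations, with a Moser-type correction to ensure the flow is symplectic, produces the desired Lagrangian isotopy from $L$ onto $\bar\Delta$.

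The main obstacle is the compactness and regularity analysis underlying this foliation picture. One must rule out degeneration of $F_i$-spheres into nodal configurations, in particular avoiding broken curves whose two components straddle $L$ in a way that would destroy the single-intersection-point geometry. Controlling this requires careful index and energy estimates together with a genericity choice of $J$ ensuring that no low-Maslov $J$-holomorphic disk has boundary on $L$. A further delicate point is confining the resulting Lagrangian isotopy to the given neighbourhood $U$, which demands additional area control over the interpolating $J_t$-families as the almost complex structure varies; this ultimately forces one to rescale everything sufficiently before compactifying and to choose $J$ with respect to $L$ in an adapted manner.
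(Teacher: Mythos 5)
The paper does not prove this theorem; it is stated as a citation to Hind \cite{Hi03}, and the whole point of the present paper is to use it as a black box after isotoping a Lagrangian sphere into a subset symplectomorphic to $T^*S^2$. So there is no ``paper's proof'' to compare against directly; what I can do is assess your proposal on its own terms and against Hind's actual method, which this paper's own machinery (sections 4--7) closely mirrors.

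Your overall framework --- compactify $T^*S^2$ to the quadric $Q\cong S^2\times S^2$ minus a conic, and exploit the two ruling foliations by $J$-spheres in the classes $F_1,F_2$ --- is indeed the right skeleton and is essentially the framework Hind uses in \cite{Hi04}. But the central step is asserted, not proved: you claim ``for sufficiently generic $J$ each leaf of either foliation meets $L$ transversely in a single point.'' This does not follow from genericity together with $[L]\cdot F_i=\pm1$. A Lagrangian has no positivity of intersections against a $J$-holomorphic curve; genericity gives transverse intersections whose \emph{signed} count is $\pm1$, but extra cancelling pairs are perfectly possible, and they form a codimension-zero condition in the 2-parameter family of leaves, so no amount of generic perturbation of $J$ alone removes them. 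This is precisely what forces Hind (and the present paper, in its own setting) to \emph{stretch the neck} around $L$: one chooses $J$ adjusted to a Weinstein collar of $L$, deforms it by neck-stretching, and shows that the SFT limit building has a $W$-part consisting of a single $\alpha$- or $\beta$-plane through $L$ while the $V$-part is disjoint from $L$. Only then do nearby $J_t$-spheres meet $L$ once. Your remark about ``low-Maslov $J$-holomorphic disks with boundary on $L$'' gestures in the general direction but does not supply this mechanism; without it the foliation picture and the degree argument for $p_i$ do not get off the ground. The later steps (interpolating $J$ back to the integrable structure, extracting a symplectic/Lagrangian isotopy from the deforming foliations via a Moser-type argument, and confining the isotopy to $U$) are reasonable in outline but hinge entirely on having the single-point intersection property uniformly along the family, so they inherit the same gap.
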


\begin{thm}[Seidel, \cite{Sei00}]
Let $A_{k+1}$ be the plumbing of $k>1$ copies of the cotangent bundle of $S^2$. In each homology class containing a Lagrangian 2-sphere there are infinitely many smoothly isotopic Lagrangian 2-spheres which are pairwise knotted.
\end{thm}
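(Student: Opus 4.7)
\emph{Proof strategy.} Let $V_1,\ldots,V_k$ denote the zero sections of the plumbing $A_{k+1}$, so that $V_i$ and $V_{i+1}$ meet transversally in one point and $V_i\cap V_j=\emptyset$ for $|i-j|\ge 2$. Write $\tau_i\in\Symp_c(A_{k+1})$ for the Dehn twist along $V_i$. These generate a homomorphism $\rho:\mathrm{Br}_{k+1}\to\pi_0\Symp_c(A_{k+1})$, and the plan is to use this to manufacture the knotted family.

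Given a Lagrangian sphere $L$ in a homology class $\alpha$, I would consider the family
\[
L_n := \tau_1^{2n}(L),\qquad n\in\ZZ.
\]
Since each $\tau_i$ acts on $H_2(A_{k+1};\ZZ)$ by reflection in $[V_i]$, its square acts trivially, so $[L_n]=\alpha$ for every $n$. Smooth isotopy of $L_n$ to $L$ then follows from the fact that $\tau_i^2$ lies in the kernel of $\pi_0\Symp_c\to\pi_0\Diff_c$, which reduces to an elementary computation on $T^*S^2$ together with a standard cut-and-paste argument on the plumbing.

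The decisive step is to distinguish the $L_n$ up to Lagrangian isotopy. For this I would use the graded Floer cohomologies $HF^*(L_n,V_j)$ as invariants, together with Seidel's Dehn twist exact triangle
\[
HF^*(V,K)\otimes HF^*(L,V)\longrightarrow HF^*(L,K)\longrightarrow HF^*(L,\tau_V K)\longrightarrow
\]
which, applied iteratively with $V=V_1$, yields a recursion for the graded dimensions of $HF^*(L_n,V_j)$ as $n$ grows. Provided the cone contributions do not vanish at each step, the total dimensions strictly increase with $|n|$, forcing the $L_n$ to be pairwise non-isotopic Lagrangians.

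The main obstacle is precisely to rule out such vanishing, equivalently, to show that the induced representation $\mathrm{Br}_{k+1}\to\Aut(D\mathcal{F}(A_{k+1}))$ on the derived Fukaya category is \emph{faithful} --- and this is where the hypothesis $k>1$ enters (for $k=1$ the braid group is $\ZZ$ and the action on the $A_2$ Milnor fibre collapses). I would attack this by performing the Floer computations explicitly in a local model in which the relevant $A_\infty$-algebra $A$ controlling the vanishing cycles can be described combinatorially, and then identifying the resulting action of $\mathrm{Br}_{k+1}$ on perfect modules over $A$ with the Khovanov--Seidel braid group action, which is known to be faithful. Faithfulness of $\rho$ (modulo smooth isotopy) then immediately implies that the infinitely many $L_n$ are pairwise knotted in the required sense.
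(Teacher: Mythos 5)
This theorem is cited from Seidel's paper \cite{Sei00} and is not proved in the present paper; the paper only summarises the strategy as ``iterated Dehn twists, distinguished by Floer cohomology.'' Your overall outline --- take $L_n=\tau_1^{2n}(L)$, observe smooth isotopy from triviality of $\tau^2$ in $\pi_0\Diff_c$, and separate the $L_n$ with Floer-theoretic invariants against the core spheres $V_j$ --- matches that summary and matches the skeleton of Seidel's argument.

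Two concrete points, however, diverge from what actually works. First, your proposed separating invariant is the \emph{total dimension} of $HF^*(L_n,V_j)$, which you hope grows with $|n|$. This is false in the basic example: for $L=V_2$ and twisting along $V_1$, the homological intersection $[\tau_1^{2n}(V_2)]\cdot[V_1]=1$ forces $HF^*(\tau_1^{2n}(V_2),V_1)$ to have rank exactly one for all $n$ (the intersection is transverse in one point and a straightforward estimate rules out cancelling pairs). Nothing grows. What \emph{does} change is the grading. Seidel's argument in \cite{Sei00} --- the paper's title ``Graded Lagrangian submanifolds'' is the hint --- equips everything with $\ZZ$-gradings, shows that $\tau_{V}^2$ shifts the relative grading of Lagrangians meeting $V$, and therefore that the singly-graded groups $HF^*(L_n,V_j)$ sit in different degrees for different $n$ even when their dimensions coincide. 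The invariant is the degree, not the rank. Second, appealing to faithfulness of the Khovanov--Seidel representation is far stronger machinery than is needed and was not available in 2000; Seidel's exact-triangle computation of the grading shift is self-contained. If you want to retain your rank-growth strategy you would at least need to replace $L$ by a sphere with $HF^*(L,V_1)$ of rank at least two and verify non-degeneracy of the cone maps, which is essentially re-deriving the graded statement anyway.

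On the hypothesis $k>1$: the correct reason it is needed is not that ``$\mathrm{Br}_2=\ZZ$ collapses'' --- $\ZZ$ acts nontrivially --- but that with a single sphere there is no second reference Lagrangian against which to measure the grading shift, and indeed Hind's theorem (stated just above in the paper) shows $T^*S^2$ has a unique Lagrangian sphere up to isotopy, so no knotting is possible there.
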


Seidel's examples are constructed from the zero-sections by a process of iterated Dehn twisting. A Dehn twist is a symplectomorphism supported in a neighbourhood of a Lagrangian sphere (see \cite{Sei08}) and Seidel's construction relies on the fact that the square of a Dehn twist is isotopic to the identity smoothly but not through symplectomorphisms. Actually showing that the images of the zero-sections under iterated Dehn twists are non-isotopic involves calculating Floer homology groups.

Hind's result is a beautiful application of the theory of punctured holomorphic curves. In the preprint \cite{Hi03}, he also claims that any Lagrangian sphere in Seidel's $A_3$ space is isotopic to one obtained by iterated Dehn twisting of a zero section.

The present paper studies the isotopy question for Lagrangian 2-spheres in \emph{symplectic Del Pezzo surfaces}.

\begin{dfn}
A symplectic Del Pezzo surface is the symplectic manifold underlying a smooth complex projective surface with ample anticanonical line bundle. Equivalently it is one of the following symplectic 4-manifolds:

\begin{itemize}
\item $Q=S^2\times S^2$ equipped with the product of the area-1 Fubini-Study forms on each factor (this also arises as the K\"{a}hler form on a smooth quadric hypersurface in $\PP{3}$),
\item A symplectic blow-up $\DD_n$ of $\PP{2}$ (with its anticanonical symplectic form $3\omega_{FS}$) in $n<9$ symplectic balls of equal volume such that
\[\omega_{\DD_n}(E_k)=\omega_{FS}(H)\]
\noindent where $E_k$ is an exceptional sphere and $H$ is a line in $\PP{2}$.
\end{itemize}

Note that all these manifolds are monotone and support a complex structure for which the monotone symplectic form is K\"{a}hler.
\end{dfn}

Let us review what is known about Lagrangian 2-spheres in Del Pezzo surfaces:

\begin{itemize}
\item $\PP{2}$, $\DD_1$: Neither of these spaces contains an embedded Lagrangian 2-sphere. This is clear for homology reasons: by Weinstein's neighbourhood theorem any Lagrangian 2-sphere has self-intersection $-2$, but there is no such homology class in either of these spaces.
\item $Q$: This space contains a unique Lagrangian 2-sphere up to isotopy. This theorem was also proved by Hind \cite{Hi04}.
\item $\DD_2$, $\DD_3$, $\DD_4$: It is easy to find Lagrangian spheres in these spaces, but Seidel has shown that the corresponding squared Dehn twists are Hamiltonian-isotopic to the identity, so there is no obvious way to produce knotting.
\item $\DD_5$, $\DD_6$, $\DD_7$, $\DD_8$: Again, it is easy to find Lagrangian 2-spheres in these spaces. It follows from some results of Seidel on the symplectic mapping class groups of these spaces \cite{Sei08} that knotting occurs.
\end{itemize}

The main theorem of the present paper is the following:

\begin{thm}\label{bigthm}
There is no Lagrangian knotting in $\DD_2$, $\DD_3$ or $\DD_4$.
\end{thm}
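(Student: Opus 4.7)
The plan is to reduce Theorem \ref{bigthm} to a uniqueness statement for Lagrangian spheres within a fixed homology class, and to attack the latter by a pseudo-holomorphic curve argument in the spirit of Hind.

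First I would classify the possible homology classes. By Weinstein's neighbourhood theorem $[L]\cdot[L]=-2$, and since $\omega$ vanishes on $L$ while $\DD_n$ is monotone we get $c_1([L])=0$. Thus $[L]$ is a root of the root lattice $K_{\DD_n}^\perp\subset H_2(\DD_n;\ZZ)$, which is of Dynkin type $A_1$, $A_1\oplus A_2$ and $A_4$ for $n=2,3,4$ respectively. Next I would realise the Weyl group of this lattice by symplectomorphisms of $\DD_n$: each simple reflection is realised (modulo Hamiltonian isotopy) by the Dehn twist along a Lagrangian sphere in the corresponding simple root class, which exists for $n\leq 4$. Composing such twists reduces the theorem to showing that any two Lagrangian spheres $L_0,L_1\subset\DD_n$ representing the same class $\alpha$ are Lagrangian isotopic.

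The heart of the argument, and the principal obstacle, is uniqueness within a single homology class. Choose an $\omega$-compatible almost complex structure $J$ making $L_1$ totally real. For $n\leq 4$, $\DD_n$ carries pencils of $J$-holomorphic rational curves of low degree (fibres of the anticanonical conic bundle, or proper transforms of lines through one of the blown-up points) whose fibre class $F$ has small positive intersection with $\alpha$. I would analyse the moduli space of $J$-holomorphic curves in class $F$, using monotonicity and the small Picard rank to bound energies and control bubbling; generically these curves form a singular Lefschetz pencil. Neck-stretching $J$ along $L_1$ and applying SFT compactness yields a limit foliation whose restriction to a Weinstein tubular neighbourhood $U$ of $L_1$ consists of families of punctured holomorphic discs in the symplectisation of the unit cotangent bundle $ST^*S^2$, capped by holomorphic discs with boundary on $L_1$. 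Hind's Theorem \ref{Hindthm} applied inside $U$ then shows that $L_1$ is Lagrangian-isotopic to the vanishing cycle of the emerging fibration; running the same argument for $L_0$ identifies it with the same vanishing cycle, producing the desired isotopy from $L_0$ to $L_1$.

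The main difficulty will be controlling the SFT compactification of the pencil: one must enumerate the possible limit holomorphic buildings and rule out anomalous bubble configurations, in particular holomorphic discs with boundary on $L_1$ coming from $(-1)$-spheres in $\DD_n$ breaking against $L_1$. The restriction $n\leq 4$ is essential and enters here in two ways. Homologically, the root systems $A_1$, $A_1\oplus A_2$, $A_4$ are small enough that every sphere class interacting with $\alpha$ is tightly constrained; symplectically, Seidel's squared Dehn twists are Hamiltonian isotopic to the identity on $\DD_n$ for $n\leq 4$ but not for $n\geq 5$, so the compactness analysis above must fail once $n\geq 5$, presumably through exactly these new disc bubbles appearing.
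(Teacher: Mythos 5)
Your high-level strategy matches the paper: reduce to uniqueness within a single homology class, stretch the neck along a Weinstein neighbourhood of the sphere, analyse the SFT limit of low-degree rational curves, and finish with Hind's theorem in a $T^*S^2$. The Weyl-group reduction via Dehn twists is a clean way to frame the transitivity argument that the paper invokes implicitly. However, there is a genuine gap at the crux of the argument, and a couple of smaller misconceptions.

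The gap is in the sentence ``Hind's Theorem applied inside $U$ then shows that $L_1$ is Lagrangian-isotopic to the vanishing cycle of the emerging fibration; running the same argument for $L_0$ identifies it with the same vanishing cycle.'' The almost complex structure $J$ here was chosen to make $L_1$ totally real (indeed, the neck is stretched around $L_1$), so the ``emerging fibration'' and its vanishing cycle depend on $L_1$. Rerunning the construction for $L_0$ produces a \emph{different} stretched $J$, a different limit fibration, and a priori a different vanishing cycle. You need a mechanism to compare the two. The paper's solution is to pick a fixed reference configuration of divisors $C_0$ (whose complement is $T^*S^2$), show via the SFT analysis that for large stretching parameter $T$ the $J_T$-holomorphic curves representing those classes are disjoint from $L$, and then — crucially — invoke Banyaga's symplectic isotopy extension theorem to produce an ambient symplectic isotopy $\Psi_t$ carrying $C_0$ to $C_t$, so that $\Psi_T^{-1}(L)$ is a Lagrangian isotopy of $L$ disjoining it from $C_0$. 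Once both $L_0$ and $L_1$ land in the complement of the \emph{same} $C_0\cong T^*S^2$, Hind's theorem finishes the job. Your proposal is missing this bridge between different $J$'s; without it, you never get $L_0$ and $L_1$ into the same model space.

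Two smaller points. First, the relevant divisor classes the paper uses (e.g.\ $H$, $S_{12}$, $E_3$, $H-E_i$) have intersection \emph{zero} with the Lagrangian class $\alpha$, not ``small positive intersection''; this is essential, since only classes with $[F]\cdot\alpha=0$ can be represented by curves disjoint from $L$. Second, the SFT compactness picture does not involve holomorphic discs with boundary on $L_1$: the limit buildings have components that are punctured finite-energy curves asymptotic to closed Reeb orbits on the unit cotangent bundle $\RR\mathbb{P}^3$, and in the cotangent-bundle end they are (covers of) the $\alpha$- and $\beta$-planes of the affine quadric. Ruling out multiply-covered Reeb asymptotics and unwanted plane components requires the transversality and index computations carried out in sections \ref{prf} and \ref{prf2}, not just a ``bound on energies from monotonicity and small Picard rank.''
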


\begin{rmk}
It is possible to find $A_3$ and $A_4$ configurations of Lagrangian spheres inside $\DD_3$ and $\DD_4$ respectively and a corollary of the theorem is that although two homologous Lagrangian spheres in these configurations are isotopic, the isotopy must pass through Lagrangian spheres which leave the $A_n$-neighbourhoods.
\end{rmk}

The theorem is proved by first showing that one can find a Lagrangian isotopy taking any 2-sphere $L$ to another sphere $L'$ which is disjoint from some set of divisors. The divisors are chosen so that their complement is symplectomorphic to (a compact subset of) $T^*S^2$, whereupon Hind's theorem guarantees that any two such spheres are Lagrangian isotopic. Achieving disjointness from divisors requires the technology of symplectic field theory and takes up most of the paper.

The idea behind proving disjointness is to find a family of almost complex structures $\{J_t\}_{t=0}^T$ and a family $C_t$ of $J_t$-holomorphic curves representing the relevant configuration of divisors such that:
\begin{itemize}
\item $J_0$ is the standard complex structure and $C_0$ is the standard configuration of divisors,
\item $C_T$ is disjoint from $L$.
\end{itemize}
Then it is not hard (see section \ref{isoprf}) to construct a disjoining isotopy of $L$ from $C_0$. The almost complex structures $J_t$ are obtained by a process called ``stretching the neck'' around $L$ (see diagram \ref{nstretch} below). The $J_t$-holomorphic curves are proven to exist in section \ref{singularpseudo}. The behaviour of $C_t$ as $t\rightarrow\infty$ is analysed using symplectic field theory and it is shown in sections \ref{prf} and \ref{prf2} that for large $t$, $C_t$ must be disjoint from $L$.
\begin{equation}\label{nstretch}
\xy
(0,10)*{};(0,-10)*{} **\crv{(-20,0)};
(0,10)*{};(0,-10)*{} **\crv{(20,20) & (40,0) & (20,-20)};
(0,7)*{};(0,-7)*{} **\crv{(-5,0)};
(20,0)*{};(5,0)*{} **\crv{(5,8) & (-20,0)};
(20,7)*{};(10,-5)*{} **\crv{(20,0)};
(23,0)*{C_0};
(-10,10)*{J_0};
(45,0)*{\cdots};
(2,-8)*{L};
(70,10)*{};(70,-10)*{} **\crv{(50,0)};
(70,10)*{};(90,15)*{} **\dir{.};
(70,-10);(90,-15)*{} **\dir{.};
(90,15)*{};(90,-15)*{} **\crv{(110,20) & (130,0) & (110,-20)};
(70,7)*{};(70,-7)*{} **\crv{(65,0)};
(100,10)*{};(100,-10)*{} **\crv{(110,0)};
(95,0)*{};(110,0)*{} **\dir{-};
(108,3)*{C_T};
(60,10)*{J_T};
(72,-8)*{L}
\endxy
\end{equation}

We should also remark that for higher blow-ups, similar arguments work to disjoin Lagrangians from divisors, but it is hard to find divisors whose complements are as well-understood as $T^*S^2$. For example, a Lagrangian sphere in the homology class $E_1-E_2$ in $\DD_n$ ($n<8$) can be disjoined from the exceptional spheres $E_3,\ \ldots,\ E_n$. Blowing-down these $n-2$ exceptional spheres leaves us with $\DD_2$. If $L$ and $L'$ are Lagrangian spheres in the homology class $E_1-E_2$ in $\DD_n$ ($n<8$) then we can isotope them both into the complement of the exceptional spheres $E_3,\ \ldots,\ E_n$ and blow-down these spheres to points $e_3,\ \ldots,\ e_n$. A corollary of theorem \ref{bigthm} is that any two Lagrangian spheres in $\DD_2$ are smoothly isotopic and we can choose that smooth isotopy to avoid the $n-2$ points $e_3,\ \ldots,\ e_n$. The result is a smooth isotopy of $L$ and $L'$ in $\DD_n$. This is certainly a symplectic phenomenon, as one can always knot smoothly embedded spheres topologically (for example by connect-summing locally with a smoothly knotted $S^2\subset\RR^4$).

In outline:
\begin{itemize}
\item In section \ref{prelim} we will review some basic facts about symplectic Del Pezzo surfaces, their homology and Lagrangian submanifolds. We also clarify the statement of theorem \ref{bigthm} and reformulate it as theorem \ref{isothm}.
\item Section \ref{gwtheory} reviews the tools we need from Gromov-Witten theory for studying what happens to the divisors and their linear systems under $\omega$-compatible deformations of the complex structure.
\item In section \ref{singularpseudo}, we describe the behaviour of these families of pseudoholomorphic curves under arbitrary $\omega$-compatible deformations of the complex structure.
\item Section \ref{SFT} reviews the techniques required from symplectic field theory (SFT) for analysing the limits of such families under a particular form of deformation of $J$ called neck-stretching.
\item In section \ref{neckstretching}, we give details of the setup for neck-stretching in our case and prove some basic properties of the limit.
\item Sections \ref{prf} and \ref{prf2} analyse the SFT limits of our families of pseudoholomorphic curves under neck-stretching and deduce disjointness of certain pseudoholomorphic curves from $L$.
\item This is used in section \ref{isoprf} to deduce theorem \ref{isothm}.
\end{itemize}

\section{Preliminaries}\label{prelim}

\subsection{Del Pezzo surfaces and symplectic forms}

A Del Pezzo surface $X$ is a smooth complex variety whose anticanonical bundle $-K_X$ is \emph{ample}, that is the sections of some tensor power of $-K_X$ define an embedding of $X$ into a projective space. The restriction of the ambient Fubini-Study form is then a K\"{a}hler form, $\omega$, on $X$. Note that we are normalising the Fubini-Study form to give a line in $\PP{N}$ area 1. The following classification theorem is well-known (see \cite{Reid} for example)

\begin{thm}
A Del Pezzo surface is biholomorphic to one of:

\begin{itemize}
\item A smooth quadric surface $Q\subset\PP{3}$ or, equivalently, a product $\PP{1}\times\PP{1}$ (thinking of the $\PP{1}$ factors as rulings on the quadric surface),
\item A blow-up, $\DD_n$, of $\PP{2}$ at $n$ points in general position for $n<8$.
\end{itemize}
\end{thm}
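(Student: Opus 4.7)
The plan is to combine adjunction, Kodaira vanishing, and the classical minimal model program for smooth rational surfaces. First, I would set $d := K_X^2$ (the \emph{degree}) and use the Hodge index theorem together with ampleness of $-K_X$ to bound $1 \leq d \leq 9$. A direct application of adjunction, $2g(C)-2 = C^2 + K_X\cdot C$, combined with the inequality $-K_X\cdot C > 0$ for every irreducible curve $C$, forces $C^2 \geq -1$ whenever $C$ is smooth rational; in particular $X$ contains no $(-2)$-curve. This non-existence of $(-2)$-curves is the workhorse of the whole argument.

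Second, I would show that $X$ is a blow-up of either $\PP{2}$ or $\PP{1}\times\PP{1}$. Kodaira vanishing applied to the ample bundle $-K_X$ gives $H^i(\mathcal{O}_X) = 0$ for $i>0$, so $p_g = q = 0$, and Castelnuovo's rationality criterion then shows $X$ is rational. I would run the minimal model program, repeatedly contracting $(-1)$-curves; at each stage the contracted surface remains Del Pezzo, since any irreducible curve on it pulls back to an irreducible curve upstairs that is not a $(-2)$-curve, so the Nakai--Moishezon criterion is preserved. The process terminates at a minimal rational surface that is still Fano, which forces it to be $\PP{2}$ or $\mathbb{F}_0 = \PP{1}\times\PP{1}$ (the Hirzebruch surface $\mathbb{F}_e$ contains a $(-e)$-curve for $e \geq 2$, and $\mathbb{F}_1$ is already a blow-up of $\PP{2}$). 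Since an elementary transformation identifies a blow-up of $\PP{1}\times\PP{1}$ at a point with a blow-up of $\PP{2}$ at two points, aside from $\PP{2}$ and $\PP{1}\times\PP{1}$ themselves every Del Pezzo $X$ arises as $\DD_n$ for $n = 9 - d$.

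Finally, I would verify that the $n$ blown-up points are in general position. If three lay on a line $\ell \subset \PP{2}$, the proper transform $\tilde\ell$ would have class $H - E_1 - E_2 - E_3$ with $\tilde\ell^2 = -2$ and $-K_X\cdot\tilde\ell = 0$, contradicting ampleness; analogous intersection-theoretic tests, using proper transforms of conics through six points and singular cubics through eight, rule out the remaining degenerate configurations. The main obstacle is the middle step, specifically checking that each $(-1)$-contraction preserves ampleness of the anticanonical class; the remaining bookkeeping, once the absence of $(-2)$-curves has been established upstairs, reduces to mechanical manipulations in the Picard lattice.
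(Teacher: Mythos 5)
The paper does not prove this theorem at all: it is stated as a well-known classification and deferred entirely to Reid's ``Chapters on algebraic surfaces.'' Your proposal supplies the standard minimal-model-program argument, which is in substance the proof one finds in Reid, Demazure, or Manin. Two details are a bit off. First, the bound $d \leq 9$ is not a consequence of the Hodge index theorem plus ampleness; the usual route is Noether's formula, $\chi(\mathcal{O}_X)=\tfrac{1}{12}(K_X^2+c_2(X))$, together with the rationality you deduce from Kodaira vanishing (so $\chi(\mathcal{O}_X)=1$, $b_1=0$, $c_2=2+\rho$, hence $K_X^2=10-\rho\leq 9$). Second, your justification that contracting a $(-1)$-curve preserves the Del Pezzo property --- ``any irreducible curve on it pulls back to an irreducible curve upstairs that is not a $(-2)$-curve, so Nakai--Moishezon is preserved'' --- is the right conclusion but not the right reason. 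The clean argument is direct: if $\sigma:X\to Y$ contracts $E$ to $p$ and $C\subset Y$ is irreducible with multiplicity $m$ at $p$, then $-K_Y\cdot C = -K_X\cdot\widetilde C + m > 0$ because $-K_X$ is ample, and $K_Y^2=K_X^2+1>0$; Nakai--Moishezon then gives ampleness of $-K_Y$ without any detour through $(-2)$-curves. (The absence of $(-2)$-curves is instead what you need later, to exclude $\mathbb F_e$ for $e\geq 2$ as the minimal model and to verify general position.) With those repairs the sketch is sound and matches the standard literature the paper cites.
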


For convenience, we recall the second homology groups of these surfaces:

\begin{itemize}
\item $H_2(Q,\ZZ)=\ZZ^2$ is generated by the classes of the two rulings $\alpha=[\PP{1}\times \{\cdot\}]$ and $\beta=[\{\cdot\}\times \PP{1}]$. The intersection pairing is given by $\alpha^2=0=\beta^2$ and $\alpha\cdot\beta=1$. The first Chern class is Poincar\'{e} dual to $2\alpha+2\beta$.

\item $H_2(\DD_n,\ZZ)=\ZZ^{n+1}$ is generated by the class $H$ of a line in $\PP{2}$ and the classes $\{E_i\}_{i=1}^n$ of the $n$ exceptional spheres. The intersection pairing is given by $H^2=1$, $E_i\cdot E_j=-\delta_{ij}$ and $H\cdot E_i=0$. The first Chern class is Poincar\'{e} dual to $3H-\sum_{i=1}^nE_i$.
\end{itemize}

By definition, some multiple of the first Chern class is represented by the K\"{a}hler form so these are monotone symplectic manifolds.

\begin{rmk}\label{anticanform}If $n<7$ then the anticanonical bundle is \emph{very ample} i.e. its sections already define an embedding into projective space. For such blow-ups of $\PP{2}$, the induced K\"{a}hler form $\omega$ lies in the cohomology class $3H-\sum_{i=1}^nE_i$. It is sometimes too inexplicit to be useful, so we also work with a form $\omega'$ obtained by performing \emph{symplectic blow-up} (see \cite{MS05}, section 7.1) in $n$ symplectically (and holomorphically) embedded balls of volume $1/2$ in $(\PP{2},3\omega_{FS})$ centred at $n$ points in general position. Recall that under symplectic blowing up, a ball of volume $\pi^2\lambda^4/2$ is replaced by a symplectic $-1$-sphere of area $\pi\lambda^2$, so the cohomology class of $\omega'$ is again $3H-\sum_{i=1}^nE_i$. Since $\omega'$ is K\"{a}hler for the complex structure of the Del Pezzo surface, by Moser's theorem it is symplectomorphic (indeed isotopic) to $\omega$. For this reason, we will sometimes blur the distinction between $\omega$ and $\omega'$, writing $\omega$ for both.

For $n\geq 7$, one can always rescale $\omega$ so that its cohomology class is $3H-\sum_{i=1}^nE_i$. We call this the ``anticanonical K\"{a}hler form'' in the case when the anticanonical bundle is only ample.
\end{rmk}

\subsection{Lagrangian spheres}

The next lemma describes the homology classes in Del Pezzo surfaces which contain Lagrangian 2-spheres.

\begin{lma}
If $L$ is a Lagrangian sphere in $Q$ then it represents one of the homology classes $\pm(\alpha-\beta)$. If $L$ is a Lagrangian sphere in $\DD_n$ then it either represents a \emph{binary class} of the form $E_i-E_j$, a \emph{ternary class} of the form $\pm(H-E_i-E_j-E_k)$ (if $n\geq 3$) or a \emph{senary class} of the form $\pm(2H-\sum_{k=1}^6E_{i_k})$ (if $n\geq 6$).
\end{lma}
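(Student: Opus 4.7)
The plan is to translate the standard topological constraints on a Lagrangian 2-sphere into Diophantine equations in the natural basis of $H_2$, and then enumerate the integer solutions case by case.

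First I would record the three constraints on $[L]\in H_2(X;\ZZ)$ for any Lagrangian sphere $L$ in a monotone symplectic 4-manifold $(X,\omega)$. (i) $\omega\cdot[L]=0$, directly from the Lagrangian condition. (ii) $[L]\cdot[L]=-2$, since by Weinstein's neighbourhood theorem the normal bundle of $L$ is isomorphic to $T^*L$, whose self-intersection equals $-\chi(S^2)=-2$. (iii) $c_1(X)\cdot[L]=0$, since for an orientable Lagrangian $L$ the bundle $TX|_L$ is, after a choice of compatible almost complex structure, the complexification of $TL$, and the complexification of any oriented real rank-$2$ bundle has vanishing first Chern class. In the monotone setting (i) and (iii) are proportional, so together they impose a single linear constraint on $[L]$.

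Next I would expand $[L]$ in the standard bases and read off the resulting equations. In $Q$, writing $[L]=a\alpha+b\beta$, condition (i) reads $a+b=0$ and condition (ii) reads $2ab=-2$; these force $(a,b)=\pm(1,-1)$, so $[L]=\pm(\alpha-\beta)$. In $\DD_n$, writing $[L]=aH-\sum_{i=1}^n b_iE_i$ and using that $c_1$ is Poincar\'e dual to $3H-\sum_i E_i$, the conditions become the linear equation $\sum_i b_i=3a$ and the quadratic $\sum_i b_i^2=a^2+2$.

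A short case analysis on $a$ completes the classification. Cauchy--Schwarz yields $9a^2=(\sum_i b_i)^2\leq n\sum_i b_i^2=n(a^2+2)$, which in the allowed range of $n$ bounds $|a|$. For $a=0$ one has $\sum b_i^2=2$ and $\sum b_i=0$, forcing exactly two of the $b_i$ to be $\pm1$ with opposite signs---the binary classes $E_i-E_j$. For $a=\pm1$, $\sum b_i^2=3$ forces three of the $b_i$ to equal $\pm 1$ and the linear constraint $\sum b_i=\pm3$ pins down their common sign, giving the ternary classes. For $a=\pm2$, a second application of Cauchy--Schwarz to the entries with $|b_i|\geq 2$ rules them out, after which six of the $b_i$ must equal $\pm 1$ with the same sign, yielding the senary classes. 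The only step requiring real care, rather than routine bookkeeping, is verifying that no admissible solution with larger $|a|$ arises in the range of $n$ under consideration; this is a direct consequence of the Cauchy--Schwarz bound above together with a small finite check.
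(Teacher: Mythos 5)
Your proof is correct and follows essentially the same route as the paper: translate $[L]^2=-2$ and $c_1([L])=0$ into Diophantine constraints on the coefficients and enumerate solutions. Your Cauchy--Schwarz inequality $(9-n)a^2\leq 2n$ is precisely the paper's rearranged identity $(9-n)\sum a_i^2+\sum_{j<k}(a_j-a_k)^2=18$ with the nonnegative second term dropped, so the two finishing moves are equivalent. One caveat, which affects the paper's own statement equally: for $n=8$ the finite check at $a=\pm3$ turns up the additional class $\pm\left(3H-2E_i-\sum_{j\neq i}E_j\right)$, which also has square $-2$ and vanishing $c_1$; this is harmless in context since the main theorem concerns only $n\leq 4$, but it means the unrestricted claim that $|a|\geq 3$ yields nothing is not quite right.
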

\begin{proof}
The requirement that $L$ is a Lagrangian sphere means that $[L]^2=-2$ and $c_1([L])=0$. For $Q$, if $[L]=A\alpha+B\beta$ then $2AB=-2$ and so $A=-B=\pm 1$. For $\DD_n$, suppose $[L]=dH+\sum a_i E_i$. Then $c_1([L])=0$ implies that
\[3d+\sum a_i=0\]
\noindent which, coupled with $[L]^2=-2$, gives
\[\left(\frac{-\sum a_i}{3}\right)^2-\sum a_i^2 = -2\]
\noindent or (after some reworking)
\[(9-n)\sum a_i^2 + \sum_{j<k}\left(a_j-a_k\right)^2=18\]
\noindent Since each summand is positive, it is a matter of combinatorics to check the claim.
\end{proof}

\begin{dfn}\label{lagtypes}
We call a Lagrangian sphere \emph{binary}, \emph{ternary} or \emph{senary} according to the arity of its homology class as defined in the previous lemma.
\end{dfn}

Notice that there is an intrinsic distinction here; it is not merely a matter of the basis we have chosen for $H_2(X,\ZZ)$. For instance, in $\DD_3$ there are six binary classes ($\{E_i-E_j\}_{i\neq j}$) and one ternary class ($H-E_1-E_2-E_3$). Computing intersection numbers: $(E_i-E_j)\cdot(E_k-E_{\ell})=\pm 1$ (when $\{i,j\}\neq\{k,\ell\}$) while $(E_i-E_j)\cdot (H-E_1-E_2-E_3)=0$. The symplectomorphism group acts with two orbits on the Lagrangian classes in $H_2(\DD_3,\ZZ)$: a binary one and a ternary one.

\subsection{Birational relations} \label{bir}

Apart from the blow-down maps
\[\rho_n:\DD_n\rightarrow\PP{2}\]
\noindent there are also blow-down maps
\[\pi^{ij}_n:\DD_n\rightarrow Q\]
\noindent for $n\geq 2$. To see this, observe that $Q$ is a quadric surface in $\PP{3}$ and one can birationally project it from a point $p\in Q$ to a hypersurface $\PP{2}\subset\PP{3}$. This map, $\phi$, is defined away from $p$. It collapses the lines $\alpha_p$ and $\beta_p$ through $p$ to points $a=\alpha_p\cap\PP{2}$ and $b=\beta_p\cap\PP{2}$, and its image otherwise misses the line $E$ through $a$ and $b$.

The graph of $\phi$ inside $Q\times\PP{2}$ is therefore isomorphic to $\DD_2$:
\[\begin{CD}
Q @<{\pi_2}<< \DD_2 \\
@. @VV{\rho_2}V \\
@. \PP{2}
\end{CD}\]
If we write $\pi_2$ and $\rho_2$ for the projections of this graph to $Q$ and $\PP{2}$ respectively then $\rho_2$ collapses the two spheres in the graph which respectively project one-to-one onto $\alpha_p$ and $\beta_p$ in $Q$. Similarly, $\pi_2$ collapses the sphere in the graph which projects via $\rho_2$ onto $E$. Thus $\pi_2:\DD_2\rightarrow Q$ is a blow-down. The homology class of the exceptional sphere is $H-E_1-E_2$.

Let us introduce the notation $S_{ij}$ for the homology class $H-E_i-E_j$ in $H_2(\DD_n,\ZZ)$. The map $\widetilde{\pi_2}$ is defined by blowing up the graph of $\phi$:
\[\begin{CD}
Q_{n-1} @<{\widetilde{\pi_2}}<< \DD_n \\
@. @VV{\rho_n}V \\
@. \PP{2}
\end{CD}\]
\noindent where $Q_{n-1}$ indicates $Q$ blown-up in $n-1$ points. Finally $\pi^{ij}_n$ is defined by composing $\widetilde{\pi_2}$ with the blow-down map to $Q$. The indices $ij$ are to indicate that the exceptional spheres of $\pi^{ij}_n$ are taken to be $\left\{E_k\right\}_{k\neq i,j}$ and $S_{ij}=H-E_i-E_j$.

\begin{rmk}
There is a symplectomorphism from the anticanonical K\"{a}hler form on $\DD_n$ with the symplectic blow-up of the anticanonical form on $Q$ (compare with remark \ref{anticanform}).
\end{rmk}

We construct some Lagrangian spheres in the binary homology class $E_i-E_j$ of $\DD_n$:

\begin{dfn}
Let $\overline{\Delta}$ be the antidiagonal Lagrangian sphere in $Q=\PP{1}\times\PP{1}$, i.e. the graph of the antipodal antisymplectomorphism $\PP{1}\rightarrow\PP{1}$. If the symplectic balls used to perform the blow-up $\pi_n^{ij}$ of $Q$ in $n-1$ points are chosen disjoint from $\overline{\Delta}$ then it lifts to a Lagrangian sphere in $(\DD_n,\omega')$ and hence specifies a Lagrangian sphere $\widetilde{\Delta}_{ij}$ in $(\DD_n,\omega)$ since $\omega\cong\omega'$.
\end{dfn}

\subsection{Disjointness from divisors}

Theorem \ref{bigthm} is proved by isotoping Lagrangian spheres until they are disjoint from a set of divisors whose complement is symplectomorphic to a compact subset of $T^*S^2$. Figures \ref{divisordiagram} and \ref{divisordiagram2} describe the relevant systems of divisors. The diagrams are to be interpreted as a union of smooth divisors, one for each line in the diagram, such that the homology class of the divisor corresponding to a given line is the one by which the line is labelled.

\begin{thm}\label{isothm}
A binary Lagrangian sphere in the homology class $E_1-E_2$ in $\DD_n$ (for $n\leq 4$) can be isotoped off a configuration of smooth divisors as shown in Figure \ref{divisordiagram}. A ternary Lagrangian sphere in the homology class $H-E_1-E_2-E_3$ can be isotoped off a configuration of smooth divisors as shown in Figure \ref{divisordiagram2}.
\end{thm}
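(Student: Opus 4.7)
The plan is to carry out the neck-stretching programme outlined in the introduction. Fix a Lagrangian sphere $L\subset\DD_n$ in one of the two distinguished homology classes. For the standard integrable complex structure $J_0$ each line in the relevant figure is represented by a unique smooth $J_0$-holomorphic rational curve (possibly pinned by the point incidences read off from the diagram); their union is the divisor configuration $C_0$ from which I want to disjoin $L$. Let $\{J_t\}_{t\geq 0}$ be the family of $\omega$-compatible almost complex structures obtained by stretching the neck across the unit conormal bundle of $L$, as in section \ref{neckstretching}.

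The first technical step is to produce, for every $t\in[0,\infty)$, a $J_t$-holomorphic configuration $C_t$ with the same combinatorial type as $C_0$. Since each component class appears in a non-trivial Gromov--Witten count on $\DD_n$ (section \ref{gwtheory}) and since the requisite compactness and genericity statements are collected in section \ref{singularpseudo}, one obtains such representatives continuously in $t$, modulo controlled singular degenerations that are accounted for in that section.

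The crux is to show that $C_t$ is disjoint from $L$ for all sufficiently large $t$. SFT compactness (section \ref{SFT}) applied along a sequence $t_k\to\infty$ extracts a holomorphic building $C_\infty$ whose ``upper'' level lives in the symplectisation of the unit cotangent bundle $ST^*S^2\cong\mathbb{RP}^3$ and whose ``lower'' level lives in $D^*S^2$. I would then run a case analysis, constrained by the Fredholm index formula for punctured curves, the action--energy inequality, positivity of intersections, and the explicit structure of Reeb orbits on $\mathbb{RP}^3$ (iterates of fibres of the Hopf fibration), to rule out every building whose lower level has a component meeting the zero-section. This is precisely what sections \ref{prf} and \ref{prf2} are devoted to, split into the binary and ternary cases. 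Once a $C_T$ disjoint from $L$ is in hand, a parametric Moser argument (section \ref{isoprf}) produces an ambient symplectic isotopy carrying $C_T$ back to $C_0$, and hence a Lagrangian isotopy of $L$ off $C_0$.

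The main obstacle is the SFT limit analysis. Many degeneration patterns are a priori compatible with crude index and genus bookkeeping, including nodal bubbling of the planar components, multiply-covered cylinders over short Reeb orbits, and awkward redistribution of the point incidences across levels. The argument should succeed because in low degrees the rational curves are highly constrained, the periodic Reeb orbits on $\mathbb{RP}^3$ are few, and the symplectic area budget is small enough to limit the complexity of buildings; combining these inputs case-by-case is expected to eliminate every limit meeting the zero-section, which is the technical heart of the paper.
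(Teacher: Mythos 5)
Your proposal follows essentially the same route as the paper: stretch the neck around $L$ (section~\ref{neckstretching}), establish persistence of the divisor configuration under the $J_t$-deformation via the Gromov--Witten arguments of section~\ref{singularpseudo}, run the SFT limit analysis of sections~\ref{prf}--\ref{prf2} to force the $W$-part of the limit building to be empty, and finally translate the disjointness of $C_T$ from $L$ into an isotopy of $L$ off $C_0$ in section~\ref{isoprf}. One small terminological point: the paper's final step is not a pure parametric Moser argument but rather an application of Banyaga's symplectic isotopy extension theorem (theorem~\ref{banyaga}), after first perturbing the moving configuration $C_t$ so that its components meet symplectically orthogonally and invoking the symplectic neighbourhood theorem to extend the isotopy off the configuration itself.
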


\begin{figure}
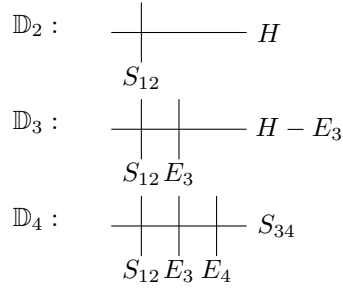

\begin{eqnarray*}
\DD_2: & & \xy (0,0)*{};(18,0)*{} **\dir{-};(21,0)*{H};(4,4)*{};(4,-4)*{} **\dir{-};(4,-6)*{S_{12}};\endxy \\
\DD_3: & & \xy (0,0)*{};(18,0)*{} **\dir{-};(25,0)*{H-E_3};(4,4)*{};(4,-4)*{} **\dir{-};(4,-6)*{S_{12}};(9,4)*{};(9,-4)*{} **\dir{-};(9,-6)*{E_3};\endxy \\
\DD_4: & & \xy (0,0)*{};(18,0)*{} **\dir{-};(22,0)*{S_{34}};(4,4)*{};(4,-4)*{} **\dir{-};(4,-6)*{S_{12}};(9,4)*{};(9,-4)*{} **\dir{-};(9,-6)*{E_3};(14,4)*{};(14,-4)*{} **\dir{-};(14,-6)*{E_4};\endxy\\
 \end{eqnarray*}
\caption{The configuration of smooth divisors from which a binary Lagrangian sphere (in the homology class $E_1-E_2$) can be made disjoint by a Lagrangian isotopy.}
\label{divisordiagram}
\end{figure}

\begin{figure}
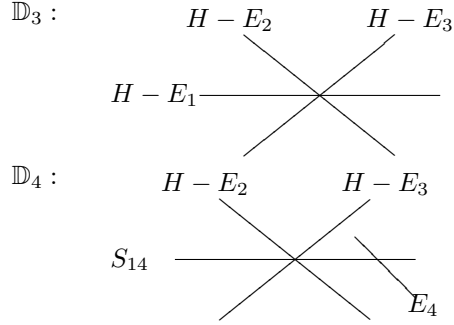

\begin{eqnarray*}
\DD_3: & & \xy(-16,-10)*{};(16,-10)*{} **\dir{-};(-10,-2)*{};(10,-18)*{} **\dir{-};(-10,-18)*{};(10,-2)*{} **\dir{-};(-22,-10)*{H-E_1};(-12,0)*{H-E_2};(12,0)*{H-E_3};\endxy \\
\DD_4: & & \xy(-16,-10)*{};(16,-10)*{} **\dir{-};(-10,-2)*{};(10,-18)*{} **\dir{-};(-10,-18)*{};(10,-2)*{} **\dir{-};(-22,-10)*{S_{14}};(-12,0)*{H-E_2};(12,0)*{H-E_3};(8,-7)*{};(16,-15)*{} **\dir{-};(17,-16)*{E_4}\endxy \end{eqnarray*}
\caption{The configuration of smooth divisors from which a ternary Lagrangian sphere (in the homology class $H-E_1-E_2-E_3$) can be made disjoint by a Lagrangian isotopy.}
\label{divisordiagram2}
\end{figure}

\begin{prp}
In each case the complement $U$ of the divisor contains a unique Lagrangian sphere up to isotopy.
\end{prp}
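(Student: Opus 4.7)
The plan is to exhibit, for each configuration, a symplectomorphism of $U$ onto a neighbourhood of the zero section in $T^{*}S^{2}$. Once this is done, the proposition follows immediately from Hind's Theorem \ref{Hindthm}, since the zero section is itself a Lagrangian sphere and every Lagrangian sphere in such a neighbourhood is isotopic to it.

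In the binary cases, the key tool is the blow-down $\pi_n^{12}:\DD_n\to Q$ from Section \ref{bir}. The non-exceptional divisor of each configuration (namely $H$, $H-E_3$, or $S_{34}$ for $n=2,3,4$) pushes forward to a smooth rational curve of class $\alpha+\beta$, i.e.\ a smooth hyperplane-section conic $C_0\subset Q$. A direct intersection-number check shows that each remaining (exceptional) divisor meets this non-exceptional one in exactly one point, so the blown-down images of those exceptional divisors all lie on $C_0$. Hence $\pi_n^{12}$ restricts to a biholomorphism $U\to Q\setminus C_0$, and the complement of a smooth hyperplane section in the quadric is the affine quadric $\{z_1^2+z_2^2+z_3^2=1\}\subset\CC^3$, which is biholomorphic to $T^{*}S^{2}$. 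For the ternary configurations one first notes that the three lines of the triangle share a common triple point (as depicted in Figure \ref{divisordiagram2}); a Cremona-type birational transformation (blow up the triple point, then blow down the three transformed $(-1)$-lines, which are now disjoint) sends $\DD_n$ biholomorphically onto a blow-up of $Q$ in which the exceptional divisor over the triple point pushes forward to a smooth conic, and the argument of the binary case again identifies $U$ with $Q\setminus C_0$.

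The symplectic identification uses the remark in Section \ref{prelim} that $\DD_n$ with its anticanonical K\"ahler form is symplectomorphic to a symplectic blow-up of $Q$ in $n-1$ balls. Choosing the balls disjoint from a small symplectic tubular neighbourhood $N$ of $C_0$, the complement $Q\setminus(N\cup\mathrm{balls})$ is symplectomorphic to the complement in $\DD_n$ of a neighbourhood of the divisor configuration, and after a Moser adjustment this yields a symplectomorphism of $U$ onto a neighbourhood of the zero section of $T^{*}S^{2}$ to which Hind's theorem applies. I expect the main obstacle to be this symplectic matching in the ternary case: the divisor has a triple point, so the standard symplectic neighbourhood theorem for smooth symplectic submanifolds does not apply directly, and a more delicate Moser-type argument is required to check that the birational transformation can be realised symplectically up to isotopy.
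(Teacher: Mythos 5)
Your biholomorphism step in the binary case is essentially the paper's argument: the divisors are the total transform of a smooth degree-$(1,1)$ curve under $\pi_n^{12}$, hence $U \cong Q\setminus C_0 \cong$ affine quadric. For the ternary case your Cremona route is a genuine alternative to the paper's, which instead exhibits the linear system $|H-E_1|$ as a basepoint-free pencil with two nodal members, excises a smooth fibre and the two sections $H-E_2$, $H-E_3$, and matches the resulting Lefschetz fibration over the disc with the standard conic fibration of the affine quadric. Your route requires the three lines to be \emph{concurrent} (as drawn); that is a legitimate choice of configuration, and the Cremona bookkeeping you describe is correct (the three transformed $S_{i4}$ are disjoint $(-1)$-curves, the blow-down is $Q$, and the exceptional curve over the triple point becomes a smooth $\alpha+\beta$ conic). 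But you should be aware the paper's Lefschetz argument works for a generic triangle configuration and does not need concurrency.

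The genuine gap is in the symplectic identification, which you rightly flag as the obstacle but do not resolve. Your stated goal — a symplectomorphism of \emph{all of} $U$ onto a neighbourhood of the zero-section in $T^*S^2$ — is not what the paper proves, and it is not clear how to get it by the symplectic-blow-up plus Moser sketch you give. The set $Q\setminus N$ you produce is a compact domain with boundary sitting \emph{inside} $U$, not equal to it; passing from such compact pieces to the whole open complement of the divisor requires controlling an exhaustion, and a naive Moser argument does not apply on the non-compact $U$. What the paper actually does is more roundabout and more careful: after assigning multiplicities to make the divisor anticanonical, $\omega|_U = -dd^c\phi$ for a plurisubharmonic $\phi$; by \cite{SS05}, lemmas 5 and 6, the \emph{symplectic completion of a sublevel set} $\{\phi\le c\}$ is symplectomorphic to a complete finite-type Stein structure on $U$; and since all such Stein structures on a fixed complex manifold are deformation-equivalent, this completion is symplectomorphic to the affine quadric, i.e.\ to $T^*S^2$. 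Hind's theorem then produces an isotopy in that completion $Y$, but that isotopy may leave the original sublevel set and hence $U$. The paper fixes this with the negative Liouville flow $\psi_{-T}$ on $Y$: for $T$ large, $\psi_{-T}$ compresses the entire isotopy back into the sublevel set, and conjugating gives an isotopy that stays inside $U$. This Liouville-flow step is not optional, and your proposal never addresses it because it is premised on the (unjustified) symplectomorphism of $U$ itself with a Weinstein neighbourhood. You should replace the "Moser adjustment" sentence with the Stein-deformation argument from \cite{SS05} and add the Liouville-flow conjugation at the end.
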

\begin{proof}
In each case we observe that $U$ is biholomorphic to an affine quadric surface:
\begin{itemize}
\item \textbf{Binary:} In figure \ref{divisordiagram}, the pictured divisors are total transforms of the diagonal in $\PP{1}\times\PP{1}$ under $\pi^{12}_n$.

\item \textbf{Ternary:} In the $\DD_3$ part of figure \ref{divisordiagram2}, the linear system of the divisor homologous to $H-E_1$ is a pencil with no basepoints and two nodal members (corresponding to the decompositions $S_{12}+E_2$ and $S_{13}+E_3$ of the homology class). The other divisors ($H-E_2$ and $H-E_3$) are sections of this pencil. Excising the pictured divisor gives a Lefschetz fibration over the disc with conic fibres and two nodal singularities. Using this Lefschetz fibration we can establish a biholomorphism with the affine quadric and its standard conic fibration with two singular fibres. For ternary spheres in $\DD_4$, the same trick works by looking at the linear system of $H-E_1$ and excising the singular fibre $S_{14}+E_4$ plus two sections (as depicted in the figure).
\end{itemize}
Now that we have established biholomorphism of $U$ with the affine quadric, notice that in each case we can give multiplicities to the components of the divisor so that it is linearly equivalent to the anticanonical class. Therefore we can assume that the restriction of $\omega$ to $U$ is the symplectic form associated to a plurisubharmonic function $\phi$ on $U$. It follows easily from \cite{SS05}, lemmas 5 and 6, that the symplectic completion of a sublevel set of $\phi$ is symplectomorphic to a complete (finite-type) Stein structure on $U$. Since all complete finite-type Stein structures on a given complex manifold are deformation equivalent, it follows that the symplectic completion of a sublevel set of $\phi$ is symplectomorphic to the affine quadric. The affine quadric is symplectomorphic to the total space of $T^*S^2$.

If there are two Lagrangian spheres $L_0$ and $L_1$ in $U$, they lie inside a sublevel set of $\phi$. Since the completion $Y$ of this sublevel set is symplectomorphic to $T^*S^2$, Hind's theorem gives us an isotopy $L_t$ in $Y$ between $L_0$ and $L_1$. Let $\psi_t$ be the negative Liouville flow on $Y$. For large $T$, $\psi_{-T}(L_t)$ is a Lagrangian isotopy between $\psi_{-T}(L_0)$ and $\psi_{-T}(L_1)$. Therefore the three-stage isotopy
\[\psi_t(L_0)|_{t=0}^{t=-T},\ \psi_{-T}(L_t)|_{t=0}^{t=1},\ \psi_t(L_1)|_{t=-T}^{t=0}\]
interpolates between $L_0$ and $L_1$ whilst remaining inside $U$.
\end{proof}

\begin{proof}[Proof of theorem \ref{bigthm}]
Theorem \ref{isothm} implies that a binary Lagrangian sphere can be isotoped into the complement of a specified divisor. The previous proposition tells us this complement contains a unique Lagrangian sphere up to Lagrangian isotopy in that space.
\end{proof}

\section{Gromov-Witten theory}\label{gwtheory}

This paper makes heavy use of the Gromov-Witten theory of genus 0 pseudoholomorphic curves as developed in \cite{MS04}. We now recall the basics of Gromov-Witten theory and the geometric theorems pertinent to dimension 4. We refer the reader to \cite{MS04} for proofs, where they are excellently presented.

\subsection{Basics}

\begin{dfn}
Let $(X,J)$ be an almost complex manifold. A $J$-holomorphic curve in $X$ is a smooth map $u:\Sigma\rightarrow X$ from a Riemann surface $(\Sigma,j)$ into $X$ whose derivative $Du$ satisfies
\[\dbar{J}u:=\frac{1}{2}\left(Du+J\circ Du\circ j\right)=0\]
\noindent $u$ is called \emph{multiply-covered} if it factors through a branched cover of Riemann surfaces and \emph{simple} otherwise. We concentrate on the case where $\Sigma$ has genus 0.
\end{dfn}

We think of $\dbar{J}$ as a section of the infinite-dimensional vector bundle
\[
\begin{CD}
\mathcal{E} @<<< \mathcal{E}_u=\Omega^{0,1}_J(u^*TX) \\
@VVV @. \\
\mC^{\infty}(S^2,X;A) @.
\end{CD}
\]
\noindent where $\mC^{\infty}(S^2,X;A)$ is the space of smooth maps $u$ from the sphere into $X$ for which $u_*[S^2]=A\in H_2(X;\ZZ)$.

\begin{dfn}[Moduli spaces of $J$-holomorphic spheres]
We write
\begin{itemize}
\item $\widetilde{\mM}(A,J)$ for the space $\dbar{J}^{-1}(0)$.
\item $\mM(A,J)$ for the quotient of $\widetilde{\mM}(A,J)$ by the reparametrisation action of $PSL(2,\CC)$ on genus 0 curves. This is called the moduli space of (unparametrised) $J$-holomorphic spheres in the class $A$.
\item $\widetilde{\mM}^*(A,J)$ for the space of simple curves in $\widetilde{\mM}(A,J)$.
\item $\mM^*(A,J)$ for the quotient of this space by reparametrisations. This is called the moduli space of (unparametrised) simple $J$-holomorphic spheres in the class $A$.
\end{itemize}
\end{dfn}

\begin{dfn}[Regular almost complex structures]
Let $d_u\dbar{J}$ denote the linearisation of $\dbar{J}$ at a point $u\in\mC^{\infty}(S^2,X;A)$. If $u$ is a $J$-holomorphic curve then $\dbar{J}(u)=0$ so that $T_{(u,0)}\mE$ can be naturally identified with $\mE_u\oplus T_u\mC^{\infty}(S^2,X;A)$. Write $\pr_{\mE_u}$ for the projection to the subspace $\mE_u$. A $J$-holomorphic curve $u$ is said to be \emph{regular} if $D_u\dbar{J}=\pr_{\mE_u}\circ d_u\dbar{J}$ is surjective as a map between suitable Sobolev completions of $T_u\mC^{\infty}(S^2,X;A)$ and $\mE_u$ and $J$ is called a \emph{regular almost complex structure} for the homology class $A$ if any $J$-holomorphic curve $u\in\widetilde{\mM}^*(A,J)$ is regular.
\end{dfn}

\begin{thm}[\cite{MS04}, theorem 3.1.5]
For a regular almost complex structure $J$, the space $\mM^*(A,J)$ is a manifold of dimension $2n+2\left<c_1(X),A\right>$, equipped with a natural smooth structure.
\end{thm}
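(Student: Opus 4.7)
The plan is to exhibit $\widetilde{\mM}^*(A,J)$---which is what this statement is really about, the dimension formula $2n + 2\langle c_1(X), A\rangle$ being the raw Fredholm index without the $-6$ from the $PSL(2,\CC)$ action---as the zero set of a smooth Fredholm section of an infinite-dimensional Banach vector bundle, and then apply the implicit function theorem. No quotient by reparametrisations enters.

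First I would replace the smooth mapping space $\mC^{\infty}(S^2,X;A)$ by its Sobolev completion $\mB^{k,p}(A):=W^{k,p}(S^2,X;A)$ for some $k\geq 1$ and $p>2$, so that Sobolev embedding guarantees $W^{k,p}\hookrightarrow C^0$. Standard charts built from the exponential map of an auxiliary Riemannian metric on $X$ endow $\mB^{k,p}(A)$ with the structure of a smooth Banach manifold whose tangent space at $u$ is $W^{k,p}(u^*TX)$, and the bundle $\mE$ extends to a smooth Banach vector bundle with fibre $\mE_u=W^{k-1,p}(\Omega^{0,1}_J(u^*TX))$. The operator $\dbar{J}$ defines a smooth section of $\mE$ in these completions once one checks the requisite Sobolev multiplication and composition estimates.

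Next, at any $J$-holomorphic $u$ the vertical derivative $D_u\dbar{J}\colon W^{k,p}(u^*TX)\to W^{k-1,p}(\Omega^{0,1}_J(u^*TX))$ is a real-linear Cauchy--Riemann operator: it differs from the Dolbeault operator on the complex vector bundle $u^*TX\to S^2$ only by a bounded zeroth-order term arising from the Nijenhuis tensor of $J$ and the choice of connection. Such operators are Fredholm, and the real Riemann--Roch theorem on $S^2$ yields Fredholm index
\[\operatorname{ind}(D_u\dbar{J}) = 2n + 2c_1(u^*TX)[S^2] = 2n + 2\langle c_1(X), A\rangle.\]
The regularity hypothesis on $J$ says precisely that $D_u\dbar{J}$ is surjective at every simple $u$; the implicit function theorem then endows $\widetilde{\mM}^*(A,J)\cap\mB^{k,p}$, near each such $u$, with the structure of a smooth Banach submanifold modelled locally on $\ker D_u\dbar{J}$, of dimension $2n+2\langle c_1(X),A\rangle$.

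Finally, elliptic regularity for $\dbar{J}$ with $C^{\infty}$ coefficients forces every $W^{k,p}$-solution to be $C^{\infty}$, so the submanifold actually lies in $\mC^{\infty}(S^2,X;A)$; comparing the charts produced for different choices of $(k,p)$ shows that the resulting finite-dimensional smooth structure is independent of those choices, hence canonical. The main technical obstacle is the foundational package underlying step one: verifying that $\mB^{k,p}(A)$ is genuinely a Banach manifold and that $\dbar{J}$ is a smooth section of $\mE$ demands careful Sobolev multiplication and composition estimates, together with checking that the exponential-map transition functions are smooth between Sobolev completions. With that machinery in place (as laid out in \cite{MS04}, chapter 3), the manifold structure and its dimension follow directly from Riemann--Roch and the implicit function theorem.
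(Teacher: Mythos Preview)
The paper does not give its own proof of this theorem; it is quoted verbatim as a black-box result from \cite{MS04}, theorem 3.1.5, and is used throughout section \ref{gwtheory} without further argument. Your proposal is a correct sketch of the standard proof (essentially the one given in \cite{MS04}, chapter 3): Sobolev completion, identification of $D_u\dbar{J}$ as a real Cauchy--Riemann operator, Riemann--Roch for the index, implicit function theorem for the local manifold structure, and elliptic regularity for independence of $(k,p)$. You also correctly flag the slight imprecision in the paper's transcription---the formula $2n+2\langle c_1(X),A\rangle$ is the index of $D_u\dbar{J}$ and hence the dimension of the parametrised space $\widetilde{\mM}^*(A,J)$, whereas the quotient $\mM^*(A,J)$ has dimension $2n+2\langle c_1(X),A\rangle-6$.
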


In fact, we will mostly work with moduli spaces of \emph{stable maps}. These have the disadvantage that they do not possess a canonical manifold structure (smooth or topological), and are just Hausdorff topological spaces. However, they compensate by having good compactness properties. We will henceforth require that $J$ is compatible with a given symplectic form $\omega$ on $X$, that is $\omega(JX,JY)=\omega(X,Y)$ for all $X,Y$ and $\omega(X,JX)> 0$ for $X\neq 0$. In this setting, the moduli spaces of stable maps will be compact. Denote the space of $\omega$-compatible $J$ by $\mJ$.

Recall that a tree is a set $T$ of vertices connected by edges $E$ to form a graph with no cycles. A $k$-labelling of $T$ is an assignment of the integers $\{1,\ldots,k\}$ to the vertices of $T$, written $i\mapsto\alpha_i\in T$. We write $\dom(u)$ for the domain of a map $u$.

\begin{dfn}
A genus 0 $J$-holomorphic stable map with $k$ marked points modelled on a $k$-labelled tree $T$ is a collection of $J$-holomorphic spheres $u_{\alpha}:S^2\rightarrow X$, one for each vertex $\alpha\in T$, with:
\begin{itemize}
\item marked points $z_i\in\dom (u_{\alpha_i})$ for $i=1,\ldots,k$
\item nodal points $z_{\alpha\beta}\in\dom (u_{\alpha})$ for each oriented edge $\alpha\beta\in E$ such that $u_{\alpha}(z_{\alpha\beta})=u_{\beta}(z_{\beta\alpha})$.
\end{itemize}
\noindent We require all marked and nodal points to be distinct. The final requirement on this data is \emph{stability}: that the number of \emph{special points} (i.e. marked or nodal points) on $\dom (u_{\alpha})$ is at least 3 if $u_{\alpha}$ is a constant map. Such a constant component is called a \emph{ghost bubble}.

The stable map $(\mathbf{u},\mathbf{z})$ is said to represent a homology class $A\in H_2(X;\ZZ)$ if
\[A=\sum_{\alpha\in T} (u_{\alpha})_*[S^2].\]
\end{dfn}

To define a moduli space of stable maps we need a suitable notion of equivalence up to reparametrisation. We say $(\mathbf{u},\mathbf{z})$ and $(\mathbf{u'},\mathbf{z'})$ (modelled on labelled trees $T$ and $T'$ respectively) are equivalent if there is an isomorphism $f:T\rightarrow T'$ of trees and a reparametrisation $\phi_{\alpha}\in PSL(2,\CC)$ for each $\alpha\in T$ for which
\[u'_{f(\alpha)}\circ\phi_{\alpha}=u_{\alpha},\ z'_{f(\alpha)f(\beta)}=\phi_{\alpha}(z_{\alpha\beta}),\ z'_i=\phi_{\alpha_i}(z_i)\]
For each $k$-labelled tree $T$ this gives a reparametrisation group $G_T$ for the space of stable maps modelled on $T$ consisting of equivalences over automorphisms $f:T\rightarrow T$ for which $\alpha_i=f(\alpha_i)$ for all $i\in\{1,\ldots,k\}$.

\begin{dfn}
We define

\begin{itemize}
\item $\bM_{0,k}(X,A,J)$, the space of equivalence classes of genus 0 $J$-holomorphic stable maps with $k$ marked points representing the class $A$. We write $[\mathbf{u},\mathbf{z}]$ for the equivalence class of the stable map $(\mathbf{u},\mathbf{z})$.
\item $\widetilde{\mM}_{0,T}(X,A,J)$, the space of genus 0 $J$-holomorphic stable maps modelled on a labelled tree $T$.
\item $\mM_{0,T}(X,A,J)=\widetilde{\mM}_{0,T}(X,A,J)/G_T$, the space of equivalence classes of genus 0 $J$-holomorphic stable maps modelled on a labelled tree $T$.
\item $\widetilde{\mM}_{0,k}(X,A,J)$ respectively $\mM_{0,k}(X,A,J)$ to be the moduli space $\widetilde{\mM}_{0,T}(X,A,J)$ respectively $\mM_{0,T}(X,A,J)$ when $T$ is the $k$-labelled tree with one vertex.
\end{itemize}
\end{dfn}

We also introduce a notion of simplicity for stable maps, and suffix the notation for moduli spaces by $^*$ to denote restriction to simple stable maps.

\begin{dfn}
A stable map $(\mathbf{u},\mathbf{z})$ modelled on a tree $T$ is simple if every non-constant component is a simple $J$-holomorphic map and no two distinct vertices of $T$ give rise to non-constant maps with the same image.
\end{dfn}

We need to equip our moduli spaces with a topology. The \emph{Gromov topology}, defined in \cite{MS04}, section 5.6, will be the one relevant for Gromov-Witten theory. We recount those properties of the resulting moduli spaces which will be relevant for this paper:

\begin{thm}[\cite{MS04}, theorem 5.6.6]
Write $\bM_{0,k}(X,A,J)$ for the moduli space of genus 0 $J$-holomorphic stable curves with $k$ marked points equipped with the Gromov topology. Then $\bM_{0,k}(X,A,J)$ is:

\begin{itemize}
\item a separable Hausdorff space,
\item (Gromov's compactness theorem) compact,
\end{itemize}

\noindent and the evaluation map
\[\ev:\bM_{0,k}(X,A,J)\rightarrow X^k,\ \ev[\mathbf{u},\mathbf{z}]=(u_{\alpha_1}(z_1),\ldots,u_{\alpha_k}(z_k))\]
\noindent and the projection which forgets the $k$-th marked point
\[\bM_{0,k}(X,A,J)\rightarrow\bM_{0,k-1}(X,A,J)\]
\noindent are continuous maps.
\end{thm}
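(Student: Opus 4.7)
The plan is to follow the strategy in \cite{MS04}, chapter 5. The four assertions split naturally: Hausdorffness and separability are essentially formal, continuity of the evaluation and forgetful maps follow rather directly from the definition of Gromov convergence, and the genuine content is Gromov's compactness theorem. I would organise the argument around the latter and deduce everything else along the way.

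First I would pin down the Gromov topology precisely: a sequence $[\mathbf{u}_n,\mathbf{z}_n]\to [\mathbf{u},\mathbf{z}]$ if one can choose representatives and, for each $n$, reparametrisations $\phi^n_\alpha\in PSL(2,\CC)$ of the components of $\mathbf{u}$ together with auxiliary rescalings at the nodes of the limit, such that outside arbitrarily small neighbourhoods of the special points of $\mathbf{u}$ the rescaled maps $u^n\circ\phi^n_\alpha$ converge in $C^\infty_{\mathrm{loc}}$ to $u_\alpha$, energy is preserved on each bubble, marked points converge, and no energy is lost in the rescaling annuli. Granted this, Hausdorffness amounts to showing that a sequence cannot Gromov-converge to two inequivalent stable maps: if both limits existed, one could paste the two systems of reparametrisations to produce an isomorphism of the underlying labelled trees and a family of $PSL(2,\CC)$-reparametrisations intertwining the two limits, contradicting inequivalence. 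Separability follows because the underlying space of tuples of smooth maps is separable in $C^\infty_{\mathrm{loc}}$ and the stable-map conditions (matching at nodes, stability of ghosts) are closed, so a countable dense subset descends to $\bM_{0,k}(X,A,J)$.

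The main obstacle is compactness. The key input is the uniform energy bound $E(\mathbf{u}_n)=\omega(A)$, coming from $\omega$-compatibility. I would argue as follows. For each $n$, pick a representative and consider the components one by one. If the differentials $|Du^n_\alpha|$ are uniformly bounded, elliptic regularity for $\dbar{J}$ combined with the Arzel\`a--Ascoli theorem gives a $C^\infty_{\mathrm{loc}}$-convergent subsequence. If they blow up at some point, the standard rescaling procedure of Sacks--Uhlenbeck/Gromov produces a non-constant $J$-holomorphic sphere in the limit (using the monotonicity lemma to ensure the rescaled maps do not have shrinking image, and the removal of singularities theorem to cap off the puncture at $\infty$). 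Each such bubble costs at least $\hbar>0$ of energy (the minimal energy of a $J$-holomorphic sphere in $X$, bounded below since $\mJ$ is compact-open in the space of almost complex structures and $\omega$-compatibility is preserved under limits). Hence the bubbling process terminates after finitely many steps, yielding a limit stable map modelled on some tree. The delicate point is the no-loss-of-energy statement at the necks between parent and bubble components, which one handles by the isoperimetric inequality on thin annuli and an iterative argument in \cite{MS04}, section 4.7. One must also verify stability of the limit by collapsing any component with fewer than three special points whose image is a point; this is where ghost bubbles naturally appear.

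With compactness in hand, the continuous maps are straightforward. For $\ev$, if $[\mathbf{u}_n,\mathbf{z}_n]\to [\mathbf{u},\mathbf{z}]$ then the marked point $z^n_i$ lies on a component whose rescaled map converges in $C^\infty_{\mathrm{loc}}$ near $z_i$, so $\ev_i=u^n_{\alpha_i}(z^n_i)\to u_{\alpha_i}(z_i)$. For the forgetful map, the only subtlety is that forgetting the $k$-th marked point may render the component carrying it unstable (a ghost bubble with two special points, or a non-constant component with only one); in either case one contracts the offending component (collapsing its edge in the tree), and this contraction is itself continuous in the Gromov topology. I would expect the hardest step by far to be the energy-quantisation/no-neck-energy-loss argument underpinning compactness; everything else is formal once the right definition of the topology is in place.
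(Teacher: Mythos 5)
The paper does not prove this theorem; it is cited verbatim from McDuff--Salamon and used as a black box. Your plan follows the same strategy as the cited reference (\cite{MS04}, chapters 4--5): energy quantisation plus Sacks--Uhlenbeck/Gromov rescaling for compactness, uniqueness of Gromov limits for Hausdorffness, and the collapse-the-unstable-component observation for the forgetful map, so there is nothing to compare against within this paper. Two small cautions: the lower bound $\hbar>0$ on bubble energy needs only a mean-value/monotonicity estimate for the single fixed $J$ here, so your parenthetical invoking compactness of the space of almost complex structures is a red herring; and calling Hausdorffness ``essentially formal'' undersells it --- uniqueness of Gromov limits (\cite{MS04}, Theorem 5.4.2) is one of the genuinely technical inputs, requiring the same local analysis near nodes as the no-energy-loss argument you correctly flag as the hard core of compactness.
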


We note that
\[\bM_{0,k}(X,A,J) = \bigcup_T \mM_{0,T}(X,A,J)\]
\noindent and

\begin{eqnarray*}
\mM_{0,0}(X,A,J) & = & \mM(X,A,J) \\
\mM_{0,1}(X,A,J) & = & \mM(X,A,J)\times_{PSL(2,\CC)} S^2
\end{eqnarray*}

\noindent for $A\neq 0$. These equations also hold for the $\widetilde{\mM}$ and $\mM^*$ versions of the moduli spaces.

\subsection{Pseudocycles and transversality}

\begin{dfn}[Pseudocycles]
A $d$-dimensional pseudocycle in a manifold $X$ is a smooth map $f:V\rightarrow X$ from a smooth oriented $d$-manifold $V$ into $X$ such that:

\begin{itemize}
\item the image $f(V)$ has compact closure,
\item the \emph{limit set}
\[\Omega_f:=\bigcap_{\substack{K\subset V,\\K\mbox{ compact}}}\overline{f(V\setminus K)}\]
\noindent is of dimension at most $\dim(V)-2$.
\end{itemize}
Here a subset is \emph{of dimension at most $k$} if it is contained in the image of a map $f:A\rightarrow X$ where $A$ is a manifold of dimension $k$.
\end{dfn}

\begin{dfn}[Bordism of pseudocycles]
Two pseudocycles $f_1:V_1\rightarrow X$ and $f_2:V_2\rightarrow X$ are \emph{bordant} if there is a smooth $d+1$-manifold $W$ with boundary $\partial W=-V_1\cup V_2$ and a map $g:W\rightarrow X$ extending $f_0$ and $f_1$ such that $\Omega_g$ has dimension 2 less than $W$. Such data is called a \emph{bordism of pseudocycles}.
\end{dfn}

To state the next lemma which concerns intersections of pseudocycles we need a notion of transversality.

\begin{dfn}[Strong transversality]
Two pseudocycles $f_1:V_1\rightarrow X$ and $f_2:V_2\rightarrow X$ are \emph{strongly transverse} if their images are transverse wherever they intersect and neither limit set intersects the closure of the other pseudocycle, i.e.
\[\Omega_{f_i}\cap \overline{f_j(V_j)}=\emptyset\]
\end{dfn}

\begin{lma}[\cite{MS04}, lemma 6.5.5]\label{pseudoint}
Let $f_1:V_1\rightarrow X$ and $f_2:V_2\rightarrow X$ be pseudocycles of complementary dimension.

\begin{itemize}
\item There is a Baire set of diffeomorphisms $\phi$ of $X$ for which $\phi\circ f_1$ and $f_2$ are strongly transverse.
\item If they are strongly transverse then their intersection is finite. Define $f_1\cdot f_2$ to be the sum of the local intersection numbers of $f_1(V_1)$ and $f_2(V_2)$ at their intersection points.
\item The intersection number $f_1\cdot f_2$ is independent of the pseudocycles up to bordism.
\end{itemize}
\end{lma}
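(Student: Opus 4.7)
The plan is to treat the three bullets separately, using Sard--Smale, dimension counting, and a one-dimensional bordism argument. For the first bullet, the key is a parametrised transversality argument: consider the map
\[F:\Diff(X)\times V_1\times V_2\rightarrow X\times X,\qquad (\phi,v_1,v_2)\mapsto (\phi(f_1(v_1)),f_2(v_2)),\]
and let $\Delta\subset X\times X$ be the diagonal. Because $\Diff(X)$ acts transitively on $X$, $F$ is transverse to $\Delta$; the Sard--Smale theorem then furnishes a Baire set of $\phi$ for which the slice $(v_1,v_2)\mapsto(\phi\circ f_1(v_1),f_2(v_2))$ is transverse to $\Delta$, which is exactly transversality of $\phi\circ f_1$ and $f_2$. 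For the limit-set condition, write $d_i=\dim V_i$ so $d_1+d_2=\dim X$. The set $\Omega_{f_1}$ is covered by the image of a manifold of dimension $\leq d_1-2$ while $\overline{f_2(V_2)}$ is covered by an image of dimension $d_2$; the total source dimension $d_1+d_2-2=\dim X-2$ is too small to fill $X$ after applying a generic $\phi$, so a second Sard--Smale argument produces a Baire set of $\phi$ for which $\phi(\Omega_{f_1})\cap\overline{f_2(V_2)}=\emptyset$ and symmetrically $\phi(\overline{f_1(V_1)})\cap\Omega_{f_2}=\emptyset$. Intersecting the two Baire sets yields strong transversality.

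For the second bullet, strong transversality provides compact subsets $K_i\subset V_i$ with $f_i(V_i\setminus K_i)\cap\overline{f_j(V_j)}=\emptyset$ for $\{i,j\}=\{1,2\}$, so every intersection of images lies in the compact set $f_1(K_1)\cap f_2(K_2)$. Ordinary transversality in complementary dimension makes the intersection points isolated, and compactness then forces them to be finite in number, so the oriented count $f_1\cdot f_2$ is well defined.

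For the third bullet, suppose $f_1^0$ and $f_1^1$ are bordant via $g:W\rightarrow X$ with $\dim W=d_1+1$ and $\dim\Omega_g\leq d_1-1$. By the same parametrised argument as in the first bullet, $g$ can be perturbed relative to $\partial W$ to a bordism $g'$ which is strongly transverse to $f_2$, with $\Omega_{g'}\cap\overline{f_2(V_2)}=\emptyset$. Then $(g')^{-1}(f_2(V_2))$ is a smooth $1$-manifold whose closure in $W$ avoids the limit set and therefore is a compact $1$-manifold with boundary lying in $\partial W=-V_1^0\sqcup V_1^1$. Counting boundary points with signs yields $f_1^0\cdot f_2=f_1^1\cdot f_2$, and a symmetric argument handles bordisms in the second variable. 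The main obstacle I anticipate is verifying that the limit-set hypothesis really does guarantee properness of the intersection $1$-manifold, since the whole bordism-invariance statement collapses without it; once properness is in hand the remaining steps reduce to routine Sard--Smale and dimension counting.
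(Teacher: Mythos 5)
The paper does not prove this lemma: it is cited verbatim from \cite{MS04}, Lemma~6.5.5, so there is no internal proof to compare against, and your outline is essentially the argument McDuff and Salamon give in \S6.5 of their book. Two technical points deserve care if you were to flesh this out. First, $\Diff(X)$ is not a Banach manifold, so the Sard--Smale theorem cannot be invoked on it directly; the usual fix is to run the parametrised transversality argument in the space of $C^{\ell}$-diffeomorphisms for each finite $\ell$ and then intersect the resulting countable family of Baire sets (or use a Floer-type $C^{\epsilon}$ space of perturbations). Second, in the third bullet it is awkward to ``perturb $g$ relative to $\partial W$'' by composing with a diffeomorphism, since any $\phi\in\Diff(X)$ would also move $g|_{\partial W}=f_1^0\sqcup f_1^1$; it is cleaner to apply a generic $\phi$ to $f_2$ and arrange that $\phi\circ f_2$ is simultaneously strongly transverse to $g$, $f_1^0$, and $f_1^1$. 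You should also replace $(g')^{-1}(f_2(V_2))$ by the fibre product $W\times_X V_2$, since $f_2(V_2)$ need not be a submanifold. With those adjustments your properness worry is exactly answered by the strong transversality hypothesis: if a sequence in $W\times_X V_2$ escaped every compact subset of $W$, then $g(w_n)=f_2(v_n)\in\overline{f_2(V_2)}$ would accumulate on $\Omega_g$, contradicting $\Omega_g\cap\overline{f_2(V_2)}=\emptyset$, and symmetrically for $\Omega_{f_2}$; hence the intersection $1$-manifold is compact with boundary and the signed boundary count gives bordism invariance.
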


The pseudocycles we will consider are the \emph{Gromov-Witten pseudocycles} from the evaluation maps
\[\ev:\mM^*_{0,k}(X,A,J)\rightarrow X^k\]
\noindent for certain homology classes in a symplectic Del Pezzo surface $X=\DD_n$. In fact, all the classes considered will contain only simple curves, so the superscript $^*$ is superfluous. However, for general Gromov-Witten theory it is essential, so we leave it in here. The crucial theorem regarding these pseudocycles requires a finer notion of regularity for almost complex structures than those we have considered so far.

\begin{dfn}[GW-regularity]
Let $T$ be a $k$-labelled tree with a set of oriented edges $E$ and $A_{\alpha}$ a homology class in $H_2(X,\ZZ)$ for each vertex $\alpha\in T$. Write $A=\sum_{\alpha\in T}A_{\alpha}$. Set
\[Z(T)\subset(S^2)^{\#E}\times (S^2)^k\]
\noindent to be the set of all tuples $(z_{\alpha\beta},z_i)$ such that for every vertex $\alpha$, the points $z_{\alpha\beta}$ and $z_i$ are distinct for all $\beta$ and $i$ such that $\alpha_i=\alpha$.

Let $J$ be an almost complex structure on $X$ and consider the moduli space
\[\mM^*(\{A_{\alpha}\},T):=\prod_{\alpha\in T}\mM^*(A_{\alpha},J)\]
\noindent $J$ is \emph{regular for $T$ and $\{A_\alpha\}$} if

\begin{itemize}
\item each component $u_{\alpha}$ is a regular $J$-curve for every $\mathbf{u}\in\mM^*(\{A_{\alpha}\},T)$,
\item (\emph{edge transversality}) the map
\[\ev^{E}:\mM^*(\{A_{\alpha}\},T)\times Z(T)\rightarrow X^{\#E}\]
\noindent (sending $(\mathbf{u},\mathbf{z})$ to $u_{\alpha}(z_{\alpha\beta})$ for every oriented edge $\alpha\beta$) is transverse to the diagonal
\[\Delta^{E}:=\left\{z_{\alpha\beta}\in X^{\#E} : z_{\alpha\beta}=z_{\beta\alpha}\right\}\]
\end{itemize}

\noindent We say $J$ is \emph{GW-regular} for the class $A$ if it is regular for all $T$ and $\{A_{\alpha}\}$ such that $\sum_{\alpha\in T}A_{\alpha}=A$.
\end{dfn}

\begin{thm}[\cite{MS04}, theorem 6.6.1]\label{GW}
Let $(X,\omega)$ be a closed monotone symplectic manifold and $0\neq A\in H_2(X,\ZZ)$. If $J$ is GW-regular then the Gromov-Witten evaluation map
\[\ev:\mM^*_{0,k}(X,A,J)\rightarrow X^k\]
\noindent is a pseudocycle of dimension $2\dim(X)+2\left<c_1(X),A\right> +2k-6$ whose bordism class is independent of $J$ (amongst those which are GW-regular).
\end{thm}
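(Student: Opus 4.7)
The plan is to verify the three defining properties of a pseudocycle for the evaluation map $\ev$---namely that its domain is a smooth oriented manifold of the predicted dimension, that its image has compact closure, and that its limit set $\Omega_\ev$ has dimension at most $\dim\mM^*_{0,k}(X,A,J) - 2$---and then to run a one-parameter version of the same argument to deduce the bordism between the pseudocycles for two different GW-regular almost complex structures.

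GW-regularity applied to the one-vertex $k$-labelled tree with $A_\alpha = A$ tells us that $\dbar{J}$ is transverse to the zero section on the space of simple maps, so $\widetilde{\mM}^*(A,J)$ is a smooth manifold; since $A\neq 0$ and the components are simple, the $PSL(2,\CC)$-action on pairs (map, marked configuration) is free, so $\mM^*_{0,k}(X,A,J)$ inherits the structure of a smooth oriented manifold of the stated dimension. Compactness of $\overline{\ev(\mM^*_{0,k}(X,A,J))}$ is immediate from Gromov compactness, since the closure sits inside the continuous image $\ev(\bM_{0,k}(X,A,J))$ of a compact space.

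The heart of the argument is the codimension bound on $\Omega_\ev$. By Gromov compactness any sequence in $\mM^*_{0,k}(X,A,J)$ escaping every compact set subconverges to a stable map $(\mathbf{u},\mathbf{z})$ modelled on some tree $T$ with at least two nonconstant components, or containing a multiply covered component, or containing two distinct components with identical image. To connect such a limit with the GW-regularity hypothesis one forms the \emph{simple reduction}: erase ghost bubbles, replace each multiply covered component by its underlying simple parametrisation, and collapse components with the same image into a single vertex. The outcome is a simple stable map modelled on some labelled tree $T'$ with nonzero class decomposition $\{A'_\alpha\}$ summing to a class $A'$ whose $\omega$-area strictly decreases whenever the reduction actually removed something. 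GW-regularity for $T'$ together with edge transversality then cuts out the corresponding stratum as a smooth manifold whose evaluation-image has real dimension
\[2\dim(X) + 2\langle c_1(X), \textstyle\sum_\alpha A'_\alpha\rangle + 2k - 6 - 2(|T'|-1).\]
Monotonicity of $(X,\omega)$ implies $\langle c_1, A'\rangle \leq \langle c_1, A\rangle$ with equality forcing both $A'=A$ and no multiple covers or coincidences; in every genuine degeneration either $|T'|\geq 2$ (costing $-2$ per extra vertex) or $\langle c_1,A'\rangle \leq \langle c_1,A\rangle -1$ (costing another $-2$), giving the required codimension of at least $2$.

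For bordism invariance I would connect GW-regular $J_0$ and $J_1$ by a smooth path $\{J_t\}_{t\in[0,1]}$, generically perturbed relative to its endpoints so as to be GW-regular in the parametric sense. The universal moduli space over $[0,1]$ is then a smooth oriented manifold of one higher dimension, whose evaluation map restricts on the boundary to $\ev_{J_0}$ and $\ev_{J_1}$ and whose limit set, by the same stratum-by-stratum analysis, is again at least two dimensions below, yielding a bordism of pseudocycles. The principal obstacle throughout is exactly this stratum-by-stratum bookkeeping in the non-parametric step: one must verify uniformly across ghost components, multiply covered components, and coincident-image components that every simple reduction lands in a stratum strictly at least two real dimensions below the principal one, which is the work carried out in full in \cite{MS04}, Chapter 6.
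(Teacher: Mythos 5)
The paper cites this theorem from \cite{MS04} without giving its own proof, and your proposal is a correct sketch of the standard McDuff--Salamon argument: smoothness and orientability of the domain from GW-regularity of the one-vertex tree, precompactness of the image from Gromov compactness, the codimension-two bound on $\Omega_{\ev}$ via simple reduction plus monotonicity (extra vertices or energy loss each cost at least two real dimensions), and a parametric version of the same for bordism invariance. No gaps beyond the stratum-by-stratum bookkeeping you explicitly defer to \cite{MS04}.
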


Finally, we state a useful geometric criterion for GW-regularity.

\begin{lma}[\cite{MS04}, lemma 6.2.2]\label{usefulGWregular}
The edge evaluation map $\ev^E$ is transverse to $\Delta^E$ if and only if for every simple stable map $(\mathbf{u},\mathbf{z})\in\widetilde{\mM}^*_{0,T}(\{A_{\alpha}\},J)$, every edge $\alpha\beta\in E$ and every pair $v_{\alpha\beta}+v_{\beta\alpha}=0$ with $v_{\alpha\beta}\in T_{u_{\alpha}(z_{\alpha\beta})}X$ there are vectors
\[\xi_{\alpha}\in\ker D_{u_{\alpha}}\dbar{J},\ \zeta_{\alpha\beta}\in T_{z_{\alpha\beta}}S^2\]
\noindent such that
\[v_{\alpha\beta}=\xi_{\alpha}(z_{\alpha\beta})-\xi_{\beta}(z_{\alpha\beta})+du_{\alpha}(z_{\alpha\beta})\zeta_{\alpha\beta}-du_{\beta}(z_{\beta\alpha})\zeta_{\beta\alpha}\]
\end{lma}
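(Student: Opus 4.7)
The lemma is a pointwise linearisation of the definition of transversality, so there is no analysis involved beyond identifying the relevant tangent spaces correctly. The plan is to fix a simple stable map $(\mathbf{u},\mathbf{z})\in\widetilde{\mM}^*_{0,T}(\{A_\alpha\},J)$ whose image under $\ev^E$ lies in $\Delta^E$, write down $d\ev^E$ and a natural linear complement $N$ to $T\Delta^E$ at $\ev^E(\mathbf{u},\mathbf{z})$, and then translate the surjectivity of $d\ev^E$ onto $N$ into the displayed equation.

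For each oriented edge $\alpha\beta$, set $p_{\alpha\beta}:=u_\alpha(z_{\alpha\beta})=u_\beta(z_{\beta\alpha})\in X$. Since each $u_\alpha$ is simple and regular, the tangent space to $\mM^*(A_\alpha,J)$ at $u_\alpha$ is naturally $\ker D_{u_\alpha}\dbar{J}$; together with the tangent factors $T_{z_{\alpha\beta}}S^2$ coming from $Z(T)$, this identifies a tangent vector to $\mM^*(\{A_\alpha\},T)\times Z(T)$ with a tuple $(\xi_\alpha,\zeta_{\alpha\beta})$ of the type appearing in the statement. Differentiating $\ev^E$ then gives, at the edge $\alpha\beta$, the familiar first variation
\[X_{\alpha\beta}:=\xi_\alpha(z_{\alpha\beta})+du_\alpha(z_{\alpha\beta})\zeta_{\alpha\beta}\in T_{p_{\alpha\beta}}X.\]
On the target side, $T\Delta^E$ consists of tuples $(w_{\alpha\beta})$ with $w_{\alpha\beta}=w_{\beta\alpha}$, and a canonical complement is the antidiagonal $N=\{(v_{\alpha\beta}):v_{\alpha\beta}+v_{\beta\alpha}=0\}$. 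Transversality of $\ev^E$ to $\Delta^E$ at $(\mathbf{u},\mathbf{z})$ is therefore the assertion that, for every antisymmetric tuple $(v_{\alpha\beta})$, one can solve $\tfrac12(X_{\alpha\beta}-X_{\beta\alpha})=v_{\alpha\beta}$ in the unknowns $(\xi_\alpha,\zeta_{\alpha\beta})$. Because $(v_{\alpha\beta})$ is arbitrary, the factor $\tfrac12$ is absorbed into it, and after the canonical identification of $u_\alpha^*T_{p_{\alpha\beta}}X$ with $u_\beta^*T_{p_{\alpha\beta}}X$ at the paired nodal points the condition becomes exactly
\[v_{\alpha\beta}=\xi_\alpha(z_{\alpha\beta})-\xi_\beta(z_{\beta\alpha})+du_\alpha(z_{\alpha\beta})\zeta_{\alpha\beta}-du_\beta(z_{\beta\alpha})\zeta_{\beta\alpha}.\]
Since surjectivity onto $N$ is equivalent to the required transversality, both implications of the lemma are contained in this single computation.

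The main (and really only) obstacle is bookkeeping: one must verify that the equation is invariant under swapping the orientation $\alpha\beta\leftrightarrow\beta\alpha$ of each edge, which flips both sides by a sign, and remember that although $\xi_\alpha$ and $\xi_\beta$ are sections over different spheres, their evaluations at the identified nodal points both land in the common tangent space $T_{p_{\alpha\beta}}X$, so the subtraction makes sense. No analytic input is required; the content is entirely linear-algebraic.
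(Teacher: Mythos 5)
Your proposal is correct and is precisely the elaboration the paper has in mind: the paper's own remark states that the lemma "is actually a simple consequence of the formula $d\ev_{\alpha\beta}(\mathbf{u},\mathbf{z})(\mathbf{\xi},\mathbf{\zeta})=\xi_{\alpha}(z_{\alpha\beta})+du_{\alpha}(z_{\alpha\beta})\zeta_{\alpha\beta}$," and you have carried out exactly that linear-algebraic unpacking via the antidiagonal complement to $T\Delta^E$. Note in passing that your final display correctly writes $\xi_\beta(z_{\beta\alpha})$ (the section evaluated at the nodal point on $\dom(u_\beta)$) where the paper's typeset statement has $\xi_\beta(z_{\alpha\beta})$, a harmless typo on the paper's side.
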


This seemingly complicated lemma is actually a simple consequence of the formula
\[d\ev_{\alpha\beta}(\mathbf{u},\mathbf{z})(\mathbf{\xi},\mathbf{\zeta})=\xi_{\alpha}(z_{\alpha\beta})+du_{\alpha}(z_{\alpha\beta})\zeta_{\alpha\beta}\]
\noindent for the derivative of the map $\ev_{\alpha\beta}(\mathbf{u},\mathbf{z})=u_{\alpha}(z_{\alpha\beta})$.

\subsection{Geometry of pseudoholomorphic curves in 4D}

All the special features of pseudoholomorphic curves in dimension 4 stem from the following theorem of McDuff on their intersection properties. Here $(X,J)$ is an arbitrary almost complex 4-manifold.

\begin{thm}[Positivity of intersections \cite{MS04}, theorem 2.6.3]\label{posint}
Let $u_0$, $u_1$ be simple $J$-holomorphic curves in $X$ representing homology classes $A_0$ and $A_1$. If $u_0$ and $u_1$ have geometrically distinct images on every pair of open subsets of the domains then every intersection of the images contributes a positive integer to the homological intersection number $A_0\cdot A_1$. This number is 1 if and only if the intersections are transverse.
\end{thm}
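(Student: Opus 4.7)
My plan is to reduce the theorem to a local statement at each intersection point, assign a local multiplicity that is manifestly a positive integer, and then argue that these multiplicities sum to the homological intersection number. The whole argument rests on two analytical ingredients for the Cauchy--Riemann equation: a unique continuation / similarity principle, and a local holomorphic model for a pair of simple $J$-curves meeting at a point.

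First I would show that the set $S = \{(z,w)\in \dom(u_0)\times\dom(u_1) : u_0(z)=u_1(w)\}$ is discrete. If $(z_n,w_n)\to (z_\infty,w_\infty)$ were an accumulating sequence in $S$, one could choose local holomorphic coordinates around the two preimages in which the $J$-holomorphic equation becomes a perturbed standard $\overline{\partial}$-equation, and the difference of (suitable reparametrizations of) $u_0$ and $u_1$ would satisfy an equation of the form $\overline{\partial}\eta + A\eta = 0$ with $A$ continuous. The Carleman similarity principle then forces $\eta$ to vanish on an open set, which would contradict the hypothesis of geometrically distinct images on pairs of open subsets (and in particular would force $u_0$ and $u_1$ to have the same image on a nonempty open set, hence, via a further continuation argument based on the same principle and simplicity, to factor through a common simple curve, contradicting geometric distinctness). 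Hence intersections are isolated, and since the images have compact closure, finite in number (after restricting to any compact part of the domains, which is harmless for the intersection theoretic count).

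Next I would obtain the central local statement: at each intersection point $p = u_0(z)=u_1(w)$ there exist smooth coordinates on a neighbourhood of $p$ in $X$ and reparametrisations of small discs around $z$ and $w$ in which $u_0$ and $u_1$ are honestly holomorphic for the standard complex structure on $\CC^2$. This is the local representation theorem of Micallef--White. Its proof, which is the main analytic obstacle, uses elliptic bootstrapping together with a careful choice of almost complex coordinates adapted to the tangent space of each curve, then removes the lower-order perturbation in $\overline{\partial}_J$ by a Newton-type iteration. Granted this model, two distinct local holomorphic curves through the origin in $\CC^2$ have a well-defined classical intersection multiplicity $\iota(u_0,u_1;p)$, computable as $\dim_\CC \mathcal{O}_{\CC^2,0}/(f_0,f_1)$ where $f_i$ are local defining equations; by elementary commutative algebra this is a strictly positive integer, equal to $1$ precisely when the two tangent lines span $T_p X$, i.e.\ when the intersection at $p$ is transverse.

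Finally I would assemble the global count $A_0\cdot A_1 = \sum_{p\in u_0\cap u_1}\iota(u_0,u_1;p)$. One way is to perturb $u_1$ smoothly (not pseudoholomorphically) to a map $u_1'$ that agrees with $u_1$ outside small neighbourhoods of the intersection points and inside each of them is a smooth perturbation transverse to $u_0$; the standard transverse count of $u_0\cdot u_1'$ recovers $A_0\cdot A_1$ by definition of homological intersection, and inside each small ball the oriented count equals $\iota(u_0,u_1;p)$ by the local holomorphic model (each local holomorphic factorisation splits under perturbation into $\iota$ transverse intersections of the same sign, since the local complex orientations coincide with the global one via the $\omega$-compatible almost complex structure used to define the orientation of the curves). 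The ``transverse iff multiplicity $1$'' statement is then immediate from the algebraic characterisation of $\iota(u_0,u_1;p)=1$. The main obstacle is the Micallef--White local model; the rest is essentially bookkeeping around it.
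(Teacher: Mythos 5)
The paper does not prove this theorem: it cites it as \cite{MS04}, theorem 2.6.3, and explicitly defers all proofs in that section to the reference, so there is no in-paper argument to compare against. Your outline is the standard route to the result (going back to McDuff's 1991 local behaviour paper and Micallef--White, and presented in \cite{MS04}): isolate intersection points, invoke a local holomorphic model, read off a positive local multiplicity, and assemble the homological count by perturbation. Your identification of the Micallef--White local representation as the real analytic obstacle is exactly right. Two small cautions on the details. First, the Micallef--White coordinate change is only $C^1$ in general, not smooth; this is still enough to define local intersection numbers and to carry out the perturbation argument, but you should not claim smoothness. Second, the discreteness step is slightly delicate as phrased: the ``difference'' of $u_0$ and $u_1$ does not satisfy a literal Cauchy--Riemann--type equation, because $J$ is evaluated along two different curves; one either linearises carefully (writing $J(u_0)-J(u_1)$ as a bounded matrix applied to $u_0-u_1$ and then applying the similarity principle), or more cleanly invokes the Micallef--White model at a would-be accumulation point and uses the classical fact that distinct irreducible holomorphic germs meet in an isolated point. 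Either repair works, and with those caveats the proposal is a sound sketch of the standard proof.
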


\begin{thm}[Adjunction inequality \cite{MS04}, theorem 2.6.4]\label{adjform}
Let $u:\Sigma\rightarrow X$ be a simple $J$-holomorphic curve in $X$ and $A=u_*[S^2]\in H_2(X,\ZZ)$. Define
\[\delta(u):=\#\{(a,b)\in \Sigma\times\Sigma : a\neq b\mbox{ and } u(a)=u(b)\}\]
\noindent Then
\[\delta(u)\leq A\cdot A - \left<c_1(X),A\right>+\chi(\Sigma)\]
\end{thm}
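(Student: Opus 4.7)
The strategy is to reduce to the case of an immersion with only transverse double points, where the inequality becomes an equality derivable from the normal bundle sequence, and then argue that passing to this generic model can only increase the count $\delta(u)$.

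\emph{Step 1: the immersed case.} Suppose first that $u$ is an immersion whose only self-intersections are $k$ transverse double points. Then $u$ has a well-defined complex normal bundle $N_u$ fitting into the exact sequence
\[0\rightarrow T\Sigma\rightarrow u^*TX\rightarrow N_u\rightarrow 0,\]
so $\deg N_u=\langle c_1(X),A\rangle-\chi(\Sigma)$. On the other hand, choosing a small section of $N_u$ and using it to push $u$ off itself, the intersection count of $u$ with its push-off is $\deg N_u$ plus two intersections per double point (one for each ordering of the two local sheets). This gives
\[A\cdot A=\deg N_u+2k=\langle c_1(X),A\rangle-\chi(\Sigma)+\delta(u),\]
since $\delta(u)=2k$. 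Rearranging yields equality in the adjunction formula.

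\emph{Step 2: reduction to the immersed case.} For a general simple $J$-holomorphic curve, the obstacles are critical points ($du(z_0)=0$) and tangential self-intersections. At each such point, the Micallef--White / Sikorav local representation theorem gives holomorphic coordinates on $X$ and on the domain in which $u$ takes the form $w\mapsto(w^p,f(w))$ with $f$ vanishing to higher order, plus the analogous statement at non-transverse intersections of two local branches. This local model lets me perturb $u$ smoothly (not $J$-holomorphically) on a small neighbourhood of each singular point to produce a new map $\tilde u$ in the same homotopy class which is an immersion with only transverse double points outside those neighbourhoods, and whose local singular contribution $\delta_{\mathrm{loc}}$ near each singular point of $u$ is at least the number of ordered pairs $(a,b)$ in $\Sigma\times\Sigma$ with $a\neq b$, $u(a)=u(b)$ lying near that point. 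Crucially, each $p$-fold critical point or tangential branch generates extra geometric double points under perturbation, so globally $\delta(\tilde u)\geq\delta(u)$.

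\emph{Step 3: concluding the inequality.} The perturbed map $\tilde u$ is homotopic to $u$, so $\tilde u_*[\Sigma]=A$ and all the topological invariants $A\cdot A$, $\langle c_1(X),A\rangle$ and $\chi(\Sigma)$ are unchanged. Applying the equality from Step 1 to $\tilde u$ gives
\[\delta(\tilde u)=A\cdot A-\langle c_1(X),A\rangle+\chi(\Sigma),\]
and combining with $\delta(u)\leq\delta(\tilde u)$ yields the required inequality. The main obstacle is Step 2: extracting a clean local holomorphic model at critical and tangential points and verifying that a smooth perturbation adapted to this model only creates, never destroys, geometric self-intersections. This is where the special rigidity of $J$-holomorphic maps (as opposed to arbitrary smooth maps) is essential, and it is exactly the content of the local positivity analysis in \cite{MS04}, Chapter 2.
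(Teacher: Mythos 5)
The paper cites this inequality directly from \cite{MS04}, Theorem 2.6.4, without reproducing a proof, so there is no in-paper argument to compare against; your sketch is therefore judged on its own terms. The architecture you propose (immersed case via the normal bundle sequence and push-off count, then reduction via the Micallef--White local model) is the standard route and is sound in outline.

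However, Step 3 contains a genuine gap. You apply the equality of Step 1 to the perturbed map $\tilde u$, yet Step 1 was established only for $J$-holomorphic immersions, using two facts that fail for a generic smooth immersion. First, the identity $\deg N_{\tilde u}=\langle c_1(X),A\rangle-\chi(\Sigma)$ requires $d\tilde u$ to remain complex-linear for an almost complex structure agreeing with $J$ near the boundary of the perturbation region: the normal Euler number is a regular-homotopy invariant, not a homotopy invariant, so a smooth immersion merely homotopic to $u$ need not have the right $\deg N_{\tilde u}$. Second, you need every transverse double point of $\tilde u$ to contribute $+1$ to $A\cdot A$; a smooth perturbation can create oppositely-signed pairs, in which case $A\cdot A=\deg N_{\tilde u}+2D(\tilde u)$ with $D$ the \emph{signed} count, giving only $\delta(\tilde u)\geq A\cdot A-\deg N_{\tilde u}$, which is the wrong direction and does not combine with $\delta(u)\leq\delta(\tilde u)$ to yield anything. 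Both failures are avoided precisely by insisting that the perturbation be holomorphic in the Micallef--White coordinates, not merely smooth; this is not a technicality to outsource, since it is exactly what distinguishes $J$-holomorphic curves from arbitrary smooth surfaces and is the entire content of the theorem. You gesture at this in your closing sentence, but the phrase ``perturb $u$ smoothly (not $J$-holomorphically)'', taken literally, would not give the conclusion.
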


\begin{thm}[Automatic transversality, \cite{HLS97} or \cite{MS04} lemma 3.3.3]\label{auttrans}
Let $u:S^2\rightarrow X$ be an embedded $J$-holomorphic sphere in an almost complex 4-manifold $X$. If $c_1(X)$ evaluates positively on $u$ then $u$ is regular.
\end{thm}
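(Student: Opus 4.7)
The plan is to exploit the fact that, for an \emph{embedded} $J$-holomorphic sphere, the linearised Cauchy--Riemann operator $D_u = D_u\dbar{J}$ acting on sections of $u^*TX$ decomposes along the tangent/normal splitting. Since $u$ is an immersion, the derivative $du$ realises the tangent bundle $TS^2$ as a complex subbundle of $u^*TX$, and a standard computation (using the fact that the image of a $J$-holomorphic curve is $J$-invariant) shows that $du(TS^2)$ is preserved by $D_u$ in the sense that there is a short exact sequence of real Cauchy--Riemann operators
\[
0 \longrightarrow (TS^2, D^T) \longrightarrow (u^*TX, D_u) \longrightarrow (N_u, D^N) \longrightarrow 0,
\]
where $N_u$ is the normal bundle and $D^T$, $D^N$ are the induced Cauchy--Riemann operators. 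Surjectivity of $D_u$ then follows from surjectivity of $D^T$ and $D^N$ separately, by the long exact sequence in cohomology of this two-term complex.

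Next I would handle the two pieces. The tangential operator $D^T$ is the standard $\dbar{}$-operator on $TS^2 \cong \mathcal{O}(2)$, whose cokernel vanishes (classically, and in the picture of the moduli problem this just reflects the fact that reparametrisations by $PSL(2,\CC)$ already cover all deformations of the parametrisation). The normal operator $D^N$ is a real linear Cauchy--Riemann operator on the complex line bundle $N_u$ over $S^2$. From the splitting $u^*TX = TS^2 \oplus N_u$ one reads off
\[
c_1(N_u) = \left\langle c_1(X), A\right\rangle - 2,
\]
with $A = u_*[S^2]$.

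The core analytic lemma I would invoke is the classical fact that a real-linear Cauchy--Riemann operator on a complex line bundle $L \to S^2$ with $c_1(L) \ge -1$ is surjective (equivalently, has vanishing cokernel; this is immediate from Serre duality together with Riemann--Roch applied to the possibly perturbed CR structure, since a holomorphic line bundle of negative degree on $\PP{1}$ has no nonzero holomorphic sections). Under our hypothesis $\left\langle c_1(X),A\right\rangle \ge 1$, we have $c_1(N_u) \ge -1$, so $D^N$ is surjective. Combined with the surjectivity of $D^T$, this gives surjectivity of $D_u$, i.e.\ regularity of $u$.

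The only subtle step is establishing the short exact sequence of Cauchy--Riemann operators, since a priori $D_u$ only respects the \emph{complex} splitting up to lower-order terms; I expect this to be the main technical obstacle, and would handle it by writing $D_u$ in a local frame adapted to the inclusion $du(TS^2)\hookrightarrow u^*TX$ and checking that the off-diagonal term in the decomposition is $\CC$-linear of order zero, hence absorbable into the definition of the induced operator $D^N$. Everything else is linear algebra on $S^2$.
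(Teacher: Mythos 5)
The paper cites this theorem to Hofer--Lizan--Sikorav and McDuff--Salamon without reproducing a proof, so the comparison is against the standard argument in those references; your sketch follows the same route: split $D_u$ along $du(TS^2)\hookrightarrow u^*TX$, show the tangential operator on $TS^2\cong\mathcal{O}(2)$ is surjective, compute $c_1(N_u)=\left<c_1(X),A\right>-2\ge -1$, and invoke a vanishing result for Cauchy--Riemann operators on low-degree line bundles over $S^2$. That is exactly the right skeleton.

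Two points in your write-up need repair, though neither is fatal. First, the key vanishing lemma cannot be obtained ``from Serre duality together with Riemann--Roch applied to the possibly perturbed CR structure.'' The operator $D^N$ is only real-linear (its zeroth-order part comes from $\nabla_{\xi}J$ and generically has a nontrivial conjugate-linear component), so it does not define a holomorphic structure on $N_u$, and Serre duality for holomorphic line bundles is not directly available. What is used is the Carleman similarity principle: solutions of a real-linear CR equation on a line bundle $L$ look locally like holomorphic functions, hence have isolated zeros of positive index, so if $c_1(L)<0$ the kernel vanishes; one then applies this to the formal adjoint $D^{N*}$, whose underlying bundle has Chern number $-c_1(N_u)-2<0$ when $c_1(N_u)\ge -1$, and this gives $\coker D^N=0$. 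The Riemann--Roch index formula, which is purely topological, is still used, but the pointwise vanishing step is genuinely harder than the holomorphic case and is precisely the content of the Hofer--Lizan--Sikorav lemma. Second, the off-diagonal component of $D_u$ from $TS^2$ to $\Omega^{0,1}(N_u)$ is not ``absorbable into $D^N$''; absorbing it would make $D^N$ depend on the tangential part and destroy the short exact sequence. What one actually shows is that this component is \emph{zero}: for a tangential vector field $\xi=du(\eta)$, the $J$-holomorphicity identity $J\circ du=du\circ j$ forces $D_u\xi=\tfrac12\,du\big(\dot\phi+j\circ\dot\phi\circ j\big)$ to land back in $\Omega^{0,1}(du(TS^2))$ — geometrically, an infinitesimal reparametrisation deforms $u$ through maps with the same image. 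Once this vanishing is in place the filtration is genuinely a short exact sequence of Cauchy--Riemann operators and the snake-lemma conclusion goes through.
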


\section{Pseudoholomorphic curves in symplectic Del Pezzo surfaces}\label{singularpseudo}

The purpose of the next section is to prove that we can see the configurations of divisors specified in theorem \ref{isothm} and their linear systems even after perturbing the complex structure. Let $(X,\omega)$ be a symplectic Del Pezzo surface $\DD_n$ with its monotone blow-up form. We will assume $n\leq 7$. With this understood, we omit the target space $X$ from the notation for a moduli space of $J$-holomorphic maps. $J$ will denote an $\omega$-compatible almost complex structure on $(X,\omega)=\DD_n$. The following propositions are proved in sections \ref{area1}, \ref{area2} and \ref{area3} respectively.

\begin{prp}
For any $i\in\{1,\ldots,n\}$ and any $\omega$-compatible $J$ on $X$ there is a unique $J$-holomorphic stable curve $E_i(J)$ representing the homology class $E_i$. This curve is smooth, simple and embedded.
\end{prp}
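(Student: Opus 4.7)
The plan is to combine monotonicity of $(X,\omega)$ with positivity of intersections, the adjunction inequality, and automatic transversality, all of which fit the class $E_i$ well since $c_1(E_i) = 1$ and $E_i \cdot E_i = -1$.

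First I would reduce an arbitrary $J$-holomorphic stable curve $C$ in class $E_i$ to a single simple component. Each non-constant component $u_\alpha$ has $\omega((u_\alpha)_*[S^2]) > 0$, and monotonicity plus integrality of $c_1$ gives $c_1((u_\alpha)_*[S^2]) \geq 1$. Summing over non-constant components and comparing with $c_1(E_i) = 1$ leaves exactly one non-constant component $u$, representing $E_i$ homologically. Since $E_i$ is primitive in $H_2(X,\ZZ)$, $u$ cannot be a non-trivial multiple cover and is therefore simple. Any ghost bubble would have to attach to $u$ through a single node, carrying only one special point and violating stability, so no ghost components occur and $C$ equals the single map $u$.

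The adjunction inequality (Theorem \ref{adjform}) applied to $u$ gives
\[
\delta(u) \leq E_i \cdot E_i - c_1(E_i) + \chi(S^2) = -1 - 1 + 2 = 0,
\]
so the inequality is saturated; the standard refinement of adjunction then forces $u$ to be an injectively immersed, hence smoothly embedded, sphere, with automatic transversality (Theorem \ref{auttrans}) at $c_1(E_i) = 1 > 0$ providing regularity. Uniqueness follows from positivity of intersections (Theorem \ref{posint}): a second geometrically distinct simple curve in class $E_i$ would contribute a non-negative integer to $E_i \cdot E_i$, contradicting $E_i \cdot E_i = -1$.

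For existence I would deform from the integrable complex structure $J_0$ of the underlying Del Pezzo variety, for which the exceptional divisor is an embedded rational curve in class $E_i$. Join $J_0$ to the given $J$ by a path $\{J_t\}_{t \in [0,1]}$ of $\omega$-compatible structures and consider the parameterised moduli space of simple $J_t$-spheres in class $E_i$. Automatic transversality keeps every element regular along the path, the structural reduction above applies at each $t$, and Gromov compactness makes the projection to $[0,1]$ a proper map from a compact smooth $1$-manifold with a single point over $t = 0$. Consequently the fibre over $t = 1$ is non-empty, yielding $E_i(J)$. I expect the main subtlety to be this last step: one must exclude the possibility that the curve escapes into a degenerate stable configuration as $J_t$ varies, and the monotone bound $c_1(A_\alpha) \geq 1$ for each non-constant component is precisely what rules this out in the class $E_i$.
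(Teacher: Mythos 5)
Your proof is correct, and the structural reduction, the adjunction/automatic-transversality step, and the uniqueness argument via positivity of intersections all coincide with the paper's route (the paper phrases the reduction via minimal area rather than minimal $c_1$, and primitivity is invoked implicitly, but these are the same facts under monotonicity). The genuine difference is in the existence step. You deform $J_0$ to $J$ along a path, invoke automatic transversality to make the parametrised moduli space a smooth $1$-manifold fibering over $[0,1]$, and conclude by compactness and a boundary-parity/covering argument. The paper instead treats
\[
\ev:\mM^*_{0,1}(E_i,J)\rightarrow X
\]
as a Gromov--Witten pseudocycle: since every energy-$1$ stable map is a single smooth component and every $J$ is GW-regular (Lemma \ref{breakingclasses} and Corollary \ref{regularenergy1}), Theorem \ref{GW} shows the bordism class is independent of $J$, and a degree argument gives non-emptiness. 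Both approaches rest on the same two inputs --- the minimality of $E_i$ forbidding degenerations, and automatic transversality making every $J$ regular --- and both are sound. Your parametrised-moduli-space argument is the more self-contained and elementary one; the paper's is more uniform with the treatment of the higher-degree classes $H-E_i$ and $H$ in Sections \ref{area2} and \ref{area3}, where nodal degenerations genuinely occur and the GW-pseudocycle formalism is actually needed, so it sets up the technology it reuses there.

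One small caution worth flagging in your write-up: for the parametrised moduli space to be a smooth $1$-manifold without perturbing the path, you need regularity of \emph{every} $J_t$, not just generic ones; this is supplied by automatic transversality only once you know each curve in class $E_i$ is embedded with $c_1>0$, which is why the structural step must be carried out first. You do this in the right order, but it is worth making the logical dependence explicit. Similarly, concluding ``a single point over $t=0$'' quietly uses your uniqueness argument applied at $t=0$; again correct, but worth a clause.
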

This result is well-known (see for example, \cite{McDRat}, lemma 3.1) but we include a proof for completeness.
\begin{prp}\label{singfolexists}
For any $i\in\{1,\ldots,n\}$ and any $\omega$-compatible $J$ on $X$,
\[\ev:\bM_{0,1}(H-E_i,J)\rightarrow X\]
\noindent is a homeomorphism.
\end{prp}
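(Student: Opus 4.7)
The space $\bM_{0,1}(H-E_i,J)$ is compact Hausdorff by Gromov compactness, and the evaluation map is continuous in the Gromov topology, so once $\ev$ is shown to be a bijection it will automatically be a homeomorphism. The plan is therefore to prove injectivity and surjectivity.

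I would start with a classification of $J$-holomorphic stable maps in class $H-E_i$. Writing such a stable map as a sum $\sum_{\alpha} m_\alpha A_\alpha$ of multiples of simple homology classes $A_\alpha$, positivity of intersections against the unique curves $E_k(J)$ from the previous proposition forces $A_\alpha \cdot E_k \geq 0$ whenever $A_\alpha \neq E_k$, so the $E_k$-coefficient of any non-exceptional $A_\alpha$ is $\leq 0$. Then $\omega$-positivity $\omega(A_\alpha) = 3d_\alpha + \sum_k a_{\alpha,k} > 0$ forces $d_\alpha \geq 1$ for each non-exceptional component. Since the $H$-coefficients must satisfy $\sum_\alpha m_\alpha d_\alpha = 1$, there is at most one non-exceptional component and it has $m_\alpha = d_\alpha = 1$. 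A brief case analysis on exceptional components (ruling out any $E_i$-component since the residual class $H - 2E_i$ violates the adjunction inequality, and ruling out $mE_j$ with $m\geq 2$ since the residual class has non-positive $\omega$) yields exactly two possibilities: (a) an irreducible simple embedded sphere in class $H-E_i$, or (b) a nodal union $C \cup E_j(J)$ with $C$ a simple embedded sphere in class $H - E_i - E_j$ for some $j \neq i$. In both cases embeddedness of the non-exceptional component comes from the adjunction formula ($A^2 - c_1(A) + 2 = 0$) and regularity from automatic transversality ($c_1(A) > 0$).

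Injectivity is then intersection-theoretic. The vanishing intersection numbers $(H-E_i)^2 = 0$, $(H-E_i)\cdot (H-E_i-E_j) = 0$, $(H-E_i)\cdot E_j = 0$ (for $j\neq i$), $(H-E_i-E_j)\cdot (H-E_i-E_k) = 0$ (for $j\neq k$), and $(H-E_i-E_j)\cdot E_k = 0$ (for $k\neq i,j$), combined with positivity of intersections, force the images in $X$ of distinct stable maps from the above classification to be disjoint. The negativity $(H-E_i-E_j)^2 = -1$ guarantees uniqueness of the component $C$ within its class. Within a single underlying stable curve the position of the marked point is determined by its image $x$ (with a ghost bubble inserted if $x$ is the node in case (b)).

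Surjectivity I would prove by deformation. For the standard complex structure $J_0$ on $\DD_n$ the basepoint-free pencil of lines through the $i$-th blown-up point realises $\ev$ as a degree-$1$ map to $X$, and by Theorem \ref{GW} every GW-regular almost complex structure inherits this property. Given the given $J$, approximate by a sequence $J_n \to J$ of GW-regular structures; for any $x_0 \in X$, pick $x_n \to x_0$ avoiding the (codimension-$\geq 2$) limit set of $\ev_n$ so that a pre-image $\mathbf{u}_n$ with $\ev_n(\mathbf{u}_n) = x_n$ exists, and apply Gromov compactness for the varying family $J_n \to J$ to extract a subsequential limit $\mathbf{u}\in\bM_{0,1}(H-E_i,J)$ with $\ev(\mathbf{u}) = x_0$. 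The main obstacle is the classification in the first step: carefully enumerating the homological decompositions of $H-E_i$ into effective simple $J$-classes and ruling out the unwanted ones via $\omega$-positivity, positivity of intersections, and adjunction is the most delicate part; once that is in hand, injectivity and surjectivity both reduce to fairly routine consequences of positivity and Gromov-Witten theory.
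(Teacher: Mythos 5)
Your overall strategy — bijectivity plus compactness and Hausdorffness giving a homeomorphism, classification of stable maps, intersection-theoretic injectivity, and pseudocycle-theoretic surjectivity — is correct and matches the paper in broad outline. However, you take a genuinely different route in two places, and the difference is worth noting.

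For the classification, you run a direct homological argument (positivity against the $E_k(J)$ to control $E_k$-coefficients, then $\omega$-positivity and adjunction), whereas the paper proceeds by a simple energy count: $\omega(H-E_i)=2$ and the minimal area of a nonconstant $J$-sphere is $1$, so there are at most two nonconstant components, and those are already classified (Lemma~\ref{classifyclasses}). Both work; the paper's is shorter because it reuses the area-$1$ classification that was needed for other purposes.

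For surjectivity, you approximate $J$ by a sequence $J_n\to J$ of GW-regular structures, appeal to degree invariance for each $J_n$, and take Gromov limits of curves through points $x_n\to x_0$ as the almost complex structure varies. This is sound, but it misses the cleaner and more informative fact that the paper exploits: for the class $H-E_i$, \emph{every} $\omega$-compatible $J$ is already GW-regular. This follows from automatic transversality (Theorem~\ref{auttrans}, since all simple components involved are embedded with $c_1>0$) together with the fact that the only edges connect area-$1$ spheres that intersect once transversely, making edge transversality automatic. With this, the paper runs the pseudocycle argument directly for the fixed $J$ and uses Gromov compactness only with $J$ fixed, avoiding the varying-$J$ compactness step. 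Your version requires the (standard but slightly heavier) Gromov compactness for sequences $J_n\to J$, and does not record the structural observation — special to these low-degree classes in Del Pezzo surfaces — that no genericity assumption on $J$ is ever needed. Your injectivity argument is essentially the same as the paper's.
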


\begin{prp}\label{Hclassexists}
For any $\omega$-compatible $J$ on $X$,
\[\ev_2:\bM_{0,2}(H,J)\rightarrow X\times X\]
\noindent is surjective.
\end{prp}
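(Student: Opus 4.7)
Plan. The plan is to reformulate the statement as a nonvanishing result for the two-point genus-zero Gromov--Witten invariant in class $H$, verify this invariant equals $1$ by a direct computation with the standard complex structure, and then upgrade the resulting density of the image to surjectivity by compactness.

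Since $c_1\cdot H = 3$, theorem \ref{GW} makes
\[\ev_2\colon \mM^*_{0,2}(H,J)\to X\times X\]
a pseudocycle of real dimension $2\dim_{\CC} X + 2\langle c_1,H\rangle + 4 - 6 = 8 = \dim(X\times X)$, whose bordism class is invariant among GW-regular $J$. First I would compute its degree using the standard integrable $J_{\mathrm{std}}$ on $\DD_n = \mathrm{Bl}_{q_1,\ldots,q_n}\PP{2}$ (or a small GW-regular perturbation of it): for a generic pair $(p,q)$, the projections $\bar p,\bar q\in\PP{2}$ are distinct and the unique $\PP{2}$-line $\ell$ through $\bar p$ and $\bar q$ misses every blown-up point $q_i$, so the strict transform of $\ell$ is the unique simple $J_{\mathrm{std}}$-holomorphic $H$-sphere through $p$ and $q$. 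This strict transform is embedded with $c_1\cdot H>0$ and hence regular by theorem \ref{auttrans}, contributing $+1$ to the count, so the pseudocycle degree equals $1$.

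For any GW-regular $J$, lemma \ref{pseudoint} then implies that a generic point of $X\times X$ has at least one preimage under $\ev_2$; equivalently the image $\ev_2(\mM^*_{0,2}(H,J))$ is dense. To pass from GW-regular $J$ to an arbitrary $\omega$-compatible $J$ I would approximate $J$ by GW-regular $J_k\to J$: given $(p,q)\in X\times X$, choose $(p_k,q_k)\to(p,q)$ in the dense image $\ev_2(\mM^*_{0,2}(H,J_k))$, lift to stable maps $u_k\in\bM_{0,2}(H,J_k)$ with $\ev_2(u_k)=(p_k,q_k)$, and apply Gromov's compactness theorem to the sequence $\{u_k\}$ (which has constant symplectic energy $\omega\cdot H$ and almost complex structures converging to $J$) to extract a Gromov-convergent subsequence with limit $u_\infty\in\bM_{0,2}(H,J)$. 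Continuity of $\ev_2$ then gives $\ev_2(u_\infty)=(p,q)$, so $(p,q)$ lies in the image.

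The step I expect to be the main obstacle is the GW-regularity bookkeeping behind the degree calculation. Edge transversality at a reducible stratum modelled on $H = (H-\sum_{i\in S}E_i) + \sum_{i\in S}E_i$ has to be checked via lemma \ref{usefulGWregular}, but for $|S|\geq 3$ the component $H-\sum_{i\in S} E_i$ has $c_1\leq 0$, so automatic transversality is unavailable. This is handled by choosing the blown-up points in sufficiently general position (possible for $n\leq 7$) so that the classes $H-E_{i_1}-E_{i_2}-E_{i_3}$ admit no holomorphic representative, eliminating the problematic strata entirely; alternatively one perturbs $J_{\mathrm{std}}$ to a nearby generic $J$ and invokes bordism invariance of the pseudocycle in theorem \ref{GW}.
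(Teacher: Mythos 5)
Your proof is correct and lands on the same core ideas (pseudocycle degree $=1$, density, Gromov compactness), but takes a genuinely different route in one place: the paper establishes that \emph{every} $\omega$-compatible $J$ is GW-regular for the class $H$ (via the case analysis of lemma~\ref{pretty} and automatic transversality), so corollary~\ref{gwdensity} gives density of the image of $\ev_2$ for the given $J$ itself, and compactness is then applied with $J$ held fixed, extracting a limit over a sequence $(x_i,y_i)\to(x,y)$. You instead only obtain density for GW-regular $J_k$ and then approximate $J$ by such $J_k\to J$, invoking Gromov compactness with varying almost complex structures. Both are valid; the paper's version is cleaner (one limit, not two) but requires the a priori regularity statement, whereas yours is more robust and would work even in settings where GW-regularity fails for some $J$.

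The ``main obstacle'' you flag is, however, a non-issue, and your proposed fix rests on the wrong mechanism. The strata with components in $H-\sum_{i\in S}E_i$ for $|S|\geq 3$ are empty for a reason much stronger than genericity of blown-up points: with the monotone form, $[\omega]$ is Poincar\'{e}-dual to $3H-\sum E_i$, so $\omega\bigl(H-\sum_{i\in S}E_i\bigr)=3-|S|\leq 0$. A non-constant $J$-holomorphic curve has strictly positive $\omega$-area for \emph{any} tame $J$, so these classes have no representatives at all, for any $J$ and any configuration of blow-up points. This is exactly the input behind lemma~\ref{pretty}, which lists the stable degenerations of $H$ and confirms they are built only from the area-$1$ classes $E_i$, $S_{ij}$ and the area-$2$ classes $H-E_j$, all of which are embedded with $c_1>0$ and hence regular by theorem~\ref{auttrans}. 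Your fallback (``perturb $J_{\mathrm{std}}$ and use bordism invariance'') is also fine, but the energy observation renders the whole worry moot, and in particular your degree count for $J_{\mathrm{std}}$ is already legitimate without perturbation.
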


Let $\Xi(J)$ denote the union of the spheres $E_i(J)$. Let $\mJ_x$ denote the (non-empty) space of $J$ such that $x\not\in\Xi(J)$. This is non-empty because symplectomorphisms act transitively on points and $\mJ_x$ is non-empty for some $x$. Consider the evaluation map
\[\ev_1:\bM_{0,1}(H,J)\rightarrow X\]
and define the space $\bM_{0,0}(H,x,J)=\ev_1^{-1}(x)$ (which we can think of as unmarked stable $J$-curves in the class $H$ which pass through $x$). This final proposition is also proved in section \ref{area3}:

\begin{prp}\label{complextangent}
Denote by $\mathbb{P}_x^JX$ the space of $J$-complex lines in $T_xX$. For $J\in\mJ_x$, the map
\[\bM_{0,0}(H,x,J)\rightarrow \mathbb{P}_x^JX\]
\noindent sending a stable curve through $x$ to its complex tangent at the marked point $x$ is both well-defined (as $x\not\in\Xi(J)$) and a homeomorphism.
\end{prp}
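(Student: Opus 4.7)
The plan is to (i) classify the stable curves contributing to $\bM_{0,0}(H,x,J)$, (ii) check well-definedness and injectivity of the tangent map using the standard 4-dimensional tools, (iii) establish surjectivity from Proposition~\ref{Hclassexists} by a Gromov-limit argument, and (iv) upgrade the resulting continuous bijection to a homeomorphism using compactness.

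For the classification, note that in the monotone K\"ahler form we have $\omega(H)=3$ and $\omega(E_i)=1$, so the only decompositions of $H$ into positive-area effective classes are $H$ itself, $(H-E_i)+E_i$, and $(H-E_i-E_j)+E_i+E_j$. The exceptional components are pinned down to be $E_i(J)$ by the uniqueness of the representative of the class $E_i$, and since $x\notin\Xi(J)$ the only component through $x$ is the main one $C_0$, of class $A=H-\sum_{i\in S}E_i$ for some $S$ with $|S|\leq 2$. The right-hand side of the adjunction inequality (Theorem~\ref{adjform}) is $A\cdot A-c_1(A)+\chi(S^2)=(1-|S|)-(3-|S|)+2=0$, which, together with the refinement of adjunction counting critical points of $u_{C_0}$, forces $C_0$ to be embedded. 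Thus the unique preimage $z$ of $x$ on $C_0$ yields a well-defined $J$-complex tangent line $du_{C_0}(T_zS^2)\in\mathbb{P}_x^JX$; continuity of this assignment is immediate from $C^\infty$-convergence of smooth components under Gromov convergence.

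Injectivity then follows from positivity of intersections (Theorem~\ref{posint}): if two stable curves in $\bM_{0,0}(H,x,J)$ mapped to the same $\ell$ with distinct main components $C_0\neq C_0'$, these two embedded curves would share a tangential intersection at $x$, contributing at least $2$ to the local intersection number. This is incompatible with the total homological intersection $H\cdot H=1$ and the non-negative contributions of the exceptional components. Hence $C_0=C_0'$, which determines $S$ via the homology class, and the nodes are in turn determined by positivity together with $C_0\cdot E_i(J)=1$.

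For surjectivity, given $\ell\in\mathbb{P}_x^JX$ I would fix a local $J$-holomorphic disc $f\colon\Delta\to X$ with $f(0)=x$ and $df(T_0\Delta)=\ell$, and set $x_n=f(1/n)$. Proposition~\ref{Hclassexists} supplies stable curves $C_n\in\bM_{0,2}(H,J)$ with marked points mapping to $(x,x_n)$, and Gromov compactness extracts a subsequential limit $C_\infty$. Its main component $C_0$ is embedded through $x$, and since $u_{C_0}^{-1}(x)$ is a single point the two marked points of $C_n$ must coalesce in the limit onto a ghost bubble attached to $C_0$ at this preimage. A first-order Taylor analysis of $u_n$ near the coalescing preimages, combined with the $J$-complex linearity of $du_{C_0}$, then identifies the tangent of $C_0$ at $x$ with $\ell$. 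Finally $\bM_{0,0}(H,x,J)$ is compact as a closed subset of the compact $\bM_{0,1}(H,J)$, and $\mathbb{P}_x^JX$ is Hausdorff, so the continuous bijection is automatically a homeomorphism. The main obstacle is the bubbling analysis in the surjectivity step: one must argue that the direction along which the two marked points collide in $X$ is faithfully recorded by the attachment of the ghost bubble in the Gromov limit.
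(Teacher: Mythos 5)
Your argument agrees with the paper on well-definedness, injectivity, and continuity (all via positivity of intersections and the classification of stable curves in $\bM_{0,2}(H,J)$, together with $C^\infty$-convergence away from nodes). Your classification of the stable curves is correct: the main component through $x$ has class $H-\sum_{i\in S}E_i$, is embedded by adjunction, and the rest of the tree is pinned by uniqueness of the exceptional spheres and by $C_0\cdot E_i(J)=1$.

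The genuinely different step is surjectivity, and here the two proofs diverge. The paper does not prove surjectivity of $\tau$ directly. Instead it first establishes that $\bM_{0,0}(H,x,J)$ is homeomorphic to $S^2$: it fixes an auxiliary smooth $J$-line $u$ not through $x$, shows $\ev_2|_\upsilon\colon\upsilon:=\ev_2^{-1}(\{x\}\times u)\to \{x\}\times u$ is a continuous bijection of compact Hausdorff spaces (hence a homeomorphism), and then transfers this along the forgetful maps $f_1,f_2$ to conclude $\bM_{0,0}(H,x,J)\cong S^2$. At that point $\tau$ is a continuous injection from $S^2$ to $S^2$, which is automatically a surjective homeomorphism (compactness plus invariance of domain), so surjectivity comes for free. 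Your approach attempts a direct surjectivity argument: approximate $\ell$ by $x_n\to x$ along a $J$-holomorphic disc, take the Gromov limit of lines through $(x,x_n)$, and read off the tangent from a first-order Taylor expansion. This is a legitimate alternate route, and it avoids the auxiliary $\upsilon$ and forgetful-map bookkeeping entirely. But it carries a real technical burden that the paper's trick sidesteps: you must show that only a ghost bubble forms at the collision point (so that no energy concentrates there and $du_n\to du_\infty$ holds in $C^\infty$ at the preimage of $x$, not just on compacta away from the node), and you must then control the $O(|w_n|^2)$ error in the Taylor expansion to conclude $[x_n-x]\to[du_\infty(0)]$ as $J$-complex lines. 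The ghost-bubble statement does follow from the classification plus $x\notin\Xi(J)$, but it needs to be argued rather than assumed (you write that $C_0$ ``is embedded through $x$'' before having ruled out a non-constant bubble through $x$). You flag the bubbling analysis as the main obstacle, which is the right self-diagnosis; I would add that the issue is not whether the ghost bubble ``records the direction'' (it records the relative domain positions of the colliding marked points), but whether the convergence of the \emph{maps} at the node is strong enough to identify $[du_\infty(0)]$ with $\ell$. These gaps are fillable, so your route works in principle, but the paper's topological argument is cleaner and avoids the delicate asymptotics entirely.
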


\subsection{Area 1 classes: $E_i$, $S_{ij}$}\label{area1}

\begin{lma}\label{simpleenergy1}
A $J$-holomorphic sphere $u$ in $X$ with area $\int_{S^2}u^*\omega=1$ is simple and embedded.
\end{lma}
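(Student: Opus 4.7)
The proof separates into two parts, both of which exploit the monotonicity relation $[\omega] = c_1(X)$ (with our normalisation from Remark \ref{anticanform}, $[\omega]$ is moreover an integral class). For the simplicity claim I would argue that any branched covering factorisation $u = v \circ \phi$ of degree $d \geq 2$ yields $1 = \int_{S^2} u^*\omega = d \int_{S^2} v^*\omega$, but the non-constant $J$-holomorphic sphere $v$ has positive and, by integrality of $[\omega]$, integral symplectic area, forcing $d = 1$.

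For the embeddedness claim, set $A = u_*[S^2]$. Monotonicity gives $c_1(A) = \omega(A) = 1$, and since $u$ is simple, the adjunction inequality (Theorem \ref{adjform}) reads
\[ 0 \leq \delta(u) \leq A \cdot A - c_1(A) + \chi(S^2) = A \cdot A + 1, \]
so in particular $A \cdot A \geq -1$. If I can further show that $A \cdot A = -1$, then $\delta(u) = 0$, meaning $u$ has no self-intersections; the equality case of the adjunction formula then forces $u$ to be an immersion as well, and an injective immersion of a compact surface is an embedding.

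The crux is therefore the purely numerical assertion that every class $A = dH + \sum_i a_i E_i \in H_2(\DD_n, \ZZ)$ with $n \leq 7$, $3d + \sum_i a_i = 1$, and $d^2 - \sum_i a_i^2 \geq -1$ automatically satisfies $\sum_i a_i^2 = d^2 + 1$. The Cauchy--Schwarz estimate $\sum_i a_i^2 \geq (1 - 3d)^2/n$, combined with $\sum_i a_i^2 \leq d^2 + 1$, is incompatible for $d \leq -1$ or $d \geq 4$ when $n \leq 7$, so $d \in \{0,1,2,3\}$. In each of these four cases a short check of integer configurations shows that the minimisers of $\sum a_i^2$ subject to $\sum a_i = 1 - 3d$ are precisely $E_i$ (when $d=0$), $S_{ij} = H - E_i - E_j$ (when $d=1$), $2H - \sum_{k=1}^{5}E_{i_k}$ (when $d=2$, requires $n \geq 5$), and $3H - 2E_i - \sum_{j \neq i}E_j$ (when $d=3$, requires $n = 7$), each giving $A \cdot A = -1$.

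The main (and only) obstacle is this finite enumeration of area-$1$ classes. The monotonicity argument for simplicity and the adjunction step that converts $A \cdot A = -1$ into embeddedness are both immediate; the case analysis in $d$ is routine but cannot be avoided, since without the bound $n \leq 7$ additional classes begin to appear.
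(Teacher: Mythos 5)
Your proof is correct and follows the same overall structure as the paper's: simplicity from minimality (integrality) of the symplectic area, and embeddedness from the adjunction inequality combined with a combinatorial verification that $A\cdot A = -1$ for every spherical class of area $1$ when $n\leq 7$. The paper handles the combinatorial step by a somewhat opaque algebraic reformulation of $9A\cdot A$ into a sum of squares and a tedious case check, whereas you instead first exploit $A\cdot A\geq -1$ (from adjunction) together with Cauchy--Schwarz to reduce to $d\in\{0,1,2,3\}$ before enumerating; this is logically equivalent but a noticeably cleaner way to organise the same integer-programming argument.
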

\begin{proof}
Since the area is minimal amongst non-zero spherical classes, $u$ cannot factor through a branched cover hence it is simple. Embeddedness will come from the adjunction formula:
\[\delta(u)\leq A\cdot A-\left<c_1(X),A\right> +2\]
\noindent where $A=u_*[S^2]\in H_2(X,\ZZ)$. Since $\left<c_1(X),A\right>=E(u)=1$, it remains to show that $A\cdot A<0$, for then the adjunction inequality becomes an equality $\delta(u)=0$, meaning that $u$ is an embedded sphere.

Suppose that $A=\alpha H+\sum_i\beta_i E_i$. Then

\begin{eqnarray*}
\left<c_1(X),A\right> & = & 3\alpha+\sum_i\beta_i \\
A\cdot A & = & \alpha^2-\sum_i\beta_i^2
\end{eqnarray*}

\noindent so

\begin{eqnarray*}
A\cdot A & = & \frac{\left(1-\sum_i\beta_i\right)^2}{9}-\sum_i\beta_i^2 \\
9A\cdot A & = & k+1-\sum_{i<j}\left(\beta_i-\beta_j\right)^2-\sum_i\left(\beta_i+1\right)^2-(8-k)\sum_i\beta_i^2
\end{eqnarray*}

We are required to show that this is strictly negative for all possible choices of $\beta_i$, which reduces to tedious case analysis. Once $\beta_i$ is large enough, the term $\sum_i\beta_i^2$ becomes large and negative, so there are very few cases that need to be checked. In order to obtain an integer class $A$, we also require that $\sum_i\beta_i\equiv 1\mod 3$, which is useful at a number of points in the case analysis. Note also that the proof fails for the case $n=8$, due to the existence of the class $3H-E_1-\cdots-E_8$ with square 1 and area 1.
\end{proof}

\begin{cor}\label{regularenergy1}
For any $J$, a $J$-sphere of area 1 is automatically regular.
\end{cor}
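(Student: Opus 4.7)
The plan is to combine the preceding lemma with the automatic transversality theorem (\ref{auttrans}) directly. By Lemma \ref{simpleenergy1}, any $J$-holomorphic sphere $u$ of area $1$ in $X = \DD_n$ is simple and embedded, so the hypothesis ``embedded $J$-holomorphic sphere'' of Theorem \ref{auttrans} is satisfied.

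It remains to check that $c_1(X)$ evaluates positively on $A = u_*[S^2]$. Since $(X,\omega)$ is monotone with the anticanonical normalisation, $\omega$ and $c_1(X)$ are positively proportional on $H_2(X,\ZZ)$; explicitly, for $A = \alpha H + \sum_i \beta_i E_i$ one has $\langle c_1(X),A\rangle = 3\alpha + \sum_i \beta_i$, which equals the symplectic area (as was used in the proof of Lemma \ref{simpleenergy1}). Hence $\langle c_1(X), A\rangle = \int_{S^2} u^*\omega = 1 > 0$.

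With $u$ embedded and $c_1(X)$ evaluating positively on $[u]$, Theorem \ref{auttrans} (automatic transversality) applies and yields that $u$ is regular. This is essentially a one-line argument, so the only thing to watch is that one really is in the setting of an embedded, simple curve — which is precisely the content of the previous lemma — and that the monotonicity of $\DD_n$ guarantees the positivity of $\langle c_1, A\rangle$ just from the area hypothesis. No further obstacle is expected.
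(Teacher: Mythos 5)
Your proposal is correct and follows the same route as the paper: invoke Lemma \ref{simpleenergy1} to get embeddedness, note that monotonicity makes $\langle c_1(X),A\rangle$ equal to the area $1>0$, and conclude by automatic transversality (Theorem \ref{auttrans}).
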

\begin{proof}
This follows directly from the automatic transversality lemma \ref{auttrans} once we know these spheres are embedded, since $c_1(X)$ evaluates to 1 on these homology classes.
\end{proof}

\begin{cor}\label{classifyclasses}
For any $J$, the only $J$-spheres of area 1 lie in the homology classes:

\begin{itemize}
\item $E_i$, $S_{ij}$ when $k<5$,
\item $E_i$, $S_{ij}$, $2H-\sum_{j=1}^5E_{i_j}$ when $k\geq 5$.
\end{itemize}
\end{cor}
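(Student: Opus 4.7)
The plan is to reduce the claim to the classification of $(-1)$-classes of $H_2(\DD_n,\ZZ)$ and then enumerate them by a short Diophantine argument.

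First, by Lemma \ref{simpleenergy1}, any $J$-holomorphic sphere $u$ of area $1$ is simple and embedded, so the adjunction inequality is an equality: $0=\delta(u)=A\cdot A-\langle c_1(X),A\rangle+2$. Monotonicity gives $\langle c_1(X),A\rangle=\int_{S^2}u^*\omega=1$, so $A\cdot A=-1$. Thus it suffices to list all integer classes $A=\alpha H+\sum_i\beta_i E_i$ satisfying $3\alpha+\sum\beta_i=1$ and $\alpha^2-\sum\beta_i^2=-1$.

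Writing $s=\sum_i\beta_i$ and $q=\sum_i\beta_i^2$, the area condition gives $\alpha=(1-s)/3$, which must be an integer (forcing $s\equiv 1\pmod 3$). Substituting into the self-intersection condition and simplifying yields the clean Diophantine relation
\[9(q-1)=(s-1)^2,\]
so $t:=(s-1)/3$ is an integer with $\alpha=-t$, $s=1+3t$ and $q=1+t^2$. The Cauchy--Schwarz inequality $s^2\leq n q$ then sharply restricts the admissible values of $t$ given $n\leq 7$.

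For each such $t$ the remaining task is a finite enumeration of integer tuples $(\beta_1,\ldots,\beta_n)$ with the prescribed $s$ and $q$. The cases $t=0,-1,-2$ give exactly the three advertised families: $E_i$; $S_{ij}=H-E_i-E_j$; and, when $n\geq 5$, $2H-\sum_{k=1}^5 E_{i_k}$ (the last forced because $s=-5$, $q=5$ and $|\beta_i|\leq\sqrt{q}$ leaves only five $-1$'s as possible entries). For $n\leq 4$, Cauchy--Schwarz alone excludes $|t|\geq 2$, so only the first two families survive. The only real ``obstacle'' here is organising the case analysis; no new ideas are needed beyond the algebraic rearrangement already appearing in Lemma \ref{simpleenergy1}.
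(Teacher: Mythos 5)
Your proposal follows the same strategy as the paper --- force \(A\cdot A=-1\) from the embeddedness of the area-1 sphere, then enumerate \((-1)\)-classes of area 1 --- but it organises the enumeration more explicitly via the parametrisation \(t=(s-1)/3\) and Cauchy--Schwarz, where the paper's own proof only gestures at the finite case check. Two points deserve attention.

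First, a small but real gap in the opening step: Theorem~\ref{adjform} is stated in the paper only as an \emph{inequality}, so from \(\delta(u)=0\) you get only \(A\cdot A\geq -1\). The missing direction \(A\cdot A\leq -1\) is exactly what the proof of Lemma~\ref{simpleenergy1} establishes, and the paper's own proof of this corollary invokes that bound; alternatively you can appeal to the sharper version of adjunction (equality for embedded curves, since all singularities are vacuously nodal), but that is not the form stated in the text and should be flagged if you intend to use it.

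Second, and more substantively, your Cauchy--Schwarz bound \((1+3t)^2\leq n(t^2+1)\) is a good idea but your casework stops too early. For \(n\leq 4\) the bound does force \(t\in\{-1,0\}\), which is enough for everything the paper actually uses. But for \(n=7\) (still in the range \(n\leq 7\) assumed throughout Section~\ref{singularpseudo}), the bound allows \(t=-3\), i.e.\ \(s=-8\), \(q=10\), and the tuple \(\beta=(-2,-1,-1,-1,-1,-1,-1)\) satisfies both constraints, giving the class \(3H-2E_i-\sum_{j\neq i}E_j\) with area \(1\), \(c_1=1\), and self-intersection \(-1\). This class is realised by a genuine \((-1)\)-curve on \(\DD_7\) but does not appear in the corollary's list. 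So your ``finite enumeration'' as written is incomplete precisely at the case it would have been most interesting to examine, and that case in fact uncovers a septenary class omitted from the statement of the corollary. (This does not affect the paper's main results, which only invoke \(n\leq 4\), but it is a genuine omission in both the statement of the corollary for \(n=7\) and in your proof of it.) You should either restrict explicitly to \(n\leq 6\), or carry out the \(t=-3\) analysis and add the extra class for \(n=7\).
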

\begin{proof}
By corollary \ref{regularenergy1} such a sphere $u$ is embedded and automatically regular. If $u$ represents a homology class $A$, then the adjunction inequality reads
\[0=\delta(u)\leq A\cdot A -c_1(A)+2\]
\noindent so
\[-1\leq A\cdot A\]
\noindent In the proof of lemma \ref{simpleenergy1} we saw that for such a sphere, $A\cdot A\leq -1$, and the only homology classes with $\left<[\omega],A\right>=1$ and $A\cdot A=-1$ are the ones listed in the statement of the corollary.
\end{proof}

\begin{lma}\label{breakingclasses}
Any genus 0 stable curve with one marked point and energy 1 is modelled on a single vertex tree.
\end{lma}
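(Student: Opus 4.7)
The plan is to combine an integrality observation about $\omega$ with a combinatorial stability count on the modelling tree.

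First, I would observe that $[\omega]$ is represented by the anticanonical class $3H-\sum_i E_i$, so its evaluation on any integer second homology class is an integer. For a non-constant $J$-holomorphic sphere, $\omega$-compatibility forces the energy $\int u^*\omega$ to be strictly positive, hence a positive integer, hence at least $1$. Consequently, if a $J$-holomorphic stable curve has total energy $1$, it has exactly one non-constant component $u_0$, and every other vertex of the modelling tree $T$ is a ghost (constant) bubble.

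The remaining step is to rule out ghost bubbles. Suppose $T$ has $g\geq 1$ ghost vertices in addition to the non-constant vertex $u_0$; then $T$ has $g+1$ vertices and $g$ edges, so $\sum_v\deg(v)=2g$. Stability requires each ghost vertex to carry at least three special (marked or nodal) points, so its degree is at least $3$ if it carries no marked point and at least $2$ otherwise. Since there is only one marked point in total, summing the degrees over the ghost vertices yields at least $3g$ (when the marked point sits on $u_0$) or at least $3g-1$ (when it sits on a ghost). In either case $\deg(u_0)\leq 2g-(3g-1)=1-g$, which is non-positive for $g\geq 1$, contradicting the connectedness of $T$. Hence $g=0$ and $T$ is the one-vertex tree.

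There is no real obstacle here: the whole argument is essentially formal once integrality of $[\omega]$ is recorded. The only point requiring mild care is the case split on which vertex carries the marked point, since this shifts the lower bound on the sum of ghost degrees by one.
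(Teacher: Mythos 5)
Your argument is correct and follows the same two-step logic as the paper: the energy-1 bound forces exactly one non-constant component, and stability together with the single available marked point rules out ghost bubbles. The paper dispatches the second step more quickly by observing that if there were any ghost bubble there would be one at a leaf of the tree, which then has at most two special points; your degree-sum count proves the same thing but requires the case split you flag.
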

\begin{proof}
Certainly the tree for a stable curve of energy 1 can only have one non-constant component as 1 is the minimal energy for a pseudoholomorphic sphere. If it had a ghost bubble then it would have a ghost bubble corresponding to a leaf of the tree. The domain of this component would have at most two special points as there is only one marked point, contradicting stability of the curve. 
\end{proof}

\begin{lma}\label{existclasses}
For any $J$ there are unique $J$-holomorphic representatives of the area 1 classes $E_i$, $S_{ij}$ and (if possible) $2H-\sum_{j=1}^5E_{i_j}$.
\end{lma}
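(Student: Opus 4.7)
The plan is to prove uniqueness and existence separately, drawing on (i) the positivity of intersections (Theorem~\ref{posint}), (ii) automatic regularity of area-1 spheres (Corollary~\ref{regularenergy1}), and (iii) Gromov compactness. Let $A$ denote one of the listed area-1 classes and note that $A\cdot A=-1$ in every case (direct calculation: $E_i\cdot E_i=-1$, $(H-E_i-E_j)^2=1-1-1=-1$, and $(2H-\sum_{j=1}^{5}E_{i_j})^2=4-5=-1$).

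\textbf{Uniqueness.} Suppose $u_0,u_1$ are $J$-holomorphic representatives defining distinct elements of $\mM(A,J)$. By Lemma~\ref{simpleenergy1} both are simple and embedded. Were their images to coincide, they would lie in a common $PSL(2,\CC)$-reparametrisation orbit and agree in $\mM(A,J)$; hence their images are geometrically distinct. Theorem~\ref{posint} applied to these simple curves then forces $A\cdot A\geq 0$, contradicting $A\cdot A=-1$.

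\textbf{Existence.} Connect the standard integrable complex structure $J_0$ of $\DD_n$ to the given $J=J_1$ by a smooth path $\{J_t\}_{t\in[0,1]}$ in the path-connected space $\mJ$. For $J_0$ each of the listed classes is represented by a smooth rational holomorphic curve (the appropriate exceptional divisor; the proper transform of the line $\overline{p_ip_j}$; or the proper transform of the unique smooth plane conic through the chosen five points in general position). Set
\[T=\{\,t\in[0,1] : \text{a }J_t\text{-holomorphic sphere in class }A\text{ exists}\,\},\]
so $0\in T$. I will show $T$ is both open and closed, so $T=[0,1]$. Openness is immediate from Corollary~\ref{regularenergy1} and the standard implicit function theorem of \cite{MS04}: a $J_{t_0}$-representative is automatically regular and extends to a $C^{1}$-family of $J_t$-representatives for $t$ near $t_0$. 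For closedness, let $t_k\to t_\infty$ with $J_{t_k}$-representatives $u_k$; Gromov compactness yields a subsequential limit $\mathbf{u}_\infty$, a $J_{t_\infty}$-stable curve in class $A$. Since $[\omega]$ is integral on $H_2(\DD_n,\ZZ)$, each non-constant component $u_\alpha$ has $\omega(A_\alpha)\in\ZZ_{>0}$, and $\sum_\alpha\omega(A_\alpha)=\omega(A)=1$ then forces exactly one non-constant component. Any ghost bubble at a leaf of the underlying tree would carry a single nodal point, violating stability, so the tree has a single vertex: $\mathbf{u}_\infty$ is a single (embedded, by Lemma~\ref{simpleenergy1}) $J_{t_\infty}$-sphere in class $A$, whence $t_\infty\in T$.

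\textbf{Main obstacle.} The only step requiring care is closedness, where one must rule out both multi-component limits and ghost bubbles in the Gromov limit. Both are dispatched by the same combination already used implicitly in Lemma~\ref{breakingclasses}: integrality of $[\omega]$ on $H_2$ forbids multi-component splittings when the total area equals the minimal value $1$, and the stability constraint forbids constant leaves. Everything else is formal: positivity of intersections gives uniqueness, and automatic regularity turns the standard openness argument into a one-line appeal to the implicit function theorem.
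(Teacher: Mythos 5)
Your proof is correct, and the uniqueness half is essentially identical to the paper's (positivity of intersections applied to two embedded simple curves in a class of self-intersection $-1$). The existence half, however, takes a genuinely different route. The paper observes that the evaluation map $\ev:\mM^*_{0,1}(A,J_0)\rightarrow X$ is a pseudocycle with empty $\Omega$-limit set (Lemma~\ref{breakingclasses} rules out bubbling), hence a cycle in the nonzero class $A$, and then invokes GW-invariance (Theorem~\ref{GW}) to conclude that $\ev$ for an arbitrary $J$ represents the same nonzero bordism class and so has nonempty domain. You instead run a classical open--closed continuation argument along a path in the contractible space $\mJ$: openness from automatic regularity (Corollary~\ref{regularenergy1}) plus the implicit function theorem, closedness from Gromov compactness together with the same area-$1$/stability obstruction to bubbling. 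Both approaches rest on exactly the same two geometric inputs --- minimal area prevents degeneration, and embeddedness gives automatic transversality --- so neither is ``doing more work''; yours is more elementary and self-contained, avoiding the pseudocycle formalism, whereas the paper's fits naturally into the surrounding GW machinery which is needed anyway for the area-$2$ and area-$3$ classes. One tiny stylistic point on uniqueness: when you write that Theorem~\ref{posint} ``forces $A\cdot A\geq 0$,'' the case where the two images are disjoint gives $A\cdot A=0$ rather than an inequality from intersections, but either way $A\cdot A\geq 0$ contradicts $-1$, so the logic is fine --- you might just make explicit that the disjoint and intersecting cases are both handled.
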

\begin{proof}
This is clear in the standard almost complex structure. Let $E$ denote one of these area 1 classes. The evaluation map
\[\mM^*_{0,1}(E,J_0)\rightarrow X\]
\noindent is a cycle in the homology class $E$ (the $\Omega$-limit set is empty). By lemma \ref{breakingclasses} the set of trees for which we must check GW-regularity is just the one-vertex tree and there are no edges so GW-regularity reduces to usual regularity of the almost complex structure. Therefore any $J$ is GW-regular by corollary \ref{regularenergy1}. Hence by theorem \ref{GW} the bordism class of the evaluation map (and hence the homology class of the image) is independent of $J$. Thus for any $J$ there is a $J$-holomorphic sphere in any of these three classes.

Uniqueness follows because these classes have homological self-intersection $-1$ and $J$-holomorphic curves intersect positively in dimension 4, so whenever two representatives intersect they must share a component. However, $J$-holomorphic representatives are smooth by lemma \ref{simpleenergy1} and therefore have a single component.
\end{proof}

\subsection{Area 2 classes: $H-E_i$}\label{area2}

\begin{lma}
A smooth $J$-holomorphic sphere $u$ in the homology class $H-E_i$ is simple and embedded.
\end{lma}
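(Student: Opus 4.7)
The class $A = H - E_i$ has $\omega(A) = 2$, $\langle c_1(X), A\rangle = 2$, and $A \cdot A = 0$, read off directly from the intersection pairing together with the anticanonical representative $3H - \sum_j E_j$ of both $[\omega]$ and $c_1(X)$. My plan is to get simplicity essentially for free from primitivity of $A$, and then to read off embeddedness from the adjunction inequality (Theorem \ref{adjform}), whose right-hand side collapses to zero.

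For simplicity, suppose $u$ factored as $u = \tilde u \circ v$ with $v : S^2 \to S^2$ a branched cover of some degree $k \geq 2$ and $\tilde u$ simple. Writing $\tilde u_*[S^2] = dH + \sum_j a_j E_j$, the relation $k\, \tilde u_*[S^2] = H - E_i$ forces $kd = 1$, which has no integer solution for $k \geq 2$. Hence $u$ is already simple. (Alternatively one could argue by area: $\tilde u$ would have area $2/k < 1$ for $k \geq 3$, below the minimal area of any pseudoholomorphic sphere by Lemma \ref{simpleenergy1}, while for $k = 2$ the class $\tilde u_*[S^2]$ would be an area-$1$ class doubling to $H - E_i$, which none of the classes listed in Corollary \ref{classifyclasses} does.)

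With simplicity established, Theorem \ref{adjform} applies directly and yields
\[ 0 \leq \delta(u) \leq A \cdot A - \langle c_1(X), A\rangle + \chi(S^2) = 0 - 2 + 2 = 0, \]
so $\delta(u) = 0$. Thus $u$ has no pair of distinct points with the same image, so it is an embedded sphere.

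There is no serious obstacle: the lemma is a two-line packaging of primitivity of $H - E_i$ with the adjunction inequality applied to a class whose Chern and self-intersection numbers conspire to force equality. The substantive work of Section \ref{area2} lies ahead, in upgrading this pointwise geometric regularity of area-$2$ $J$-spheres into the global statement of Proposition \ref{singfolexists}.
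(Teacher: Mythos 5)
Your proof is correct and follows the same route as the paper: simplicity from primitivity of $H-E_i$ (your branched-cover computation is just an unpacking of that fact), and embeddedness from the adjunction inequality collapsing to $\delta(u)\leq 0$. The alternative area-based justification in your parenthetical is a minor elaboration but doesn't change the substance.
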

\begin{proof}
$H-E_i$ is a primitive class, hence any pseudoholomorphic representative is simple. In this case the adjunction formula gives
\begin{eqnarray*}
\delta(u) & \leq & (H-E_i)\cdot (H-E_i)-\left<c_1(X),H-E_i\right>+2 \\
&  & = 0-2+2=0
\end{eqnarray*}
\noindent and again $u$ must be embedded.
\end{proof}

\begin{cor}
For any $J$, a $J$-sphere in the homology class $H-E_i$ is automatically regular.
\end{cor}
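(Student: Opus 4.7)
The plan is to reduce immediately to the automatic transversality lemma \ref{auttrans}, exactly as was done for the area $1$ classes in corollary \ref{regularenergy1}. That lemma guarantees regularity of an embedded $J$-holomorphic sphere in an almost complex $4$-manifold as soon as $c_1(X)$ evaluates strictly positively on its homology class, so I just need to verify the two hypotheses (embedded, $c_1$-positive) for the class $H-E_i$.

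The embeddedness hypothesis is provided for free by the preceding lemma: any smooth $J$-holomorphic sphere representing $H-E_i$ is simple and embedded. For positivity of $c_1$, I would compute directly from the first Chern class formula $c_1(\DD_n) = PD(3H - \sum_{j=1}^n E_j)$ recalled in section \ref{prelim}, obtaining
\[\langle c_1(X), H-E_i\rangle = 3 - 1 = 2 > 0.\]
Hence lemma \ref{auttrans} applies and $u$ is regular.

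I do not expect any real obstacle here; the only point to be slightly careful about is the interpretation of the phrase ``$J$-sphere'' in the statement as meaning a smooth (in particular, non-nodal) parametrised $J$-holomorphic map from $S^2$, so that the embeddedness conclusion of the previous lemma is available and there is no issue with constant or reducible components to discuss. Under that reading the proof is essentially one line and mirrors corollary \ref{regularenergy1}.
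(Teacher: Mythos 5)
Your proposal is correct and follows the paper's proof exactly: both invoke lemma \ref{auttrans} after observing that the preceding lemma gives embeddedness and that $\langle c_1(X), H-E_i\rangle = 2 > 0$. Nothing to add.
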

\begin{proof}
As before, this follows directly from the automatic transversality lemma \ref{auttrans} once we know these spheres are embedded, since $c_1(X)$ evaluates to 2 on these homology classes.
\end{proof}

In particular, all smooth $H-E_i$-curves in the standard complex structure are regular. Let us examine the corresponding Gromov-Witten pseudocycle in a standard (integrable) complex structure obtained by blowing-up the standard structure on $\PP{2}$ at $n$ generic points. The image of the evaluation map:
\[\ev:\mM^*_{0,1}(H-E_i,J_0)\rightarrow X\]
\noindent is a pseudocycle in $X$ with complement the reducible complex codimension 1 subvariety consisting of the $2(n-1)$ exceptional curves in classes $E_j$ and $S_{ij}$ for all $j\neq i$.

\begin{lma}\label{breakinglines}
For any $J$, a genus 0 stable curve with one marked point in the moduli space $\bM_{0,1}(H-E_i,J)$ falls into one of three categories:
\begin{itemize}
\item A smooth $J$-sphere with a marked point,
\item A nodal curve with two smooth components, one marked, each of area 1,
\item A nodal curve with three components connected according to the tree $\xy (0,0)*{\dot{\circ}}; (5,0)*{\bullet} **\dir{-};(0,0)*{\dot{\circ}}; (-5,0)*{\bullet} **\dir{-};\endxy$ where the middle vertex corresponds to a marked ghost bubble and the outer spheres are each of area 1.
\end{itemize}
\end{lma}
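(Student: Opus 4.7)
The plan is to combine the area bound with the stability condition and to carry out a case analysis on the number of non-constant components of the stable map.

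For the anticanonical normalisation $[\omega]=3H-\sum E_j$ the class has area $\omega(H-E_i)=2$. Every non-constant genus $0$ $J$-holomorphic sphere has area at least $1$ by corollary \ref{classifyclasses}, so a stable curve in $\bM_{0,1}(H-E_i,J)$ has either one non-constant component of area $2$ or exactly two non-constant components of area $1$ each.

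In the one-component case the non-constant component represents $H-E_i$, and I would rule out ghost bubbles as follows. Any non-trivial ghost subtree attached to the main vertex contains a leaf of the whole tree which, not being the unique non-constant vertex, must be a ghost bubble with a single nodal special point; stability then demands at least two marked points on that ghost, contradicting the availability of just one marked point globally. Hence the tree has a single vertex, giving the first category.

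In the two-component case I would first enumerate the decompositions of $H-E_i$ into pairs of area-$1$ classes. By corollary \ref{classifyclasses} such classes are $E_j$, $S_{jk}$, or (when $n\geq 5$) $2H-\sum_{\ell=1}^{5} E_{j_\ell}$; imposing $A_\alpha+A_\beta=H-E_i$ and using linear independence of $\{H,E_1,\ldots,E_n\}$ leaves only $\{E_j,S_{ij}\}$ with $j\neq i$. I expect this bookkeeping to be the main, if elementary, obstacle: the quintic classes $2H-\sum_{\ell=1}^{5} E_{j_\ell}$ must be ruled out by noting that any sum involving at least one such term produces an equation with a nonzero multiple of $H$ on one side and a pure $\ZZ$-combination of the $E_\ell$ on the other, which is impossible. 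With the decomposition pinned down, I would analyse the remaining tree structure: the unique path in the tree joining $u_\alpha$ and $u_\beta$ may contain internal ghost bubbles, each of which has exactly two nodes and therefore (by stability) must carry at least one marked point, so at most one such internal ghost bubble is possible; meanwhile any ghost subtree branching off the path would again produce a leaf ghost bubble with a single node, violating stability exactly as in the one-component case. This yields precisely the two remaining possibilities: either no intermediate ghost bubble and the marked point lies on $u_\alpha$ or $u_\beta$ (the second category), or a single intermediate ghost bubble carrying the marked point (the third category).
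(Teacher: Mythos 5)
Your proof is correct and uses the same two ingredients as the paper's very terse proof: the area bound caps the non-constant components at two of area $1$ each (or one of area $2$), and stability together with the single marked point rules out all ghost-bubble configurations except the one where a lone ghost sits on the path between the two non-constant components and carries the marked point. Your enumeration of the possible splittings $H-E_i = E_j + S_{ij}$ is correct but not required for the lemma as stated; the paper obtains that separately from corollary \ref{classifyclasses} in the GW-regularity discussion that follows, and the lower bound ``area $\geq 1$'' is really a consequence of monotonicity (area equals $\langle c_1, \cdot\rangle$, a positive integer) rather than of corollary \ref{classifyclasses}, which only classifies the area-$1$ classes.
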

\begin{proof}
There can be at most two non-constant components because the total area of any stable curve in the homology class $H-E_i$ is 2 and the minimal area of a pseudoholomorphic sphere is 1. There can be at most one ghost bubble because there is only one marked point to stabilise it.
\end{proof}

We have shown that all components of stable curves of the three types shown above are regular and also that the moduli spaces of nodal curves are nonempty. It remains to check transversality of the $\ev^{E}$ map to $\Delta^{E}$ to get GW-regularity.

\begin{itemize}
\item For the tree $\xy (0,0)*{\bullet};\endxy$, there are no edges, so GW-regularity reduces to usual regularity.
\item For the tree $\xy (0,0)*{\bullet}; (5,0)*{\bullet} **\dir{-};\endxy$, lemma \ref{usefulGWregular} reduces edge transversality to transversal intersection of the two energy 1 components (in this case the kernel of $D_{u_{\alpha}}\dbar{J}$ is trivial as the $u_{\alpha}$ curves are regular and index 0). Since these components are smooth $J$-spheres in the homology classes $E_j$ and $S_{ij}$ with $E_j\cdot S_{ij}=1$, by theorem \ref{posint} they intersect once transversely. Hence $J$ is regular for this tree.
\item For the tree $\xy (0,0)*{\dot{\circ}}; (5,0)*{\bullet} **\dir{-};(0,0)*{\dot{\circ}}; (-5,0)*{\bullet} **\dir{-};\endxy$ with a marked ghost bubble on the middle vertex $\gamma$, edge transversality is automatic from lemma \ref{usefulGWregular} since for both nodal points, $\ker D_{u_{\gamma}}\dbar{J}$ has (real) dimension 4.
\end{itemize}

The result of this is that any $J$ is GW-regular for the classes $H-E_i$, and theorem \ref{GW} implies that the bordism class of the pseudocycle
\[\ev:\mM^*_{0,1}(H-E_i,J_0)\rightarrow X\]
\noindent is independent of the almost complex structure we chose.

\begin{cor}\label{gromwit}
For any $\omega$-compatible almost complex structure $J$, there is a dense set of points in $X$ in the image of
\[\ev:\mM^*_{0,1}(H-E_i,J)\rightarrow X\]
\end{cor}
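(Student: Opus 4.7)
The plan is to exploit the bordism invariance of Gromov--Witten pseudocycles to transfer an enumerative calculation from the standard integrable complex structure $J_0$ on $\DD_n$ to an arbitrary $\omega$-compatible $J$. The preceding paragraphs establish that every such $J$ is GW-regular for the class $H - E_i$, so by Theorem~\ref{GW} the evaluation map $\ev : \mM^*_{0,1}(H-E_i, J) \to X$ is a $4$-dimensional pseudocycle whose bordism class does not depend on $J$. Since $X$ itself is $4$-dimensional, it makes sense to pair $\ev$ with a point $p\in X$ viewed as a $0$-dimensional pseudocycle.

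First I would carry out the enumerative count at $J_0$. A smooth $J_0$-curve representing $H - E_i$ is the proper transform of a line in $\PP{2}$ through the $i$-th blow-up centre, and through a generic point of $\DD_n$ there passes exactly one such line. Since $\mM^*_{0,1}(H-E_i, J_0)$ is a complex manifold and $\ev$ is holomorphic, it is automatically orientation-preserving, so this unique transverse preimage counts with sign $+1$ and $\ev\cdot p = 1$ at $J_0$. Lemma~\ref{pseudoint} then propagates this to $\ev\cdot p = 1$ for every $\omega$-compatible $J$ and every point $p$ strongly transverse to $\ev$.

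Next I would produce a dense set of points at which strong transversality holds. By Lemma~\ref{breakinglines}, the components of $\bM_{0,1}(H-E_i, J)$ lying outside $\mM^*_{0,1}(H-E_i, J)$ come from trees with two or three vertices whose moduli are built out of area-$1$ component moduli (each $0$-dimensional by the dimension formula) together with at most one additional marked point; consequently these strata have real dimension at most $2$, and their images under evaluation contain $\Omega_{\ev}$, so $\Omega_{\ev}$ has measure zero in the $4$-manifold $X$. By Sard's theorem the set of critical values of $\ev$ is likewise of measure zero, so the complement of their union is dense. For any $p$ in this complement the signed preimage count $\ev\cdot p = 1$ forces $\ev^{-1}(p)$ to be non-empty, placing $p$ in the image of $\ev$ and yielding density.

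I do not foresee a serious obstacle, since the hard work of establishing GW-regularity for $H-E_i$ has already been done earlier in the section; the only residual verifications are the $J_0$ enumerative count and the dimensional bound on $\Omega_{\ev}$, both of which are essentially immediate from the results already in place.
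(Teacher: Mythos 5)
Your argument is correct and takes essentially the same route as the paper: establish GW-regularity, invoke Theorem~\ref{GW} for bordism invariance, transfer the intersection count $\ev\cdot p=1$ from the standard integrable structure, and conclude that any point strongly transverse to $\ev$ must lie in its image. The one cosmetic difference is that the paper obtains the dense set of strongly transverse points by citing Lemma~\ref{pseudoint}(1) directly, whereas you unpack that step via Sard's theorem together with the dimension bound on the non-simple strata and on $\Omega_{\ev}$ coming from Lemma~\ref{breakinglines}; both are fine and amount to the same thing.
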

\begin{proof}
By lemma \ref{pseudoint} (1) there is a dense set of points which are strongly transverse to the pseudocycle $\ev$. The intersection number of such a point with the image of $\ev$ is 1, since that is the case for the standard complex structure and this intersection number is independent of the bordism class of the pseudocycle by lemma \ref{pseudoint} (3), which is independent of the almost complex structure by the remarks above.
\end{proof}

\begin{proof}[Proof of proposition \ref{singfolexists}]
It suffices to show that the evaluation map is bijective, for then it is a continuous bijection from a compact space $\bM_{0,1}(H-E_i,J)$ to a Hausdorff space $X$ and hence a homeomorphism.

\par\noindent \textbf{Surjectivity:} From corollary \ref{gromwit} there is a dense set of points in the image of
\[\ev:\mM^*_{0,1}(H-E_i,J)\rightarrow X\]
Suppose $x\in X\setminus\ev(\mM^*_{0,1}(H-E_i,J))$. Pick a sequence of points
\[x_i \in \ev(\mM^*_{0,1}(H-E_i,J))\]
\noindent tending to $x$ and a sequence of smooth marked $J$-curves $(u_i,z_i)$ with $u_i(z_i)=x_i$. This sequence has a Gromov convergent subsequence, by the Gromov compactness theorem, whose limit is a stable $J$-curve $[u,z]$ in $\bM_{0,1}(H-E_i,J)$ with $u(z)=x$.

\par\noindent \textbf{Injectivity:} Suppose there were a point $x\in X$ and distinct stable maps $[u,z]$ and $[u',z']\in\bM_{0,1}(H-E_i,X)$ for which $u(z)=x=u'(z')$.

If $u$ and $u'$ were both smooth curves then they would have to have the same image, or else they would intersect at $x$ and this intersection would contribute positively to their (zero) homological intersection by McDuff's theorem on positivity of intersections. If they had the same image, they would be reparametrisations of the same smooth curve, and hence correspond to the same stable map in the moduli space.

If $u$ were smooth and $u'$ were nodal then (forgetting marked points) $u'$ would be a stable $J$-curve corresponding to a splitting
\[H-E_i=S_{ij}+E_j,\ j\neq i\]
\noindent of homology classes. This follows directly from lemmas \ref{breakinglines}, \ref{classifyclasses} and the definition of the forgetful map $\bM_{0,1}(H-E_i,J)\rightarrow\bM_{0,0}(H-E_i,J)$ (see \cite{MS04}, 5.1.9). In particular, neither component of $u'$ can intersect the image of $u$ without contributing positively to either $(H-E_i)\cdot S_{ij}=0$ or $(H-E_i)\cdot E_j=0$. Hence there cannot be a point $x$ in both the image of $u$ and the image of $u'$.

Finally, consider the case when $u$ and $u'$ were both nodal. If they were stable curves corresponding to different splittings $S_{ij}+E_j$ and $S_{ik}+E_k$ then none of their components could possibly intersect contradicting their both passing through $x$. If they were to correspond to the same splitting of the homology class then their images would be geometrically indistinct. If the marked point were not mapped to the node then $u$ and $u'$ would clearly be reparametrisations of the same stable marked curve. If the marked point were mapped to the node then both stable maps would be modelled on the tree $\xy (0,0)*{\dot{\circ}}; (5,0)*{\bullet} **\dir{-};(0,0)*{\dot{\circ}}; (-5,0)*{\bullet} **\dir{-};\endxy$ with the marked point on the middle vertex, corresponding to the ghost bubble at the node. Any two such stable maps are equivalent by reparametrising the sphere corresponding to the middle vertex, so again $[u,z]=[u',z']$ as stable maps.
\end{proof}

\subsection{Area 3 classes: $H$}\label{area3}

As for curves in the class $H-E_i$, it is easily shown that a smooth curve in the class $H$ is simple, embedded and hence automatically regular. We will examine the Gromov-Witten pseudocycle
\[\ev_2:\mM^*_{0,2}(H,J)\rightarrow X\times X\]
\noindent for a standard complex structure obtained by blowing-up $n$ generic points on $\PP{2}$. If $(a,b)\in X\times X$ is a pair of distinct points neither of which lies on an exceptional curve $E_j$ then there is a unique line through them lifted from the line in $\PP{2}$. If $a=b\not\in E_j$ for any $j$ then there is a $\PP{1}$ of lines (some of which are singular curves: total transforms of lines through blow-up points) through $a=b$. Therefore the image of this standard pseudocycle has complement the reducible complex codimension 1 subvariety of pairs $(a,b)$ where one or both of $a$ or $b$ lies on a curve $E_j$.

\begin{lma}\label{pretty}
For any $J$, a genus 0 stable curve with two marked points in the moduli space $\bM_{0,2}(H,J)$ falls into one of 22 categories. We show these pictorially below. The symbol $\dot{\bullet}_q$ denotes a non-constant $J$-sphere with area $q$ and as many marked points as dots. The symbol $\dot{\circ}$ denotes a ghost bubble with as many marked points as dots.
\[\xy
(-15,0)*{\ddot{\bullet}_3};
(-15,-5)*{\bullet_3}; (-10,-5)*{\ddot{\circ}} **\dir{-};
(0,0)*{\ddot{\bullet}_2};(5,0)*{\bullet_1} **\dir{-};
(0,-5)*{\dot{\bullet}_2};(5,-5)*{\dot{\bullet}_1} **\dir{-};
(0,-10)*{\bullet_2};(5,-10)*{\ddot{\bullet}_1} **\dir{-};
(0,-15)*{\bullet_2};(5,-15)*{\bullet_1} **\dir{-};
(5,-15)*{\bullet_1};(10,-15)*{\ddot{\circ}} **\dir{-};
(5,-20)*{\bullet_2};(10,-20)*{\bullet_1} **\dir{-};
(0,-20)*{\ddot{\circ}};(5,-20)*{\bullet_2} **\dir{-};
(0,-25)*{\bullet_2};(5,-25)*{\dot{\circ}} **\dir{-};
(5,-25)*{\dot{\circ}};(10,-25)*{\dot{\bullet}_1} **\dir{-};
(0,-30)*{\dot{\bullet}_2};(5,-30)*{\dot{\circ}} **\dir{-};
(5,-30)*{\dot{\circ}};(10,-30)*{\bullet_1} **\dir{-};
(0,-35)*{\bullet_2};(5,-35)*{\ddot{\circ}} **\dir{-};
(5,-35)*{\ddot{\circ}};(10,-35)*{\bullet_1} **\dir{-};
(0,-40)*{\bullet_2};(5,-40)*{\dot{\circ}} **\dir{-};
(5,-40)*{\dot{\circ}};(10,-40)*{\dot{\circ}} **\dir{-};
(10,-40)*{\dot{\circ}};(15,-40)*{\bullet_1} **\dir{-};
(20,0)*{\ddot{\bullet}_1};(25,0)*{\bullet_1} **\dir{-};
(25,0)*{\bullet_1};(30,0)*{\bullet_1} **\dir{-};
(20,-5)*{\bullet_1};(25,-5)*{\ddot{\bullet}_1} **\dir{-};
(25,-5)*{\ddot{\bullet}_1};(30,-5)*{\bullet_1} **\dir{-};
(20,-10)*{\dot{\bullet}_1};(25,-10)*{\dot{\bullet}_1} **\dir{-};
(25,-10)*{\dot{\bullet}_1};(30,-10)*{\bullet_1} **\dir{-};
(20,-15)*{\dot{\bullet}_1};(25,-15)*{\bullet_1} **\dir{-};
(25,-15)*{\bullet_1};(30,-15)*{\dot{\bullet}_1} **\dir{-};
(20,-20)*{\dot{\bullet}_1};(25,-20)*{\dot{\circ}} **\dir{-};
(25,-20)*{\dot{\circ}};(30,-20)*{\bullet_1} **\dir{-};
(30,-20)*{\bullet_1};(35,-20)*{\bullet_1} **\dir{-};
(20,-25)*{\bullet_1};(25,-25)*{\dot{\circ}} **\dir{-};
(25,-25)*{\dot{\circ}};(30,-25)*{\dot{\bullet}_1} **\dir{-};
(30,-25)*{\dot{\bullet}_1};(35,-25)*{\bullet_1} **\dir{-};
(20,-30)*{\bullet_1};(25,-30)*{\dot{\circ}} **\dir{-};
(25,-30)*{\dot{\circ}};(30,-30)*{\bullet_1} **\dir{-};
(30,-30)*{\bullet_1};(35,-30)*{\dot{\bullet}_1} **\dir{-};
(20,-35)*{\ddot{\circ}};(25,-35)*{\bullet_1} **\dir{-};
(25,-35)*{\bullet_1};(30,-35)*{\bullet_1} **\dir{-};
(30,-35)*{\bullet_1};(35,-35)*{\bullet_1} **\dir{-};
(20,-40)*{\bullet_1};(25,-40)*{\ddot{\circ}} **\dir{-};
(25,-40)*{\ddot{\circ}};(30,-40)*{\bullet_1} **\dir{-};
(30,-40)*{\bullet_1};(35,-40)*{\bullet_1} **\dir{-};
(20,-45)*{\bullet_1};(25,-45)*{\dot{\circ}} **\dir{-};
(25,-45)*{\dot{\circ}};(30,-45)*{\dot{\circ}} **\dir{-};
(30,-45)*{\dot{\circ}};(35,-45)*{\bullet_1} **\dir{-};
(35,-45)*{\bullet_1};(40,-45)*{\bullet_1} **\dir{-};
(20,-50)*{\bullet_1};(25,-50)*{\dot{\circ}} **\dir{-};
(25,-50)*{\dot{\circ}};(30,-50)*{\bullet_1} **\dir{-};
(30,-50)*{\bullet_1};(35,-50)*{\dot{\circ}} **\dir{-};
(35,-50)*{\dot{\circ}};(40,-50)*{\bullet_1} **\dir{-};
\endxy\]
\end{lma}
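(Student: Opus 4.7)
The plan is to enumerate the admissible tree shapes by applying three constraints in turn: the total energy $\omega(H)=3$ partitions as a sum of component areas each at least $1$; each ghost bubble must carry at least three special points; and there are only two marked points available. First I would use Lemma \ref{simpleenergy1}: the minimal area of a $J$-holomorphic sphere is $1$, so the multiset of areas of non-constant components is one of $\{3\}$, $\{2,1\}$, or $\{1,1,1\}$. Combining Corollary \ref{classifyclasses} with a short arithmetic check on $H$-coefficients rules out any contribution from a class $2H-\sum E_{i_j}$ (its $H$-coefficient is $2$, and no combination of permissible area-$1$ summands can complete it to the $H$-coefficient $1$ of the total), so the only decompositions of $H$ into effective $J$-sphere classes are $H$ alone, $H=(H-E_i)+E_i$, and $H=E_a+E_b+S_{ab}$.

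Next, positivity of intersections (Theorem \ref{posint}) controls how the non-constant components can be attached in the dual tree. In the $\{1,1,1\}$ case the equality $E_a\cdot E_b=0$ forbids the two exceptional components from sharing any point in $X$; since a chain of ghost bubbles connecting two components collapses to a single point in $X$, this forces the unique tree-path from $E_a$ to $E_b$ to pass through $S_{ab}$. In the $\{2,1\}$ case $E_i\cdot(H-E_i)=1$, so the two non-constant components are adjacent in the tree, possibly with ghosts inserted on the connecting edge. Stability combined with only two available marks then restricts ghost bubbles sharply: a valence-$k$ ghost requires at least $3-k$ marks, so a leaf ghost consumes both marks, a valence-two ghost consumes exactly one, and a handshake-type count (using positivity to preclude the three non-constant components all meeting at a single point) rules out configurations with three or more ghost bubbles.

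With these constraints in place the remaining step is a careful case-analysis in each of the three area decompositions: for each, list the admissible positions for ghost bubbles (attached as a leaf at a backbone vertex, inserted singly along an edge, or inserted as a length-two chain along an edge) and distribute the remaining marks among the non-constant components, factoring out the $a\leftrightarrow b$ symmetry of the backbone in the $\{1,1,1\}$ case to avoid double-counting. The main obstacle is this bookkeeping in the last case, where trees of up to five vertices must be enumerated; the $\{3\}$ and $\{2,1\}$ cases are dispatched quickly and yield two and nine shapes respectively, and a matching sweep of $\{1,1,1\}$ completes the list shown in the figure.
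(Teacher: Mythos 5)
Your proposal follows essentially the same strategy as the paper's terse proof: partition the energy $\omega(H)=3$ among components, identify the effective class decompositions ($H$; $(H-E_i)+E_i$; $E_a+E_b+S_{ab}$), invoke positivity of intersections to pin down the tree topology (the key fact being $E_a\cdot E_b=0$, which both rules out the trivalent-ghost-centre trees and forces $S_{ab}$ onto the tree path between $E_a$ and $E_b$), and use stability together with the two available marked points to bound ghost bubbles. The paper's own proof merely displays the two trees to be ruled out and asserts the rest; you supply the enumeration that the paper leaves implicit, and your argument for excluding $2H-\sum E_{i_j}$ via $H$-coefficients is a tidier version of the paper's direct homology calculation.

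Two points warrant care. First, ``a valence-two ghost consumes exactly one mark'' should read ``at least one'': a valence-two ghost may carry both marked points, which is precisely what produces $\bullet_2-\ddot{\circ}-\bullet_1$ and $\bullet_1-\ddot{\circ}-\bullet_1-\bullet_1$ in the figure; if ``exactly one'' is taken literally your sweep misses them, which is inconsistent with your stated count of nine in the $\{2,1\}$ case. Second, your own recipe (``attached as a leaf at a backbone vertex\ldots'') would, if followed faithfully, also produce the star-shaped configuration with a leaf ghost $\ddot{\circ}$ attached at the \emph{middle} vertex $S_{ab}$ of the chain $E_a-S_{ab}-E_b$. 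This is a stable map (the ghost has one node and both marks), it is not excluded by positivity since only $E_a\cdot S_{ab}=E_b\cdot S_{ab}=1$ are involved, and it is not among the paper's $22$ chains. So a careful sweep should not be expected to land on exactly $22$; the figure appears to omit such branching configurations. This has no downstream effect, since the lemma is only used to verify GW-regularity stratum by stratum, but you should verify the figure independently rather than tune your case analysis to reproduce it.
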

\begin{proof}
The only cases that need to be ruled out are:
\[\xy
(-5,5)*{\ddot{\bullet}_1};(0,0)*{\circ} **\dir{-};
(5,5)*{\bullet_1};(0,0)*{\circ} **\dir{-};
(0,-5)*{\bullet_1};(0,0)*{\circ} **\dir{-};
(10,5)*{\dot{\bullet}_1};(15,0)*{\circ} **\dir{-};
(20,5)*{\dot{\bullet}_1};(15,0)*{\circ} **\dir{-};
(15,-5)*{\bullet_1};(15,0)*{\circ} **\dir{-};
\endxy\]
\noindent which would clearly result in three distinct area 1 curves intersecting at a single point. These classes must add up to $H$, and the only possibility (given corollary \ref{classifyclasses}) is that the three curves are $S_{ij}$, $E_i$ and $E_j$. Since $E_i$ and $E_j$ cannot intersect, these trivalent configurations are ruled out.
\end{proof}

The area 1 components are all understood: by corollary \ref{classifyclasses} they are of the form $E_j$, $S_{ij}$ or $2H-\sum_{j=1}^5E_j$. This latter case cannot occur, since the other components of a stable curve with total class $H$ would have to sum to $-H+\sum_{j=1}^5E_j$. This cannot be achieved by two area 1 curves, and there is no area 2 curve in this homology class since it would be somewhere injective (this is a primitive homology class) and have self-intersection -4, contradicting the adjunction inequality
\[0\leq A\cdot A-c_1(A)+2=-4\]
The area 2 components are therefore understood to be either $H-E_j$ or $H-S_{ij}=E_i+E_j$. This latter case is subsumed into the 3-component stable degenerations, as the unique curve in the class $E_i+E_j$ is the (disconnected) union of the exceptional spheres $E_i$ and $E_j$. One may check that all possible trees are GW-regular (as in the previous section) and so any $J$ is GW-regular for the class $H$. Theorem \ref{GW} implies that the bordism class of the pseudocycle
\[\ev_2:\mM^*_{0,2}(H,J)\rightarrow X\times X\]
\noindent is independent of $J$.

\begin{cor}\label{gwdensity}
For any $\omega$-compatible $J$ there is a dense set $\mathcal{D}$ of points in $X\times X$ in the image of $\ev_2$.
\end{cor}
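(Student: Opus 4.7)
The plan is to mirror the proof of Corollary \ref{gromwit}. From the analysis in this subsection, every $\omega$-compatible $J$ is GW-regular for the class $H$, so $\ev_2 \colon \mM^*_{0,2}(H,J) \to X\times X$ is a pseudocycle whose bordism class, by Theorem \ref{GW}, is independent of the choice of GW-regular $J$. The source has real dimension $8$, matching $\dim(X\times X)$, so this pseudocycle is top-dimensional and its algebraic intersection number with a single point of $X\times X$ is a well-defined integer.

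By Lemma \ref{pseudoint}(1), applied to the $0$-dimensional pseudocycle given by a single point, a dense set $\mathcal{D}\subset X\times X$ consists of points $(a,b)$ strongly transverse to $\ev_2$. For such a point the intersection number $\ev_2\cdot\{(a,b)\}$ is finite, and by Lemma \ref{pseudoint}(3) it equals the corresponding count computed using the standard integrable complex structure $J_0$ (which is GW-regular, as established above).

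To evaluate this invariant count, pick $(a,b)$ generic: distinct, with neither point on any exceptional divisor $E_j$, so that $\rho_n(a)$ and $\rho_n(b)$ are distinct points of $\PP{2}$ avoiding the $n$ blow-up points. There is then a unique line $\ell\subset\PP{2}$ through $\rho_n(a)$ and $\rho_n(b)$; its strict transform is the unique smooth $J_0$-sphere in the class $H$ passing through both $a$ and $b$, and placing the two ordered marked points at the preimages of $a$ and $b$ yields exactly one element of $\mM^*_{0,2}(H,J_0)$. Since holomorphic evaluation maps are orientation-preserving, this contributes $+1$; the bordism-invariant count is therefore $1$.

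As $1\neq 0$, every point of $\mathcal{D}$ lies in the image of $\ev_2$ for any $\omega$-compatible $J$, which supplies the required dense subset. The only step needing care is this model computation: one must verify that a generic $(a,b)$ avoids not only the exceptional loci but also the ``walls'' on which $J_0$-stable curves in class $H$ degenerate into the reducible configurations enumerated in Lemma \ref{pretty}. This is automatic, since each such wall is cut out by vanishing of a holomorphic section and hence has complex codimension at least $1$ in $X\times X$, so their union fails to be dense.
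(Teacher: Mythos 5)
Your proposal is correct and follows exactly the same strategy as the paper, which leaves the proof of this corollary implicit because it parallels the proof of Corollary \ref{gromwit} verbatim: take a dense set of strongly transverse points via Lemma \ref{pseudoint}(1), note the intersection number with a point is bordism-invariant by Lemma \ref{pseudoint}(3) and Theorem \ref{GW}, and evaluate it to be $1$ in the standard integrable model. Your closing remark about avoiding the ``walls'' of reducible configurations is already subsumed in the notion of strong transversality (the point must miss the limit set $\Omega_{\ev_2}$), so no additional verification is needed.
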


\begin{proof}[Proof of proposition \ref{Hclassexists}]
Let $(x,y)\in X\times X$ and pick a sequence of points $(x_i,y_i)$ in $\mathcal{D}\subset X\times X$ (as defined in corollary \ref{gwdensity}) tending to $(x,y)$. By the corollary, we can choose a sequence of stable curves $u_i\in\ev_2^{-1}(x_i,y_i)$ and there is a subsequence of these which Gromov converges to a stable curve in $\bM_{0,2}(H,J)$ through $(x,y)$. This proves surjectivity.
\end{proof}

\begin{proof}[Proof of proposition \ref{complextangent}]
Fix $u$, a smooth $J$-curve homologous to $H$ but not passing through $x$. Consider the set $\upsilon=\ev_2^{-1}(\{x\}\times u)$ of stable curves through $x$ hitting points of $u$.

\begin{lma}
$\ev_2|_{\upsilon}:\upsilon\rightarrow \{x\}\times u$ is a bijection.
\end{lma}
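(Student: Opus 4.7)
Surjectivity of $\ev_2|_\upsilon$ follows at once from Proposition \ref{Hclassexists}: the fiber $\ev_2^{-1}(x,y)$ is non-empty for every $y \in u$, so each point of $\{x\}\times u$ has a preimage in $\upsilon$.

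For injectivity, the idea is to apply McDuff's positivity of intersections (Theorem \ref{posint}) in two stages. First, fix $C \in \upsilon$ and test it against $u$. Because $x \notin u$, the stable map $C$ cannot contain $u$ as a component: otherwise the remaining components of $C$ would have classes summing to zero, hence all be ghost bubbles, and their combined image could not cover $x$. Therefore $C$ and $u$ share no non-constant component, and positivity yields $|C \cap u| \leq [C]\cdot[u] = H\cdot H = 1$. Combined with $y \in C \cap u$, this intersection consists of the single transverse point $y$; in particular, $y$ sits on a unique non-constant component $D$ of $C$, smooth at $y$, whose class $A$ satisfies $A \cdot H = 1$. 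A short enumeration using Corollary \ref{classifyclasses} and the adjunction inequality (Theorem \ref{adjform}) restricts the possibilities to $A \in \{H,\ H - E_j,\ S_{ij}\}$.

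Given $C_1, C_2 \in \upsilon$, apply positivity to the pair: if they shared no non-constant component as subsets of $X$, then $|C_1 \cap C_2| \leq [C_1]\cdot[C_2] = H\cdot H = 1$, contradicting $\{x,y\}\subseteq C_1 \cap C_2$. So $C_1$ and $C_2$ must share a common non-constant component. A case analysis on the homology class of this shared component, using the uniqueness of $J$-holomorphic representatives for the area-$1$ classes $E_j$ and $S_{ij}$ (Lemma \ref{existclasses}) and the uniqueness of area-$2$ representatives through a fixed point (Proposition \ref{singfolexists}), forces the underlying unmarked supports of $C_1$ and $C_2$ to coincide. Finally, because $D$ is smooth at $y$ and embedded, the marked point $z_2$ is pinned down as the unique preimage of $y$ on $D$; the stability requirement that every ghost bubble carry $\geq 3$ special points, combined with $x \neq y$, pins $z_1$ down as well, giving $C_1 = C_2$ as stable maps.

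The main obstacle is the finishing case analysis: one must rule out the possibility that $x$ lies at a node of a $C_i$, where $z_1$ could a priori be placed on any of the incident non-constant components or on a ghost bubble attached at that node. The stability condition together with the uniqueness of the area-$1$ representatives and $x \neq y$ is what ultimately collapses all such configurations to a single equivalence class in $\bM_{0,2}(H,J)$.
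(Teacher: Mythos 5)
Your overall strategy matches the paper's: surjectivity from Proposition \ref{Hclassexists}, injectivity from positivity of intersections (Theorem \ref{posint}). The paper's version is very terse and does not spell out the shared-component case; your expansion of that step is sound and follows the same logic the paper relies on.

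However, your handling of the scenario you flag as the ``main obstacle'' --- $x$ lying at a node of $C_i$ --- is not correct as stated. You claim that the stability condition, uniqueness of area-$1$ representatives, and $x\neq y$ collapse all placements of $z_1$ to a single equivalence class. That cannot work: if $x$ were genuinely at a node joining two non-constant components, then putting $z_1$ on one component, on the other, or on a ghost bubble inserted at the node would give three inequivalent stable maps with the same evaluation, and none of your cited facts rules any of them out. The correct and much shorter resolution is the one the paper invokes a few lines later (in the argument that $f_1$ restricts to a homeomorphism onto $\bM_{0,0}(H,x,J)$): the hypothesis $J\in\mJ_x$, i.e.\ $x\notin\Xi(J)$, directly forces $x$ to be a \emph{smooth} point of every stable curve in $\upsilon$. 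Indeed, by Lemma \ref{pretty} and Corollary \ref{classifyclasses}, a stable curve in class $H$ with $n\leq 4$ decomposes (up to ghost bubbles) as $H$, $(H-E_j)+E_j$, or $S_{ij}+E_i+E_j$, and in the nodal cases the nodes all sit on the exceptional spheres $E_j(J)$; since $x$ avoids these, it is not a node. Once you know $x$ is a smooth non-nodal point (and $y$ is too, by the transversality of $C\cap u$ you already established), the marked points are uniquely determined and the case analysis closes. So: same approach as the paper, correct in the main, but replace the stability-based handwave with the observation that $x\notin\Xi(J)$ kills the nodal case outright.
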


\noindent Surjectivity comes immediately from proposition \ref{Hclassexists}. Injectivity will follow from positivity of intersections. Two curves homologous to $H$ intersecting at $x$ must intersect transversely as $x$ does not lie on a component $E_i(J)$ by assumption. If they were also to intersect at $p\in u$ they would have intersection number greater than $1$ by positivity of intersections, but $H\cdot H=1$.

Since $\ev_2$ is continuous and all spaces involved are compact and Hausdorff, $\upsilon$ is homeomorphic to the 2-sphere $\{x\}\times u$. We will now show that $\bM_{0,0}(H,x,J)$ is homeomorphic to the 2-sphere.

There are continuous forgetful maps
\begin{align*}
f_2:\bM_{0,2}(H,J)\rightarrow\bM_{0,0}(H,J) \\
f_1:\bM_{0,1}(H,J)\rightarrow\bM_{0,0}(H,J)
\end{align*}
The restriction of $f_1$ to $\bM_{0,0}(H,x,J)=\ev^{-1}_1(x)\subset\bM_{0,1}(H,J)$ is a homeomorphism. To see this, first note that $x\notin\Xi(J)$ implies that $x$ is not a node of any stable curve in $\bM_{0,0}(H,x,J)$. By lemma \ref{pretty} and the results of the earlier sections, if $u\in\bM_{0,0}(H,x,J)$ then all components of $u$ are simple, embedded curves. Furthermore $x$ occurs on precisely one of the components of $u$. Therefore the restriction of $f_1$ to $\bM_{0,0}(H,x,J)$ is a continuous bijection of compact Hausdorff spaces.

The restriction of $f_2$ to $\upsilon$ lands in $\bM_{0,0}(H,x,J)$
and is a bijection (as every sphere through $x$ will also hit $u$). By the same point-set topological reasoning it is a homeomorphism. Therefore $\bM_{0,0}(H,x,J)$ is homeomorphic to a 2-sphere.

Finally, we consider the map
\[\tau:\bM_{0,0}(H,x,J)\rightarrow\mathbb{P}_x^JX\]
\noindent which sends a stable curve through $x$ to its complex tangent at $x$. This is well-defined since by assumption $J\in \mJ_x$, so $x\not\in\Xi(J)$. By positivity of intersections, any two distinct stable curves through $x$ in the homology class $H$ intersect transversely at $x$, so $\tau$ is injective. But a continuous injection from a 2-sphere to a 2-sphere is a homeomorphism, so to prove the proposition it suffices to show $\tau$ is continuous.

\begin{lma}
$\tau$ is a continuous map.
\end{lma}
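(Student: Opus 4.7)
The plan is to show that if $[u_n,z_n]\to [u_\infty,z_\infty]$ Gromov-converges in $\bM_{0,0}(H,x,J)$, then the complex tangent lines $\tau([u_n,z_n])$ converge in $\mathbb{P}_x^JX$ to $\tau([u_\infty,z_\infty])$. The guiding idea is that Gromov convergence, combined with the fact that the limiting marked point is not at a node, gives $C^1$ convergence of the appropriately reparametrised component carrying the marked point, and convergence of tangent lines is then automatic from convergence of derivatives.

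First I would identify, for each $n$ (including $n=\infty$), the unique component $C_n$ of $u_n$ whose domain contains the marked point $z_n$; by the classification immediately preceding this lemma, each $C_n$ is a simple embedded $J$-holomorphic sphere with $C_n(z_n)=x$. The key geometric input is that $z_\infty$ cannot be a nodal point of $u_\infty$: the hypothesis $x\notin\Xi(J)$ together with positivity of intersections and the enumeration in lemma \ref{pretty} forces $x$ to lie on precisely one component of $u_\infty$, which is exactly the input used above to show that $f_1$ restricts to a homeomorphism on $\bM_{0,0}(H,x,J)$.

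Next I would unpack the definition of the Gromov topology (\cite{MS04}, section 5.6): there exist M\"{o}bius reparametrisations $\phi_n\in PSL(2,\CC)$ such that $C_n\circ\phi_n\to C_\infty$ in $C^{\infty}_{\mbox{loc}}$ on the complement of the finitely many nodal points of $\dom(C_\infty)$, and such that the marked points satisfy $\phi_n^{-1}(z_n)\to z_\infty$. Choosing a neighbourhood $U\subset\dom(C_\infty)$ of $z_\infty$ disjoint from all nodes, the convergence $C_n\circ\phi_n\to C_\infty$ is in particular $C^1$ on $U$, and for $n$ large $\phi_n^{-1}(z_n)\in U$.

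Finally, since $\phi_n$ is biholomorphic, the image of $dC_n(z_n)$ coincides with the image of $d(C_n\circ\phi_n)(\phi_n^{-1}(z_n))$, so $\tau([u_n,z_n])$ equals the image of this latter derivative. By $C^1$ convergence on $U$ and $\phi_n^{-1}(z_n)\to z_\infty$, these derivatives converge as linear maps to $dC_\infty(z_\infty)$; because each $C_n$ and $C_\infty$ is embedded, the derivatives are injective, so their images are genuine $J$-invariant $2$-planes and the convergence takes place in $\mathbb{P}_x^JX$. The limiting line is precisely $\tau([u_\infty,z_\infty])$, proving continuity. The only real obstacle is step two, verifying that $z_\infty$ is not a node, after which the $C^1$ convergence of derivatives at the marked point is an immediate consequence of the definition of Gromov convergence.
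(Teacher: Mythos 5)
Your proposal is correct and takes essentially the same approach as the paper: reduce to sequential continuity (the Gromov topology is metrizable), use that $x\notin\Xi(J)$ to ensure the marked point is never at a node (of either $v_i$ or the limit $v$), and then invoke the $C^\infty_{\mathrm{loc}}$ (hence $C^1$) convergence away from nodes that Gromov convergence provides to get convergence of the complex tangent lines. You have simply unpacked the paper's very terse argument: you make explicit the identification of the component carrying the marked point, the role of M\"{o}bius reparametrisations and the fact that they preserve the image of the derivative, and the need for injectivity of $dC_n(z_n)$ (embeddedness) so that the tangent images are genuine $J$-complex lines. These are harmless clarifications rather than a different route.
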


Since the Gromov topology is metrizable it suffices to prove that $\tau$ is sequentially continuous, i.e. that the complex tangent at $x$ of a Gromov-limit $v$ of a sequence $v_i$ of stable curves through $x$ is the limit of their complex tangents. Gromov convergence implies $\mC^{\infty}$-convergence of $v_i$ to $v$ on compact subsets away from the nodes and since $x\not\in\Xi(J)$ by assumption $x$ is always a smooth point of $v_i$ and of $v$. Hence the claim follows.
\end{proof}

\section{Symplectic field theory}\label{SFT}

\subsection{The setting}\label{setting}

\subsubsection{Contact-type hypersurfaces}\label{hypersurfaces}

\begin{dfn}
A \emph{contact-type hypersurface} in a symplectic $2n$-manifold $(X,\omega)$ is a codimension 1 submanifold $M$ for which there exists a collar neighbourhood $N\cong(-\epsilon,\epsilon)\times M$ and a \emph{Liouville vector field} $\eta$ defined on $N$ which is transverse to $\{0\}\times M$ and satisfies $\mathcal{L}_{\eta}\omega=\omega$.
\end{dfn}

To a contact-type hypersurface one associates:

\begin{itemize}
\item The \emph{contact 1-form} $\lambda=\iota_{\eta}\omega$,
\item The \emph{contact hyperplane distribution} $\zeta=\ker\lambda$,
\item The \emph{Reeb vector field} $R$ satisfying $\iota_R d\lambda=0$, $\lambda(R)=1$.
\end{itemize}

\begin{dfn}
Given a contact-type hypersurface $M$, the product $\RR\times M$ admits a symplectic structure $d(e^t\lambda)$ and the symplectic manifold $\Sympl(M)=(\RR\times M,d(e^t\lambda))$ is called the symplectisation of $M$. We also write $\Sympl_{\pm}(M)$ for the positive/negative parts $\RR_{\pm}\times M$ of $\Sympl(M)$.
\end{dfn}

Any contact-type hypersurface $M\subset X$ has a neighbourhood which is symplectomorphic to a neighbourhood of $\{0\}\times M$ in $\Sympl(M)$. This symplectomorphism is realised by the flow of the Liouville field (which is to be identified with $\partial_t$ in $\RR\times M$).

\begin{dfn}
Let $M$ be a contact-type hypersurface in $(X,\omega)$ with Liouville vector field $\eta$ and let $J$ be a complex structure on the bundle $\zeta\rightarrow M$. If $d\lambda(\cdot,J\cdot)$ is a nondegenerate bundle metric then we say $J$ is $d\lambda$-compatible. We can define an almost complex structure $\tilde{J}$ on $\Sympl(M)$ by requiring that:

\begin{itemize}
\item $\tilde{J}$ is $\RR$-invariant,
\item $\tilde{J}\partial_t= R$,
\item $\tilde{J}|_{\zeta}=J$.
\end{itemize}

\noindent If $J$ is $d\lambda$-compatible then $\tilde{J}$ is $d(e^t\lambda)$-compatible. We say that $\tilde{J}$ is a \emph{cylindrical almost complex structure} on $\Sympl(M)$. An $\omega$-compatible almost complex structure on $X$ which restricts to a cylindrical almost complex structure on a collar neighbourhood $N$ of $M$ is said to be \emph{adjusted to $M$ for the Liouville field $\eta$}.
\end{dfn}

\subsubsection{Contact-type boundary, cylindrical ends}\label{bdryends}

Now let $(X,\omega)$ be a compact symplectic manifold with a boundary component $M$. Suppose that there is:

\begin{itemize}
\item a collar neighbourhood $N\cong (-\epsilon,0]\times M$ of $M$,
\item a Liouville field $\eta$ defined in $(-\epsilon,0)\times M$,
\item a smooth extension of $\eta$ to $(-\epsilon,\epsilon')\times M$ (for some $\epsilon'$) which is transverse to $\{0\}\times M$.
\end{itemize}

\noindent Then we say that this boundary component of $X$ is of \emph{contact-type}. Furthermore, if $\eta$ points out of $X$ along $M$ then we say that $M$ is \emph{convex} and we say it is \emph{concave} otherwise. The collar neighbourhood $N$ of a convex (respectively concave) boundary component $M$ is symplectomorphic to a neighbourhood in $\Sympl_-(M)$ (respectively $\Sympl_+(M)$).

The Liouville field gives us all the data we formerly had on a contact-type hypersurface, namely a contact 1-form, a contact hyperplane distribution and a Reeb vector field.

\begin{dfn}
The \emph{symplectic completion} of a compact symplectic manifold $(X,\omega)$ with contact-type boundary $M$ comprising $b$ components $\bigcup_{i=1}^b M_i$ is the union

\[\overline{X}=X\cup \bigcup_{i=1}^b\Sympl_{\pm}(M_i)\]

\noindent where $\pm$ indicates $+$ if $M_i$ is convex and $-$ if $M_i$ is concave. The noncompact parts in the union are called the \emph{ends of $\overline{X}$}. The identifications for convex (respectively concave) ends are via symplectomorphisms defined by the Liouville flows $\eta_i$ on neighbourhoods $N_i\cong(-\epsilon,0]\times M_i$ (respectively $N_i\cong[0,\epsilon)\times M_i$) of $M_i$.
\end{dfn}

Thus $\overline{X}$ is a non-compact symplectic manifold and we write $\overline{\omega}$ for its symplectic form. $\overline{X}$ is diffeomorphic to the interior $\mathring{X}\subset X$ via a diffeomorphism $\phi$:

\begin{itemize}
\item $\phi$ is the identity on $X\setminus\bigcup_{i=1}^b N_i$,
\item $\phi$ sends each $(-\epsilon,\infty)\times M_i$ (or $(-\infty,\epsilon)\times M_i$) to $\mathring{N}_i\cong(-\epsilon,0)\times M_i$ (respectively $(0,\epsilon)\times M_i$) by a diffeomorphism of the form $(t,m)\mapsto (g_i(t),m)$,
\item the function $g_i:(-\epsilon,\infty)\rightarrow (-\epsilon,0)$ (respectively $g_i:(-\infty,\epsilon)\rightarrow(0,\epsilon)$):
\begin{itemize}
\item is monotone and concave,
\item coincides with $t\mapsto -\epsilon e^{-t}/2$ on $(0,\infty)$ (respectively $t\mapsto \epsilon e^t/2$ on $(-\infty,0)$)
\item coincides with the identity near $-\epsilon$ (respectively $+\epsilon$).
\end{itemize}
\end{itemize}

\begin{dfn}
A compact symplectic manifold with contact-type boundary is called a \emph{compact symplectic cobordism}. The completion of a symplectic cobordism with respect to some choice of Liouville fields is called a \emph{completed symplectic cobordism}.
\end{dfn}

Finally, we discuss almost complex structures.

\begin{dfn}
Let $(X,\omega)$ be a symplectic cobordism with boundary $M$. Let $\eta$ be a Liouville field defined in a collar neighbourhood $N$ of $M$, with contact form $\lambda$ and hyperplane distribution $\zeta$. An $\omega$-compatible almost complex structure on $X$ is \emph{adjusted to $M$ for the Liouville field $\eta$} if it is of the form $\tilde{J}$ on $N$ for a $d\lambda$-compatible $J$ on $\zeta$ (where we are using $\eta$ to identify $N$ with a subset of the symplectisation of $M$). We sometimes write \emph{$\eta$-adjusted} for brevity, or just \emph{adjusted} with a tacit choice of $\eta$.
\end{dfn}

\noindent An $\eta$-adjusted almost complex structure $J$ extends cylindrically to an $\overline{\omega}$-compatible almost complex structure $\overline{J}$ on the symplectic completion $\overline{X}$. We call this the \emph{cylindrical completion} of $(X,J)$.

\begin{lma}
$\phi_*\overline{J}$ is an $\omega$-compatible almost complex structure on $\mathring{X}$.
\end{lma}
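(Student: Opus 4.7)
The plan is to reduce the claim to a statement on $\overline{X}$ via pullback: since $\phi$ is a diffeomorphism, $\phi_*\overline{J}$ is $\omega$-compatible on $\mathring{X}$ if and only if $\overline{J}$ is $\phi^*\omega$-compatible on $\overline{X}$. Away from the completed collars $\phi$ is the identity and $\overline{J}=J$ is $\omega$-compatible by hypothesis, so the only content is on the cylindrical ends. I would treat a convex boundary component $M$; the concave case is identical after the obvious sign change. The affected region is $(-\epsilon,\infty)\times M\subset\overline{X}$, on which $\overline{\omega}=d(e^t\lambda)$ and $\overline{J}$ is cylindrical, and $\phi$ carries it onto the collar piece $(-\epsilon,0)\times M\subset\mathring{X}$ carrying $\omega=d(e^s\lambda)$ via $(t,m)\mapsto(g(t),m)$.

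The first step is to compute the pullback explicitly. Since $\lambda$ is pulled back from $M$ a direct computation gives
\[
\phi^*\omega \;=\; d\bigl(e^{g(t)}\lambda\bigr) \;=\; e^{g(t)}g'(t)\, dt\wedge\lambda \;+\; e^{g(t)}\, d\lambda.
\]
The next step is to observe that with respect to the $\overline{J}$-invariant splitting $T(\RR\times M)=\langle\partial_t,R\rangle\oplus\zeta$ this two-form is block-diagonal: $\lambda$ kills $\zeta$, while $d\lambda$ (being pulled back from $M$) vanishes on $\partial_t$, and $\iota_R d\lambda=0$ by the definition of the Reeb field. Compatibility of $\overline{J}$ with $\phi^*\omega$ therefore reduces to two independent checks, one on each summand.

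On $\zeta$ the restriction $\phi^*\omega|_\zeta = e^{g(t)}\, d\lambda|_\zeta$ is a positive rescaling of $d\lambda|_\zeta$, with which $\overline{J}|_\zeta = J$ is compatible by the hypothesis of $\eta$-adjustedness. On the Reeb plane one has $\phi^*\omega(\partial_t,R)=e^{g(t)}g'(t)$, and since $\overline{J}$ sends $\partial_t$ to $R$ and $R$ to $-\partial_t$, compatibility on this plane is equivalent to the strict positivity $g'(t)>0$. This positivity is the only nontrivial input, but it is built directly into the construction of $g$: one has $g'(t)=\tfrac{1}{2}\epsilon e^{-t}>0$ on $(0,\infty)$ and $g'=1$ near $-\epsilon$, with the monotonicity clause interpolating in between. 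Thus there is no real obstacle; the assertion is ultimately the statement that the monotonicity condition imposed on $g$ is exactly strong enough to preserve compatibility through the reparametrisation of the cylindrical end.
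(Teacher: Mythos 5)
Your proof is correct and reaches the same conclusion, but the organisation is genuinely different and somewhat cleaner than the paper's. The paper works directly on $\mathring{X}$: it decomposes a tangent vector $v=a\partial_{\RR}+bR+\xi$, computes $\phi_*\overline{J}v$ explicitly, and verifies both $\omega(v,\phi_*\overline{J}v)>0$ and the symplectic-pairing condition $\omega(\phi_*\overline{J}v,\phi_*\overline{J}w)=\omega(v,w)$ by direct expansion against $\omega=e^t(dt\wedge\lambda+d\lambda)$, with the second check resting on the chain-rule identity $g'(g^{-1}(t))\,(g^{-1})'(t)=1$. You instead conjugate by the diffeomorphism $\phi$ and check that $\overline{J}$ is $\phi^*\omega$-compatible on $\overline{X}$, noting that $\phi^*\omega = e^{g(t)}g'(t)\,dt\wedge\lambda + e^{g(t)}\,d\lambda$ is block-diagonal for the $\overline{J}$-invariant splitting $\langle\partial_t,R\rangle\oplus\zeta$. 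This buys you two things: the condition $\omega(\overline{J}\cdot,\overline{J}\cdot)=\omega(\cdot,\cdot)$ becomes essentially automatic (on the $2$-dimensional Reeb block it holds because $\det\overline{J}=1$, and on $\zeta$ it is the assumed $d\lambda$-compatibility of $J$), and the only nontrivial input is isolated to the single scalar inequality $g'>0$. A further small merit of your formulation is that it makes transparent that strict monotonicity of $g$ (i.e.\ $g'>0$, which already follows from $g$ being a diffeomorphism of intervals) is what drives the argument; the paper's appeal to concavity of $g$ and $g^{-1}$ is not actually doing any work here.
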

\begin{proof}
If $v=a\partial_{\RR}+bR+\xi$ where $\partial_{\RR}$ is the Liouville direction, $R$ is the Reeb field and $\xi\in\zeta$, then

\begin{eqnarray*}
\overline{J}(v)&=&-b\partial_{\RR}+aR+J\xi \\
\phi_*\overline{J}(v)&=&-g'(g^{-1}(t))b\partial_{\RR}+a(g^{-1})'(t)R+J\xi
\end{eqnarray*}

\noindent so, as $\omega=d(e^t\lambda)=e^t(dt\wedge\lambda+d\lambda)$,

\[\omega(v,\phi_*\overline{J}v)=e^t\left(a^2(g^{-1})'(t)+b^2g'(g^{-1}(t))+d\lambda(\xi,J\xi)\right)\]

\noindent which is positive because $J$ is $d\lambda$-compatible and $g$ and $g^{-1}$ are both concave. Also,

\[\omega(\phi_*\overline{J}v,\phi_*\overline{J}w)=g'(g^{-1}(t))(g^{-1})'(t)(a_vb_w-a_w b_v)+\omega(\xi_v,\xi_w)\]

\noindent and the first term is just $(gg^{-1})'(t)=1$, therefore

\[\omega(\phi_*\overline{J}v,\phi_*\overline{J}w)=\omega(v,w)\]
\end{proof}

\subsubsection{Neck-stretching}

Given a contact-type hypersurface $M$ in a closed symplectic manifold $(X,\omega)$, one can cut along it to obtain a compact symplectic cobordism $X'$ with two boundary components $M_1$ and $M_2$. A Liouville field $\eta$ for $M$ restricts to two Liouville fields $\eta_1$ and $\eta_2$ on collar neighbourhoods of $M_1$ and $M_2$ respectively, so that these are boundary components of contact-type. Since $\eta$ is transverse to $M$, one of $M_1$, $M_2$ must be convex, the other concave. Without loss of generality, suppose $M_1$ is convex.

Let $J_1$ be an $\omega$-compatible almost complex structure on $X$ which is adjusted to $M$ for a Liouville field $\eta$. One forms a family $J_t$ of $\omega$-compatible almost complex structures on $X$ as follows:

\begin{itemize}
\item Let $\mathcal{F}_s$ be the flow of $\eta$.
\item Let $I_t=[-t-\epsilon,t+\epsilon]$ and define a diffeomorphism $\Phi_t:I_t\times M\rightarrow X$ by

\[\Phi_t(s,m)=\mathcal{F}_{\beta(s)}(m)\]

\noindent Here $\beta:I_t\rightarrow[-\epsilon,\epsilon]$ is a strictly monotonically increasing function satisfying $\beta(s)=s+t$ on $[-t-\epsilon,-t-\epsilon/2]$ and $\beta(s)=s-t$ on $[t+\epsilon/2,t+\epsilon]$.
\item Equip $I_t\times M$ with the symplectic form

\begin{equation}\label{phistaromega}
\omega_{\beta}=d\left(e^{\beta(s)}\iota_{\eta}\omega\right)=\Phi_t^*\omega
\end{equation}

\item Let $\tilde{J}_t$ be the $\eta$-invariant almost complex structure on $I_t\times M$ such that $\tilde{J}_t|_{\zeta}=J_{\zeta}$.
\item Glue the almost complex manifold $(X\setminus \Phi_t(I_t\times M),J)$ to $(I_t\times M,\tilde{J}_t)$ via $\Phi_t$. By equation \ref{phistaromega}, the result is symplectomorphic to $X$, but it has a new $\omega$-compatible almost complex structure, which we denote by $J_t$.
\end{itemize}

\begin{figure}[htb]
	\centering
		\includegraphics[height=5cm]{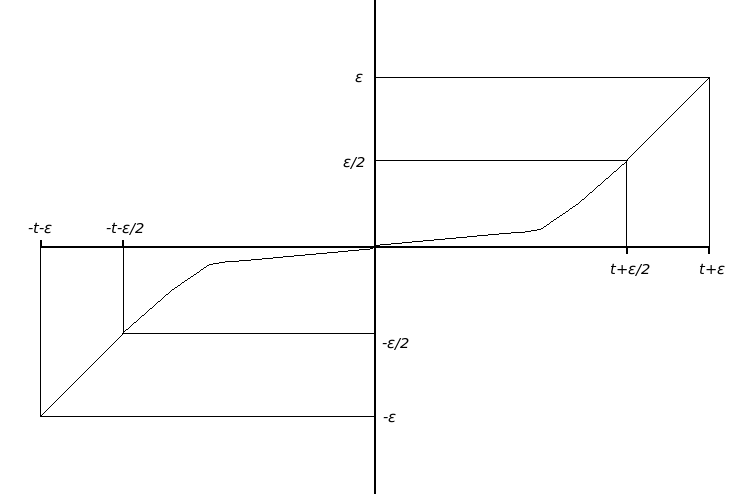}
	\caption{A valid function $\beta$ for defining $\omega_{\beta}$.}
	\label{fig:beta}
\end{figure}

This sequence $J_t$ of almost complex structures on $X$ is called a \emph{neck-stretch of $J_1$ along $M$}. We also consider the noncompact almost complex manifold $(X_{\infty},J_{\infty})$ which is the cylindrical completion of $(X',J_1)$.

\subsubsection{Reeb dynamics}

The closed orbits of the Reeb flow $\phi_t$ (generated by $R$) are important. Let $\tilde{N}_T$ denote the space of closed Reeb orbits in $M$ with period $T$.

\begin{dfn}
The Reeb flow $\phi_t$ is of Morse-Bott type if for every $T$ the space $\tilde{N}_T$ is a smooth closed submanifold of $M$ with $d\lambda|_{\tilde{N}_T}$ of locally constant rank and if the linearised return map $d\phi_T-\id$ is non-degenerate in the normal directions to $\tilde{N}_T$, i.e. its kernel is the tangent space to $\tilde{N}_T$.
\end{dfn}

The minimal period of a Reeb orbit $\gamma$ is the smallest period of any Reeb orbit $\gamma'$ with $\gamma(\RR)=\gamma'(\RR)$. We denote by $N_T$ the moduli orbifold $\tilde{N}_T/S^1$ of Reeb orbits; the orbifold points correspond to Reeb orbits with period $T/m$ covered $m$ times.

In the Morse-Bott case, we have the following lemma (\cite{Bou02}, lemma 3.1):

\begin{lma}\label{coords}
Let $M$ be a $2n-1$-manifold with contact form $\lambda$ and Reeb field $R$ of Morse-Bott type. Let $\gamma$ be a closed Reeb orbit of period $T$ with minimal period $\tau=T/k$. Suppose $N=\tilde{N}/S^1$ is the moduli orbifold of Reeb orbits containing $\gamma$. Then there is a tubular neighbourhood $V$ of $\gamma(\RR)$, a neighbourhood $U\subset S^1\times\RR^{2n-2}$ of $S^1\times\{0\}$ and a covering map $\psi:U\rightarrow V$ such that

\begin{itemize}
\item $V\cap \tilde{N}$ is invariant under the Reeb flow,
\item $\psi|_{S^1\times\{0\}}$ covers $\gamma(\RR)$ exactly $k$ times,
\item the $\phi$-preimage of a periodic orbit $\gamma'$ in $V\cap \tilde{N}$ is a union of circles $S^1\times\{a\}$ where $a\in\{0\}\times\RR^{\dim(N)}\subset\RR^{2n-2}$,
\item $\psi^*\lambda=f\lambda_0$, where $\lambda_0=d\theta+\sum_{i=1}^{n-1} x_idy_i$ is the standard contact form on $S^1\times\RR^{2n-2}$ and
\item $f$ is a positive smooth function $f:U\rightarrow\RR$ satisfying $f|_{S^1\times\{0\}\times\RR^{\dim(N)}}\equiv T$ and $D_{(\theta,0,a)}f=0$ for all $(0,a)\in\{0\}\times\RR^{\dim(N)}$.
\end{itemize}
\end{lma}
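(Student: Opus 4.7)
The plan is to construct the chart in two steps: first put $\lambda$ in normal form near the embedded simple orbit $\gamma_0 = \gamma(\RR)$ (of minimal period $\tau = T/k$), and then pass to a $k$-fold cover and rescale to obtain the chart adapted to $\gamma$ itself.

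For the first step, pick a base point $p_0 \in \gamma_0$ and a small Reeb-transverse disc $\Sigma \subset M$ at $p_0$ with $T_{p_0}\Sigma = \zeta_{p_0}$. The Reeb flow defines a local first-return map $P \colon \Sigma' \to \Sigma$, which is a symplectomorphism for $d\lambda|_\zeta$. The Morse-Bott hypothesis says exactly that $\mathrm{Fix}(P) = \tilde N \cap \Sigma$ is a smooth submanifold with $T_{p_0}\mathrm{Fix}(P) = \ker(DP_{p_0} - \mathrm{Id})$, and since $DP_{p_0}$ is symplectic with $1$ a semisimple eigenvalue, this kernel $K$ is a symplectic subspace of $\zeta_{p_0}$ of dimension $\dim N$, with symplectic complement $K^{\perp}$ on which $DP_{p_0} - \mathrm{Id}$ is invertible. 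Choose symplectic linear coordinates on $\zeta_{p_0}$ adapted to the splitting $K^{\perp} \oplus K$, and assemble them with the Reeb flow into the map $\Psi_0 \colon (-\epsilon, \tau+\epsilon) \times \Sigma \to M$, $(t,v) \mapsto \phi_t(v)$, which descends via $(t+\tau, v) \sim (t, P(v))$ to a diffeomorphism of the corresponding mapping torus onto a tubular neighborhood of $\gamma_0$.

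Next, use a Moser-type argument to put $\Psi_0^*\lambda$ into the shape $f_0 \lambda_0^{(\tau)}$ on a neighborhood of the zero section $S^1_\tau \times \{0\}$, where $\lambda_0^{(\tau)} = d\vartheta + \sum x_i dy_i$ on $\RR/\tau\ZZ \times \RR^{2n-2}$. Interpolate between $\Psi_0^*\lambda$ and $\lambda_0^{(\tau)}$ by the straight-line homotopy and integrate the resulting time-dependent vector field; tangency of the interpolant to $\tilde N$ (guaranteed by the splitting $K^{\perp}\oplus K$) together with the non-degeneracy of $DP - \mathrm{Id}$ on $K^{\perp}$ lets us simultaneously straighten $\tilde N$ to $S^1_\tau \times \{0\} \times \RR^{\dim N}$ and bring $\lambda$ into conformal form, with $f_0 \equiv 1$ on $\tilde N$. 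The condition $Df_0 = 0$ along $\tilde N$ then follows from the formula for how the Reeb vector field transforms under conformal rescaling: since $\tilde N$ is by hypothesis a union of Reeb orbits of $\lambda$, the Reeb field of $f_0 \lambda_0^{(\tau)}$ must be tangent to $\tilde N$, which is equivalent to $df_0$ vanishing in the directions normal to $\tilde N$.

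Finally, compose with the $k$-sheeted cover $\rho \colon \RR/T\ZZ \to \RR/\tau\ZZ$, $\theta \mapsto \theta \bmod \tau$, followed by the rescaling $(\theta, x_i, y_i) \mapsto (\theta/T, x_i/T, y_i)$. This transforms $\lambda_0^{(\tau)}$ into $T \cdot \lambda_0$ for $\lambda_0 = d\theta + \sum x_i dy_i$ on $\RR/\ZZ \times \RR^{2n-2}$, so that $\psi^*\lambda = f \lambda_0$ with $f = T f_0 \equiv T$ on $S^1 \times \{0\} \times \RR^{\dim N}$ and $Df = 0$ there, while $\psi|_{S^1 \times \{0\}}$ covers $\gamma_0$ exactly $k$ times as required. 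The main obstacle is the Moser step: one must simultaneously normalize $\lambda$ to $f_0\lambda_0^{(\tau)}$ and straighten the Morse-Bott leaf $\tilde N$, and it is precisely the symplectic splitting $\zeta_{p_0} = K^{\perp} \oplus K$ furnished by the Morse-Bott hypothesis that makes both conditions achievable by a single interpolating isotopy.
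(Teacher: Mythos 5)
The central gap is your assertion that $1$ is a semisimple eigenvalue of $DP_{p_0}$, from which you conclude that $K = \ker(DP_{p_0}-\mathrm{Id})$ is a symplectic subspace of $\zeta_{p_0}$ with symplectic complement $K^{\perp}$. The Morse-Bott hypothesis as stated does \emph{not} give you this. It requires only that $\ker(d\phi_T - \mathrm{id}) = T\tilde N_T$ and that $d\lambda|_{\tilde N_T}$ have locally constant rank; it does not preclude generalized eigenvectors for the eigenvalue $1$ living outside $T\tilde N_T$. For a simple counterexample to the asserted implication, take $DP_{p_0}$ to be the shear $\left(\begin{smallmatrix}1&1\\0&1\end{smallmatrix}\right)$ on $\zeta_{p_0}\cong\RR^2$: this is symplectic, $\ker(DP_{p_0}-\mathrm{Id})$ is one-dimensional and hence isotropic rather than symplectic, and yet the Morse-Bott condition holds with $\tilde N$ one-dimensional (transverse to $\gamma$). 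In fact, the locally-constant-rank clause in the definition is there precisely because $K$ can be isotropic, coisotropic, or neither; the statement of the lemma itself does not promise that $\{0\}\times\RR^{\dim N}$ sits symplectically inside $\RR^{2n-2}$.

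This failure is not cosmetic, since the symplectic splitting $\zeta_{p_0}=K^{\perp}\oplus K$ is exactly what you lean on in the Moser step to simultaneously straighten $\tilde N$ and conformally normalize $\lambda$. Without it you must instead choose Darboux-type coordinates on $(\Sigma,d\lambda|_{\Sigma})$ adapted to a submanifold $\tilde N\cap\Sigma$ on which the restricted two-form merely has constant, possibly degenerate rank, and then carry this through a relative Gray/Moser argument --- which is what Bourgeois's proof actually does and what you would need to reproduce. Two smaller points that deserve more care even once the splitting is repaired: (i) the straight-line interpolation between $\Psi_0^*\lambda$ and $\lambda_0^{(\tau)}$ must be shown to consist of contact forms (and to admit a generating vector field tangent to $\tilde N$), not merely asserted; (ii) achieving $f\equiv T$ pointwise on $\tilde N$, rather than only $\int_{S^1} f\,d\theta = T$, requires an argument --- constancy of $f$ along the unparametrized orbit does not follow from the period condition alone but must be built into the normalization.
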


\subsection{Punctured finite-energy holomorphic curves}\label{puncgeneralities}

\subsubsection{Definitions}

Let $(X,\omega)$ be a compact symplectic cobordism with boundary $M=M_+\cup M_-$ (where $M_+$ denotes the union of convex components and $M_-$ the union of concave components) and let $J$ be an $\omega$-compatible almost complex structure on $X$ which is adjusted to $M$ for a choice of Liouville fields $\eta$. Let $(\overline{X},\overline{J})$ be the symplectic completion with its cylindrically-extended almost complex structure and denote by $E_+$ and $E_-$ the set of convex (respectively concave) ends of $\overline{X}$.

\begin{dfn}[Energy]
Let $(S,j)$ be a closed Riemann surface and

\[Z=\{z_1,\ldots,z_k\}\subset S\]

\noindent a set of punctures. A map $F:S\setminus Z\rightarrow \overline{X}$ has energy

\[E(F)=\int_{\overline{X}\setminus E_+\cup E_-}F^*\omega+E_{\lambda}(F)\]

\noindent where $E_{\lambda}$ is given by

\[\sup_{\phi_{\pm}\in\mC}\left(\int_{F^{-1}(E_+)}\left(\phi_+\circ a_+\right)da_+\wedge v_+^*\lambda_++\int_{F^{-1}(E_-)}\left(\phi_-\circ a_-\right)da_-\wedge v_-^*\lambda_-\right)\]

\noindent where $F|_{E_{\pm}}=(a_{\pm},v_{\pm})$ in coordinates on $\RR_{\pm}\times M_{\pm}$, $\lambda_{\pm}$ is the contact form on $M_{\pm}$ and $\mC$ consists of pairs of functions $(\phi_-,\phi_+)$ where $\phi_{\pm}:\RR_{\pm}\rightarrow \RR$ is such that

\[\int_0^{\infty}\phi_+(s)ds=\int_{-\infty}^0\phi_-(s)ds=1\]
\end{dfn}

\begin{dfn}[Punctured curves]
Let $(S,j)$ be a closed Riemann surface and $Z\subset S$ a set of punctures. A punctured finite-energy $\overline{J}$-holomorphic curve is a $(j,\overline{J})$-holomorphic map $F:\dot{S}:=S\setminus Z\rightarrow \overline{X}$ with finite energy. We write $(S,j,Z,F)$ for the data of a punctured finite-energy holomorphic curve.
\end{dfn}

We also define the oriented blow-up $S^Z$ of $S$ at the punctures to be the compactification of $\dot{S}$ obtained by replacing $z\in S$ with the circle $\Gamma_z$ of oriented real lines in $T_zS$ through $z$. This comes with a canonical circle action on $\Gamma_z$ because the complex structure on $T_zS$ allows us to define multiplication by $e^{it}$ which lifts to $\Gamma_z$. This will be relevant later when we examine the asymptotics of punctured curves.

\subsubsection{Asymptotics}

We begin by recalling the following important observation of Hofer \cite{Hof93}:

\begin{lma}\label{basicasymptotics}
Let $F=(a,v):\CC^*\rightarrow \RR\times M$ be a finite-energy $J$-holomorphic cylinder where $J$ is a cylindrical $d(e^t\lambda)$-compatible almost complex structure on $\Sympl(M)$ for some contact form $\lambda$ on $M$. Then there is a $T>0$, a Reeb orbit $\gamma$ and a sequence $R_k\rightarrow\infty$ such that

\begin{equation}\label{asymptote}
\lim_{k\rightarrow\infty}v(R_ke^{2\pi it})=\gamma(Tt)
\end{equation}

\noindent with convergence in $\mC^{\infty}(S^1,M)$.
\end{lma}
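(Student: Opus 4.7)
\emph{Plan.} The strategy is the classical Hofer argument. Change cylindrical coordinates via $z = e^{2\pi(s+it)}$ so that $\CC^*$ is identified with $\RR\times S^1$ and the puncture $|z|\to\infty$ becomes $s\to+\infty$. In these coordinates write $F(s,t) = (a(s,t), v(s,t))$. The $\overline{\partial}_J$-equation for a cylindrical $J$ on $\Sympl(M)$ decomposes, using the splitting $T(\RR\times M) = \RR\partial_t\oplus\RR R\oplus\zeta$, into the three real equations
\begin{align*}
\partial_s a &= \lambda(\partial_t v), \\
\partial_t a &= -\lambda(\partial_s v), \\
\pi_{\zeta}\partial_s v + J\,\pi_{\zeta}\partial_t v &= 0,
\end{align*}
and the finite-energy hypothesis translates into $\int \lvert\pi_{\zeta}dv\rvert^2<\infty$ together with a uniform bound on $\int_{\{s\}\times S^1} v^*\lambda$.

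\emph{Step 1: gradient bound near the puncture.} The main technical step is to show that $\lvert dF\rvert$ is uniformly bounded on some half-cylinder $[s_0,\infty)\times S^1$. If not, one finds a sequence $\zeta_k\in\CC^*$ with $\lvert\zeta_k\rvert\to\infty$ and $\lvert dF(\zeta_k)\rvert\to\infty$, and rescales (the usual Hofer bubbling lemma producing a well-controlled rescaling radius from finite energy) to obtain, in the limit, a non-constant $\overline{J}$-holomorphic map $w:\CC\to\RR\times M$ of finite Hofer energy. Since $d(e^t\lambda)$ is exact, $w$ cannot be a sphere; Hofer's classification of finite-energy planes then forces $w$ to be asymptotic to a closed Reeb orbit of some positive period $\tau$. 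Because the $\lambda$-energy captured by these rescaled bubbles is bounded below by the minimal period of Reeb orbits in the image, accumulating such bubbles at the puncture contradicts the finiteness of $E_\lambda(F)$. Hence $\lvert dF\rvert$ is uniformly bounded near $s=+\infty$.

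\emph{Step 2: extracting a subsequential limit.} Pick any sequence $s_k\to\infty$ and set
\[
F_k(s,t) = \bigl(a(s+s_k,t) - a(s_k,0),\, v(s+s_k,t)\bigr).
\]
The uniform gradient bound together with elliptic bootstrapping for the $\overline{\partial}_{\overline{J}}$-equation gives uniform $\mC^\infty_{\mathrm{loc}}$ bounds, so after passing to a subsequence $F_k\to F_\infty = (a_\infty,v_\infty)$ in $\mC^\infty_{\mathrm{loc}}(\RR\times S^1,\RR\times M)$. Because $\int_{[s_k,\infty)\times S^1} F^*d(e^t\lambda)\to 0$ as $k\to\infty$ (total $\omega$-energy is finite), the limit satisfies $\pi_\zeta\,dv_\infty\equiv 0$, so $v_\infty$ is tangent to the Reeb flow. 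The cylindrical equations then reduce to $\partial_s a_\infty = \lambda(\partial_t v_\infty)$ being constant in $t$ and $\partial_s v_\infty$ proportional to $R$, which forces
\[
v_\infty(s,t) = \varphi_{Tt+c(s)}(m_0)
\]
for some $m_0\in M$, where $\varphi$ is the Reeb flow and $T = \int_{\{s\}\times S^1} v^*_\infty\lambda$ is independent of $s$ by Stokes. The cylinder equations further force $c(s)$ to be constant. Reparametrising the $S^1$-factor, $v_\infty(0,t) = \gamma(Tt)$ for a closed Reeb orbit $\gamma$, and positivity of $T$ follows from the non-triviality of the limit (otherwise the original curve would have trivial $\lambda$-flux at infinity, which combined with vanishing $\omega$-energy past a large $s_0$ would contradict the non-constancy of $F$ on that end, or yield $T=0$ in which case the limit is a constant map and we choose any orbit; the statement as phrased asks only for some $T>0$, and this case can be handled by choosing the sequence differently or using the fact that a genuinely finite-energy cylinder with non-removable puncture must have positive flux, see Hofer \cite{Hof93}).

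\emph{Step 3: conclusion.} Setting $R_k = \lvert z_k\rvert = e^{2\pi s_k}$, the restriction to $\{s=0\}$ gives
\[
v(R_k e^{2\pi i t}) = v_\infty(0,t) + o(1) = \gamma(Tt) + o(1)
\]
in $\mC^\infty(S^1,M)$, which is the claimed convergence (\ref{asymptote}).

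\emph{Main obstacle.} Step 1, the uniform gradient bound at the puncture, is the heart of the argument. It is where the specific features of the symplectisation geometry enter: the exactness of $d(e^t\lambda)$ rules out sphere bubbles, and the Morse-Bott (or non-degeneracy) properties of the Reeb flow guarantee a positive lower bound on periods so that bubbles cannot accumulate without exhausting $E_\lambda(F)$. The remaining steps are essentially elliptic regularity plus Arzel\`a--Ascoli.
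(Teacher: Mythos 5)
The paper does not prove this lemma: it is quoted as ``the following important observation of Hofer'' and the reference \cite{Hof93} is cited in lieu of an argument. There is therefore no in-paper proof to compare against; your task becomes reconstructing the standard proof, and your sketch does so correctly along the classical Hofer lines (rewrite the Cauchy--Riemann equation in the splitting $\RR\partial_t\oplus\RR R\oplus\zeta$, establish a uniform gradient bound near the puncture by a bubbling-off argument using the positive lower bound on Reeb periods coming from compactness of $M$, translate and extract a $\mC^\infty_{\mathrm{loc}}$-convergent subsequence, and use that the $d\lambda$-energy on the end tends to zero to force the limit to be an orbit cylinder). Two small remarks. First, Step~1 is slightly stronger than what the lemma needs: the as-stated conclusion only requires convergence along \emph{some} sequence $R_k$, and Hofer's original proof in fact avoids establishing a global gradient bound, instead choosing $R_k$ so that the $d\lambda$-energy on the surrounding annuli is small and then bounding the gradient only there; both routes work, and yours gives the genuinely true uniform gradient bound as a bonus. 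Second, your parenthetical discussion of the case $T=0$ is exactly the right thing to flag: as literally stated, the lemma is false for a constant cylinder or for a cylinder whose puncture at $\infty$ is removable, and the intended reading is that the puncture under consideration is non-removable (which is always the case in the paper's applications, since the curves in question have $[L]\cdot E_i\neq 0$ and hence genuinely cross the neck). So the proposal is a correct reconstruction, with sensible commentary on the edge case the paper's citation leaves implicit.
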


Suppose moreover that the Reeb flow on $(M,\lambda)$ is Morse-Bott. Lemma \ref{basicasymptotics} can be strengthened significantly in this setting. Suppose $F=(a,v)$ is a finite-energy cylinder and $\gamma$ is a Reeb orbit for which there is a sequence $R_k$ such that equation \ref{asymptote} holds.

\begin{prp}[\cite{HWZ96}, proposition 2.1]
Let $F=(a,v)$ be a map as in lemma \ref{basicasymptotics}. Let $\Gamma$ be the subspace of $\mC^{\infty}(S^1,M)$ consisting of $T$-periodic Reeb orbits in the Morse-Bott family containing $\gamma$. If $W$ is an $S^1$-invariant neighbourhood of $\Gamma$ then there is a constant $R_0$ such that for all $R\geq R_0$, $v(R,\cdot)\in W$.
\end{prp}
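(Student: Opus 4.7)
The plan is to argue by contradiction, promoting the subsequential convergence of lemma \ref{basicasymptotics} to the claimed uniform statement via a compactness argument combined with an intermediate-value step, exploiting the Morse-Bott structure to rule out ``runaway'' limit orbits.

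First I would shrink $W$ without loss of generality. Because $M$ is closed, $\tilde{N}_T$ is a closed submanifold and $\Gamma$ is a union of connected components, so $\Gamma$ and $\tilde{N}_T\setminus\Gamma$ are disjoint closed sets. Hence we may pick an $S^1$-invariant open set $W'$ with $\Gamma\subset W'$, $\overline{W'}\subset W$, and $\overline{W'}\cap \tilde{N}_T=\Gamma$. Suppose for contradiction that $v(R_n,\cdot)\notin W'$ for some sequence $R_n\to\infty$. By lemma \ref{basicasymptotics} there is a sequence $R_k\to\infty$ with $v(R_k,\cdot)\to \gamma(T\cdot)$ in $\mathcal{C}^\infty(S^1,M)$, so in particular $v(R_k,\cdot)\in W'$ for large $k$. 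By continuity in $R$, between each such $R_k$ and a nearby $R_n$ outside $W'$ we can find $R'_k\to\infty$ with $v(R'_k,\cdot)\in\partial W'$.

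Next I would extract a limit. The finite-energy condition together with Hofer's gradient bound (the same one underlying lemma \ref{basicasymptotics}; any blow-up of $|dF|$ would yield a non-constant finite-energy plane, contradicting the standing hypotheses) gives a uniform $\mathcal{C}^\infty$-bound on the family of loops $v(R,\cdot)$ for $R$ large. By Arzel\`a--Ascoli, a subsequence of $v(R'_k,\cdot)$ converges in $\mathcal{C}^\infty(S^1,M)$ to some loop $\sigma$. I would then show $\sigma$ parametrises a $T$-periodic Reeb orbit. Finiteness of $E_\lambda(F)$ forces $\int_{R\le |z|\le R'} v^*d\lambda\to 0$ as $R,R'\to\infty$, which combined with the $d\lambda$-compatibility of $\overline J$ forces the $\zeta$-component of $dv$ along $\{|z|=R'_k\}$ to tend to zero; hence $\sigma'(t)$ is a positive multiple of the Reeb field, and by Stokes the period $\int_{|z|=R}v^*\lambda$ converges as $R\to\infty$ to the value $T$ determined along the sequence $R_k$. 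Thus $\sigma\in\tilde{N}_T$.

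Finally I would close the contradiction: $\sigma$ is the $\mathcal{C}^\infty$-limit of loops in $\partial W'$, so $\sigma\in\partial W'$; combined with $\sigma\in\tilde{N}_T$ and the choice of $W'$, this gives $\sigma\in\partial W'\cap\tilde{N}_T\subset \overline{W'}\cap\tilde{N}_T=\Gamma$, whereas $\Gamma\subset W'$ is disjoint from $\partial W'$, a contradiction.

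The main obstacle will be the intermediate step of identifying $\sigma$ as a \emph{$T$-periodic} Reeb orbit: the loop bound and Arzel\`a--Ascoli yield \emph{some} $\mathcal{C}^\infty$-limit cheaply, but pinning down that the limit lies in $\tilde{N}_T$ rather than $\tilde{N}_{T'}$ for some other period $T'$ requires the quantitative use of finite energy together with the closedness of $v^*d\lambda$ on long annuli and a careful accounting of the Reeb-period contributions to $E_\lambda$. Once this is in hand, the Morse-Bott assumption enters only through the mild topological input that $\Gamma$ is a clopen piece of $\tilde{N}_T$, which is what allows the separation by $W'$.
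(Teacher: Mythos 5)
The paper does not prove this proposition; it is imported verbatim as Proposition~2.1 of Hofer--Wysocki--Zehnder, so there is no internal proof to compare. That said, your strategy---contradiction, intermediate value to land on $\partial W'$, Arzel\`a--Ascoli via the gradient bound, and closing via the clopen-ness of $\Gamma$ among parametrised $T$-periodic orbits---is the right shape. A small tidiness point: you write $\overline{W'}\cap\tilde N_T=\Gamma$, but $\tilde N_T\subset M$ is a set of points while $\Gamma,W'\subset\mC^\infty(S^1,M)$ are sets of loops; you should replace $\tilde N_T$ there by the set of \emph{parametrised} $T$-periodic Reeb loops, which is closed in $\mC^\infty(S^1,M)$ and for which the lift of $\Gamma$ is indeed a clopen piece in the Morse--Bott setting.

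The genuine gap is in the line ``hence $\sigma'(t)$ is a positive multiple of the Reeb field, and by Stokes the period \dots converges to $T$. Thus $\sigma\in\tilde N_T$.'' What this actually establishes is $\dot\sigma(t)=f(t)\,R(\sigma(t))$ for some $f\ge 0$ with $\int_{S^1}f=T$; it does not show $f\equiv T$. A non-constant $f$ gives a reparametrised trajectory whose image lies in $\tilde N_T$ but whose loop is \emph{not} in $\Gamma$, and since $W$ is only assumed $S^1$-invariant (rotations, not arbitrary reparametrisations) the contradiction with $\Gamma\subset W'$ does not close. You flag the ``which $T'$?'' difficulty, but not this parametrisation issue, which is the real one. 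To fix it, extract a limit \emph{cylinder}, not just a limit loop: consider the shifts $F_k(s,t)=\bigl(a(s+R'_k,t)-a(R'_k,0),\,v(s+R'_k,t)\bigr)$, which have uniformly bounded gradient on expanding domains and hence $\mC^\infty_{\mathrm{loc}}$-subconverge to a $\tilde J$-holomorphic map $F_\infty=(a_\infty,v_\infty):\RR\times S^1\to\RR\times M$. The tail estimate gives $\int v_\infty^*d\lambda=0$, so $\pi_\zeta dv_\infty\equiv 0$ and $a_\infty$ is harmonic with bounded gradient on $\RR\times S^1$; by periodicity in $t$ it is affine in $s$ alone, $a_\infty(s,t)=cs+e$, whence $\lambda(\partial_t v_\infty)=\partial_s a_\infty\equiv c$, and Stokes identifies $c=T$. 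Then $\sigma=v_\infty(0,\cdot)$ is a genuine constant-speed $T$-periodic orbit and the contradiction closes. (A secondary remark: passing from $\int_{[R,R']\times S^1}|\pi_\zeta dv|^2\to 0$ to pointwise decay on a single circle needs a mean-value or elliptic-regularity step, but this is standard given the gradient bound, and is in any case subsumed by the cylinder argument.)
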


Now using the coordinates around $\gamma$ which we defined in lemma \ref{coords} and coordinates $(s,t)\in\RR\times S^1\cong\CC^*$, the Cauchy-Riemann equations for $F$ read:

\begin{eqnarray*}
0&=&\frac{\partial z}{\partial s}+J|_{\xi}\frac{\partial z}{\partial t}-\frac{1}{T^2}\left(a_t-a_s J|_{\xi}\right)\left(
\begin{array}{c}
0 \\
\nabla^2_{N^{\bot}}f
\end{array}
\right)z_{N^{\bot}} \\
0&=&\frac{\partial a}{\partial s}-\lambda(v_t) \\
0&=&\frac{\partial a}{\partial t}+\lambda(v_s) \\
\end{eqnarray*}

\noindent where $z$ is the projection of $v$ to $\RR^{2n-2}$, $J|_{\xi}$ is the matrix for the cylindrical almost complex structure on the contact hyperplanes $\xi$ and $z_{N^{\bot}}$ is the projection of $v$ onto the $2n-2-\dim(N)$-dimensional subspace $\left(\{0\}\times\RR^{\dim(N)}\right)^{\bot}$. Writing the first of these equations as

\[0=\frac{\partial z}{\partial t}+A(s,t)z_{N^{\bot}}\]

\noindent allows us to make the following definition:

\begin{dfn}
The \emph{asymptotic operator} of $F$ is the limit

\[A_{\infty}(t)=\lim_{s\rightarrow\infty}A(s,t):L^2(S^1,M)\rightarrow L^2(S^1,M)\]
\end{dfn}

To show this is well-defined requires some further asymptotic estimates (see \cite{HWZ96}, lemma 2.2). With this in hand, the following theorem can be shown.

\begin{thm}[\cite{HWZ96}, theorems 1.2-1.3 or \cite{Bou02}, chapter 3]\label{asymptotics}
Let $(M,\lambda)$ be a contact manifold with Morse-Bott Reeb flow. Let $J$ be a cylindrical adjusted almost complex structure on $\Sympl(M)$. Suppose that $F=(a,v):\CC^*\rightarrow \Sympl(M)$ is a finite-energy $J$-holomorphic curve and that $\gamma$ is a Reeb orbit for which there is a sequence $R_k$ such that equation \ref{asymptote} holds. Then

\[\lim_{R\rightarrow\infty} v(Re^{2\pi it})=\gamma(Tt)\]

\noindent as smooth maps $S^1\rightarrow M$.

Further, if $\RR\times\RR^{2n-2}$ are coordinates universally covering the coordinates from lemma \ref{coords} around $\gamma$ with respect to which $F$ is represented by

\begin{eqnarray*}
(a,v):[s_0,\infty)\times\RR & \rightarrow & \RR\times\RR\times\RR^{2n-2} \\
(a,v)(s,t) & = & (a(s,t),\theta(s,t),z(s,t))
\end{eqnarray*}

\noindent (for large $s_0$) then:
\begin{itemize}
\item There are constants $a_0,\theta_0\in\RR$ and $d>0$ such that

\begin{eqnarray*}
|\partial^{\beta}(a(s,t)-Ts-a_0)| & \leq & Ce^{-ds} \\
|\partial^{\beta}(\theta(s,t)-ks-\theta_0)| & \leq & Ce^{-ds} \\
\end{eqnarray*}

\noindent where $\beta$ is a multi-index, $C$ is a constant depending on the multi-index and $k$ is the number of times the orbit of period $T$ wraps the simple orbit of period $\tau$ with the same image. Notice that $\theta(s,t+\tau)=\theta(s,t)+k\tau$.
\item If $z\not\equiv 0$ (in which case $F$ would be a cylinder on $\gamma$), the following asymptotic formula holds for $z$:

\begin{equation}\label{asympz}
z(s,t)=e^{\int_{s_0}^s\mu(\sigma)d\sigma}\left(e(t)+r(s,t)\right)
\end{equation}

\noindent where $r(s,t)$ tends to zero uniformly with all derivatives as $s$ tends to infinity, $\mu:[s_0,\infty)\rightarrow\RR$ is a smooth function which tends to a number $L<0$ in the limit $s\rightarrow\infty$ and $e(t)$ is a vector in $\RR^2$.
\end{itemize}

More explicitly, the number $L$ and the vector $e(t)$ are an eigenvalue and corresponding eigenfunction for the asymptotic operator $A_{\infty}$ on $L^2(S^1,\RR^2)$.
\end{thm}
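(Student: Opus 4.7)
The plan is to follow the strategy of Hofer-Wysocki-Zehnder \cite{HWZ96}, adapted to the Morse-Bott setting as in Bourgeois \cite{Bou02}. The argument splits into three stages: (i) upgrade the subsequential convergence to full $\mC^{\infty}$ convergence; (ii) derive exponential decay in the normal directions to the Morse-Bott family; (iii) deduce the asymptotic formulas for the longitudinal coordinates $a$ and $\theta$.

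\textbf{Stage (i): full convergence.} Using the Morse-Bott neighbourhood coordinates of lemma \ref{coords}, the preceding proposition of \cite{HWZ96} already confines $v(R,\cdot)$ to an arbitrarily small $S^1$-invariant neighbourhood $W$ of $\Gamma$ for $R$ large. In these coordinates, write $F(s,t)=(a(s,t),\theta(s,t),z(s,t))$ with $z\in\RR^{2n-2}$ and let $z_{N^\bot}$ denote its projection to the normal slice $(\{0\}\times\RR^{\dim(N)})^\bot$. The Cauchy-Riemann equations displayed above show that $\partial_s F$ and $\partial_t F$ are uniformly bounded for $s$ large (finite energy plus monotonicity), and elliptic bootstrapping upgrades the $\mC^0$-closeness to $\Gamma$ to $\mC^{\infty}$-closeness. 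Combining this with the Morse-Bott nondegeneracy of $d\phi_T-\id$ along directions normal to $\tilde{N}_T$ forces the full limit $\lim_{R\to\infty} v(Re^{2\pi it})$ to exist and equal $\gamma(Tt)$.

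\textbf{Stage (ii): exponential decay of $z$.} This is where the real work lies and is what I expect to be the main obstacle. Rewrite the CR equation for $z$ in the form $\partial_s z + A(s,t)z_{N^{\bot}} = 0$, where $A(s,t)\to A_\infty(t)$ as $s\to\infty$. Because $f$ has a critical manifold of Reeb orbits and the Hessian $\nabla^2_{N^{\bot}} f$ is nondegenerate transverse to $N$, the asymptotic operator $A_\infty$ is a self-adjoint operator on $L^2(S^1,\RR^{2n-2})$ with discrete spectrum, kernel equal to the tangent directions of the Morse-Bott family $N$, and no eigenvalue accumulation at $0$. The standard Hofer-type argument then proceeds as follows: define $g(s)=\tfrac{1}{2}\int_{S^1}|z_{N^{\bot}}(s,t)|^2\,dt$ and, using integration by parts together with the CR equation, derive a differential inequality of the form $g''(s)\geq c\,g(s)$ for a constant $c>0$ bounded below by the square of the smallest nonzero eigenvalue modulus. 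A convexity (three-intervals) argument, combined with $g(s)\to 0$ and finite energy, forces $g(s)\leq Ce^{-2d s}$ for some $d>0$. Refining this, one shows $z(s,\cdot)/\|z(s,\cdot)\|_{L^2}$ converges as $s\to\infty$ to an eigenfunction $e(t)$ of $A_\infty$ for a negative eigenvalue $L$; writing $\mu(s)=-\langle A(s,\cdot)z,z\rangle/\|z\|^2$ and integrating recovers the asymptotic formula $z(s,t)=\exp(\int_{s_0}^s\mu)\,(e(t)+r(s,t))$ of equation \ref{asympz}, with $r\to 0$ in all derivatives via elliptic estimates on the shifted maps.

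\textbf{Stage (iii): decay of $a$ and $\theta$.} With exponential control of $z$ in hand, return to the remaining two CR equations $\partial_s a=\lambda(v_t)$ and $\partial_t a=-\lambda(v_s)$. Expanding $\lambda=f(\theta,z)(d\theta+x\,dy)$ around $\{z=0\}$ using $D_{(\theta,0,a)}f=0$ shows that $\lambda(v_t)-T$ and $\lambda(v_s)$ are quadratic in $z$ and its derivatives, hence bounded by $Ce^{-2ds}$. Integrating yields the existence of the constants $a_0$ and the bound $|\partial^{\beta}(a-Ts-a_0)|\leq Ce^{-ds}$. An entirely analogous argument, using that $\theta$-drift along the orbit is governed by the leading $d\theta$-component of $\lambda$, gives $|\partial^{\beta}(\theta-ks-\theta_0)|\leq Ce^{-ds}$, with the factor $k$ automatic from the covering statement of lemma \ref{coords}. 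Elliptic regularity passes the decay to all higher derivatives, completing the theorem.
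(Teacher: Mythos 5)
This theorem is quoted in the paper from external sources (Hofer--Wysocki--Zehnder \cite{HWZ96} and Bourgeois \cite{Bou02}) and is not reproved there, so there is no internal proof to compare against; I will assess your outline against the standard arguments in those references. Your three-stage plan does follow the HWZ--Bourgeois route, and Stage (iii) is correct modulo routine bookkeeping. But there is a substantive gap in Stage (ii), and a related soft spot in Stage (i), both stemming from the feature that makes the Morse--Bott case genuinely harder than the nondegenerate case.

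In the Morse--Bott setting the asymptotic operator $A_{\infty}$ has a \emph{nontrivial kernel}: the directions tangent to the orbit space $N$. Your convexity argument yields $g''\geq cg$ only for $g(s)=\tfrac{1}{2}\|z_{N^{\bot}}(s,\cdot)\|_{L^2}^2$, and that is correct because the spectral gap bounds $A_\infty$ away from zero on $(\ker A_\infty)^{\perp}$. However, the theorem asserts an asymptotic formula for the \emph{full} $z$, with a strictly negative $L$ and remainder $r\to 0$, and the eigenfunction $e(t)$ for $L<0$ has no kernel component. Passing from ``$\|z_{N^{\bot}}\|\leq Ce^{-ds}$'' to ``$z/\|z\|\to e$ with $L<0$'' silently assumes that the tangential component $z_N$ decays at least as fast as $z_{N^{\bot}}$; if $z_N$ decayed more slowly, $z/\|z\|$ would converge into $\ker A_\infty$ and the claimed formula with $L<0$ would be false. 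The needed extra step (present in both \cite{HWZ96} and \cite{Bou02}) is to write the CR equation for $z_N$: because $N$ is a critical manifold for $f$ and $D_{(\theta,0,a)}f=0$, the $s$-derivative of $z_N$ has no linear term and is bounded by a quadratic expression in $z$ and its derivatives, hence by $Ce^{-2ds}$ once the normal decay is in hand; integrating from $s$ to $\infty$ (using $z_N\to 0$ because the curve converges to the \emph{particular} orbit $\gamma$) shows $|z_N(s,\cdot)|\leq C'e^{-2ds}$, which is strictly subdominant and can be absorbed into $r(s,t)$. Without this argument, Stage (ii) does not actually establish the asymptotic formula as stated.

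Stage (i) has the same blind spot in a milder form. Proposition 2.1 of \cite{HWZ96} confines $v(R,\cdot)$ to a small $S^1$-invariant neighbourhood of the whole Morse--Bott family $\Gamma$, not of a single orbit. ``Morse--Bott nondegeneracy of $d\phi_T-\id$'' alone does not prevent slow drift of $v(R,\cdot)$ along $\Gamma$ as $R\to\infty$; ruling out such drift again uses the fact that $\partial_s z_N$ is quadratic in $z$ and hence integrable once one has some decay of $z_{N^{\bot}}$. So in the actual proofs the full convergence and the exponential estimates are established in a single bootstrap, rather than the clean sequential (i)--(ii) split your outline suggests. You should either interleave the stages or state explicitly that the tangential drift estimate is what upgrades the subsequential limit to a full limit.
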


Though we have been talking in this section about punctured cylinders in symplectisations, the theory carries through for punctured finite-energy curves in cylindrical completions of symplectic manifolds with contact-type boundary.

\begin{prp}[\cite{BEHWZ03}, proposition 6.2]\label{compactifyingasymptotics}
Let $X$ be a compact symplectic cobordism with boundary $M$ and $J$ an adjusted almost complex structure. Suppose the Reeb flow on $M$ is Morse-Bott. Let $(S,j,Z,F:\dot{S}\rightarrow\overline{X})$ be a punctured finite-energy $\overline{J}$-holomorphic curve in the cylindrical completion $(\overline{X},\overline{J})$. Then for every $z\in Z$, either $F$ extends continuously over $z$ to a holomorphic map or there is a neighbourhood $\CC^*\subset\dot{S}$ of the puncture $z$ which is asymptotic to a Reeb orbit $\gamma_z$ in the sense of theorem \ref{asymptotics} above. Consequently, embedding $\overline{X}$ as $\mathring{X}\subset X$ via the diffeomorphism $\phi$ from \ref{bdryends} above, the curve $\phi\circ F$ extends continuously to a map $S^Z\rightarrow X$, sending the circle $\Gamma_i$ compactifying the puncture $z$ to the asymptotic orbit $\gamma_z$. This map is equivariant with respect to the canonical actions of $S^1$ on $\Gamma_z$ and (via the Reeb flow) on $\gamma_z$.
\end{prp}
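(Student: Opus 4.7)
The plan is to work locally near each puncture $z \in Z$. Choose a small punctured disc neighbourhood of $z$, and use the biholomorphism with a half-cylinder $[R_0,\infty) \times S^1 \cong \CC^*$ (via $w = e^{-s - 2\pi i t}$) to write $F$ in cylindrical coordinates on the domain. There is then a dichotomy: either $F$ stays in a compact subset of $\overline{X}$ near $z$, or it escapes to infinity in one of the cylindrical ends. In the bounded case, Gromov's removal of singularities applies (the energy on $F^{-1}$ of a small disc is finite by hypothesis, and $F$ maps into a compact region where $\overline{J}$ is tame), giving a continuous (hence $\overline{J}$-holomorphic) extension over $z$; this is the first alternative.

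In the unbounded case, I would first show that for large $s$, $F$ eventually lies entirely within one of the cylindrical ends $\RR_\pm \times M_\pm$: if $F$ returned infinitely often to the compact core, a monotonicity/mean-value argument for $\overline{J}$-holomorphic curves would force infinite energy. Once the tail $F|_{[s_0,\infty) \times S^1}$ factors through $\RR_\pm \times M_\pm$, write $F = (a,v)$ and apply Hofer's lemma (Lemma \ref{basicasymptotics}) to extract a sequence $R_k \to \infty$ for which $v(R_k e^{2\pi i t})$ converges in $\mathcal{C}^\infty(S^1,M)$ to a parametrisation $\gamma(Tt)$ of a closed Reeb orbit of period $T$. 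Under the Morse-Bott hypothesis, the proposition quoted from \cite{HWZ96} upgrades sequential convergence to $\mathcal{C}^\infty$-trapping in an arbitrarily small $S^1$-invariant neighbourhood of the Morse-Bott family $\Gamma$ containing $\gamma$.

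Next I would apply Theorem \ref{asymptotics} in the tubular coordinates of Lemma \ref{coords} to promote this into honest convergence $v(Re^{2\pi i t}) \to \gamma(Tt)$ as $R \to \infty$, together with the exponential decay estimates for $a(s,t) - Ts - a_0$ and $\theta(s,t) - ks - \theta_0$ and the eigenfunction formula \eqref{asympz} for the normal component $z(s,t)$. These estimates are precisely what is needed to control the map in a full punctured neighbourhood, not merely along a subsequence of circles.

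Finally, I would transport the result back to $X$ via the diffeomorphism $\phi\colon \overline{X} \to \mathring{X}$ of Section \ref{bdryends}. Because $\phi$ collapses the end $[s_0,\infty) \times M$ onto $(0,\epsilon) \times M$ by the reparametrisation $(s,m) \mapsto (g(s),m)$ with $g$ monotone and bounded, the uniform convergence $v(Re^{2\pi i t}) \to \gamma(Tt)$ shows that $\phi \circ F$ extends continuously to the circle $\Gamma_z$ compactifying the puncture in the oriented blow-up $S^Z$, sending $\Gamma_z$ onto $\gamma_z(\RR)$. The canonical $S^1$-action on $\Gamma_z$ comes from multiplication by $e^{2\pi i t}$ on $T_zS$, while the $S^1$-action on $\gamma_z$ is the Reeb reparametrisation; the equivariance is an immediate consequence of the asymptotic formula $v(Re^{2\pi i t}) \to \gamma(Tt)$ which identifies the angular coordinate of the domain with the time parameter of the Reeb orbit. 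The most technical step is the upgrade from Hofer's subsequential limit to genuine convergence under the Morse-Bott assumption; once that is in place, together with the exponential asymptotic expansion, the continuous extension to $S^Z$ and the $S^1$-equivariance follow directly.
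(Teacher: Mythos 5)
The paper gives no proof of this proposition: it is stated as a black-box citation to \cite{BEHWZ03}, proposition 6.2 (with theorem \ref{asymptotics} itself deferred to \cite{HWZ96} and \cite{Bou02}). So there is no internal argument to compare your sketch against; you should be comparing against the cited literature rather than this paper.

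That said, your outline is a faithful reconstruction of the standard proof. The dichotomy you set up (bounded implies removable singularity by Gromov's theorem; unbounded implies the tail eventually enters a single cylindrical end, where Hofer's lemma gives subsequential convergence and the Morse--Bott hypothesis plus theorem \ref{asymptotics} upgrades this to genuine $\mC^\infty$-convergence along the full circle with exponential decay) is exactly the shape of the argument in \cite{BEHWZ03} and \cite{HWZ96}. The step you are rightly most cautious about---forcing the tail into one end via a monotonicity/energy-quantization argument---is indeed the place where technical work is needed; the standard route is to observe that a $J$-holomorphic curve passing through the compact cobordism core carries a definite lower bound on energy, so a curve of finite energy can only do so finitely often. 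One small point worth making explicit in the final step: the $S^1$-equivariance claim requires you to compare the orientation of $\Gamma_z$ (induced by multiplication by $e^{i\theta}$ on $T_zS$) with the sign of the puncture, since at positive punctures the cylindrical coordinate $w = e^{-s-2\pi i t}$ reverses the angular orientation relative to $\Gamma_z$ while at negative punctures it preserves it; the asymptotic formula $v(Re^{2\pi i t}) \to \gamma(Tt)$ then matches the Reeb $S^1$-action in the appropriate direction in either case. Apart from needing to spell that out, the proposal is sound.
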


If $M$ consists of convex components $M^+$ and concave components $M^-$, denote by $Z^{\pm}$ the sets of punctures of a holomorphic curve $(S,j,Z,F)$ which are asymptotic to Reeb orbits on $M^{\pm}$ and by $s^{\pm}$ the size of $Z^{\pm}$.

\subsection{Moduli spaces}

\subsubsection{Outline}

In what follows, $(X,\omega)$ is a compact symplectic cobordism with boundary $M$ and $J$ is an adjusted almost complex structure. Let $S\setminus Z$ be a punctured genus zero Riemann surface and $\left\{\rho_z\right\}_{z\in Z}$ be a collection of Morse-Bott manifolds of unparametrised Reeb orbits in $M$ with representative orbits $\gamma_z\in\rho_z$. For each $x\in Z$ we pick a Reeb orbit $r_z$ to act as a basepoint of $\rho_z$. A punctured finite-energy curve $F:S\setminus Z\rightarrow X$ such that the puncture $z$ is asymptotic to an orbit $\gamma_z$ from $\rho_z$ defines a relative homology class $A\in H_2(X,\bigcup_{z\in Z}r_z)$ as soon as we choose a path in $\rho_z$ from $\gamma_z$ to $r_z$ and consider this cylinder (sitting inside the boundary $M$ of $X$) glued to the end of the curve $F$. In all our cases, $\rho_z$ will be simply-connected, so the relative homology class thus defined will be independent of the choice of path. Once we have specified a collection of Morse-Bott manifolds of Reeb orbits and their basepoints, we can talk about the group $H_2(X,\bigcup_{z\in Z}r_z)$ and for a fixed class $A\in H_2(X,\bigcup_{z\in Z}r_z)$ we can hope to write down a moduli problem for all punctured finite-energy curves with these asymptotics defining this element of the relative homology group. The signs and multiplicities of orbits are subsumed into the notation: if a puncture is asymptotic to an orbit $\gamma$ with multiplicity $k$ then for our purposes it is asymptotic to the orbit $\gamma'=k\gamma$ and we work with the moduli space $\rho'$ of $\gamma'$.

To such a relative homology class $A$ one can assign a \emph{relative first Chern class} $c_1^{\Phi}(A)$ once one has chosen trivialisations $\Phi$ of the contact hyperplane distributions near the orbits $\gamma_z$:

\begin{dfn}[\cite{Wen08}]
Let $S\setminus Z\rightarrow X$ be a smoothly immersed representative of $A\in H_2(X,\bigcup_{z\in Z}\gamma_z)$ and $E$ be the pullback of the tangent bundle to $S\setminus Z$. Fix a trivialisation $\Phi$ of the contact hyperplane distributions near the orbits $\gamma_z$. There is an induced (unitary) trivialisation $\Phi$ of $E$ over the cylindrical ends of $S\setminus Z$. If $E=L_1\oplus\ldots\oplus L_k$ as a sum of smooth complex line bundles then define $c_1^{\Phi}(A)$ to be the sum $\sum_{i=1}^k c_1^{\Phi}(L_i)$ where $c_1^{\Phi}(L_i)$ is the number of zeros of a generic section of $L_i$ which restricts to a constant non-zero section over the ends (relative to the trivialisation $\Phi$).
\end{dfn}

Given the data $\left(S\setminus Z, \rho_Z=\left\{\rho_z\right\}_{z\in Z}, A\in H_2(X,\bigcup_{z\in Z}r_z)\right)$ (where $Z$ is understood as the union of positive and negative punctures $Z=Z^+\cup Z^-$ where $\rho_z$ consists of Reeb orbits in the convex respectively concave ends of $X$) we introduce the moduli space
\[\mM^A_{s^-,s^+}\left(\rho_Z,J\right)\]
\noindent of $\overline{J}$-holomorphic maps $S\setminus Z\rightarrow\overline{X}$ representing the class $A$ with $|Z^{\pm}|=s^{\pm}$. The complex structure on $S\setminus Z$ is allowed to vary and to give the moduli space a topology one must consider local Teichm\"{u}ller slices in the space of complex structures on $S\setminus Z$ (see for example, \cite{Wen08}). We will ignore this technicality since it does not affect the proof of the transversality results - we can achieve transversality without perturbing the complex structure on $S\setminus Z$.

In outline, the moduli space $\mM^A_{s^-,s^+}\left(\rho_Z,J\right)$ is the zero locus of a Cauchy-Riemann operator between suitable Banach manifolds whose linearisation is Fredholm of index
\[(n-3)\chi(S\setminus Z)+2c_1^{\Phi}(A)+\sum_{z\in Z}(\pm)^z K(z)\]
\noindent where $(\pm)^z$ is the sign of the puncture $z$ and
\[K(z)=\mu(\rho_z)(\pm)^z\frac{1}{2}\dim(\rho_z)\]
\noindent Here $\mu$ is a generalised Conley-Zehnder index for degenerate asymptotics defined in \cite{Bou02}, chapter 5 (depending on $\Phi$).

We also have evaluation maps
\[\ev_z:\mM^A_{s^+,s^-}\left(\rho_Z,J\right)\rightarrow\rho_z\]
\noindent sending a puncture to its asymptotic Reeb orbit and
\[\ev_R=\prod_{z\in Z}\ev_z:\mM^A_{s^+,s^-}\left(\rho_Z,J\right)\rightarrow R_Z:=\prod_{z\in Z}\rho_z\]

\subsubsection{Analytic set-up}

This section follows \cite{Wen08}. If $(S,j)$ is a closed Riemann surface with a finite set $Z=Z^+\cup Z^+$ of (positive and negative) punctures then one can pick cylindrical coordinates $(s,t):I_{\pm}\times S^1\rightarrow\aA^{\pm}_z$ on punctured disc neighbourhoods $\aA^{\pm}_z$ of the punctures, where $I_+=[0,\infty)$ and $I_-=(\infty,0]$. Let $S^Z$ be the oriented blow-up of $S$ at $Z$ with compactifying circles $\{\Gamma_z\}_{z\in Z}$. Given a Hermitian rank $r$ vector bundle $E$ over $S\setminus Z$ which extends continuously to a smooth complex vector bundle over $\bigcup_{z\in Z}\Gamma_z$, an \emph{admissible trivialisation} of $E$ near the ends is a smooth unitary bundle isomorphism
\[\Phi:E|_{\aA^{\pm}_z}\rightarrow I_{\pm}\times S^1\times\CC^r\]
\noindent which extends continuously to a unitary trivialisation of the bundle over the compactifying circles.

\begin{dfn}
Let $E$ be a bundle over $S\setminus Z$ and $\delta_Z=\{\delta_z\}_{z\in Z}$ a set of small positive numbers (these will eventually be taken to be smaller than the smallest eigenvalue of the asymptotic operator of the given puncture). A section $\sigma:S\setminus Z\rightarrow E$ is of class $W^{k,p}_{\delta_Z}$ if for each end $\aA^{\pm}_z$ there is an admissible trivialisation of $E$ over $\aA^{\pm}_z$ in which the section
\[(s,t)\mapsto e^{\pm\delta_zs}F(s,t)\]
is in $W^{k,p}(I_{\pm}\times S^1,\CC^r)$.
\end{dfn}

We now define our Banach space of maps.

\begin{dfn}
Let $S$ be a closed Riemann surface and $Z\subset S$ a finite set of punctures. For each $z\in Z$ assign a Morse-Bott family $\rho_z$ of Reeb orbits of period $T_z$ in $M$ and a number $\delta_z$. A map $F:\dot{S}\rightarrow \overline{X}$ is of class $W^{k,p}_{\delta_Z}$ if $F$ is in $W^{k,p}_{loc}$ and for each $z\in Z^{\pm}$ there is an orbit $\gamma_z\in\rho_z$ such that in cylindrical coordinates $(s,t)$ on an annular neighbourhood $\aA_z^{\pm}$ of $z$ in $S$
\[F(s+s_0,t)=\exp_{\tilde{\gamma}(s,t)}h(s,t)\]
\noindent for $s$ large enough, where $s_0$ is a constant, $\tilde{\gamma}(s,t)=(T_zs,\gamma_z(T_zt))\in\RR_{\pm}\times M$ and $h\in W^{k,p}_{\delta_z}(\aA^{\pm}_z,T(\RR_{\pm}\times M))$.
\end{dfn}

When $kp>2$ the space of $W^{k,p}_{\delta_Z}$-maps $F:S\setminus Z\rightarrow\overline{X}$ is a Banach manifold $\mB^{k,p}_{\delta_Z}$ with tangent spaces
\[T_F\mB^{k,p}_{\delta_Z}=W^{k,p}_{\delta_Z}(F^*T\overline{X})\oplus V_Z\oplus X_Z\]
To understand the summands $V_Z\oplus X_Z$, pick cylindrical coordinates $(s,t)$ on each end and coordinates $\RR\times S^1\times\RR^{2n-2}$ near each asymptotic orbit $\gamma_z$ as in lemma \ref{coords}. Let $\kappa^z_1,\kappa^z_2\in\mC^{\infty}(\aA^{\pm}_z,F|_{\aA^{\pm}_z}^*T\overline{X})$ be the vector fields given in local coordinates on $\RR\times S^1\times\RR^{2n-2}$ by $(1,0,0)$ and $(0,1,0)$. Extend these fields by a cut-off function to the rest of $S\setminus Z$. The $2|Z|$-dimensional space they span is defined to be $V_Z$. The construction of $X_Z$ is similar, but the vector fields at a given end are taken to be the constant fields in $\RR^{2n-2}$ tangent to the Morse-Bott space parametrising Reeb orbits.  $X_Z$ can be identified with $T_{\gamma_Z}R_Z$ if $\gamma_Z$ is the point in $R_Z$ representing the Reeb orbits to which $F$ is asymptotic.

Note that at this point, we can fix some extra data to obtain a related space. Let $Z_c\subset Z$ be a subset of punctures whose asymptotic orbit we fix to be $\gamma_z\in\rho_z$. We call these the \emph{constrained} punctures. The corresponding Banach manifold of maps is written $\mB^{k,p}_{\delta_Z}(Z_c)$ and the $X_Z$-summand in its tangent spaces is defined similarly but where one uses only the tangent vectors to Morse-Bott spaces parametrising Reeb orbits associated to unconstrained punctures.

Let $\eta$ be a choice of Liouville field near $M$ with contact form $\lambda$ and Reeb vector field $R$. Let $\mJ^{\ell}$ denote the space of $\omega$-compatible, $\eta$-adjusted, $\mC^{\ell}$-differentiable almost complex structures on $X$. $T_J\mJ^{\ell}$ is the Banach space of $\mC^{\ell}$-sections $Y$ of the endomorphism bundle $\End(T\overline{X})$ such that $Y\overline{J}+\overline{J}Y=0$, $\overline{\omega}(Yv,w)+\overline{\omega}(v,Yw)=0$, $Y(\xi)\subset\xi$ (where $\xi$ is the contact distribution) and $Y(R)=Y(\eta)=0$. This space is written $\mC^{\ell}(\End(T\overline{X},J,\eta,\omega))$.

Over the product $\mJ^{\ell}\times\mB^{k,p}_{\delta_Z}$ define the Banach bundle $\mE$ whose fibre at $(J,F)$ is the space
\[W^{k-1,p}_{\delta_Z}(\Lambda^{0,1}_j\otimes_{(j,\overline{J})}F^*T\overline{X})\]
\noindent and let $\sigma$ be the section
\[\sigma(J,F)=dF+\overline{J}\circ dF\circ j\]
A zero $(J,F)$ of $\sigma$ is precisely a finite-energy punctured $\overline{J}$-holomorphic curve.

\begin{dfn}
We define the universal moduli space of simple finite-energy curves to be the space $\mM^*(A,S,Z,\rho_Z,\mJ^{\ell})$ of $(J,F)\in\sigma^{-1}(0)$ such that $F$ does not factor through a multiple cover.
\end{dfn}

We have the following basic transversality results. The proofs are closely modelled on \cite{MS04}, propositions 3.2.1 and 3.4.2. and can be found in section \ref{transproofs} below.

\begin{prp}\label{trans1}
Let $e_z$ denote the smallest eigenvalue of the asymptotic operator $A_z$ assigned to the puncture $z$. For $\delta_Z\in \prod_{z\in Z}(0,e_z)$ the universal moduli space is a separable $\mC^{\ell-k}$ Banach submanifold of $\mJ^{\ell}\times\mB^{k,p}_{\delta_Z}$ and the projection map to $\mJ^{\ell}$ is a $\mC^{\ell-k}$-smooth Fredholm map whose index is given in theorem \ref{trans4} below.
\end{prp}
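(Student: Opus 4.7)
The plan is to realise $\mM^*(A,S,Z,\rho_Z,\mJ^{\ell})$ as the zero set of a $\mC^{\ell-k}$ section $\sigma$ of the Banach bundle $\mE$, verify that its linearisation is a surjective Fredholm operator at every simple solution, and then deduce both the submanifold structure and the Fredholm property of the projection from the implicit function theorem, exactly as in \cite{MS04}, proposition 3.2.1, but working in the exponentially weighted Sobolev spaces needed to handle the punctures. The choice $\delta_z\in(0,e_z)$ is what makes this analytic set-up viable: it converts each (degenerate, Morse--Bott) asymptotic operator at a puncture into a translation-invariant operator with trivial kernel on the weighted space, which is precisely the condition needed to have a Fredholm linearised Cauchy--Riemann operator on cylindrical ends.

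First I would check that $\sigma$ is $\mC^{\ell-k}$ smooth, following \cite{MS04}, proposition 3.1.1; the loss of $k$ derivatives is caused by the bundle charts on $\mB^{k,p}_{\delta_Z}$. At a zero $(J,F)$ the differential decomposes as
\[D_{(J,F)}\sigma(Y,\xi,v,x) \;=\; D_F(\xi,v,x) \;+\; \tfrac{1}{2}Y(F)\circ dF\circ j,\]
where $\xi\in W^{k,p}_{\delta_Z}(F^*T\overline{X})$ and $(v,x)\in V_Z\oplus X_Z$, and $D_F$ is the linearised Cauchy--Riemann operator acting on $T_F\mB^{k,p}_{\delta_Z}$. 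By the Riemann--Roch theorem for punctured Cauchy--Riemann operators with Morse--Bott asymptotics, due to Schwarz and developed in \cite{Bou02}, chapter 5, $D_F$ is Fredholm of the index claimed in theorem \ref{trans4}. Its image is therefore closed, and so is that of $D_{(J,F)}\sigma$, which only enlarges $\mathrm{im}\,D_F$ by a subspace projecting into the finite-dimensional $\coker D_F$.

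The main obstacle is surjectivity of $D_{(J,F)}\sigma$, which I would establish by the standard dual-space argument of \cite{MS04}, proposition 3.2.1. Suppose $\eta$ in the dual of $\mE_{(J,F)}$ annihilates the image. Restricting to $Y=0$ shows that $\eta$ is a weak solution of the formal adjoint equation $D_F^*\eta=0$; elliptic regularity together with the Carleman-type unique continuation principle then force $\eta$ to vanish either identically or only on a discrete subset of $\dot S$. On the other hand, for any \emph{injective point} $z\in\dot S$ of $F$ (a point where $dF(z)$ is injective and $F^{-1}(F(z))=\{z\}$) we can choose $Y\in T_J\mJ^{\ell}$ supported near $F(z)$ for which the pairing $\langle\eta,\tfrac{1}{2}Y(F)\circ dF\circ j\rangle$ is nonzero whenever $\eta(z)\neq 0$. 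Since $F$ is simple, the set of injective points is open and dense in $\dot S$: away from the punctures this is \cite{MS04}, proposition 2.5.1, and near the punctures the sharp asymptotic formula of theorem \ref{asymptotics} (applied both when distinct ends are asymptotic to the same orbit and when a single end approaches its orbit in a controlled way) ensures that simplicity propagates to the ends. The resulting contradiction forces $\eta\equiv 0$, proving surjectivity.

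Surjectivity of $D_{(J,F)}\sigma$ together with the Fredholm property of $D_F$ now yields, via the implicit function theorem, the structure of a separable $\mC^{\ell-k}$ Banach submanifold on $\mM^*(A,S,Z,\rho_Z,\mJ^{\ell})$, separability being inherited from the ambient space. Finally, the projection $\pi$ to $\mJ^{\ell}$ has differential at $(J,F)$ whose kernel is $\ker D_F$ and whose cokernel, by a short diagram chase using surjectivity of $D\sigma$, is canonically isomorphic to $\coker D_F$. Hence $\pi$ is Fredholm of the same index as $D_F$, which is the formula recorded in theorem \ref{trans4}. The only genuinely new analytic ingredient compared with the closed case of \cite{MS04} is the openness and density of injective points near the punctures; this is by now standard in symplectic field theory (see \cite{Wen08}) but depends crucially on the full strength of theorem \ref{asymptotics}.
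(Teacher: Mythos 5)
Your argument is essentially the paper's own: realise the universal moduli space as the zero set of a section of the Banach bundle $\mE$, use the Fredholm property of the linearised operator $D_F$ (from \cite{Bou02}) to get a closed image, and prove surjectivity by the Hahn--Banach dual argument, killing an annihilating functional $\eta$ first by Aronszajn/Carleman unique continuation (so it vanishes only on a discrete set) and then by a local perturbation $Y$ supported near the image of an injective point, relying on density of injective points for a simple punctured curve. The paper cites Siefring \cite{Sie07} for finiteness of non-injective points rather than your combination of \cite{MS04} and \cite{Wen08}, and it leaves the smoothness of $\sigma$ and the index formula to the references rather than stating them explicitly, but these are presentational differences, not mathematical ones.
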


The condition on $\delta_Z$ is necessary for the Fredholm theory to work and implies the stated index formula. Only the transversality aspects of this proposition are proved below; the underlying Fredholm theory is referenced to the work of Schwarz \cite{SchwarzThesis} (see also \cite{WendlThesis}).

\begin{prp}\label{trans2}
Every point of $R_Z:=\prod_{z\in Z}\rho_z$ is a regular value of the evaluation map
\[\ev_R:\mM^*(A,S,Z,\rho_Z,\mJ^{\ell})\rightarrow R_Z\]
\end{prp}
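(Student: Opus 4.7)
The plan is to deduce this from a mild strengthening of Proposition \ref{trans1}. Recall $T_F\mB^{k,p}_{\delta_Z} = W^{k,p}_{\delta_Z}(F^*T\overline X)\oplus V_Z\oplus X_Z$ with $X_Z\cong T_{\gamma_Z}R_Z$, and under this identification the differential $d\ev_R$ is simply the projection to the $X_Z$ summand. The tangent space to the universal moduli space at $(J,F)$ is $\ker D\sigma$, where
\[D\sigma(Y,\xi) = D_F\dbar{J}\xi + \tfrac{1}{2} Y\circ dF\circ j.\]
Given $v\in T_{\gamma_Z}R_Z$, let $\xi_v\in X_Z$ be its canonical lift (with trivial components in $W^{k,p}_{\delta_Z}$ and $V_Z$). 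I would set $\eta_v := D_F\dbar{J}\xi_v$. The asymptotic description of $F$ from Theorem \ref{asymptotics} together with the model coordinates of Lemma \ref{coords} shows that $\xi_v$ lies in the exponentially weighted space up to a genuine element of $X_Z$ and that $\eta_v\in W^{k-1,p}_{\delta_Z}(\Lambda^{0,1}\otimes_{(j,\bar J)}F^*T\overline X)$.

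If I can produce a pair $(Y,\xi_0)\in T_J\mJ^\ell \times W^{k,p}_{\delta_Z}(F^*T\overline X)$ satisfying $D_F\dbar{J}\xi_0 + \tfrac{1}{2} Y\circ dF\circ j = -\eta_v$, then $(Y,\xi_0 + \xi_v)\in \ker D\sigma = T_{(J,F)}\mM^*$ and $d\ev_R(\xi_0+\xi_v) = \pi_{X_Z}(\xi_0+\xi_v)=v$, giving the desired surjectivity. Thus the proposition reduces to showing that the restricted operator
\[\tilde L: T_J\mJ^\ell \times W^{k,p}_{\delta_Z}(F^*T\overline X)\longrightarrow W^{k-1,p}_{\delta_Z}(\Lambda^{0,1}\otimes_{(j,\bar J)}F^*T\overline X),\qquad (Y,\xi)\mapsto D_F\dbar{J}\xi+\tfrac12 Y\circ dF\circ j,\]
is surjective, i.e.\ that the freedom to deform $J$ together with the $W^{k,p}_{\delta_Z}$-part of $\xi$ already suffices to hit every element of the target.

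This is proved by the same cokernel argument as in Proposition \ref{trans1}. Schwarz's exponentially weighted Fredholm theory ensures that $D_F\dbar{J}$ has closed range and finite-dimensional cokernel as an operator on $W^{k,p}_{\delta_Z}$, so $\tilde L$ has closed range and it suffices to eliminate the annihilator. If $\eta\in L^q_{-\delta_Z}$ (with $1/p+1/q=1$) is $L^2$-orthogonal to the image of $\tilde L$, then orthogonality to all $D_F\dbar{J}\xi$ with $\xi\in W^{k,p}_{\delta_Z}$ forces $\eta$ to be a smooth solution of the formal adjoint equation on $\dot S$, while orthogonality to $\tfrac12 Y\circ dF\circ j$ for all admissible $Y$ (cf.\ \cite{MS04}, Proposition 3.2.1) forces $\eta$ to vanish on the subset $U\subset \dot S$ of injective immersed points of $F$; simplicity of $F$ (in the SFT sense, with asymptotic orbits factored in) ensures $U$ is open and dense, and unique continuation for the formal adjoint then gives $\eta\equiv 0$. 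The main obstacle is verifying that the classical somewhere-injectivity lemma and the unique continuation step go through in the punctured, exponentially weighted framework; this is routine but relies essentially on the asymptotic analysis of Theorem \ref{asymptotics} to guarantee that an asymptotically trivial element of the cokernel is actually zero. Once $\tilde L$ is surjective, the construction of the two previous paragraphs produces the required lift, and $\ev_R$ is submersive at every point of $\mM^*(A,S,Z,\rho_Z,\mJ^\ell)$.
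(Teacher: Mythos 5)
Your argument is correct and is essentially the same as the paper's: the paper's Lemma~\ref{trans3} is precisely the surjectivity statement for your restricted operator $\tilde L$ (phrased as a lifting lemma, with the added insistence that $Y$ can be supported in a small ball $B_\epsilon(F(w))$), and both proofs run the same annihilator/unique-continuation argument and then produce the tangent vector to the universal moduli space by the same substitution trick. The one place your packaging differs slightly is that the paper applies Lemma~\ref{trans3} component-by-component and then glues the resulting $(\xi_j,Y_j)$ using the locality of the $Y_j$'s (requiring the balls $B_\epsilon(F(w_j))$ to avoid the other components), whereas your single global cokernel argument does not need the $Y$'s to be locally supported and handles a disconnected domain $\dot S$ in one stroke -- the annihilator $\eta$ vanishes at every injective point of every component and unique continuation kills it on each component separately. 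This is a mild streamlining, not a different route. The only points to be careful about are the ones you already flag: (i) that $D_F \xi_v \in W^{k-1,p}_{\delta_Z}$, which is built into the Banach manifold structure since $X_Z$ was engineered to lie in $T_F\mB^{k,p}_{\delta_Z}$; and (ii) that the restricted $D_F$ (on $W^{k,p}_{\delta_Z}$ alone, dropping $V_Z$) still has closed range -- here you should note explicitly that this follows because $W^{k,p}_{\delta_Z}$ has finite codimension in the full domain $W^{k,p}_{\delta_Z}\oplus V_Z\oplus X_Z$ on which the paper's cited Fredholm theory (\cite{WendlThesis}, \cite{SchwarzThesis}) is stated, and a Fredholm operator restricted to a closed finite-codimension subspace remains Fredholm.
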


Let $\pi_{\mJ}$ denote the projection of $\mM^*(A,S,Z,\rho_Z,\mJ^{\infty})$ to $\mJ^{\infty}$ where $\mJ^{\infty}$ denotes the subset of smooth almost complex structures. Note that by elliptic regularity the corresponding universal moduli space consists of smooth curves. The above transversality results together with the Sard-Smale theorem will prove the following results (see section \ref{transproofs}).

\begin{thm}\label{trans4}
For $J$ in a Baire set $\mJ_{\mbox{reg}}\subset\mJ^{\infty}$ the space
\[\mM^A_{s^+,s^-}(\rho_Z,J)=\pi_{\mJ}^{-1}(J)\]
\noindent is a smooth manifold of dimension
\[(n-3)\chi(S\setminus Z)+2c_1^{\Phi}(A)+\sum_{z\in Z}(\pm)^z K(z)\]
\noindent where $(\pm)^z$ is the sign of the puncture $z$ and
\[K(z)=\mu(\rho_z)(\pm)^z\frac{1}{2}\dim(\rho_z)\]
\noindent Here $\mu$ is a generalised Conley-Zehnder index for degenerate asymptotics defined in \cite{Bou02}, chapter 5 (depending on $\Phi$).
\end{thm}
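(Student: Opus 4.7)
The plan is to extract Theorem \ref{trans4} from Proposition \ref{trans1} by the standard Sard--Smale argument, then upgrade from $\mC^{\ell}$ to $\mC^{\infty}$ via Taubes's trick. The Fredholm index formula is already encoded in Proposition \ref{trans1}, so the only genuinely new content here is the passage from ``generic $\mC^{\ell}$'' to ``generic $\mC^{\infty}$'' in a way that preserves the Baire-set conclusion.

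The first step is to fix $k,p$ with $kp > 2$ and choose $\ell$ large enough that $\ell-k$ exceeds the Fredholm index
\[d(A,S,Z,\rho_Z) := (n-3)\chi(S\setminus Z)+2c_1^{\Phi}(A)+\sum_{z\in Z}(\pm)^z K(z).\]
By Proposition \ref{trans1}, $\mM^*(A,S,Z,\rho_Z,\mJ^{\ell})$ is a separable $\mC^{\ell-k}$ Banach manifold and the projection $\pi_{\mJ}:\mM^*(A,S,Z,\rho_Z,\mJ^{\ell})\to\mJ^{\ell}$ is $\mC^{\ell-k}$-Fredholm of index $d(A,S,Z,\rho_Z)$. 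The Sard--Smale theorem applies and provides a Baire set $\mJ^{\ell}_{\text{reg}}\subset\mJ^{\ell}$ of regular values of $\pi_{\mJ}$. For any $J\in\mJ^{\ell}_{\text{reg}}$, the implicit function theorem for Fredholm maps gives that $\pi_{\mJ}^{-1}(J)$ is a $\mC^{\ell-k}$ submanifold of $\mB^{k,p}_{\delta_Z}$ of dimension equal to the Fredholm index.

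Next, to deal with multiply-covered curves and to state the result for $\mC^{\infty}$ almost complex structures, I would run Taubes's trick. Exhaust the universal moduli space by a nested sequence of compact subsets $K_m$ (constraining, e.g., the $\mC^0$-norm of the first $m$ derivatives of $F$ and its distance from the constrained orbits) and let
\[\mJ^{\infty}_m=\left\{J\in\mJ^{\infty}:\text{every }(J,F)\in\pi_{\mJ}^{-1}(J)\cap K_m\text{ is a regular point of }\pi_{\mJ}\right\}.\]
Each $\mJ^{\infty}_m$ is open in $\mJ^{\infty}$ by the implicit function theorem applied on the compact set $K_m$, and it is dense by Proposition \ref{trans1} together with a standard approximation argument showing that $\mJ^{\infty}$ is $\mC^{\ell}$-dense in $\mJ^{\ell}$ for every $\ell$. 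Setting $\mJ_{\text{reg}}=\bigcap_m\mJ^{\infty}_m$ produces the desired Baire subset of $\mJ^{\infty}$.

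Finally, elliptic regularity for the nonlinear Cauchy--Riemann equation (applied locally on $S\setminus Z$ and combined with the exponential decay estimates of Theorem \ref{asymptotics} near the punctures) shows that every $F\in\pi_{\mJ}^{-1}(J)$ with $J\in\mJ^{\infty}$ is automatically smooth, so the resulting manifold $\mM^A_{s^+,s^-}(\rho_Z,J)$ does not depend on the choice of $k,p$. The main technical obstacle is entirely contained in Proposition \ref{trans1}: the Fredholm index calculation in weighted Sobolev spaces with Morse--Bott asymptotics, where the generalized Conley--Zehnder contribution $K(z)$ arises from splitting off the tangent directions along $\rho_z$ and applying the Riemann--Roch formula of Schwarz; the remaining steps here are formal manipulations of the Sard--Smale/Taubes machinery.
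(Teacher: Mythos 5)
Your proposal is correct and follows essentially the same route the paper takes: the paper's proof of Theorems \ref{trans4} and \ref{trans5} is simply a citation to the Sard--Smale argument of \cite{MS04}, theorem 3.1.5(ii) (which contains exactly the $\mC^{\ell}$-to-$\mC^{\infty}$ upgrade via Taubes's trick and elliptic regularity that you spell out), together with the Fredholm index formula from \cite{Bou02}, section 5. The only superfluous remark in your write-up is the aside about ``dealing with multiply-covered curves'' --- the universal moduli space is already restricted to simple curves, so nothing extra is needed there.
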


\begin{thm}\label{trans5}
Suppose all asymptotic orbits are simple and fix a submanifold $P\subset R_Z$. For $J$ in a Baire set $\mJ_{\mbox{reg}}^P\subset\mJ_{\mbox{reg}}$ the evaluation map
\[\ev_R:\mM^A_{s^+,s^-}(\rho_Z,J)\rightarrow R_Z\]
\noindent is transverse to $P$.
\end{thm}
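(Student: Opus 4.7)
The plan is to parallel the proof of Theorem \ref{trans4}, but starting from Proposition \ref{trans2} in place of Proposition \ref{trans1}. By Proposition \ref{trans2}, the evaluation map
\[\ev_R : \mM^*(A,S,Z,\rho_Z,\mJ^{\ell}) \to R_Z\]
is a submersion of Banach manifolds, so for any submanifold $P \subset R_Z$ the preimage
\[\mM^*_P := \ev_R^{-1}(P) \subset \mM^*(A,S,Z,\rho_Z,\mJ^{\ell})\]
is itself a $\mC^{\ell-k}$ Banach submanifold of codimension $\codim(P)$. The assumption that all asymptotic orbits are simple is used here to ensure that every curve in the universal moduli space is simple in the sense of \ref{trans1}, so that Propositions \ref{trans1} and \ref{trans2} apply uniformly on $\mM^*_P$.

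Next, restrict the projection $\pi_{\mJ}$ of Proposition \ref{trans1} to $\mM^*_P$, obtaining a map $\pi^P_{\mJ} : \mM^*_P \to \mJ^{\ell}$. A standard lemma from Fredholm theory states that the restriction of a Fredholm map to the transverse preimage of a finite-codimension submanifold of the target of a complementary submersion is again Fredholm, with the index dropped by that codimension; applied here this shows $\pi^P_{\mJ}$ is Fredholm of index $\mbox{ind}(\pi_{\mJ}) - \codim(P)$. The Sard-Smale theorem applied to $\pi^P_{\mJ}$ (with $\ell$ taken large enough relative to the index) produces a Baire set of regular values in $\mJ^{\ell}$, and the Taubes trick used for Theorem \ref{trans4} upgrades this to a Baire set $\mJ^P_{\mbox{reg}} \subset \mJ_{\mbox{reg}}$ inside $\mJ^{\infty}$.

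To conclude, one verifies that $J$ being a regular value of $\pi^P_{\mJ}$ is equivalent to $\ev_R$, restricted to $\mM^A_{s^+,s^-}(\rho_Z,J)$, being transverse to $P$. This is a diagram chase at a given $F \in \mM^*_P \cap \pi_{\mJ}^{-1}(J)$: the splitting
\[T_F \mM^*_P = T_F(\pi_{\mJ}^{-1}(J) \cap \mM^*_P) \oplus (\mbox{horizontal lift of } T_J \mJ^{\ell})\]
combined with surjectivity of $d\ev_R$ from the full tangent space of the universal moduli space onto $T_{\ev_R(F)} R_Z$ forces $d\ev_R$ restricted to the $J$-slice to hit every normal direction to $P$.

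The only delicate step I anticipate is the verification that restriction to $\mM^*_P$ preserves Fredholmness of the projection, but this is a standard calculation once one writes $\mM^*_P$ locally as the transverse zero set of the composition of $\ev_R$ with a submersion onto a normal slice to $P$. Given this, the rest of the argument is entirely formal bookkeeping following \cite{MS04}.
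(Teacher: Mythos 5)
Your proposal is essentially correct and follows the same route the paper points to: restrict to $\ev_R^{-1}(P)$ (a Banach submanifold because $\ev_R$ is a submersion by Proposition \ref{trans2}), observe that the restricted projection remains Fredholm with index dropped by $\codim(P)$ (since inclusion of a finite-codimension closed subspace is Fredholm, and compositions of Fredholm maps are Fredholm), apply Sard--Smale plus the Taubes trick, and finish with the diagram chase identifying regular values of $\pi^P_{\mJ}$ with the sought transversality of $\ev_R$ on the $J$-slice. The paper itself compresses all of this into ``standard application of Sard--Smale,'' so your write-out supplies exactly the details being elided.

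One attribution is off, though it does not affect the structure of the argument. You say the hypothesis that all asymptotic orbits are simple ``ensures that every curve in the universal moduli space is simple.'' That is not what the hypothesis does: the universal moduli space $\mM^*(A,S,Z,\rho_Z,\mJ^{\ell})$ is by definition restricted to curves not factoring through a multiple cover, so simplicity of the curves is already built in and is independent of the covering multiplicity of the asymptotic orbits. The point of assuming simple orbits is that the moduli space $\rho_z$ of Reeb orbits is in general only an orbifold, with orbifold points precisely at multiply-covered orbits; the hypothesis guarantees that each $\rho_z$ and hence $R_Z=\prod_{z\in Z}\rho_z$ is a genuine smooth manifold, so that ``submanifold $P\subset R_Z$'' and ``transverse to $P$'' have their usual unambiguous meanings and the Sard--Smale machinery applies without orbifold complications.
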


\subsubsection{Proofs of transversality results}\label{transproofs}

The proofs are closely modelled on \cite{MS04}, propositions 3.2.1 and 3.4.2.

\begin{proof}[Proof of proposition \ref{trans1}]
To prove that the universal moduli space is a manifold it suffices (by the implicit function theorem for Banach manifolds) to show that the vertical differential
\[D\sigma(J,F):W^{k,p}_{\delta_Z}(F^*T\overline{X})\times\mC^{\ell}(\End(T\overline{X},J,\eta,\omega)\rightarrow W^{k-1,p}_{\delta_Z}(\Lambda^{0,1}_j\otimes_{(j,J)}F^*T\overline{X})\]
\noindent is surjective for generic $\delta_Z$ whenever $F$ is a simple finite-energy $J$-curve. Notice that we have omitted the finite-dimensional $V_Z$ and $X_Z$ factors, as these do not affect the argument. This differential is given by
\[D\sigma(J,F)(Y,y,\xi)=D_F\xi+Y\circ dF\circ j\]
\noindent where $D_F\xi=\zeta ds+\overline{J}(F)\zeta dt$ in local isothermal coordinates on $S\setminus Z$ with $\zeta=\nabla_{\partial_sF}\xi+\overline{J}\nabla_{\partial_t}\xi+(\nabla_{\xi}\overline{J})\partial_tF$. Here $\nabla$ is the $\omega(-,\overline{J}-)$ Levi-Civita connection. By \cite{Bou02}, proposition 5.2, the operator $D_F$ is Fredholm and hence its image is closed. To prove surjectivity it suffices to prove that the image of $D_F$ is dense if $F$ is a simple $\overline{J}$-holomorphic curve.

\textbf{Case }$\mathbf{k=1}$: If the image of $D_F$ were not dense then by the Hahn-Banach theorem there would be a non-zero $\beta\in L^q_{-\delta_Z}(\Lambda^{0,1}_j\otimes_{(j,J)}F^*T\overline{X})$ with $q^{-1}+p^{-1}=1$ and
\begin{eqnarray}
\label{inteq1}\int_{S\setminus Z}\left<\beta,D_F\xi\right>d\vol & = & 0 \\ 
\label{inteq2}\int_{S\setminus Z}\left<\beta,Y\circ dF\circ j\right>d\vol & = & 0
\end{eqnarray}
The first of these three equations implies (via elliptic regularity) that $\beta$ is of Sobolev class $W^{1,p}$ and that $D^*\beta=0$. Aronszajn's unique continuation theorem means that $\beta$ can therefore only vanish on a discrete set of points.

It follows from \cite{Sie07}, corollaries 2.5 and 2.6 that a holomorphic curve which is simple has a finite number of non-injective points. We will show that $\beta$ vanishes on any non-injective point and hence vanishes identically. Let $x_0\in S\setminus Z$ be an injective point of $F$.

\begin{lma}
$\beta$ vanishes at $x_0$.
\end{lma}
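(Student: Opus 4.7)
The plan is to derive a contradiction with equation \ref{inteq2} by constructing an infinitesimal variation $Y$ of the almost complex structure, concentrated in a small neighbourhood of $F(x_0)$, such that the integrand $\left<\beta, Y\circ dF\circ j\right>$ is strictly positive near $x_0$ and vanishes elsewhere. This is the analogue for punctured curves of the argument in \cite{MS04}, proposition 3.2.1.

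First I would establish a pointwise linear algebra statement. Since $x_0$ is an injective point, $dF(x_0):T_{x_0}S\to T_{F(x_0)}\overline{X}$ is injective with $\overline{J}$-complex image. By the argument of \cite{MS04}, lemma 3.2.2, the evaluation
\[\End(T_{F(x_0)}\overline{X},\overline{J},\omega)\to\Lambda^{0,1}_{j,\overline{J}}(T_{x_0}S,T_{F(x_0)}\overline{X}),\qquad Y\mapsto Y\circ dF(x_0)\circ j,\]
is surjective. If $F(x_0)$ lies in the compact interior $\mathring{X}\subset\overline{X}$ there is no further constraint on $Y$. If $F(x_0)$ lies in a cylindrical end, then $Y$ must preserve the contact distribution $\xi$ and annihilate the Reeb and Liouville directions; solvability in this smaller class requires only that $dF(x_0)$ be transverse to the Reeb-Liouville plane, which holds on a dense open subset of injective points, since otherwise $F$ would coincide locally with a trivial cylinder over a Reeb orbit, contradicting simplicity. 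I therefore fix an injective point $x_0$ of this kind.

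Assuming $\beta(x_0)\neq 0$, I choose $Y_0\in\End(T_{F(x_0)}\overline{X},\overline{J},\eta,\omega)$ satisfying $Y_0\circ dF(x_0)\circ j=\beta(x_0)$, so that $\left<\beta(x_0),Y_0\circ dF(x_0)\circ j\right>=|\beta(x_0)|^2>0$. Multiplying by a bump function supported in a small neighbourhood $V$ of $F(x_0)$ produces a global section $Y\in T_J\mJ^{\ell}$. Because $F$ is simple and $x_0$ is injective, a standard argument (see the proof of \cite{MS04}, proposition 3.2.1) shows that by shrinking $V$, the preimage $F^{-1}(V)$ lies in an arbitrarily small neighbourhood $U$ of $x_0$. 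By continuity of $\beta$ and $dF$, the integrand $\left<\beta,Y\circ dF\circ j\right>$ is nonnegative on $U$ and strictly positive at $x_0$, hence
\[\int_{S\setminus Z}\left<\beta,Y\circ dF\circ j\right>d\vol=\int_U\left<\beta,Y\circ dF\circ j\right>d\vol>0,\]
contradicting equation \ref{inteq2}. Therefore $\beta(x_0)=0$.

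The principal obstacle is the linear algebra step in the cylindrical ends, where the class of admissible variations $Y$ is restricted to endomorphisms preserving $\xi$ and vanishing on $R$ and $\eta$. This is the only genuinely new feature compared to the closed-manifold argument, and it is overcome by choosing the injective point so that $dF(x_0)$ is transverse to the Reeb-Liouville plane; simplicity of $F$, combined with the fact that a non-trivial-cylinder $\overline{J}$-curve cannot have its differential tangent to this plane on an open set, ensures the abundance of such points.
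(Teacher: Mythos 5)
Your proof follows the same strategy as the paper's: assume $\beta(x_0)\neq 0$, choose $Y_0$ making $\left<\beta(x_0),Y_0\circ dF(x_0)\circ j\right> >0$, extend $Y_0$ to a section supported near $F(x_0)$, and contradict equation (\ref{inteq2}). You went further than the paper in explicitly addressing the constraints on $Y$ over the cylindrical ends, which the paper leaves implicit, and this is a real subtlety. However, your fix does not work.

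The constraints $Y(R)=Y(\eta)=0$ and $Y(\zeta)\subset\zeta$ force the image of $Y$ to lie in the contact distribution wherever $F(x_0)$ is in the cylindrical region, regardless of whether $dF(x_0)$ is transverse to the Reeb--Liouville plane. Hence $Y_0\circ dF(x_0)\circ j$ always takes values in $\Lambda^{0,1}_{j,\overline{J}}\left(T_{x_0}S,\zeta_{F(x_0)}\right)$, a two-real-dimensional subspace of $\Lambda^{0,1}_{j,\overline{J}}\left(T_{x_0}S,T_{F(x_0)}\overline{X}\right)$. The equation $Y_0\circ dF(x_0)\circ j=\beta(x_0)$ you wish to solve is therefore unsolvable whenever $\beta(x_0)$ has a nonzero component in the $\RR\eta\oplus\RR R$ directions, and the pairing argument only shows that the $\zeta$-component of $\beta(x_0)$ vanishes. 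Transversality of $dF(x_0)$ to the Reeb--Liouville plane does not repair this: it affects the source of the map $Y_0\mapsto Y_0\circ dF(x_0)\circ j$, not its target. The correct resolution is to choose $x_0$ an injective point with $F(x_0)$ in the interior of the compact cobordism, away from the adjusted collar, where $T_J\mJ^{\ell}$ imposes no constraint beyond $\overline{J}$- and $\overline{\omega}$-compatibility and \cite{MS04}, lemma 3.2.2 gives full solvability. Such points exist in abundance: by the maximum principle applied to the $\RR$-coordinate on the ends, a simple finite-energy curve cannot have its image contained in the cylindrical part, so the injective points mapping into the interior form an open nonempty set, and $\beta$ vanishing there already gives $\beta\equiv 0$ by the Aronszajn step.
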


If not, pick $Y_0\in\End(T_{F(x_0)}\overline{X},J_{F(x_0)},\eta_{F(x_0)},\omega_{F(x_0)})$ such that
\[\left<\beta_{x_0},Y_0\circ dF(x_0)\circ j(z_0)\right> >0\]
\noindent and let $Y\in\mC^{\ell}(\End(T\overline{X},\overline{J},\eta,\omega)$ be such that $Y(F(x_0))=Y_0$. This ensures that $\left<\beta,Y\circ dF\circ j\right>$ is positive in some neighbourhood of $x_0$. Since $x_0$ is an injective point this contains the inverse image under $F$ of a small ball $U_0$ centred at $F(x_0)$. If $C:\overline{X}\rightarrow[0,1]$ is a cut-off function supported in $U_0$ with $C(F(x_0))=1$ then
\[\left<\beta,CY\circ dF\circ j\right> \geq 0\]
\noindent and is positive at $x_0$ so equation \ref{inteq2} cannot hold for $CY$ so $\beta(x_0)=0$.

\textbf{Case }$\mathbf{k>1}$: Follows by elliptic regularity.
\end{proof}

Before proving proposition \ref{trans2} we prove the following lemma.

\begin{lma}\label{trans3}
Let $p>2$, $J\in\mJ^{\ell}$, $w\in S\setminus Z$ and $F:S\setminus Z\rightarrow \overline{X}$ be a simple punctured finite-energy $\overline{J}$-holomorphic curve. For every $\epsilon>0$ and tangent vector $v_Z\in T_{\ev_R(J,F)}R_Z$ there is a vector field $\xi\in T_{(J,F)}\mB^{\ell,p}_{\delta_Z}$ and a section $Y\in\mC^{\ell}(\End(T\overline{X},J,\eta,\omega)$ such that
\[D_F\xi+Y(F)dF\circ j=0,\ \mbox{supp}(Y)\subset B_{\epsilon}(u(w))\]
\noindent and the $X_Z$-component of $\xi$ is $v_Z$.
\end{lma}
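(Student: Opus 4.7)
The plan is to write $\xi = \xi_0 + \xi_1$, where $\xi_0$ carries the prescribed $X_Z$-component and $\xi_1$ lies in the genuine Sobolev summand $W^{k,p}_{\delta_Z}(F^*T\overline{X})$. Using the splitting
\[T_{(J,F)}\mB^{\ell,p}_{\delta_Z} = W^{k,p}_{\delta_Z}(F^*T\overline{X}) \oplus V_Z \oplus X_Z,\]
I first pick $\xi_0$ with zero $W^{k,p}_{\delta_Z}$- and $V_Z$-components and $X_Z$-component equal to $v_Z$; concretely, this is the cutoff of the constant vector field in $\RR^{2n-2}$ tangent to the Morse-Bott family whose image in $X_Z$ is $v_Z$. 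Because such a vector field lies in $\ker A_\infty$ at every end, $D_F\xi_0$ decays exponentially faster than any element of $\ker A_\infty$ and so lies in $W^{k-1,p}_{\delta_Z}(\Lambda^{0,1}_j \otimes F^*T\overline{X})$ provided $\delta_Z$ is in the range of proposition \ref{trans1}. Setting $\eta_0 := -D_F\xi_0$, the lemma reduces to finding $\xi_1 \in W^{k,p}_{\delta_Z}(F^*T\overline{X})$ and $Y \in \mC^{\ell}(\End(T\overline{X},J,\eta,\omega))$ supported in $B_\epsilon(F(w))$ with $D_F\xi_1 + Y(F)\,dF\circ j = \eta_0$.

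Consider the linear map
\[L:W^{k,p}_{\delta_Z}(F^*T\overline{X}) \times \mathcal{Y}_{\epsilon} \to W^{k-1,p}_{\delta_Z}(\Lambda^{0,1}_j \otimes F^*T\overline{X}),\quad L(\xi_1,Y)=D_F\xi_1+Y(F)\,dF\circ j,\]
where $\mathcal{Y}_\epsilon$ is the subspace of sections supported in $B_\epsilon(F(w))$. The image of $L$ contains the image of the Fredholm operator $D_F$, which is closed of finite codimension, so $\operatorname{im}(L)$ is itself closed. Surjectivity of $L$ therefore reduces to density, which gives the required $(\xi_1,Y)$ and hence $\xi := \xi_0 + \xi_1$, whose $X_Z$-component is manifestly $v_Z$ since $\xi_1$ is purely in the first summand.

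To prove density, suppose for contradiction the image is not dense. By Hahn-Banach there is a non-zero $\beta \in L^q_{-\delta_Z}$ ($p^{-1}+q^{-1}=1$) annihilating $\operatorname{im}(L)$. Testing against the $\xi_1$-slot gives $D_F^*\beta=0$; by elliptic regularity $\beta$ is smooth and by Aronszajn's unique continuation theorem the zero set of $\beta$ is discrete. Testing against the $Y$-slot gives
\[\int_{S\setminus Z}\bigl\langle \beta,\, Y(F)\,dF\circ j\bigr\rangle\,d\vol = 0\qquad\text{for every } Y\in \mathcal{Y}_\epsilon.\]
The open set $U:=F^{-1}(B_\epsilon(F(w)))$ is non-empty (it contains $w$), and by Siefring's theorem (\cite{Sie07}, corollaries 2.5--2.6) the non-injective points of the simple curve $F$ form a discrete set. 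Hence I may pick an injective point $w'\in U$ at which $\beta(w')\neq 0$. Exactly as in the proof of proposition \ref{trans1} I choose $Y_0 \in \End(T_{F(w')}\overline{X},J,\eta,\omega)$ with $\langle \beta(w'),Y_0\circ dF(w')\circ j\rangle > 0$, extend to a section and multiply by a cutoff supported in a small enough ball about $F(w')$ that (i) the ball is contained in $B_\epsilon(F(w))$ (possible since $F(w')\in B_\epsilon(F(w))$) and (ii) its $F$-preimage is, by injectivity at $w'$, a neighbourhood of $w'$ disjoint from the zero set of $\beta$. The resulting $Y$ lies in $\mathcal{Y}_\epsilon$ and makes the integrand non-negative everywhere and strictly positive near $w'$, contradicting the annihilation relation.

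The main obstacle is the last step: guaranteeing that a localised perturbation $Y$ can be simultaneously supported in the prescribed ball $B_\epsilon(F(w))$ \emph{and} effectively tested against $\beta$, which requires locating an injective point of $F$ inside $F^{-1}(B_\epsilon(F(w)))$. This is precisely where Siefring's density of injective points intervenes; everything else is a direct adaptation of the Hahn-Banach/unique continuation argument that already appeared in the proof of proposition \ref{trans1}.
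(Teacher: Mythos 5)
Your proof is correct and takes essentially the same approach as the paper's: split off the $X_Z$-component, use Fredholmness of $D_F$ on the weighted Sobolev space to get closed range, and run the Hahn--Banach/unique-continuation argument together with a localised perturbation $Y$ at an injective point in $F^{-1}(B_\epsilon(F(w)))$ to prove density. The minor variations --- using the discreteness of the zero set of $\beta$ directly (as the paper itself does in the proof of proposition \ref{trans1}) rather than citing lemma 3.4.7 of \cite{MS04} after showing $\beta$ vanishes on an open set, omitting the $V_Z$-summand from the $\xi$-slot, and not spelling out the $k>1$ elliptic-regularity bootstrap --- are cosmetic and do not affect the argument.
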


\begin{proof}[Proof of lemma \ref{trans3}]
Let $\pi_X$ denote the projection of $T_F\mB^{k,p}_{\delta_Z}$ to $X_Z$. Given a vector field $\xi'\in T_F\mB^{k,p}_{\delta_Z}$ with $\pi_X(\xi')=v_Z$ we will show that there is a $\xi''\in T_F\mB^{k,p}_{\delta_Z}$ with $\pi_X(\xi'')=0$ and a $Y\in\mC^{\ell}(\End(T\overline{X},J,\eta,\omega))$ supported in $B_{\epsilon}(F(w))$ such that
\[D_F(\xi'+\xi'')+Y\circ dF\circ j=0\]
It suffices to show that the map
\[(\xi,Y)\mapsto D_F\xi+Y\circ dF\circ j\]
\noindent is surjective for $Y$ supported in $B_{\epsilon}(F(w))$ and $\xi\in\pi_X^{-1}(0)$, for then we may take $(\xi'',Y)$ in the preimage of $-D_F\xi'$. Let $\mathcal{Z}_k$ denote the image of this operator for $\xi$ of regularity $W^{k,p}_{\delta_Z}$.

\begin{lma}
$\mathcal{Z}^k=W^{k-1,p}_{\delta_Z}(\Lambda^{0,1}_j\otimes_{(j,J)} F^*T\overline{X})$ for $k\leq \ell$.
\end{lma}

This will follow by induction from elliptic regularity once we have proved that $\mathcal{Z}^1=L^p_{\delta_Z}(\Lambda^{0,1}_j\otimes_{(j,J)} F^*T\overline{X})$. Since $D_F:T_F\mB^{k,p}_{\delta_Z}\rightarrow L^p_{\delta_Z}(\Lambda^{0,1}_j\otimes_{(j,J)} F^*T\overline{X})$ is Fredholm (see \cite{WendlThesis}, section 4.5), the image of $W^{k,p}_{\delta_Z}(F^*T\overline{X})\oplus V_Z\oplus 0$ under $D_F$ is closed and it suffices to show it is dense.

If it were not dense then by the Hahn-Banach theorem there would be a nontrivial $\beta\in L^q(\Lambda^{0,1}_j\otimes_{(j,J)} F^*T\overline{X})$ with $q^{-1}+p^{-1}=1$, annihilating the image of $D_F$. This means that
\begin{eqnarray}
\label{inteq4}\int_{S\setminus Z}\left<\beta,D_F\xi\right>d\vol & = & 0 \\ 
\label{inteq5}\int_{S\setminus Z}\left<\beta,Y\circ dF\circ j\right>d\vol & = & 0
\end{eqnarray}
It is possible to show as in the proof of proposition \ref{trans1} that unless $\beta=0$ at an injective point of the curve one can derive a contradiction by considering some $Y$ supported in $B_{\epsilon}(F(w))$ making the integral (\ref{inteq5}) positive. Hence $\beta$ vanishes on any injective point. There is an open subset of injective points, hence the following lemma shows that $\beta$ vanishes identically:

\begin{lma}[\cite{MS04}, lemma 3.4.7]
Let $p>2$, $J\in\mJ^{\ell}$ and $F:S\setminus Z\rightarrow \overline{X}$ be a simple punctured finite-energy $J$-holomorphic curve. If there is a $\beta\in L_{\delta_Z}^q(\Lambda^{0,1}_j\otimes_{(j,J)}F^*T\overline{X})$ with $q^{-1}+p^{-1}=1$ satifying
\[\pi_X\xi=0 \Rightarrow \int_{S\setminus Z}\left<\beta,D_F\xi\right>d\vol=0\]
\noindent for every $\xi\in T_F\mB^{k,p}_{\delta_Z}$ then $\beta\in W^{\ell,p}_{\mbox{loc}}(\Lambda^{0,1}_j\otimes_{(j,J)}F^*T\overline{X})$ and $D^*_F\xi=0$ on $S\setminus Z$ where $D^*_F$ is the formal adjoint of $D_F$. Moreover $\beta\equiv 0$ if it vanishes on a nonempty open set.
\end{lma}
\begin{proof}
The proof from lemma 3.4.7. of \cite{MS04} carries through because it is only a local regularity result.
\end{proof}
This finishes the proof of lemma \ref{trans3}.\end{proof}

\begin{proof}[Proof of proposition \ref{trans2}]
The tangent space at $(J,F)$ to $\mM^*(A,S,Z,\rho_Z,\mJ^{\ell})$ is the subspace of $(\xi,Y)\in \left(W^{\ell,p}_{\delta_Z}(F^*T\overline{X})\oplus V_Z\oplus X_Z\right)\times\left(\mC^{\ell}(\End(T\overline{X},J,\eta,\omega))\right)$ satisfying $D_F\xi+Y\circ dF\circ j=0$. The derivative of the evaluation map $\ev_R:\mM^*(A,S,Z,\rho_Z,\mJ)\rightarrow R_Z$ is the projection to $X_Z$ which is naturally identified with the tangent space at $\prod_{z\in Z}\gamma_z$ to $R_Z$.

For any tangent vector $v_Z\in T_{\ev_R(J,F)}R_Z$, lemma \ref{trans3} applied to each component $S_j\setminus Z_j$ of $S\setminus Z$ gives us a tangent field $(\xi_j,Y_j)$ to the universal moduli space of that subcurve such that $\sum_j\xi_j$ projects to $v_Z$ in $X_Z$. Choose the points $w_j\in S_j\setminus Z_j$ such that the balls $B_{\epsilon}(F(w_j))$ are pairwise disjoint and do not intersect $F(\Sigma_k)$ unless $k=j$. This choice is possible by simplicity of $F$.

Finally, let $\xi$ be in $W^{\ell,p}_{\delta_Z}(F^*T\overline{X})\oplus V_Z\oplus X_Z$ such that $\xi|_{S_j\setminus Z_j}=\xi_j$ and $Y=\sum_jY_j$. The pair $(\xi,Y)$ is then in the preimage $d\ev_R^{-1}(v_Z)$ and $d\ev_R$ is surjective.
\end{proof}

\begin{proof}[Proof of theorems \ref{trans4}, \ref{trans5}]
These theorems are standard applications of the Sard-Smale theorem given the above propositions. For full details of these arguments, compare with \cite{MS04}, theorem 3.1.5.(\textsc{ii}). The formula for the dimension of the smooth moduli space comes from a Fredholm index formula for $D_F$, see \cite{Bou02}, section 5.
\end{proof}

\subsection{Compactness}

Let $(X_{\infty},J_{\infty})$ be the non-compact almost complex manifold obtained by stretching the neck around some contact-type hypersurface $M\subset X$ and let $X^k_{\infty}$ be the union

\[X_{\infty}\cup\coprod_{i=1}^k \Sympl_i(M)\]

$\Sympl_i(M)$ is understood to be equipped with the same cylindrical almost complex structure as the ends of $X_{\infty}$. If $M$ is a separating hypersurface cutting $X$ into two compact pieces then we denote these by $W$ and $V$, where $W$ has convex boundary and $V$ has concave boundary.

Consider a collection of finite-energy punctured $J_{\infty}$-holomorphic curves $F_{\nu}:\Sigma_{\nu}\setminus Z_{\nu}\rightarrow \Sympl_{\nu}(M)$ (for $\nu\in\{1,\ldots,k\}$) and $F_0:\Sigma_0\setminus Z_0\rightarrow X_{\infty}$. We allow these to have sets of marked points $K_{\nu}$ and sets of special marked pairs $D_{\nu}=\{\overline{d}_i,\underline{d}_i\}$ for which $F_{\nu}(\overline{d}_i)=F_{\nu}(\underline{d}_i)$ (creating a node $d^{\nu}_i$). Let $\Sigma_{\nu}^{Z_{\nu}}$ denote the oriented blow-up of $\Sigma_{\nu}$ at $Z_{\nu}$ with compactifying circles $\Gamma^{\nu}_z$, $z\in Z_{\nu}$. Let $\Gamma^{\nu}_{\pm}$ denote the union of the compactifying circles at positive/negative punctures in the $\nu$-th curve.

Recall from section \ref{bdryends} that $X_{\infty}$ was diffeomorphic to $X\setminus M$ via a diffeomorphism $\phi$. Let $N\cong [-\epsilon,\epsilon]\times M$ be a closed Liouville collar of $M$ and use the Liouville flow to define a new diffeomorphism $\phi^0$ from $X_{\infty}$ to $X\setminus N$. Define $N_{\nu}\subset N$ for $\nu=1,\ldots,k$ to be the subset $[-\epsilon+\frac{2(\nu-1)\epsilon}{k},\epsilon,+\frac{2\nu\epsilon}{k}]\times M$ and notice that the symplectic completion of this is the symplectisation of $M$. Therefore there is a diffeomorphism $\phi^{\nu}$ identifying $\Sympl_{\nu}M$ with the interior of $N_{\nu}$. Therefore the space $X^k_{\infty}$ maps into $X$ via the union of these diffeomorphisms with image the complement of a collection of embedded copies of $M$. See figure \ref{sympldecompo}.

\begin{figure}
	\centering
		\includegraphics[height=7cm]{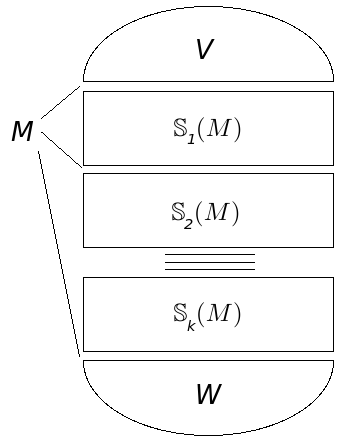}
	\caption{The decomposition of $X$ into $W$-, $V$- and symplectisation parts.}
	\label{sympldecompo}
\end{figure}

\begin{dfn}
The data $(F_{\nu},\Sigma_{\nu},Z_{\nu},K_{\nu},D_{\nu})$ defines a level $k$-holomorphic building in $X_{\infty}^k$ if there are a sequence $\{\Phi^{\nu}:\Gamma_+^{\nu}\rightarrow\Gamma_-^{\nu+1}\}_{\nu=1}^k$ of orientation-reversing diffeomorphisms (orthogonal on each boundary component) for which the compactifications of the maps $\phi^{\nu}\circ F_{\nu}$ glue to give a piecewise smooth map

\[\dot{F}:\Sigma^Z:=\bigcup_{\Phi^{\nu}}\Sigma_{\nu}^{Z_{\nu}}\rightarrow X\]

The \emph{genus} of a holomorphic building is the genus of the topological surface $\Sigma^Z$.
\end{dfn}

In the case where $M$ is separating, we write $F^W$ and $F^V$ for the $W$- and $V$-parts respectively, so $\Sigma_{0}=\Sigma^V\cup\Sigma^W$ and $F^W=F_{0}|_{\Sigma^W}$, $F^V=F_{0}|_{\Sigma^V}$.

\begin{dfn}
A marked level-$k$ holomorphic building is \emph{stable} if every constant component has at least three marked points and if there is no level $\nu$ for which all components of $F_{\nu}$ are unmarked Reeb cylinders.
\end{dfn}

For the relevant notions of equivalence and convergence for holomorphic buildings, we refer to the papers \cite{BEHWZ03} and \cite{CieMoh05} as the definitions are very involved. Instead, we remark that one can topologise the space of equivalence classes of stable level-$k$ genus $g$ buildings with $\mu$ marked points and $s^{\pm}$ positive/negative punctures with the topology of \emph{Gromov-Hofer convergence}. For us the most important property in the definition of Gromov-Hofer convergence is the following:

\begin{lma}
If $F_{\nu,j}:\Sigma_{\nu}\setminus Z_{\nu}\rightarrow X^k_{\infty}$ is a sequence of holomorphic buildings which Gromov-Hofer converge to a level-$k'$ building $F_{\nu}:\Sigma_{\nu}\setminus Z_{\nu}\rightarrow X^{k'}_{\infty}$ then the compactified curves $\dot{F}_j:\Sigma_j^{Z_j}:=\bigcup_{\Phi_j^{\nu}}\Sigma_{j,\nu}^{Z_{j,\nu}}\rightarrow X$ converge in $\mC^{\infty}_{\mbox{loc}}$ on compact subsets of the image of $X^k_{\infty}$ in $X$ and in $\mC^0$ everywhere.
\end{lma}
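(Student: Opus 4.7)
The plan is to read the statement as two assertions with quite different flavours: a smooth-convergence claim on compact subsets of $X\setminus\bigcup_{\nu}M_\nu$ (where $M_\nu$ denotes the embedded copy of $M$ coming from $\partial N_\nu$ under $\phi^\nu$), and a $\mC^0$ claim that extends across the seams $M_\nu$. The first is essentially a restatement of the definition of Gromov--Hofer convergence once the identifications are made explicit, whereas the second requires combining this definition with the asymptotic analysis of punctured curves recalled in theorem~\ref{asymptotics} and proposition~\ref{compactifyingasymptotics}.

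For the $\mC^{\infty}_{\mathrm{loc}}$ part, I would fix a compact $K\subset X\setminus\bigcup_{\nu}M_\nu$. Using the diffeomorphisms $\phi^{\nu}$, $K$ is the image of a compact set $\tilde K_\nu$ in each level $\Sympl_\nu(M)$ (or in $X_\infty$ for $\nu=0$), all of whose points are at bounded $\RR$-coordinate away from the ends. By definition of Gromov--Hofer convergence, for $j$ large the domains admit a sequence of diffeomorphisms (the gauge choices in each level, including the $\RR$-translations) under which the $F_{\nu,j}$ converge in $\mC^{\infty}_{\mathrm{loc}}$ on the complement of the nodal and puncture sets to $F_\nu$. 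Composing with the fixed smooth maps $\phi^\nu$ and with the (eventually constant) gluing maps $\Phi^\nu_j$, this is exactly the statement of $\mC^{\infty}_{\mathrm{loc}}$ convergence of $\dot F_j$ to $\dot F$ on $K$.

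The $\mC^0$ convergence across the seams is the real content. Fix $\delta>0$ and a seam $M_\nu$; it suffices to find $j_0$ so that for $j\geq j_0$ every point of $\dot F_j$ mapped into a small neighbourhood of $M_\nu$ in $X$ lies within $\delta$ of a point of $\dot F$, and vice versa. On the domain side, such points come from annular neighbourhoods of punctures $z\in Z_\nu^+$ (matched with $Z_{\nu+1}^-$ via $\Phi^\nu$). By proposition~\ref{compactifyingasymptotics}, on the limit these annular neighbourhoods are mapped, under $\phi^\nu\circ F_\nu$ and $\phi^{\nu+1}\circ F_{\nu+1}$, into collars of $M_\nu$ whose $M$-projections converge to the common Reeb orbit $\gamma_z$. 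The exponential asymptotic estimate of theorem~\ref{asymptotics} together with the fact that the compression function $g$ used in section~\ref{bdryends} is of the form $\pm\epsilon e^{\mp t}/2$ near infinity translates exponential decay in the cylindrical coordinate $s$ into proximity of $\dot F$ to $M_\nu$ in $X$. For the sequence, Gromov--Hofer convergence includes control on the ``neck'' regions: for any neighbourhood $U$ of $\gamma_z$ in $M$ and any $R$, eventually the portion of $F_{\nu,j}$ and $F_{\nu+1,j}$ corresponding to the glued neck lies in the $\RR$-cylinder over $U$, after matching basepoints. Choosing $R$ so large that the asymptotic tails of $\dot F$ sit in a $\delta/3$ tube around $M_\nu$ and then $j$ so large that the sequence's neck sits in a $\delta/3$ tube around the same Reeb orbit gives the desired $\mC^0$ bound.

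The main obstacle I expect is precisely keeping track of this matching in the neck as the scaling parameter varies: one must arrange that the $\RR$-translations used in the definition of Gromov--Hofer convergence in each level $\nu\geq 1$ are compatible with the compression diffeomorphisms $\phi^\nu$ and with the finite-collar identification $N_\nu\subset N\subset X$. Once one notes that the composition $\phi^\nu$ collapses an infinite cylinder to a finite interval, the matter reduces to uniform smallness of $v_{j}(s,\cdot)-\gamma_z$ on $\{s\geq R\}$, which is supplied by the standard Gromov--Hofer tail estimate (see \cite{BEHWZ03}). With this in hand the two statements combine to give the claim.
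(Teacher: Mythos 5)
The paper states this lemma without proof --- it is presented as \emph{the} property one cares about in the definition of Gromov--Hofer convergence, with the reader referred to \cite{BEHWZ03} and \cite{CieMoh05} for the definition itself --- so there is no argument in the paper to compare your proposal against. Your account is correct in outline and is the natural way to justify the claim from the definition: the $\mC^{\infty}_{\mathrm{loc}}$ convergence away from the seams is a restatement of Gromov--Hofer convergence through the compressions $\phi^\nu$, and the $\mC^0$ convergence across the seams follows from the Reeb asymptotics of theorem~\ref{asymptotics} and proposition~\ref{compactifyingasymptotics} together with the fact that $\phi^\nu$ sends $\{s\geq R\}$ to an exponentially thin collar of $M_\nu$, plus the uniform neck control supplied by the no-energy-loss part of the definition. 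One imprecision worth fixing: when $k'>k$ the sequence's gluing maps $\Phi_j^\nu$ cannot be ``eventually constant'' --- the sequence has fewer levels than the limit, and the $\RR$-translations that extract the new levels diverge with $j$. This is immaterial to your $\mC^{\infty}_{\mathrm{loc}}$ argument, since a compact $K$ avoiding all the seams of the \emph{limit} pulls back to compacta at bounded cylindrical coordinate in single levels, where the gluing data plays no role; and for the $\mC^0$ claim you already use the Reeb asymptotics rather than tracking $\Phi_j^\nu$ against $\Phi^\nu$. (For the same reason the ``$X^k_\infty$'' in the lemma's statement should be read as $X^{k'}_\infty$, the domain of the limit building.)
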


The reason for introducing these notions is the following set of compactness results, rephrased from the all-purpose theorem of \cite{BEHWZ03}:

\begin{thm}[\cite{BEHWZ03}, theorem 10.3]\label{SFTcompA}
Let $(X,\omega)$ be a closed symplectic manifold, $M$ a contact-type hypersurface and $J_1$ an $\omega$-compatible almost complex structure adjusted to some choice of Liouville fields near $M$. Let $J_t$ denote the family of almost complex structures obtained by neck-stretching $J_1$ along $M$ and let $u_j:S_j\rightarrow X$ be a sequence of $J_{t_j}$-holomorphic curves with $\omega$-energy bounded from above. Then there exists a subsequence $u_{j_{\ell}}$, a number $k$ and a level-$k$ holomorphic building $F$ in $X^k_{\infty}$ such that $u_{j_{\ell}}$ Gromov-Hofer converges to $F$.
\end{thm}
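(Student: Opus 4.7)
The plan is to adapt Gromov's compactness argument to the neck-stretched setting, treating the manifold $X$ as being decomposed by the implanted copies of $M$ and controlling the behaviour of the curves on each piece separately. The key technical input is the notion of Hofer energy (already built into the definition of energy $E(F)$ above), which is preserved under neck-stretching even though the cylindrical parts of the neck have unbounded $\omega$-volume. First I would establish that the sequence $u_j$ has uniformly bounded Hofer energy: the $\omega$-energy bound on $X$ gives a bound on the $\omega$-piece of $E$, while on the neck the contribution $\phi\circ a\, da\wedge v^*\lambda$ is controlled because each function $\phi_{\pm}$ has integral $1$ and $\int u_j^*\omega$ on the neck is itself bounded by the total $\omega$-energy. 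This bound survives passage to the limit building.

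Next I would perform a standard gradient-blow-up analysis. Wherever $|du_j|$ fails to be uniformly bounded one rescales at the maximum point; the rescaled curves converge to a non-constant finite-energy plane in $\RR^{2n}$ or in a symplectisation, giving a bubble. The uniform Hofer energy bound limits the total number of such bubbles. Away from bubble points the $u_j$ have derivative bounds, and the pre-compactness of $J_t$ away from the neck yields $\mC^\infty_{\mathrm{loc}}$ subsequential convergence on any compact set of $X\setminus M$. The heart of the matter is the neck: because the neck region in $J_t$-coordinates is isometrically a cylinder $[-t-\epsilon,t+\epsilon]\times M$ equipped with the translation-invariant almost complex structure $\tilde J_t$, the portions of $u_j$ mapping into successively longer necks can be reparametrised (in the $\RR$-factor) and extracted to give $\overline J$-holomorphic curves in the symplectisation $\Sympl(M)$. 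I would iterate this rescaling procedure: each time a non-trivial subsequential limit survives in a fixed compact window of the neck, it is recorded as one symplectisation level, and the remaining behaviour at $\pm\infty$ in that level is analysed recursively. The number of such levels is bounded by the Hofer energy divided by the smallest periodic orbit period, so the process terminates at some finite $k$.

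The main obstacle, and where lemma \ref{coords} and the Morse--Bott hypothesis are crucial, is the asymptotic analysis at the punctures created by this rescaling. For each puncture produced on a level, one must show that the rescaled curve converges in $\mC^\infty(S^1,M)$ to a closed Reeb orbit (not merely a weak sequential limit along some $R_k\to\infty$). This is exactly Hofer's theorem together with the Morse--Bott refinement of Hofer--Wysocki--Zehnder (lemma \ref{basicasymptotics} and theorem \ref{asymptotics} above, applied to the limit curves rather than to a fixed one). One then matches, for each interface between consecutive levels, the positive asymptotic orbit of the lower level with the negative asymptotic orbit of the upper level: they must coincide because the rescaled windows on either side of the interface are obtained as limits of the same portion of $u_j$ with two different choices of $\RR$-shift, so the cylindrical convergence forces agreement of the asymptotic Reeb orbits. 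Bubbling of marked points is handled by the standard stabilisation procedure (inserting ghost bubbles so that every constant component has at least three special points), which yields the stability condition on the limit building.

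Finally I would assemble these pieces. The gluing of compactifications $\Sigma_\nu^{Z_\nu}$ along the matching asymptotic circles via orientation-reversing diffeomorphisms $\Phi^\nu$ is forced by the matching of asymptotic Reeb orbits established above, and the resulting continuous map $\dot F:\Sigma^Z\to X$ is the piecewise-smooth object required by the definition of a holomorphic building. Verification that the convergence is Gromov--Hofer convergence in the sense of \cite{BEHWZ03} is a bookkeeping exercise in unwinding that definition. The only part of this sketch that is genuinely hard, as opposed to being a careful synthesis of existing tools, is the exponential decay and uniform asymptotic control needed to guarantee that successive rescaling windows see the same Reeb orbit and do not ``miss'' intermediate levels of bubbling along the neck; this is precisely where the Morse--Bott normal form and the spectral analysis of the asymptotic operator $A_\infty$ do the essential work.
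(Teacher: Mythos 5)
This statement is not proved in the paper: it is quoted verbatim (with reference) from Bourgeois--Eliashberg--Hofer--Wysocki--Zehnder, ``Compactness results in symplectic field theory'', Theorem~10.3. The paper uses it as a black box, so there is no internal proof to compare against. Your sketch is a reasonable high-level summary of the strategy in that reference --- uniform Hofer energy bound, bubbling analysis at gradient blow-up points, reparametrisation of the neck to extract symplectisation levels, Morse--Bott asymptotic analysis to identify limiting Reeb orbits and match them across levels, and stabilisation by ghost bubbles --- and you correctly identify the matching of asymptotic orbits and the exponential decay estimates as the technically delicate part.

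Two points are worth tightening. First, your bound on the number of levels ``by the Hofer energy divided by the smallest periodic orbit period'' is not quite the mechanism: trivial Reeb cylinders have zero $d\lambda$-energy, so a symplectisation level can a priori carry arbitrarily little energy. The actual argument in the reference combines the energy quantisation (each nonconstant, non-cylindrical component carries a definite amount of energy, using that the period spectrum is discrete and bounded below) with the stability condition (a level consisting entirely of trivial cylinders is discarded), and it is the interplay of these two that gives finiteness of $k$. Second, the matching of asymptotics across adjacent levels is not merely ``cylindrical convergence forcing agreement'': one must rule out energy escaping into an intermediate level that is missed by any fixed choice of rescaling window. This requires showing that on long sub-annuli of the domain with small $d\lambda$-energy the curve stays $C^0$-close to a single Reeb trajectory (the ``long cylinder'' lemma), which is where the Morse--Bott spectral gap enters. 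These are precisely the technical lemmas occupying most of the compactness portion of the reference, so your attribution of the hard work to the asymptotic operator analysis is correct, even if the sketch understates how much of the argument lives there.

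Since the paper itself does not attempt a proof, none of this constitutes a discrepancy with the paper; it is simply a matter of whether the sketch would survive contact with the details in the cited source.
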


\begin{thm}[\cite{BEHWZ03}, theorem 10.2]\label{SFTcompB}
Let $(X,\omega)$ be a compact symplectic cobordism and $J$ an $\omega$-compatible almost complex structure adjusted to some choice of Liouville fields along the boundary. Then a sequence of $\overline{J}$-holomorphic buildings of level-$k$ with bounded energy in $\overline{X}$ has a Gromov-Hofer convergent subsequence whose limit is a level $k'$-holomorphic building for some $k'$.
\end{thm}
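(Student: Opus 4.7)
The plan is to reduce the statement to the closed-manifold version, Theorem \ref{SFTcompA}, applied level-by-level, and then to control how levels can subdivide under limits. Fix a sequence $F_j$ of level-$k$ $\overline{J}$-holomorphic buildings with uniformly bounded energy in $\overline{X}$. Write $F_j = (F_{j,0}, F_{j,1},\ldots, F_{j,k})$ where $F_{j,0}$ lies in $\overline{X}$ and $F_{j,\nu}$ for $\nu\geq 1$ lies in a symplectisation level $\Sympl(M_\nu)$. The energy splits additively across the levels, so each $F_{j,\nu}$ has uniformly bounded energy in the SFT sense.

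First I would treat each level separately. For the bottom level $F_{j,0}:\Sigma_0\setminus Z_0\to\overline{X}$, I would compactify $\overline{X}$ by capping off the ends with the standard SFT picture and apply the proof strategy of Theorem \ref{SFTcompA}: extract a Gromov-Hofer convergent subsequence, whose limit is itself a (possibly taller) holomorphic building with bottom piece in $\overline{X}$ and additional symplectisation levels inserted above the original ends. The mechanism by which new levels appear is the standard SFT rescaling procedure: when a connected component of the domain develops derivatives tending to infinity (either at an interior point, giving a sphere bubble, or near a puncture, giving energy escaping to the cylindrical end), one rescales the target via the $\RR$-action on the relevant symplectisation to capture a non-trivial finite-energy limit curve. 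Each symplectisation level $F_{j,\nu}$ ($\nu\geq 1$) is treated the same way, using the analogous rescaling argument within $\Sympl(M_\nu)$; the only novelty is that one must further quotient by $\RR$-translation or, if one does not, allow the limit itself to break into multiple symplectisation sub-levels. In both cases the key analytic inputs are the monotonicity lemma, elliptic bootstrapping to upgrade $\mC^0_{loc}$ convergence to $\mC^\infty_{loc}$, and the Morse-Bott asymptotic analysis from Theorem \ref{asymptotics} which pins down the limit orbit at each puncture.

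Next I would glue the per-level limits into a single holomorphic building. The Morse-Bott proposition \ref{compactifyingasymptotics} guarantees that the positive asymptotic orbits of level $\nu$ and the negative asymptotic orbits of the adjacent limit level match in $M_\nu$ (possibly after passing to deeper sublevels created by rescaling); the orientation-reversing gluing diffeomorphisms $\Phi^\nu$ are produced from the asymptotic evaluation maps, exactly as in the construction of a holomorphic building. Stability of the resulting building is ensured by collapsing any level that would consist purely of unmarked Reeb cylinders onto the adjacent level, which is permissible because Gromov-Hofer convergence identifies such a level with a trivial one.

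The main obstacle, and the place where most of the work in \cite{BEHWZ03} goes, is to bound the total number of new levels that can appear in the limit, i.e.\ to show $k'<\infty$. The argument is that each genuinely new level accounts for a definite quantum of the $\lambda$-contribution to the SFT-energy: either at least the energy of a simple Reeb orbit on some $M_\nu$ (since the asymptotic operator has a positive spectral gap in the Morse-Bott regime) or a definite amount of $\omega$-energy from a non-trivial sphere bubble. Combining this with the uniform energy bound shows the rescaling procedure terminates, which together with the Hausdorff/separability properties of the Gromov-Hofer topology on the moduli space of buildings yields the desired convergent subsequence.
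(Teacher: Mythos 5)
This statement is cited verbatim from \cite{BEHWZ03}, theorem 10.2; the paper itself offers no proof, so there is nothing internal to compare against, and your proposal can only be assessed against the argument in \cite{BEHWZ03} itself.

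Your outline captures the correct overall architecture of that argument: decompose by level, extract Gromov--Hofer limits via SFT rescaling in each piece, glue asymptotics using the Morse--Bott analysis, collapse unstable Reeb-cylinder levels, and cap the number of new levels by energy quantization. The last step is stated correctly in substance: any non-trivial symplectisation level contains a component that is not a trivial cylinder over a Reeb orbit, and such a component has $d\lambda$-energy bounded below by the minimal Reeb period, so the uniform energy bound forces $k'<\infty$. This is indeed how \cite{BEHWZ03} rules out infinitely tall limit buildings.

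Two points, however, are imprecise enough to constitute gaps if taken at face value. First, the phrase \emph{compactify $\overline{X}$ by capping off the ends and apply the proof strategy of Theorem \ref{SFTcompA}} does not literally make sense: $\overline{X}$ has cylindrical ends and there is no symplectic cap that would turn it into a closed manifold to which a closed-manifold Gromov compactness theorem could be applied, nor is the bottom level of a building a curve in a closed manifold. What actually happens is that the analysis near the ends of $\overline{X}$ is carried out directly in the cylindrical model, using the same $\RR$-rescaling that produces new symplectisation levels; the closed-manifold theorem is not invoked. Second, and more seriously, \emph{treating each level separately} is a misleading description of the hard part. A sequence of level-$k$ buildings must be handled globally: the $\RR$-translations used to normalize each symplectisation level must be chosen consistently across the sequence (and across components within a level), and the decision of which components of the limit land in which level is an output of the proof, not an input. \cite{BEHWZ03} controls this by constructing a gradient-flow-line structure on the domains (their \emph{decorated nodal surfaces} and the diameter/monotonicity estimates), which is precisely the machinery your sketch elides with the phrase \emph{glued from the asymptotic evaluation maps}. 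Without some substitute for this global bookkeeping the per-level limits need not assemble into a single building of definite height.

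In short: the ideas you list are the right ones and in roughly the right order, but the two steps you describe most casually (applying a closed-manifold result to $\overline{X}$ and matching limits across levels) are exactly where the technical content of the cited proof lives.
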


\section{Neck-stretching for Lagrangian spheres in Del Pezzo surfaces}\label{neckstretching}

In this section, $(X,\omega)$ will be a symplectic Del Pezzo surface $\DD_n$ and $L\subset X$ will be a Lagrangian sphere. To apply the machinery developed in the previous section, we extract from this data:

\begin{itemize}
\item A contact-type hypersurface $M$ enclosing a neighbourhood $W$ of $L$,
\item An almost complex structure $J_1$ on $X$ which is very explicitly given on $W$,
\item A family of almost complex structures $J_t$ arising from neck-stretching of $J_1$ along $M$.
\end{itemize}

We also need to understand specific properties of holomorphic curves in the symplectic completion $\overline{W}$.

\subsection{The neck-stretching data}\label{TS}

As $L$ is a Lagrangian 2-sphere, Weinstein's neighbourhood theorem guarantees the existence of a neighbourhood $\widetilde{W}$ of $L$ which is symplectomorphic to a neighbourhood of the zero-section in $T^*S^2$ with its canonical symplectic structure.

More explicitly, write $T^*S^2$ in coordinates:
\[T^*S^2=\{(u,v)\in\RR^3\times\RR^3 : |u|=1, u\cdot v=0\}\]
\noindent with canonical symplectic form $\omega_{\mbox{can}}=d\lambda_{\mbox{can}}$ where
\[\lambda_{\mbox{can}}=\sum_{j=1}^3 v_j du_j\]
\noindent and let $H$ be the Hamiltonian function
\[H(u,v)=\frac{1}{2}|v|^2\]
The Hamiltonian flow generated by $H$ is the cogeodesic flow on the round sphere and for $c>0$ the $c$-level set of $H$ is a contact-type hypersurface. The contact plane distribution is
\[\zeta_{(u,v)}=\left<(u\times v,0),(0,u\times v)\right>\]
\noindent and the Reeb vector field is $R=(v,0)$, the $\omega$-dual to $dH$. Write $M_c$ for $H^{-1}(c)$ and $W_c$ for $H^{-1}([0,c])$.

Suppose $\Psi:\widetilde{W}\rightarrow W_{2r}$ is the symplectomorphism given by Weinstein's neighbourhood theorem, taking $L$ to the zero-section. The hypersurface $M=\Psi^{-1}(M_r)$ is a contact-type hypersurface in $X$, bounding $W=\Psi^{-1}(W_r)$. The hypersurface $M$ is diffeomorphic to $\RR\mathbb{P}^3$.

We now write down an explicit $\omega_{\mbox{can}}$-compatible complex structure $I$ on $T^*S^2$ which we can restrict to $W_{2r}$, pull-back via $\Psi$ to $\widetilde{W}$ and extend arbitrarily but $\omega$-compatibly to the rest of $X$. The complex structure is obtained by identifying $T^*S^2$ with the affine quadric
\[\mQ=\{x_1^2+x_2^2+x_3^2=1\}\subset\CC^3\]
\noindent via the map $\varpi:T^*S^2\rightarrow\mQ$ given in coordinates by
\[x_j = u_j\cosh(|v|)+i v_j\sinh(|v|)/|v|\]
\noindent under which the Liouville form on $\mQ$ inherited from $\CC^3$ pulls back to (a positive multiple of) the canonical Liouville 1-form on $T^*S^2$:
\[\varpi^*\lambda_{\mQ}=\frac{\cosh(|v|)\sinh(|v|)}{|v|}\lambda_{\mbox{can}}\]
\noindent Hence the pull-back of the complex structure on the quadric preserves the contact distribution on $M$ and sends the canonical Liouville field $\eta=(0,v)$ to a rescaled Reeb field $(fv,0)$. Thus $I$ is $\omega$-positive and $\varpi$ identifies the Lagrangian zero-section with the real part of the quadric. We will interchangeably refer to these both as $L$.

\begin{dfn}
Given a Weinstein neighbourhood $\Psi$, a \emph{neck-stretching datum} for $L$ consists an extension of $\Psi^*I$ to an $\omega$-compatible almost complex structure $J_1$ on $X$. Note that such structures are $\eta$-adjusted on the cylindrical neck $\Psi^{-1}(W_{2r}\setminus W_r)$. Let $\mJ_1$ denote the space of neck-stretching data.
\end{dfn}

Apply the neck-stretching procedure from section \ref{bdryends} to $J_1\in\mJ_1$ along the hypersurface $M_r$. The result is a sequence $\{J_t\}_{t\in[1,\infty)}$ of $\omega$-compatible almost complex structures and we denote by $(X_{\infty},J_{\infty})$ the noncompact almost complex, symplectic manifold with cylindrical ends:
\[\overline{W}\cup\overline{V}\]
\noindent where $V$ is the closure of the complement of $W$ in $X$. We also write $X_{\infty}^k$ for the manifold
\[X_{\infty}\cup\coprod_{i=1}^k \Sympl_i(M)\]
\noindent where $\Sympl_i(M)$ is the symplectisation of $M$ (labelled by the integer $i$).

We will be interested in understanding the limits of $J_t$-holomorphic curves from the families constructed in section \ref{singularpseudo} as $t\rightarrow \infty$ for a neck-stretch $J_t$. To that end, let us first examine punctured finite-energy holomorphic curves in the $\overline{W}$-part of $X_{\infty}$, namely the affine quadric surface.

\subsection{Compactifying punctured curves}

\subsubsection{Asymptotics for the affine quadric $\mQ$}\label{asympoperator}

In the case of the affine quadric, the Reeb flow is actually a Hamiltonian circle action (see section \ref{TS}) so the manifold of periodic orbits of period $T$ fills the whole of $M$. The local model in theorem \ref{asymptotics} near a periodic orbit is just $S^1\times\RR^2$ with the standard contact form $\lambda=d\theta+x dy$. The Reeb orbits are then circles of constant $(x,y)$ and the linearised return map is the identity. This implies that the relevant self-adjoint operator $A$ in this case is just $-J_0\frac{d}{dt}$, whose eigenvalues $L$ are integer multiples of $2\pi$ and whose eigenfunctions are $f(t)=(\cos(Lt),\sin(Lt))$.

\subsubsection{Compactifying}\label{compactifying}

Recall that the symplectic completion $\overline{W}$ can be embedded in the original compact symplectic manifold $(W,\omega)$ with boundary $M$ by a diffeomorphism $\phi$ (see section \ref{bdryends}). Furthermore, $\phi_*\overline{J}$ is compatible with $\omega$. In the case of the affine quadric, one may take $W$ to be the closed $r$-sublevel set of the Hamiltonian $H$. Having a Hamiltonian circle action on this level set means we may perform a symplectic cut to obtain a closed symplectic manifold:
\[\wp(W):=\left(W\setminus M\right)\cup \left(M//S^1\right)\]
\noindent (see Lerman \cite{Ler95}). In fact, $\wp(W)$ is a monotone $S^2\times S^2$, and the compactification locus $M//S^1$ corresponds to the diagonal sphere $\Delta=\{(x,x): x\in S^2\}$. The symplectic form on the symplectic cut is proportional to the Poincar\'{e}-dual of $\Delta$.

Given a finite-energy punctured holomorphic curve $F:S\setminus Z\rightarrow \overline{W}$, the map $\phi\circ F$ extends continuously to a map $\dot{F}:S^Z\rightarrow W$. On $M$, the image of $F'$ is a collection of closed Reeb orbits and therefore $F'$ descends to a continuous map $\dot{F}:S\rightarrow\wp(W)$. Since $F$ is holomorphic, $\omega$ integrates positively over $\dot{F}(S\setminus Z)$ and the singular cycle represented by $\dot{F}$ satisfies:
\[[\Delta]\cdot[\dot{F}]>0\]
In terms of the basis $[S^2]\times\{0\},\{0\}\times [S^2]$ for $H_2(\wp(W),\ZZ)$, writing $[\dot{F}]=(a,b)$, this implies $a+b>0$.

\begin{rmk}\label{nosharedremark}
Let $F_1$ and $F_2$ be two finite-energy punctured holomorphic curves in $\overline{W}$ with $\Gamma_1$ and $\Gamma_2$ their sets of asymptotic Reeb orbits. Suppose that if $\gamma\in\Gamma_1$ then there is no $\gamma'$ in $\Gamma_2$ with the same image. Then the compactifications $\dot{F_1}$ and $\dot{F_2}$ only intersect in $\phi(\overline{W})$ and their intersections are precisely the images under $\phi$ of their intersections in $\overline{W}$.
\end{rmk}

\subsection{Relative first Chern class}\label{relc1}

An equivalent point of view of the symplectic cut of the affine quadric $\mQ$ is the projective quadric surface
\[\{z_0^2+z_1^2+z_2^2+z_3^2=0\}\subset\PP{3}\]
If $\Delta=\{z_0=0\}$ denotes a hyperplane section in $Q$ then the multiple divisor $2\Delta$ is in the anticanonical linear system. Hence the determinant line bundle $\det(TQ)$ has a holomorphic section $\sigma$ that is non-vanishing on $\mQ$.

In general, if $U\subset X$ is an open subset of a symplectic manifold which is symplectomorphic to a subset of $\mQ$ and $J$ is an $\omega$-compatible almost complex structure on $X$ agreeing with the restriction of the affine quadric complex structure on $U$ then $\det(TX)$ can be $J$-unitarily trivialised on the open set $U$ using this section $\sigma$. Thus with respect to this trivialisation $\Phi$, the relative first Chern class $c_1^{\Phi}(u)$ of a punctured holomorphic curve $u$ in $\mQ$ is 0.

One can define the first Chern class of a curve in the complement of the subset $U$ using this trivialisation near the boundary and trying to extend constant sections over the interior. If $f$ is a curve in $X$ which has a component $u$ in $U$ and $v$ in the complement of $U$ then
\[c_1^{\Phi}(u)+c_1^{\Phi}(v)=c_1(f)\]

In particular, suppose that $J_t$ is a sequence of $\omega$-compatible almost complex structures on $X$ arising from a neck-stretch and $f_t$ a Gromov-Hofer convergent sequence of $J_t$-holomorphic curves in a fixed homology class $C$. Let $f_{\infty}$ denote the $J_{\infty}$-holomorphic limit building with levels: $u$ landing in the completion of $U$, $s_k$ landing in the symplectisation at level $k$ and $v$ landing in the completion of the complement of $U$. Then, since the determinant line bundle can be unitarily trivialised on $U$ and on the symplectisation levels,
\[c_1^{\Phi}(v)=c_1(C).\]

\subsection{Punctured curves in the affine quadric: examples and properties}

The easiest way to obtain examples of punctured curves in $T^*S^2$ is to view it as the affine quadric $\mathcal{Q}\subset\CC^3$ as in section \ref{TS}. It is well-known that every point $p\in\mathcal{Q}$ lies on exactly two complex lines $\alpha_p$ and $\beta_p$ in $\CC^3$ with $\alpha_p,\beta_p\subset\mathcal{Q}$. Globally the fibration over $\mathcal{Q}$ whose fibre at $p$ is the two point set $\{\alpha_p,\beta_p\}$ is a 2-to-1 cover of $\mathcal{Q}$. Since $\pi_1(\mathcal{Q})=0$, the total space of this fibration has two components, corresponding to two distinct families of planes. Another way to see these families of planes is as follows: Given a choice of orientation on $L=\mbox{Re }\mathcal{Q}$ we can define $\alpha$-planes to be planes which intersect $L$ positively and $\beta$-planes to be planes which intersect $L$ negatively.

\begin{lma}
Thought of as punctured holomorphic planes, each $\alpha$- (respectively $\beta$-) plane is asymptotic to a single simple Reeb orbit on $M$. There is a unique $\alpha$- (respectively $\beta$-) plane asymptotic to each Reeb orbit.
\end{lma}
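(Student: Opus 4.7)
The plan is to work directly with the explicit description of lines in $\mathcal{Q}$. Every complex line contained in $\mathcal{Q}$ can be written as $\ell_v(p)=\{p+tv : t\in\CC\}$, where $p\in\mathcal{Q}$ and $v\in\CC^3\setminus\{0\}$ satisfies $v\cdot v=0$ and $p\cdot v=0$. Writing $v=a+ib$ with $a,b\in\RR^3$, the condition $v\cdot v=0$ translates to $|a|=|b|$ and $a\cdot b=0$, and the projective null cone in $\PP(\CC^3)$ is a smooth conic. The parametrisation $t\mapsto p+tv$ is a biholomorphism of $\CC$ onto $\ell_v(p)$ and, viewed in $\overline{W}$ via $\varpi^{-1}$, gives a finite-energy punctured $I$-holomorphic plane.

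First I would carry out the asymptotic analysis as $|t|\to\infty$ using the formulas on $\mathcal{Q}$: $|\mbox{Re}(x)|=\cosh|w|$, $|\mbox{Im}(x)|=\sinh|w|$, $u=\mbox{Re}(x)/\cosh|w|$, and $\hat{w}=\mbox{Im}(x)/\sinh|w|$. Setting $t=\rho e^{i\theta}$ and expanding $p+tv$ to leading order yields
\[
u(t)\to\cos\theta\,\hat{a}-\sin\theta\,\hat{b},\qquad \hat{w}(t)\to\sin\theta\,\hat{a}+\cos\theta\,\hat{b},
\]
with $|w|\sim\log\rho$ and remainder of order $1/\rho$, hence exponentially small in the cylindrical coordinate $s=\log\rho$. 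As $\theta$ ranges over $[0,2\pi)$, the projection $u$ traverses the great circle in the oriented real plane spanned by $a,b$ exactly once. Combined with theorem \ref{asymptotics} and the description of the asymptotic operator $A_\infty=-J_0\,d/dt$ (with eigenvalues $2\pi\ZZ$) from section \ref{asympoperator}, this shows that $\ell_v(p)$ is asymptotic to a single simply covered Reeb orbit.

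Next I would establish the bijection with Reeb orbits. An oriented great circle on $S^2$ determines an oriented real $2$-plane in $\RR^3$ and hence a pair $(a,b)$ of orthogonal equal-length real vectors modulo the simultaneous rotation $(a,b)\mapsto(\cos\phi\,a+\sin\phi\,b,-\sin\phi\,a+\cos\phi\,b)$, which rescales $v=a+ib$ by $e^{-i\phi}$ and so fixes $[v]\in\PP^1$; conversely, the asymptotic computation shows $[v]$ determines the oriented great circle. For fixed $[v]$, a computation in a normal form (for instance $v=(1,i,0)$) shows that $\mathcal{Q}\cap v^\perp$ consists of exactly two affine lines parallel to $\CC v$, so there are precisely two planes with direction $[v]$. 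These two planes are exchanged by a holomorphic involution of $\mathcal{Q}$ (such as $x_3\mapsto -x_3$) which reverses the orientation of $L$, so one intersects $L$ positively and the other negatively. This produces a unique $\alpha$-plane and a unique $\beta$-plane asymptotic to each Reeb orbit.

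The main obstacle is the asymptotic computation: one must verify that the convergence to the Reeb orbit is of the exponential type required by theorem \ref{asymptotics} and extract the correct eigenvalue of $A_\infty$ so as to conclude that the winding number is $1$ (rather than a higher multiple). Both follow from the linearity in $t$ of the leading contribution $tv$ together with the boundedness of the offset $p$, but translating the bound into the tubular coordinates of lemma \ref{coords} requires some care.
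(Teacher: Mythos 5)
Your argument is correct, but it takes a genuinely different route from the paper's. The paper's proof is short and coordinate-free: it identifies the symplectic cut of $W$ with the projective quadric $Q\subset\PP{3}$ and the compactification locus with the hyperplane section $\{z_0=0\}$; an affine line then compactifies to a projective line meeting $\{z_0=0\}$ transversely in one point, and by remark \ref{intersectionremark} that local intersection number is exactly the covering multiplicity of the asymptotic Reeb orbit — so the orbit is simple. The bijection is then immediate because each point of the smooth conic $\{z_0=0\}\cap Q$ lies on exactly one projective line from each of the two rulings. You instead carry out an explicit asymptotic expansion of $p+tv$ in the coordinates of $\varpi$, read off the limiting great circle (and the winding number $1$) directly, and count the lines $\mQ\cap v^{\perp}$ in a normal form such as $v=(1,i,0)$. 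Your approach is more self-contained and exhibits the asymptotic orbit concretely, and the involution $x_3\mapsto -x_3$ is a nice way to see the $\alpha$/$\beta$ split; the cost, which you correctly flag, is that one must convert the $O(1/\rho)$ error into the exponential decay in $s=\log\rho$ demanded by theorem \ref{asymptotics} and lemma \ref{coords}, and verify that the orientation conventions make the limiting circle a Reeb orbit rather than its reverse. The paper's argument sidesteps all of this by outsourcing the asymptotics to the intersection-number remark, at the price of invoking the compactification machinery. Both are valid; the paper's is shorter, yours is more explicit.
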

\begin{proof}
For the affine quadric, the projective compactification
\[Q=\{z_0^2+z_1^2+z_2^2+z_3^2=0\}\]
\noindent agrees with the symplectic cut, the projective compactification locus being the hyperplane section $\{z_0=0\}$. Affine lines projectively compactify to projective lines and therefore intersect the compactification locus in exactly one point transversely. This point is the representative of the unique asymptotic Reeb orbit, and every point on $\{z_0=0\}$ lies on a projective line of $\PP{3}$ contained in $Q$.
\end{proof}

\begin{rmk}\label{intersectionremark}
In fact, if $F$ is a finite-energy punctured holomorphic curve asymptotic to a Reeb orbit $\gamma$ of period $k\tau$ and minimal period $\tau$ then we can find the local intersection number of $\dot{F}$ and $\Delta$ by looking at the model case of an $\alpha$-plane asymptotic to $\gamma$, covered $k$-times by $\CC$ with branching over the origin. This projectively compactifies to a $k$-fold branched cover of a projective line, ramified at $0$ and $\infty$, and has local intersection number $k$ with the compactification locus $\Delta$.
\end{rmk}

\begin{lma}\label{mustbealphabeta}
Any finite-energy holomorphic plane asymptotic to a single simple Reeb orbit $\gamma$ must be an $\alpha$- or $\beta$-plane.
\end{lma}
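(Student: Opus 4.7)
The plan is to show that the projective compactification of $F$ is a holomorphic rational curve in $Q=\PP{1}\times\PP{1}$ of bidegree $(1,0)$ or $(0,1)$; since these are precisely the classes of the $\alpha$- and $\beta$-rulings, the lemma follows.

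First I would compactify. The coordinate identification of section \ref{TS} makes $\overline{W}$ biholomorphic to the affine quadric $\mQ\subset\CC^3$, which sits as the complement of the hyperplane section $\Delta=\{z_0=0\}$ in the smooth projective quadric $Q=\{z_0^2+\cdots+z_3^2=0\}\subset\PP{3}$. Because the complex structure $I$ from section \ref{TS} is integrable, $F:\CC\to\mQ$ is classically holomorphic. Proposition \ref{compactifyingasymptotics} extends $F$ continuously to $\dot{F}:\PP{1}\to Q$, sending $\infty$ to the point $\gamma/S^1\in\Delta$. The continuous extension of a bounded holomorphic map into a projective variety is itself holomorphic by the Riemann removable singularities theorem, so $\dot{F}$ is a morphism of smooth projective varieties.

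Next I would pin down its homology class. By remark \ref{intersectionremark}, since $\gamma$ is simple ($k=1$) and $F$ has a single puncture, the intersection number $[\dot{F}]\cdot[\Delta]$ equals $1$. Writing $[\dot{F}]=a\alpha+b\beta$ in $H_2(Q;\ZZ)$ and using $[\Delta]=\alpha+\beta$, this gives $a+b=1$. The two projections $\pr_i\circ\dot{F}:\PP{1}\to\PP{1}$ are holomorphic maps of compact Riemann surfaces, hence have nonnegative degrees; but these degrees are precisely $a$ and $b$, so $a,b\geq 0$. (One could equally invoke positivity of intersections of $\dot{F}$ against a generic $\alpha$- and $\beta$-ruling chosen via remark \ref{nosharedremark} so that their intersections with $\Delta$ avoid $\gamma/S^1$.) Combined with $a+b=1$, this forces $(a,b)\in\{(1,0),(0,1)\}$.

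Finally, a holomorphic map $\PP{1}\to\PP{1}\times\PP{1}$ of bidegree $(1,0)$ is, up to reparametrisation, of the form $z\mapsto(z,p)$, whose image is an $\alpha$-ruling of $Q$; its restriction to the affine part $\mQ$ is by construction an $\alpha$-plane. The bidegree $(0,1)$ case yields a $\beta$-plane analogously. The main subtlety I anticipate is the justification that the continuous compactification $\dot{F}$ is genuinely holomorphic across the puncture; this depends on integrability of the target and classical removable singularities rather than the general SFT asymptotic machinery of section \ref{puncgeneralities}. Once this point is secured, the rest is straightforward intersection-theoretic bookkeeping.
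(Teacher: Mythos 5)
Your proof is correct, but it takes a genuinely different route from the paper's. The paper argues by contradiction: if $F$ were neither an $\alpha$- nor a $\beta$-plane, then (since the planes foliate $\mQ$) it would intersect some $\alpha$-plane $A$ and some $\beta$-plane $B$ not asymptotic to $\gamma$; positivity of intersections for pseudoholomorphic curves together with remark~\ref{nosharedremark} would then give $[\dot{F}]\cdot[\dot{A}]>0$ and $[\dot{F}]\cdot[\dot{B}]>0$, forcing $a+b\geq 2$ and contradicting $[\dot{F}]\cdot[\Delta]=1$ from remark~\ref{intersectionremark}. You instead compactify first and then argue directly: integrability of $I$ plus the classical Riemann removable singularities theorem upgrade $\dot{F}$ to a genuine morphism $\PP{1}\to Q$, after which the two projections to $\PP{1}$ are holomorphic maps of compact Riemann surfaces with nonnegative degrees $a,b$; combined with $a+b=1$ this pins down $\dot{F}$ as a parametrised ruling, hence an $\alpha$- or $\beta$-plane on the affine part. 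The two arguments use the same input to get $a+b=1$; they differ in how they obtain $a,b\geq 0$. Your route is more elementary and even identifies the curve up to reparametrisation, but it leans harder on integrability of the complex structure on $\overline{W}$ (needed both for the removable-singularities step and for nonnegativity of degree). The paper's route, phrased in terms of positivity of intersections of pseudoholomorphic curves, stays closer to the symplectic toolkit and would survive mild non-integrable perturbations of $J$ on $W$ provided some replacement for the $\alpha/\beta$-foliation existed.
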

\begin{proof}
If $\pi$ is neither an $\alpha$- nor a $\beta$-plane then, since these planes foliate $\mQ$, $\pi$ must intersect an $\alpha$-plane $A$ and a $\beta$-plane $B$ neither of which are asymptotic to $\gamma$. Furthermore, by positivity of intersections (which is a local property of holomorphic curves and therefore holds even in the current noncompact setting) this intersection contributes positively to the intersection of $\pi$ with $A$ and $B$. By remark \ref{nosharedremark} the compactifications satisfy
\[\dot{\pi}\cdot A > 0 \mbox{ , } \dot{\pi}\cdot B>0\]
\noindent In terms of the basis $[S^2]\times\{0\},\{0\}\times [S^2]$ for $H_2(\wp(W),\ZZ)$, writing $[\dot{\pi}]=(a,b)$, this implies $a+b\geq 2$. But $\pi$ is a plane asymptotic to a single simple Reeb orbit so the intersection number of $\dot{\pi}$ and $\Delta$ is 1 by remark \ref{intersectionremark}.
\end{proof}

\begin{lma}\label{cyl}
Let $F:S\setminus Z\rightarrow\mathcal{Q}$ be a punctured finite-energy holomorphic curve with a set $\Gamma$ of asymptotic Reeb orbits. If $A$ is an $\alpha$-plane and $B$ a $\beta$-plane, each asymptotic to some $\gamma'\notin\Gamma$ then either $F$ covers an $\alpha$- or $\beta$- plane or else it intersects both $A$ and $B$.
\end{lma}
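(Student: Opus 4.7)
The plan is to exploit the holomorphic $\PP{1}$-fibration of $\mQ$ by $\alpha$-planes (and symmetrically by $\beta$-planes). Viewing $\mQ$ as $Q\setminus\Delta$ inside the projective compactification $Q=\PP{1}\times\PP{1}$, the first ruling restricts to a holomorphic submersion $\pi_\alpha:\mQ\to\PP{1}$ whose fibres are precisely the $\alpha$-planes, and the target $\PP{1}$ is identified with the space of simple Reeb orbits on $M$ by the bijection ``$\alpha$-plane $\leftrightarrow$ asymptotic simple orbit'' used in the proof of lemma \ref{mustbealphabeta}. There is no loss in assuming $F$ is simple: it factors through a simple curve with the same image and the same underlying asymptotic orbits, and both alternatives in the lemma depend only on the image of $F$.

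The core of the argument is to show that $\pi_\alpha\circ F:S\setminus Z\to\PP{1}$ extends to a holomorphic map $\bar{\pi}:S\to\PP{1}$. At each puncture $z\in Z$, proposition \ref{compactifyingasymptotics} and theorem \ref{asymptotics} give $\mC^{\infty}$-convergence of $F$ on a cylindrical neighbourhood of $z$ to a cylinder over the asymptotic orbit $\gamma_z\in\Gamma$. The simple orbit underlying $\gamma_z$ determines a point $\bar{\pi}(z)\in\PP{1}$ (the parameter of the unique $\alpha$-plane asymptotic to it), and the convergence forces $\pi_\alpha\circ F\to\bar{\pi}(z)$ as one approaches $z$. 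The resulting continuous extension $\bar{\pi}:S\to\PP{1}$ is automatically holomorphic by Riemann's removable singularity theorem.

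Now $\bar{\pi}$, being a holomorphic map from a closed Riemann surface to $\PP{1}$, is either constant or surjective. In the constant case $F$'s image lies in a single $\alpha$-plane, so $F$ covers that plane and the first alternative of the lemma holds. Otherwise $\bar{\pi}$ is surjective, so it hits the point $[A]\in\PP{1}$ representing $A$; because the asymptotic orbit of $A$ has image disjoint from every orbit in $\Gamma$ (this being the content of $\gamma'\notin\Gamma$, in the sense of remark \ref{nosharedremark}), no puncture $z\in Z$ can map to $[A]$, so the preimage of $[A]$ contains some interior point $z\in S\setminus Z$ at which $F(z)\in A$ is a genuine intersection in $\mQ$. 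Running the identical argument with the $\beta$-ruling $\pi_\beta$ yields $F\cap B\neq\emptyset$. The one piece of real work is the extension of $\pi_\alpha\circ F$ across the punctures; once that is in hand, the constant-versus-surjective dichotomy and its consequence are formal.
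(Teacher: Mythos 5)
Your argument is correct, but it is genuinely different from the paper's. The paper argues by contradiction using homological intersection numbers in the compactification $\wp(W)\cong S^2\times S^2$: if $F$ neither covers a plane nor meets $A$, then $[\dot{F}]\cdot[\dot{A}]=0$ by remark \ref{nosharedremark}; but the $\alpha$-planes foliate $\mQ$, so $F$ must meet some $\alpha$-plane $A'$ whose asymptotic orbit is also not in $\Gamma$, and positivity of intersections then forces $[\dot{F}]\cdot[\dot{A'}]>0$ --- a contradiction since $[\dot{A'}]=[\dot{A}]$. Your proof instead exploits the integrability of the complex structure on $\mQ$ more directly: you pull back the holomorphic ruling $\pi_\alpha:\mQ\to\PP{1}$, extend it across the punctures using proposition \ref{compactifyingasymptotics} and the removable singularity theorem, and invoke the constant-or-surjective dichotomy for maps of closed Riemann surfaces to $\PP{1}$. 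This is slightly more elementary in that it avoids the computation of $[\dot{F}]\cdot[\dot{A}]$ in $H_2(\wp(W))$, replacing it by the open mapping theorem; on the other hand it relies specifically on having a \emph{holomorphic submersion} $\pi_\alpha$ and so does not transfer to settings where the complex structure on the $W$-part is merely almost complex, whereas the paper's positivity-of-intersections argument does. A small remark: the reduction to simple $F$ at the start is unnecessary --- the constant/surjective dichotomy applies regardless, and multiple covering only affects degrees, not non-emptiness of fibres --- and the phrasing ``running the identical argument with $\pi_\beta$ yields $F\cap B\neq\emptyset$'' should more carefully say that the $\beta$-projection is likewise either constant (giving the first alternative) or surjective (giving $F\cap B\neq\emptyset$); but the logic of the disjunction is unaffected.
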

\begin{proof}
Suppose $F$ does not cover an $\alpha$- or $\beta$- plane and (without loss of generality)
\[F(S\setminus Z)\cap A=\emptyset\]
Then $[\dot{F}]\cdot[\dot{A}]=0$. However the $\alpha$-planes foliate $\mathcal{Q}$ so the image of $F$ must intersect some $\alpha$-plane $A'$ whose asymptotic orbit is not contained in $\Gamma$. This intersection would be positive which would contradict $[\dot{F}]\cdot[\dot{A}]=0$.
\end{proof}

\subsection{Index formulas}

Richard Hind \cite{Hi04}, lemma 7, calculates the Conley-Zehnder indices of the Reeb orbits for this contact form on $M$ to be $2\cov(\gamma)$ where $\cov(\gamma)$ denotes the number of times the orbit $\gamma$ in question wraps around a simple Reeb orbit. We also know that the relative first Chern class of such a curve vanishes (see section \ref{relc1}). Therefore for genus 0, $s^+$-punctured curves in $W$, the index formula for the expected dimension of their moduli spaces reads:
\[\mathbb{E}\dim=2(s^+-1)+\sum_{i=1}^{s^+}2\cov(\gamma_i)\]
\noindent and for genus 0, $s^-$-punctured curves $F$ in $V$, the index formula becomes
\[\mathbb{E}\dim=2(s^--1+c_1^{\Phi}(F^*TV))-\sum_{i=1}^{s^-}2\cov(\gamma_i)\]

\section{Analysis of limit-buildings: binary case}\label{prf}

Let $L$ be a binary Lagrangian sphere in $\DD_n$, $n=2,3,4$, in the homology class $E_1-E_2$ (for notational simplicity). Let $\Psi$ a choice of Weinstein neighbourhood and $\mJ_1$ the space of neck-stretching data for $L$. In this section we will analyse the $J_{\infty}$-holomorphic buildings obtained as limits of sequences of stable $J_t$-curves from the families constructed in section \ref{singularpseudo}.

Let $E_k(J)$, $S_{k\ell}(J)$ denote the $J$-holomorphic exceptional spheres in the homology classes $E_k$ and $S_{k\ell}$. We will prove the following proposition:

\begin{prp}\label{grottyanalysis}
For generic neck-stretching data $J_1\in\mJ_1$ and large $t$,
\begin{itemize}
\item $E_k(J_t)\cap L=\emptyset$ if $k\neq 1,2$,
\item $S_{ij}(J_t)\cap L=\emptyset$ unless $|\{i,j\}\cap\{1,2\}|=1$,
\item If $i=3,4$ there is a smooth $J_t$-holomorphic curve homologous to $H-E_i$ disjoint from $L$,
\item There is a smooth $J_t$-holomorphic curve homologous to $H$ disjoint from $L$.
\end{itemize}
\end{prp}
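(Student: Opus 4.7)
The plan is to apply Gromov--Hofer compactness (Theorem \ref{SFTcompA}) to sequences of $J_t$-holomorphic curves in each target class, and then argue that the $J_\infty$-holomorphic limit buildings have no component in $\overline{W}$. Once the limit lives entirely in $\overline{V}$, the building has no asymptotic Reeb orbits, hence is a classical (closed) stable $J_\infty$-curve in $\overline{V}\subset X\setminus L$; for bullets 1 and 2 this is the unique $J_t$-representative we started with, while for bullets 3 and 4 a smooth regular component of the limit (by automatic transversality, Theorem \ref{auttrans}) deforms back to a smooth $J_t$-holomorphic curve disjoint from $L$ for all large $t$.

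For bullets 1 and 2, take the unique $J_t$-spheres $E_k(J_t)$ and $S_{ij}(J_t)$ supplied by lemma \ref{existclasses}; arguing by contradiction, if some subsequence meets $L$ for $t_n\to\infty$, the Gromov--Hofer limit has non-empty $W$-part $F^W$. Each component of $F^W$ is a punctured finite-energy curve in the affine quadric $\overline{W}$ of total $\omega$-area at most $1$ and with $c_1^\Phi=0$ (section \ref{relc1}); by lemmas \ref{mustbealphabeta} and \ref{cyl} each is a cover of an $\alpha$- or $\beta$-plane, and its compactification in $\wp(W)=S^2\times S^2$ represents a class $(a,b)$ with $a,b\ge 0$. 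All classes $A$ appearing in bullets 1 and 2 satisfy $[A]\cdot[L]=0$; combining positivity of intersection against the foliations by $\alpha$- and $\beta$-planes, the index formulas at the end of section \ref{SFT}, and the matching of asymptotic Reeb orbits with a $V$-part of total relative first Chern class $c_1(A)=1$, no such $(a,b)$ is consistent, giving a contradiction.

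For bullets 3 and 4, pick a base point $x\in X\setminus L$ (and, in the $H$ case, a second base point $y\in X\setminus L$), chosen generically so as to miss the area-$1$ divisors $E_j(J_t)$ and $S_{ij}(J_t)$ for all $t$ in the sequence. Proposition \ref{singfolexists} yields a $J_t$-holomorphic stable curve in $H-E_i$ through $x$, and Proposition \ref{Hclassexists} one in $H$ through $(x,y)$, for every $t$; these are smooth by the genericity of the base points. A Gromov--Hofer limit is then a building whose $V$-part contains $x$ (and $y$). Running the same classification and index analysis for $H-E_i$ ($i=3,4$) and $H$ rules out a non-trivial $W$-part: the vanishing of $[A]\cdot[L]$, the foliating-plane argument, and the $c_1^\Phi$ accounting again eliminate all potential $W$-configurations. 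The limit is a closed stable $J_\infty$-curve in $\overline{V}$ through $x$ (and $y$); its smooth components are regular by Theorem \ref{auttrans} and perturb back to a smooth $J_t$-holomorphic curve in the target class disjoint from $L$ for all sufficiently large $t$.

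The main obstacle is the SFT enumeration underlying the rule-out of non-trivial $W$-parts. For each target class $A$ one must list the finitely many combinatorial types of genus-$0$ trees of components in $\overline{W}$, the symplectisation levels, and $\overline{V}$ that are compatible with the area bound $\omega(A)\in\{1,2,3\}$, with matching asymptotic Reeb orbits on $M\cong\RR\mathbb{P}^3$, and with the class splitting and index formulas; for each type one must either exhibit a direct positivity or classification obstruction, or show that the expected dimension of the corresponding stratum is strictly negative for a generic neck-stretching datum (via Proposition \ref{trans1} and Sard--Smale). The most delicate case is bullet 4, where $\omega(H)=3$ admits more low-period asymptotic configurations; here the extra point constraint through $(x,y)\notin L\times L$ and the observation from lemma \ref{cyl} that any non-covering $W$-component intersects both the $\alpha$- and $\beta$-foliations will be essential for dismissing the remaining possibilities.
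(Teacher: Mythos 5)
Your overall strategy — SFT compactness, limit-building analysis, and the $\alpha/\beta$-plane foliation of $\overline{W}$ — matches the paper's, but two pivotal ideas are missing and the remainder is too vague to be checkable.

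First, the paper does not attack the four bullets independently. It begins by analysing the classes $E_1$ and $E_2$, which do \emph{not} appear in the statement precisely because $[L]\cdot E_1$ and $[L]\cdot E_2$ are nonzero, so their limit buildings are forced to have nonempty $W$-parts. After establishing that $E^W_{1,\infty}$ and $E^W_{2,\infty}$ are a single $\beta$- and $\alpha$-plane respectively, asymptotic to a \emph{common} Reeb orbit $R$, the paper uses positivity of intersections against these two planes to pin down the asymptotics of every subsequent $W$-part (for $E_3$, $E_4$, $S_{34}$, and for the $H-E_i$ and $H$ buildings): any $\alpha$- or $\beta$-plane not asymptotic to $R$ would have an isolated positive intersection with $E^W_{1,\infty}$ or $E^W_{2,\infty}$, contradicting the vanishing of the relevant homological intersections. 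Your proposal never touches $E_1$, $E_2$ and has no substitute for this probing argument; the bare constraints you list ($[A]\cdot L=0$, $c_1^\Phi$-accounting, and the compactified class $(a,b)$) do not by themselves rule out a $W$-part consisting of one $\alpha$-plane and one $\beta$-plane both asymptotic to the same orbit, which is the delicate case throughout. Ruling that case out requires the evaluation-map transversality to the multi-diagonal in $\rho^k$ (remark \ref{transvevenergy1} and theorem \ref{trans5}), which you do not invoke. Also note that applying lemma \ref{mustbealphabeta} to classify $W$-components as $\alpha/\beta$-planes is only legitimate after first proving the asymptotics are simple, which the paper does via the index formula applied to the $V$-part; your draft reverses this logical order.

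Second, for bullets 3 and 4 your criterion ``expected dimension strictly negative'' is not what carries the day. In the $H-E_i$ case the $V$-part lies in a $2$-dimensional moduli space and the multi-diagonal has codimension $2$, so the intersection has expected dimension $0$, not negative; nothing in theorems \ref{trans4}--\ref{trans5} supplies transversality for the joint evaluation at a marked point \emph{and} at the punctures, so you cannot simply add your point constraint and subtract its codimension. The paper instead runs a moving-base-point argument (lemma \ref{gettingcloser}): if the $V$-part had both punctures at $R$, nearby curves in its $2$-dimensional moduli escape the diagonal, a diagonal-sequence argument shows such a nearby curve is again the $V$-part of some Gromov--Hofer limit, and that limit's $W$-part then has a plane not asymptotic to $R$, which intersects $E^W_{1,\infty}$ or $E^W_{2,\infty}$ and contradicts $E_1\cdot(H-E_i)=0$. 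This mechanism has no analogue in your sketch. For bullet 4, the same perturbation is needed (this time in the space of tangent directions); your substitution of a second point $y$ for the tangent-direction constraint $\ell$ is a plausible variant, but you would still have to prove the analogue of proposition \ref{complextangent} for two-point constraints and still need the moving-constraint argument, since the $c_1^\Phi=3$ moduli is $4$-dimensional and curves with repeated asymptotics sweep out a full-measure subset of $V$.

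In short, the skeleton is right, but the two load-bearing ingredients — the common Reeb orbit $R$ obtained from the $E_1,E_2$ limits, and the moving-base-point argument that converts dimension-$0$ transversality into an actual contradiction — are absent, and the ``enumeration'' you flag as an obstacle is exactly where the proof lives.
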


Our limit analysis will actually prove this for $t=\infty$, but the nature of Gromov-Hofer convergence then implies the proposition.

\subsection{Exceptional spheres}

We postpone analysis of the exceptional classes $E_3$, $E_4$ and $S_{34}$ to the end of the section and first examine the limit of a convergent subsequence $E(J_{t_j})$ when $E$ is an exceptional class $E_1$, $E_2$ or $S_{12}$. Such a sequence exists by the SFT compactness theorem \ref{SFTcompA}. Let $E_{\infty}$ denote the limit building in $X^k_{\infty}$ for some $k$. Recall that its $V$- and $W$-parts are denoted $E_{\infty}^V$ and $E_{\infty}^W$ respectively.

\begin{lma}\label{propertiesexceptional}
$E_{\infty}^V$ is non-empty, simple and connected.
\end{lma}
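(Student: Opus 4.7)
The plan is to treat the three assertions---non-emptiness, simplicity, and connectedness---separately.

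First, for \emph{non-emptiness}, I would suppose for contradiction that $E_{\infty}^V = \emptyset$. Then the limit building is supported entirely in $\overline{W}$ together with the symplectisation levels $\Sympl_i(M)$. Under the gluing diffeomorphism $X^k_{\infty} \to X$ of Section \ref{bdryends}, the compactified image of the building lies in the Weinstein neighbourhood $W$ of $L$. Since $W$ deformation retracts to $L$, the image of $H_2(W;\ZZ) \to H_2(X;\ZZ)$ is generated by $[L] = E_1 - E_2$. Hence $E$ would have to be a multiple of $E_1 - E_2$, contradicting $E \in \{E_1, E_2, S_{12}\}$.

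Second, for \emph{simplicity}, I would exploit that by Lemma \ref{simpleenergy1} each $E(J_{t_j})$ is smooth and embedded of $\omega$-area $1$. Suppose some component $v$ of $E_{\infty}^V$ is a $d$-fold branched cover of a simple curve $v_0$ (with $d\geq 2$), or that two distinct components of $E_{\infty}^V$ share the same image. In either case the $\delta$-count of the limit has positive contribution coming from the interior of the relevant components. But since Gromov--Hofer convergence is $\mathcal{C}^{\infty}_{\mathrm{loc}}$ on compact subsets of the domain away from the nodes and punctures, any such interior double points must arise as limits of genuine double points of the approximants $E(J_{t_j})$. This contradicts embeddedness.

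Third, for \emph{connectedness}, I would argue via the tree structure of the stable building. The limit $E_{\infty}$ is connected as a stable map since its approximants are. If $E_{\infty}^V$ split into two or more components, some linking piece in $\overline{W}$ or in a symplectisation level would be needed. By Lemma \ref{mustbealphabeta}, an $\overline{W}$-plane asymptotic to a simple Reeb orbit is an $\alpha$- or $\beta$-plane and carries only a single puncture, so it cannot directly bridge two $V$-components; any linker must therefore be a multi-punctured curve in $\overline{W}$ or in a symplectisation level. Such a linker has Fredholm index and relative first Chern class constrained by the formulas of Section \ref{relc1}, and its $\omega$-energy is bounded below. Combined with the total area identity $\omega(E)=1$ and the intersection-theoretic constraints of Lemma \ref{cyl}, one should exclude every tree configuration in which $E_{\infty}^V$ is disconnected.

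I expect connectedness to be the principal obstacle: it requires a genuine combinatorial case analysis of matching asymptotic orbits and of possible linking components, balancing index, energy and intersection data. By contrast, non-emptiness reduces to a one-line homological observation, and simplicity follows almost immediately from embeddedness of the approximants together with local $\mathcal{C}^{\infty}$-convergence in the smooth part of the limit.
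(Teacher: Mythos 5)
Your non-emptiness argument is a correct alternative to the paper's: the paper invokes the maximum principle to rule out closed curves in $\overline{W}$, whereas you observe that a building entirely contained in (the compactification of) $W$ together with the symplectisation levels would have homology class in the image of $H_2(W;\ZZ)\rightarrow H_2(X;\ZZ)$, i.e.\ a multiple of $E_1-E_2$. Both routes work.

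The other two parts have genuine gaps, and you are missing the single idea that the paper uses to dispose of both at once. Section \ref{relc1} shows that, with respect to the trivialisation $\Phi$ of $\det TX$ over $W$ and the symplectisation levels, the relative first Chern class satisfies $c_1^{\Phi}(E_{\infty}^V)=c_1(E)=1$, and that every finite-energy punctured curve $v$ in $V$ has $c_1^{\Phi}(v)>0$ (cap it off topologically by discs in $W$ to get a cycle $C$ with $c_1(C)=c_1^{\Phi}(v)$ and $\omega(C)>0$, then use monotonicity). Since $1$ is the minimal positive value, there can only be one component (connectedness); and a $k$-fold cover would have $c_1^{\Phi}=k\geq 2$ (simplicity).

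Your simplicity argument does not work as stated: Gromov--Hofer convergence in $\mC^{\infty}_{\mathrm{loc}}$ away from nodes and punctures does \emph{not} preclude the limit from being multiply covered even when all approximants are embedded. The standard example is a pencil of smooth conics degenerating to a double line: the stable map limit is a degree-$2$ branched cover of a line, yet the approximants are embedded spheres. The covering phenomenon appears as the domain is reparametrised and breaks; it is not detected by $\mC^{\infty}_{\mathrm{loc}}$ convergence on compact sets, and the resulting ``double points'' of the cover need not be limits of transverse double points of the approximants. Your connectedness argument is not a proof at all: you identify the correct ingredients (index, energy, intersection constraints) but explicitly leave the combinatorial exclusion of disconnected configurations undone, and without the Chern class constraint the case analysis you propose is genuinely hard. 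The positivity of $c_1^{\Phi}$ on every $V$-component is precisely the mechanism that makes that case analysis unnecessary.
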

\begin{proof}
$E_{\infty}^V$ is non-empty because the maximum principle for holomorphic curves forbids closed holomorphic curves in $W$. The discussion in section \ref{relc1} implies that the relative first Chern class
\[c_1^{\Phi}(E_{\infty}^V)=c_1(E)=1.\]
Any finite-energy punctured curve $v$ in $V$ has $c_1^{\Phi}(v)>0$ since one could glue it continuously to a collection of finite-energy planes in $W$ to obtain a singular cycle $C$ in $X$ which is symplectic away from $M$ so that $\omega(C)>0$. Since $[c_1(X)]=[\omega]$ and $c_1^{\Phi}(u)=0$ for a holomorphic curve $u$ in $W$, $c_1^{\Phi}(v)=c_1(C)=\omega(C)>0$. Therefore each component of $E_{\infty}^V$ contributes positively to $c_1^{\Phi}(E_{\infty}^V)=1$, but 1 is primitive amongst relative Chern classes. Therefore $E_{\infty}^V$ is connected. If $v'$ is a branched $k$-fold cover of $v$ then $c_1^{\Phi}(v')=kc_1^{\Phi}(v)$, so $E_{\infty}^V$ is simple.
\end{proof}

\begin{rmk}\label{regularityforexceptional}
By theorem \ref{trans4}, we can choose a neck-stretching datum $J_1$ such that $J_{\infty}|_V$ is regular for $E_{\infty}^V$. In fact, since there is a countable number of possible connected topologies for the domain of $E_{\infty}^V$ and a finite number of possible relative homology classes for each of those giving $c_1^{\Phi}=1$, we can take an intersection of the Baire sets for each Fredholm problem and what remains is a Baire set in $\mJ_1$ making all genus 0 connected $J_{\infty}|_V$-holomorphic punctured curves in $V$ with $c_1^{\Phi}=1$ regular.
\end{rmk}

In this generic case, the dimension formula for the moduli space of punctured finite-energy curves in $V$ containing $E_{\infty}^V$ becomes:

\begin{eqnarray*}
\dim(\mM(S,J_{\infty},E_{\infty}^V)) & = & -2+2s^{-}+2c_1^{\Phi}(E_{\infty}^V)-2\sum_{i=1}^{s^-}\cov(\gamma_i) \\
& = & 2\left(s^--\sum_{i=1}^{s^-}\cov(\gamma_i)\right) \\
& \leq & 0
\end{eqnarray*}

\noindent with equality if and only if $\cov(\gamma_i)=1$ for all $\gamma_i$, where $\{\gamma_i\}_{i=1}^{s^-}$ is the set of Reeb orbits to which $E_{\infty}^V$ is asymptotic at the punctures $\{z_i\}_{i=1}^{s^-}$. Thus, generically, the asymptotic Reeb orbits are simple.

\begin{rmk}\label{transvevenergy1}
This means that for generic neck-stretching data, all moduli spaces of $c_1^{\Phi}=1$, genus 0 punctured finite-energy curves in $V$ are zero-dimensional and all such curves have simple Reeb asymptotics. By theorem \ref{trans5}, one can also ensure that the puncture-evaluation maps from such moduli spaces to products of $\rho$ are transverse to all strata of the multi-diagonal. That is, if a $c_1^{\Phi}=1$, genus 0 curve has $k$ punctures, the map sending its moduli space to $\rho^k$ is transverse to all smooth strata of the subset $\{(\gamma_1,\ldots,\gamma_k):\gamma_i=\gamma_j\mbox{ for some }i,j\}$. In particular, for generic neck-stretching data, a $c_1^{\Phi}=1$, genus 0 punctured finite-energy curve in $V$ has \emph{distinct}, simple Reeb asymptotics.
\end{rmk}

\begin{lma}\label{reebcyl}
Any part of the limit $E_{\infty}$ which lands in a symplectisation component of $X^k_{\infty}$ is a cylinder on its asymptotic Reeb orbit.
\end{lma}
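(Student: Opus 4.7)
The plan is to combine a dimension count of the building's moduli space with an action-budget analysis. After invoking Remark \ref{regularityforexceptional} (and its natural analogue for the $W$-part and symplectisation levels via Bourgeois-style transversality in the Morse-Bott setting), we may assume $J_1$ is generic enough that every component of $E_\infty$ lies in a regular moduli space, and in particular the $V$-part has simple, pairwise distinct asymptotic Reeb orbits (Remark \ref{transvevenergy1}).

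Let $k$ be the number of symplectisation levels, $n_i$ the number of matching punctures at the $i$th interface (so $n_0$ counts $V$-asymptotics and $n_k$ counts $W$-asymptotics), $N := \sum_{i=0}^{k} n_i$, and $C$ the total number of connected components appearing across all symplectisation levels. Assembling the Fredholm index formulas recorded above for each level, and subtracting a codimension-$2$ matching constraint at each of the $N$ interface punctures, I expect the computation to telescope to
\[\dim \mM(\mbox{building}) \;=\; 2(N - 1 - C).\]
Since $\mM(E, J_t)$ is zero-dimensional by Corollary \ref{regularenergy1}, the building moduli is zero-dimensional as well, forcing $N - 1 = C$.

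The combinatorial punchline is that any symplectisation-level component is a punctured sphere with at least two punctures, and has exactly two iff it is a cylinder. Summing this bound over all components yields $2C \leq 2N - n_0 - n_k$, with equality iff every component is a topological cylinder. Combined with $N = C + 1$ this forces $n_0 + n_k \leq 2$. Since any symplectisation level requires matching on both sides, both $n_0 \geq 1$ and $n_k \geq 1$ whenever symplectisation levels are present, so $n_0 + n_k = 2$, each $n_i = 1$, and every symplectisation component is topologically a cylinder. Writing $m_i$ for the multiplicity of the Reeb orbit at interface $i$, the non-negativity of the $d\lambda$-energy on each cylinder makes the sequence $m_i$ non-increasing, and $m_0 = 1$ then forces $m_i \equiv 1$. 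Each cylinder therefore has matching simple asymptotics on both ends and vanishing $d\lambda$-energy, i.e.\ it is a trivial Reeb cylinder.

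The main obstacle I anticipate is the transversality step: unlike on $\overline{V}$, the almost complex structure on $\overline{W}$ and on the symplectisations is essentially fixed by the Weinstein identification and the cylindrical extension, so one cannot freely perturb it. I would address this by leaning on the Hamiltonian $S^1$-nature of the Reeb flow (which makes the Morse-Bott theory tractable) together with the explicit classification of low-area curves in the affine quadric developed in Section \ref{singularpseudo} (the $\alpha$- and $\beta$-planes), to conclude that every component arising in the building is in fact regular in its own moduli space.
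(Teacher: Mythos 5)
Your plan is genuinely different in structure from what the paper does, and as written it has gaps that I don't think can be papered over without essentially reverting to the paper's argument.

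The paper's proof makes no use of a moduli count for the whole building. It works level by level from the $V$-side: given that the $V$-part has simple negative asymptotics (established just before the lemma, from generic regularity in $\overline{V}$ only), it applies non-negativity of the $d\lambda$-energy to the top symplectisation level to get $s^+\geq\sum\cov(\gamma_i^-)\geq s^-$; it then forces $s^+=1$ for each component by the genus-zero constraint (a component with two positive punctures would close a cycle through the connected $V$-part); and, crucially, it rules out $s^-=0$ by observing that $M\cong\RR\mathbb{P}^3$ and a simple Reeb orbit is nontrivial in $\pi_1$, so there is no finite-energy plane in $\Sympl(M)$ asymptotic to it. That gives a cylinder with zero $d\lambda$-energy, hence a trivial Reeb cylinder, and one induces downward. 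The whole proof needs transversality only in $\overline{V}$, where $J$ can be perturbed.

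Your argument depends on several things that aren't established. First, the index identity $\dim\mM(\mbox{building})=2(N-1-C)$ is asserted (``I expect the computation to telescope'') rather than derived, and the inference from ``$\mM(E,J_t)$ is zero-dimensional'' to ``the building moduli space is zero-dimensional'' is a gluing/regularity statement: what is preserved under degeneration is the virtual (Fredholm) dimension, not the actual one, and equating the two requires transversality for every piece, including those in $\overline{W}$ and the symplectisations — exactly where $J$ cannot be perturbed. You flag this obstacle but the gesture toward the Hamiltonian $S^1$-structure and $\alpha$-/$\beta$-planes doesn't resolve it, since the components in question live in the symplectisation, not in $\overline{W}$. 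Second, your combinatorial step rests on the claim that each symplectisation component has at least two punctures; this is not automatic. A finite-energy plane with a single positive puncture in $\Sympl(M)$ is a priori possible, and if any such plane occurred your inequality $2C\leq 2N-n_0-n_k$ would fail. The paper excludes planes precisely via the $\pi_1(\RR\mathbb{P}^3)=\ZZ/2$ argument, which never appears in your write-up. Without that input, the combinatorial punchline doesn't close. Your final step — using $d\lambda$-energy monotonicity of multiplicities together with $m_0=1$ to force trivial cylinders — is correct and is also the endgame of the paper's proof, but you cannot reach it along the route proposed. I'd recommend abandoning the global dimension count and instead running the paper's top-down argument: simple asymptotics at the $V$-interface, then $d\lambda$-energy plus genus zero plus $\pi_1(M)$ at each level.
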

\begin{proof}
The $\lambda$-energy of a finite-energy curve $F=(a,v):S\setminus Z\rightarrow \RR\times M$ in the symplectisation is non-negative, which entails
\[\int v^*d\lambda\geq 0\]
\noindent By Stokes's theorem this integral is the sum of the periods of the asymptotic Reeb orbits, weighted $\pm 1$ according to whether $F$ is asymptotic to $-\infty\times\gamma$ or $+\infty\times\gamma$. Let $s^\pm$ denote the number of positive (respectively negative) Reeb orbits to which $F$ is asymptotic.

Write $E_{\infty}^{\ell}$ for the part of $E_{\infty}$ landing in $\Sympl_{\ell}(M)$ and let $\ell=m$ be the top level of the symplectisation. We know by what was said above that the positive asymptotics of $E_{\infty}^{m}$ are simple Reeb orbits. Therefore for $F=E_{\infty}^{\ell}$, the inequality above becomes
\[s^+-\sum_{i=1}^{s^-}\cov(\gamma^-_i)\geq 0\]
\noindent and in particular, $s^+\geq s^-$.

Now $E_{\infty}^V$ has a single component, so if $s^+>1$ then $E_{\infty}^{m}$ will connect two of the negative asymptotic orbits of $E_{\infty}^V$, but the genus of $E_{\infty}$ is zero. Hence $s^+=1$ and so $s^-\leq 1$. Since $M\cong\RR\mathbb{P}^3$ and a single Reeb orbit represents a nontrivial element of $\pi_1(\RR\mathbb{P}^3)\cong\ZZ/2$, $s^-\neq 0$ or else $E_{\infty}^{m}$ would be a nullhomotopy of the Reeb orbit. Hence $s^-=1$ and $E_{\infty}^{m}$ is a cylinder with zero energy. This implies that it is a Reeb cylinder as claimed.

Inductively applying this argument to the lower symplectisation levels allows us to deduce the lemma for all $\ell$.
\end{proof}

Finally, we consider $E_{\infty}^W$. Topologically, it must consist of finite-energy planes in order for the building $E_{\infty}$ to have genus zero. Since $E_{\infty}^V$ has simple negative asymptotics and the intermediate levels $E_{\infty}^{\ell}$ are cylindrical, the positive asymptotics of $E_{\infty}^W$ are also simple. In lemma \ref{mustbealphabeta}, we classified finite-energy planes with simple asymptotics in $W$. They were either $\alpha$- or $\beta$-planes. In summary:

\begin{prp}\label{finalexceptional}
If $E(J_{t_j})$ is a convergent sequence of $J_{t_j}$-holomorphic exceptional spheres for a neck-stretch $J_t$ then the limit building consists of:

\begin{itemize}
\item $E_{\infty}^V$, a connected punctured finite-energy $J_{\infty}$-holomorphic sphere in $V$ asymptotic to a finite collection of simple Reeb orbits $\{\gamma_i\}$ on $M$,
\item $E_{\infty}^W$, a collection of $\alpha$- or $\beta$-planes in $W$ asymptotic to the Reeb orbits $\{\gamma_i\}$.
\end{itemize}
\end{prp}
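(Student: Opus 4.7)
The proposition is essentially a bookkeeping exercise that assembles the lemmas already established in this section, so the plan is to chase the pieces through in the correct order. First I would invoke Theorem \ref{SFTcompA} to extract a Gromov--Hofer convergent subsequence and a limit building $E_{\infty}$ of some level $k$ in $X_{\infty}^{k}$, with $V$-part $E_{\infty}^{V}$, symplectisation parts $E_{\infty}^{\ell}$ for $1 \leq \ell \leq m$, and $W$-part $E_{\infty}^{W}$. By the maximum principle and Lemma \ref{propertiesexceptional}, $E_{\infty}^{V}$ is non-empty, simple, connected, and satisfies $c_{1}^{\Phi}(E_{\infty}^{V}) = 1$.

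Next I would fix a neck-stretching datum $J_{1}$ in the Baire set provided by Remark \ref{regularityforexceptional} and Remark \ref{transvevenergy1}, so that every connected genus $0$ punctured curve in $V$ with $c_{1}^{\Phi} = 1$ is regular, its moduli space is $0$-dimensional, and its puncture-evaluation is transverse to the multi-diagonal in the product of Morse--Bott families. Applying the index formula (section \ref{relc1}) to $E_{\infty}^{V}$ with $c_{1}^{\Phi} = 1$ gives
\[
\dim \mM = 2\left(s^{-} - \sum_{i=1}^{s^{-}} \cov(\gamma_i)\right) \leq 0,
\]
with equality forcing $\cov(\gamma_i) = 1$ for every asymptotic orbit, so the negative asymptotics of $E_{\infty}^{V}$ are simple. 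Transversality to the multi-diagonal further forces these simple orbits to be pairwise distinct.

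Now I would propagate this simplicity downward through the symplectisation levels. Lemma \ref{reebcyl} shows each $E_{\infty}^{\ell}$ is a disjoint union of trivial Reeb cylinders (the proof there uses the genus $0$ constraint and $\pi_{1}(\mathbb{RP}^{3}) = \mathbb{Z}/2$, together with Stokes), hence the top positive asymptotics of $E_{\infty}^{W}$ are exactly the simple distinct orbits $\{\gamma_i\}_{i=1}^{s^{-}}$. Since the total genus of $E_{\infty}$ vanishes and the symplectisation levels are cylindrical, each connected component of $E_{\infty}^{W}$ must be a finite-energy plane asymptotic to one of the $\gamma_{i}$. Lemma \ref{mustbealphabeta} classifies such planes as $\alpha$- or $\beta$-planes, which is precisely the second bullet. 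Combining these observations yields the two bullets of the proposition.

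The only genuinely delicate point is the transition from ``generic regularity for the underlying Fredholm problems'' to the usable statement about the limit building: we must know, before passing to the limit, that $J_{1}$ (and hence $J_{\infty}|_{V}$) was chosen in a Baire set addressing \emph{all} possible component topologies and \emph{all} possible relative homology classes of components that could appear in a limit. This is handled as in Remark \ref{regularityforexceptional} by taking a countable intersection of Baire sets, one for each pair (topological type, relative class) with $c_{1}^{\Phi} = 1$; the finiteness of relative classes with fixed $c_{1}^{\Phi}$ in each genus is what makes this countable intersection non-empty. Once this is in place, the remainder of the argument is purely topological assembly.
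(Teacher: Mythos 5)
Your proposal is correct and follows essentially the same route as the paper: Lemma \ref{propertiesexceptional} and Remark \ref{regularityforexceptional} for the $V$-part, the index formula to force simple asymptotics, Lemma \ref{reebcyl} for the symplectisation levels, and Lemma \ref{mustbealphabeta} to identify the $W$-components as $\alpha$- or $\beta$-planes. The paper additionally notes (just after the proposition) that the symplectisation levels actually disappear from the limit entirely, because unmarked Reeb cylinders are excluded by the stability condition on holomorphic buildings.
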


The components of the buildings in symplectisation levels would necessarily be cylindrical, but these are ruled out by the definition of stability for a holomorphic building since there are no marked points.

Notice that since $E_1$ and $E_2$ have homological intersection zero, their $J_{\infty}$-holomorphic representatives have no isolated intersections. Otherwise, for large $t$, there would be positively intersecting $E_1(J_t)$ and $E_2(J_t)$. Since the $W$-parts of their limits must be non-empty (as each class $E_i$ has non-trivial homological intersection with $L\subset W$), and consist of $\alpha$- and $\beta$- planes, we deduce:

\begin{lma}
The punctured curves $E_{1,\infty}^W$ and $E_{2,\infty}^W$ consist of $\alpha$- and $\beta$-planes all of which are asymptotic to \emph{the same Reeb orbit $R$}.
\end{lma}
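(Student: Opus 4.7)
The plan is to exploit the disjointness of $E_1(J_t)$ and $E_2(J_t)$ for every $t$ together with the intersection theory of $\alpha$- and $\beta$-planes in the affine quadric $\mathcal{Q}$.

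First, since $E_1\cdot E_2=0$ and each $E_i(J_t)$ is a smooth, simple, embedded curve (lemma \ref{simpleenergy1}) with distinct image, positivity of intersections (theorem \ref{posint}) forces $E_1(J_t)\cap E_2(J_t)=\emptyset$ for every $t$. Under Gromov--Hofer convergence the compactifications $\dot{E}_{1,\infty}^W$, $\dot{E}_{2,\infty}^W$ inside the symplectic cut $\wp(W)\cong S^2\times S^2$ are $C^0$-limits of the pushforwards of $E_i(J_t)\cap W$, so any hypothetical intersection point of these singular cycles in the interior $\mathcal{Q}=\wp(W)\setminus\Delta$ would, by positivity of intersections applied to the $\overline{J}_\infty$-holomorphic limits and local persistence of intersection multiplicity under $C^0$ convergence of pseudoholomorphic curves, reappear as an actual intersection of $E_1(J_t)$ and $E_2(J_t)$ for $t$ large. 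Therefore $\dot{E}_{1,\infty}^W$ and $\dot{E}_{2,\infty}^W$ are disjoint in $\mathcal{Q}$.

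Next I would invoke the intersection pattern of $\alpha$- and $\beta$-planes (as in the proof of lemma \ref{mustbealphabeta}): two distinct $\alpha$-planes (respectively two distinct $\beta$-planes) are disjoint as members of the same ruling of the projective quadric, while an $\alpha$-plane $\alpha_\gamma$ and a $\beta$-plane $\beta_{\gamma'}$ meet in exactly one point of $\wp(W)$, which lies on $\Delta$ iff $\gamma=\gamma'$ (in which case they are disjoint in $\mathcal{Q}$). Combined with the disjointness in $\mathcal{Q}$ just established, every $\alpha$-plane component of $E_{1,\infty}^W$ must share its Reeb asymptote with every $\beta$-plane component of $E_{2,\infty}^W$, and symmetrically for the other pairing. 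Since a Reeb orbit admits at most one $\alpha$-plane (respectively $\beta$-plane) asymptotic to it, this already collapses each $E_{i,\infty}^W$ to a highly rigid configuration.

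Finally I would combine this with the intersection number $E_i\cdot[L]$. With the convention that $\alpha$-planes intersect $L$ with multiplicity $+1$ and $\beta$-planes with multiplicity $-1$, all intersections $E_i(J_t)\cap L$ occur in $W$ and persist in the limit, so if $E_{i,\infty}^W$ has $p_i$ $\alpha$-plane components and $q_i$ $\beta$-plane components then $p_i-q_i=E_i\cdot[L]$, giving $-1$ for $i=1$ and $+1$ for $i=2$. A brief case analysis of the constraints from the previous step (if $p_1\ge 1$ and $q_2\ge 1$ then uniqueness forces $p_1=q_2=1$; but then the remaining $q_1\ge 2$ $\beta$-planes of $E_1$ and $p_2\ge 2$ $\alpha$-planes of $E_2$ would all have to share a single asymptote, contradicting uniqueness) rules out every possibility except $(p_1,q_1,p_2,q_2)=(0,1,1,0)$, so $E_{1,\infty}^W$ is a single $\beta$-plane and $E_{2,\infty}^W$ a single $\alpha$-plane, both asymptotic to a common simple Reeb orbit $R$. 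The main technical subtlety is the justification in the first step that no ``phantom'' interior intersection emerges in the Gromov--Hofer limit; given the $C^0$-convergence of compactifications and the local nature of positivity of intersections, this should be routine, but it is the step that warrants most care.
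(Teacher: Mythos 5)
Your approach shares the two basic ingredients of the paper's proof — positivity of intersections applied to the disjointness $E_1(J_t)\cap E_2(J_t)=\emptyset$, and the intersection pattern of $\alpha$- and $\beta$-planes in $\mathcal{Q}$ — but the mechanism by which you close the argument is genuinely different from the paper's, and as written it has a gap. The paper proves the lemma by combining \emph{two} intersection constraints: (i) no interior intersections between $E_{1,\infty}$ and $E_{2,\infty}$ (from $E_1\cdot E_2=0$), and (ii) no self-intersections of either $E_{i,\infty}$ (from the adjunction-formula embeddedness of $E_i(J_t)$, lemma~\ref{simpleenergy1}). Constraint (ii) is what forces any $\alpha$-plane in $E_{1,\infty}^W$ to share asymptote with the $\beta$-plane $A\subset E_{1,\infty}^W$, since planes of opposite type with distinct asymptotes meet in the interior of $\mathcal{Q}$. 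You do not use (ii) at all; instead you try to substitute a counting argument using $p_i-q_i=E_i\cdot[L]$ together with the uniqueness of $\alpha$- (resp.\ $\beta$-) planes per asymptote.

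The gap is in the step ``uniqueness forces $p_1=q_2=1$.'' The uniqueness statement in the paper is that there is a unique $\alpha$-plane asymptotic to a given Reeb orbit as a \emph{subset of $\mathcal{Q}$}; it does not by itself rule out two distinct components of the limit building $E_{i,\infty}^W$ mapping onto the same plane. For example, with $(p_1,q_1,p_2,q_2)=(1,2,1,0)$ and all three $\beta$-type/$\alpha$-type planes involved asymptotic to a single $R$, nothing in your steps 1--2 constrains the asymptote of the $\alpha$-plane of $E_1$ (since $q_2=0$), and uniqueness alone does not contradict $q_1=2$: you would need to argue that the two $\beta$-components of $E_{1,\infty}^W$ cannot share an asymptote. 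To rule that out one must use either (ii) above (any $\alpha$-plane of $E_1$ with a different asymptote would force a self-intersection of the embedded $E_1(J_t)$), or the genericity statement that the asymptotic orbits of $E_{i,\infty}^V$ are \emph{distinct} (remark~\ref{transvevenergy1}). The paper invokes the former for this lemma and defers the ``single plane'' conclusion to the following proposition, where it uses the latter. Your proposal conflates the two stages and, in doing so, relies on an input you never state. Finally, your worry about ``phantom'' interior intersections is well placed and is handled in the paper by remark~\ref{nosharedremark} together with the $C^0$-convergence of compactifications; that part of your argument is fine.
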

\begin{proof}
Let $A$ be an $\alpha$-plane which is a component of $E_{1,\infty}^W$ and $B$ a $\beta$-plane which is a component of $E_{2,\infty}^W$. Such components must exist since $E_1\cdot L=-1$ and $E_2\cdot L=1$. If $A$ and $B$ were not asymptotic to the same Reeb orbit then they would have an isolated point of intersection which is disallowed as remarked above. Therefore they have a common asymptotic orbit $R$. Similarly, any $\beta$-plane in $E_{1,\infty}^W$ (respectively $\alpha$-plane in $E_{2,\infty}^W$) which was not asymptotic to $R$ would show up as a self-intersection of $E_1(J_t)$ (respectively $E_2(J_t)$) for large $t$. But by the adjunction formula, we saw these closed curves were embedded.
\end{proof}

For the other exceptional classes $E_3$, $E_4$ and $S_{34}$, the same analysis carries through except that the $W$-part may be empty. Again by positivity of intersections, any $\alpha$- or $\beta$- plane in $E_3^W$, $E_4^W$ or $S_{34}^W$ must be asymptotic to $R$.

\begin{prp}
$E_{1,\infty}^W$ and $E_{2,\infty}^W$ consist of a single $\beta$- and $\alpha$-plane respectively. $E_{3,\infty}$, $E_{4,\infty}$ and $S_{34,\infty}$ have no $W$-component.
\end{prp}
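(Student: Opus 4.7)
The plan is to combine the preceding lemma (which ensures all $W$-plane components of these limit buildings are asymptotic to the single Reeb orbit $R$) with two further facts: the uniqueness of $\alpha$- and $\beta$-planes asymptotic to a given simple Reeb orbit, and the generic distinctness of Reeb asymptotics of $V$-parts of exceptional classes. The uniqueness is recorded just after lemma \ref{mustbealphabeta}: for every simple Reeb orbit on $M$, there is exactly one $\alpha$-plane and exactly one $\beta$-plane in $\overline{W}$ asymptotic to it. Writing $a$ and $b$ for the number of $\alpha$- and $\beta$-plane components of the $W$-part of whichever building we are analysing, we therefore have $0 \leq a, b \leq 1$.

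Next comes the algebraic count. By Gromov--Hofer convergence we have $\mathcal{C}^{\infty}_{\mathrm{loc}}$-convergence on compact subsets of $W$ away from $M$, and since $L$ is compact and contained in the interior of $W$, the intersection number $E \cdot L$ of the original class with $L$ is realised entirely by the $W$-part of the limit building. Using the orientation convention for $\alpha$- and $\beta$-planes (which intersect $L$ with sign $+1$ and $-1$ respectively) we conclude $a - b = E \cdot L$. For $E_1$ this forces $(a,b) = (0,1)$, i.e.\ a single $\beta$-plane; for $E_2$ it forces $(a,b) = (1,0)$, a single $\alpha$-plane. For $E_3$, $E_4$ and $S_{34}$ we have $E \cdot L = 0$, so the bounds on $a$ and $b$ leave only the two possibilities $(0,0)$ and $(1,1)$ to consider.

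The remaining task — ruling out $(a,b) = (1,1)$ for $E_3$, $E_4$, $S_{34}$ — is the step I expect to be the main obstacle, since it really uses the genericity of the neck-stretching datum rather than just positivity of intersections. Suppose for contradiction that $E_{3,\infty}^W$ contains both an $\alpha$-plane and a $\beta$-plane; each is asymptotic to $R$, and every intermediate symplectisation level of the building is a Reeb cylinder on the common asymptotic orbit by lemma \ref{reebcyl}. Hence the connected curve $E_{3,\infty}^V$ (connectedness by lemma \ref{propertiesexceptional}) has two distinct negative punctures both asymptotic to the \emph{same} simple Reeb orbit $R$. But remark \ref{transvevenergy1} tells us that for generic $J_1 \in \mJ_1$ the asymptotic Reeb orbits of any connected, genus $0$, $c_1^{\Phi} = 1$ punctured finite-energy curve in $V$ are pairwise distinct (and simple). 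This contradiction forces $(a,b)=(0,0)$; the identical argument applies verbatim to $E_4$ and $S_{34}$, completing the proof.
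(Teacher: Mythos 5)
Your proof is correct in outline and uses the same key ingredients as the paper's: all $W$-planes of these buildings are asymptotic to a single Reeb orbit $R$; the transversality statement of remark~\ref{transvevenergy1}; and the homological intersection number with $L$. The overall conclusion and the final contradiction argument are exactly as in the paper, so this is essentially the same approach, just organised differently: the paper directly bounds the total number $k$ of punctures of $E^V_\infty$ by the transversality argument and then lets $E\cdot L$ determine whether $k=0$ or $k=1$, whereas you first try to bound $a,b\leq 1$ separately and only invoke transversality at the end to kill $(a,b)=(1,1)$.

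That reorganisation introduces one genuine gap worth flagging. You deduce $a,b\leq 1$ from the uniqueness of the $\alpha$- (respectively $\beta$-) plane asymptotic to a given Reeb orbit. That uniqueness statement concerns the \emph{image} of the plane, but it does not by itself prevent the limit building from having two distinct \emph{components} in the $W$-part whose maps both land on the same $\alpha$-plane. Nothing in the stability condition for a holomorphic building rules this out; distinct vertices of a stable building may have identical images (the no-repeated-images condition is part of the definition of a \emph{simple} stable map, not of stability). The correct way to exclude this repetition is exactly the argument you use to kill $(1,1)$: if the $W$-part had two plane components, the connected curve $E^V_\infty$ would have two negative punctures asymptotic to the same simple orbit $R$, contradicting remark~\ref{transvevenergy1} for generic $J_1$. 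So the transversality argument should be deployed from the start to show $a+b\leq 1$, at which point the uniqueness lemma is not needed at all, and the intersection count $a-b=E\cdot L$ immediately finishes the proof --- which is what the paper does.

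One small additional remark: the paper's proposition also implicitly treats $S_{12}$ in the same dichotomy but handles $S_{12,\infty}^W=\emptyset$ in a separate lemma afterwards, because the preceding lemma that establishes ``all $W$-planes asymptotic to $R$'' was only stated for $E_1,E_2,E_3,E_4,S_{34}$. Your proposal correctly restricts attention to the classes named in the proposition, so this is not an issue for you, but be aware that the argument for $S_{12}$ requires an extra intermediate observation (that an $\alpha$- and $\beta$-plane asymptotic to different orbits would give a non-embedded $S_{12}(J_T)$, contradicting lemma~\ref{simpleenergy1}).
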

\begin{proof}
Let $E$ stand for any of these classes. We have noted that $E^W_{\infty}$ (if non-empty) consists of $\alpha$- or $\beta$-planes asymptotic to a given Reeb orbit $R$. Therefore $E^V_{\infty}$ has all its asymptotic orbits equal to $R$. Suppose $E^V_{\infty}$ has $k$ punctures. Suppose $k>1$. By remark \ref{transvevenergy1}, the evaluation map from the moduli space of $E^V_{\infty}$ to the product $\rho^k$ is transverse to the various strata of the multi-diagonal for generic neck-stretching data $J_1$. Since $\rho$ has dimension 2, the multi-diagonal has top strata of codimension 2. But the moduli space is zero-dimensional and $\rho^k$ has dimension $2k>2$ so this transversality means that the image of evaluation map is disjoint from the multi-diagonal. This contradicts the fact that all asymptotic orbits are equal to $R$. Therefore $k=0$ or $1$. If $k=1$ then $E$ must have intersection number $\pm 1$ with $L$ (as $E_{\infty}^W$ consists of a single $\alpha$- or $\beta$-plane). If $k=0$ then $E$ must have zero intersection with $L$. Since the classes $E_3$, $E_4$, $S_{12}$ and $S_{34}$ have zero intersection with $L$ they fall into this category, while the classes $E_1$ and $E_2$ must have $k=1$.
\end{proof}

The final point is to show that $S_{12,\infty}$ has no $W$-part.

\begin{lma}
$S^W_{12,\infty}=\emptyset$.
\end{lma}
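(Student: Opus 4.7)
The strategy is to show that a nonempty $S_{12,\infty}^W$ imposes an impossible rigidity on the family of $(H-E_1)$-curves through points of $L$. First, refining the argument of the preceding proposition, I argue that $S_{12,\infty}^W$ must consist of exactly one $\alpha$-plane $A$ (asymptotic to a Reeb orbit $R_\alpha$) and one $\beta$-plane $B$ (asymptotic to a distinct orbit $R_\beta$). This uses $S_{12}\cdot L=0$ to equate the numbers of $\alpha$- and $\beta$-planes; Theorem \ref{trans5} applied to the products of moduli spaces for $S_{12}^V$ with $E_1^V$ and with $E_2^V$ (ensuring the asymptotes of $S_{12,\infty}^V$ are distinct simple orbits, and distinct from the common orbit $R$ of $E_{1,\infty}^W$ and $E_{2,\infty}^W$); and finally positivity of intersections applied to $S_{12}(J_t)\cdot E_1(J_t)=S_{12}(J_t)\cdot E_2(J_t)=1$ to force the count to be exactly one of each.

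Now fix a point $p\in L$ distinct from the two points $A\cap L$ and $B\cap L$. By Proposition \ref{singfolexists} there is a unique stable $J_t$-curve $C_p$ in the class $H-E_1$ through $p$; let $C_{p,\infty}$ denote an SFT limit along a subsequence. Since $(H-E_1)\cdot L=1$, the $W$-part $C_{p,\infty}^W$ has signed $\alpha$-$\beta$-plane count equal to $+1$. Since $(H-E_1)\cdot S_{12}=0$, the algebraic $W$-intersection of $C_{p,\infty}^W$ with $S_{12,\infty}^W=A\cup B$ must be nonpositive (the $V$-intersection is nonnegative by positivity). That $W$-intersection decomposes, via the usual $\alpha$-$\beta$ analysis in the affine quadric, as the number of $\alpha$-plane components of $C_{p,\infty}^W$ not asymptotic to $R_\beta$ plus the number of $\beta$-plane components not asymptotic to $R_\alpha$, both summands nonnegative; both must therefore vanish. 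Combined with transversality of asymptotes within $C_{p,\infty}^V$ and the signed-count constraint $a-b=1$, $C_{p,\infty}^W$ must consist of a single $\alpha$-plane asymptotic to $R_\beta$.

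Since $p$ lies in the interior of $W$, this single $\alpha$-plane must contain $p$. But the $\alpha$-planes foliate the affine quadric $\mQ$, so the $\alpha$-plane through $p$ is uniquely determined, call it $A_p$, with asymptotic orbit $R_{A_p}$. Since each $\alpha$-plane meets $L$ transversely in exactly one point (the homological intersection is $\pm 1$), the assignment $p\mapsto R_{A_p}$ is a continuous bijection, hence a homeomorphism $L\to\rho$. The requirement $R_{A_p}=R_\beta$ for all $p$ in the open dense set $L\setminus\{A\cap L,B\cap L\}$ contradicts non-constancy of this homeomorphism. Hence $S_{12,\infty}^W=\emptyset$. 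The main technical obstacle is arranging the various transversality hypotheses simultaneously across all the moduli spaces (and all relevant pairs of them) involved in the argument; this is handled by intersecting countably many Baire sets provided by Theorems \ref{trans4} and \ref{trans5}.
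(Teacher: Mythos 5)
Your proposal correctly handles one of the two cases by transversality, but in the other case it takes a long detour and misses the short argument that the paper uses. Having established (via $S_{12}\cdot L=0$ and Theorem \ref{trans5}) that a non-empty $S_{12,\infty}^W$ must contain an $\alpha$-plane $A$ and a $\beta$-plane $B$ asymptotic to \emph{distinct} simple Reeb orbits $R_\alpha\neq R_\beta$, you should observe immediately that this is already impossible: an $\alpha$-plane and a $\beta$-plane asymptotic to different Reeb orbits intersect at an interior point of the affine quadric (in the symplectic cut $\wp(W)\cong S^2\times S^2$ the two rulings meet in exactly one point, and if they meet on the compactification locus $\Delta$ they must share the asymptotic orbit). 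This interior double point of the limit building, together with positivity of intersections, forces $S_{12}(J_t)$ to acquire a self-intersection for $t$ large — contradicting Lemma \ref{simpleenergy1}, which says area-$1$ spheres are embedded. That is the whole argument; the dichotomy ``same orbit (ruled out by transversality) / distinct orbits (ruled out by embeddedness)'' closes the case directly.

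Your replacement for the embeddedness step — pushing a family of $(H-E_1)$-curves through a generic $p\in L$, deducing that the $\alpha$-plane through $p$ is always asymptotic to $R_\beta$, and contradicting the non-constancy of the foliation map $L\to\rho$ — is not obviously wrong, but it requires substantially more machinery than the paper deploys: transversality of evaluation maps for $(H-E_1)$-limit buildings, control over intersections at infinity when asymptotics are shared (where Remark \ref{nosharedremark} no longer applies), and the claim that each $\alpha$-plane meets $L$ in exactly one \emph{geometric} point. That last assertion is true for the explicit affine-quadric geometry but is not a consequence of the homological intersection being $\pm 1$, which only constrains the signed count. In short: your first paragraph already contains a contradiction; recognising the interior intersection of $A$ and $B$ spares the entire second half of the argument.
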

\begin{proof}
$S^W_{12,\infty}$ consists of a union of $\alpha$- and $\beta$-planes. Each of these contributes $\pm 1$ respectively to the intersection number $S_{12}\cdot [L]$. Since this intersection number vanishes, if $S^W_{12,\infty}\neq\emptyset$ then it must contain at least one $\alpha$-plane and one $\beta$-plane. If these were asymptotic to different Reeb orbits then they would intersect and for large $T$, $S_{12}(J_T)$ would not be embedded, contradicting lemma \ref{simpleenergy1}. However, a transversality argument like the one in the preceding lemma will rule out the possibility that $S^V_{12,\infty}$ has two punctures asymptotic to the same Reeb orbit. Therefore the $W$-part must be empty.
\end{proof}

\subsection{The classes $H-E_3$, $H-E_4$}

The purpose of this section is to prove that (for generic neck-stretching data) if $i=3,4$ there is a smooth $J_t$-holomorphic curve homologous to $H-E_i$ disjoint from $L$.

Let $u_j$ be a sequence of $J_{t_j}$-holomorphic curves in the homology class $H-E_i$ in $X$. By SFT compactness, there is a Gromov-Hofer convergent subsequence $u_{j_k}$ whose limit is a building $F$. We note some properties of the $V$-part $F^V$ of such a building:

\begin{lma}\label{propertiesofvparts}
$F^V$ is non-empty and falls into one of three categories $\mbox{\textcircled{a}}$, $\mbox{\textcircled{b}}$ and $\mbox{\textcircled{c}}$:
\[ 
\xy 
(-3,5)*{\mbox{\textcircled{a}}};
(-3,0)*{};(7,0)*{}; **\crv{(2,10)};
(2,0)*{2};
(7,5)*{1\times};
(20,5)*{\mbox{\textcircled{b}}};
(20,0)*{};(30,0)*{}; **\crv{(25,10)};
(25,0)*{1};
(30,5)*{1\times};
(32,0)*{};(42,0)*{}; **\crv{(37,10)};
(37,0)*{1};
(42,5)*{1\times};
(55,5)*{\mbox{\textcircled{c}}};
(55,0)*{};(65,0)*{}; **\crv{(60,10)};
(60,0)*{1};
(65,5)*{2\times};
\endxy 
\]
\noindent where $\xy(0,0)*{};(10,0)*{}; **\crv{(5,10)};
(5,0)*{A};
(10,5)*{n\times};\endxy$ stands for a component which is an $n$-fold cover of a curve with $c_1^{\Phi}=A$.\end{lma}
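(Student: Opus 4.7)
The plan is to pin down $F^V$ using two facts: (i) its total relative first Chern class is $c_1^{\Phi}(F^V)=2$, forced by the homology class of the original curves, and (ii) each non-constant component of $F^V$ contributes at least $1$ to this total. Solving $\sum n\cdot c_1^{\Phi}(v_0)=2$ with $n\geq 1$ and $c_1^{\Phi}(v_0)\geq 1$ then leaves exactly the three multisets $\{(1,2)\}$, $\{(1,1),(1,1)\}$, $\{(2,1)\}$, corresponding to cases \mbox{\textcircled{a}}, \mbox{\textcircled{b}}, \mbox{\textcircled{c}}.

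First I would rule out $F^V=\emptyset$. If every component of $F$ lay in $\overline{W}$ or in a symplectisation level, stability would forbid any level consisting entirely of unmarked Reeb cylinders, so some component would have to be a closed non-constant holomorphic curve in $\overline{W}=\mQ$. But the affine quadric is Stein, hence contains no compact holomorphic curves of positive dimension; contradiction.

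For (i), using the trivialisation $\Phi$ of $\det(TX)$ over the affine-quadric end and over the symplectisation levels as in section \ref{relc1}, all $\alpha$-/$\beta$-planes and Reeb cylinders have $c_1^{\Phi}=0$, so the additivity established there yields
\[c_1^{\Phi}(F^V)\;=\;\langle c_1(X),\,H-E_i\rangle\;=\;(3H-\textstyle\sum_k E_k)\cdot(H-E_i)\;=\;2.\]
For (ii), the argument from lemma \ref{propertiesexceptional} applies verbatim: cap each negative Reeb puncture of a non-constant component $v$ by an $\alpha$- or $\beta$-plane in $\overline{W}$ (these are classified and always exist, see lemma \ref{mustbealphabeta} and the surrounding discussion) to build a singular cycle $C$ in $X$ which is genuinely symplectic away from $M$, so $\omega(C)>0$; monotonicity $[c_1(X)]=[\omega]$ combined with the vanishing of $c_1^{\Phi}$ on the caps and on the symplectisation cylinders gives $c_1^{\Phi}(v)=c_1(C)>0$.

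The main obstacle is step (ii): realising the capping cycle relies on the concrete $\alpha$-/$\beta$-plane description of finite-energy planes in $\overline{W}$, and one needs to check that the resulting piecewise-smooth cycle is indeed symplectic off the identified copies of $M$ inside $X$, so that $\omega(C)>0$ can be invoked. Ghost components of $F^V$ satisfy $c_1^{\Phi}=0$ and are simply omitted from the three schematic pictures — they play no role in the counting and in particular do not prevent falling into exactly one of the three listed configurations.
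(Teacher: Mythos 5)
Your proof follows the paper's route essentially exactly: show $F^V\neq\emptyset$, compute $c_1^{\Phi}(F^V)=2$ via the trivialisation over $\overline{W}$ and the symplectisation levels, show each non-constant component contributes $c_1^{\Phi}\geq 1$ by the capping argument of lemma \ref{propertiesexceptional}, and enumerate the decompositions (noting correctly that ghost bubbles contribute $0$ and may be ignored). The one place your reasoning is slightly loose is the non-emptiness step: ruling out a level of unmarked Reeb cylinders by stability does not by itself force some component to be a \emph{closed} curve in $\overline{W}$ — what actually does this is the requirement that a holomorphic building have all positive punctures matched, so that $F^V=\emptyset$ would force the $\overline{W}$-part to have no punctures at all (and hence be constant by the maximum principle in $\mQ$, contradicting positivity of energy); alternatively, one can simply observe via section \ref{relc1} that $F^V=\emptyset$ would give $c_1^{\Phi}(F^V)=0\neq 2$.
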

\begin{proof}
The maximum principle says that $W$ contains no closed holomorphic curves, hence $F^V$ is nonempty. Arguing as in the proof of lemma \ref{propertiesexceptional}, the relative first Chern class $c_1^{\Phi}(F^V)$ is $2$ and $F^V$ has at most two connected components. Since $1$ is the minimal Chern number, multiple covering can only occur if the limit is a double cover of a simple curve with $c_1^{\Phi}=1$. These are precisely the cases listed in the lemma.
\end{proof}

Since there is a countable set of possible topologies and homology classes for simple genus 0, curves with $c_1^{\Phi}=1,\ 2$, it can be ensured as in remarks \ref{regularityforexceptional} and \ref{transvevenergy1} that all such moduli spaces are smooth and of the expected dimension. The dimension formula for classes with $c_1^{\Phi}=2$ becomes
\begin{eqnarray*}
\dim(\mM^A_{0,s^-}(J_{\infty}|_V)) & = & 2+2s^--2\sum_{i=1}^{s^-}\cov(\gamma_i) \\
& \leq & 2
\end{eqnarray*}
\noindent which is only positive if
\begin{enumerate}
\item all asymptotic Reeb orbits are simple, in which case the expected dimension is 2, or
\item all but one of the asymptotic Reeb orbits are simple, the one exception being a doubly-wrapped orbit. The expected dimension in this case is 0.
\end{enumerate}

This gives the following breakdown of the three cases from lemma \ref{propertiesofvparts}:

\begin{tabular}{ccl}
Dimension 0: & \textcircled{a}: & one double orbit \\
 & \textcircled{b}: & all simple orbits\\
 & \textcircled{c}: & all simple orbits \\
Dimension 2: & \textcircled{a}: & all simple orbits \\
\end{tabular}

\noindent where we understand a multiply-covered curve to live ``in a $0$-dimensional moduli space'' if its underlying simple curve has a moduli space of dimension $0$. We call curves of type \textcircled{a} with all simple orbits \emph{good curves} and all other curves \emph{bad curves}.

Let $J_{t_j}$ be a sequence for which the curves $E_k(J_{t_j})$ and $S_{k\ell}(J_{t_j})$ Gromov-Hofer converge to $J_{\infty}$-holomorphic limit buildings for all $i,k,\ell$.

\begin{lma}
There exists a good curve in $V$ which occurs as the $V$-part of the Gromov-Hofer limit of a sequence of $J_{t_j}$-holomorphic curves in $X$ homologous to $H-E_i$ (for $i=3,4$).
\end{lma}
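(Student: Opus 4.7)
My plan is to select a point $x \in V$ carefully and then analyse the Gromov-Hofer limit of the unique $J_{t_j}$-holomorphic $(H-E_i)$-curve passing through $x$. By proposition~\ref{singfolexists}, for each $t_j$ and each $x \in X$ there is a unique stable curve $u_j(x) \in \bM_{0,1}(H-E_i,J_{t_j})$ whose marked point maps to $x$, so I would arrange $x$ so that every subsequential Gromov-Hofer limit of the $u_j(x)$ must have good $V$-part.

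The bad $V$-parts enumerated after lemma~\ref{propertiesofvparts} are either simple curves in $0$-dimensional moduli spaces (type \textcircled{a} with one doubly-covered asymptotic puncture; both components of type \textcircled{b}; and the underlying simple curve of type \textcircled{c}) or double covers of such. There are only countably many combinatorial data (relative homology class, number of punctures, and Morse-Bott family of each asymptotic orbit) yielding $c_1^{\Phi}\in\{1,2\}$, so theorems~\ref{trans4} and~\ref{trans5}, applied iteratively in the spirit of remarks~\ref{regularityforexceptional} and~\ref{transvevenergy1}, would allow me to pick $J_1$ in a Baire subset of $\mJ_1$ for which every simple punctured moduli space with $c_1^{\Phi}\in\{1,2\}$ is cut out transversely of its expected dimension and whose puncture-evaluation map is transverse to every stratum of the multi-diagonal in the corresponding $R_Z$. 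Under these hypotheses each $0$-dimensional bad moduli space is discrete, and an application of theorem~\ref{SFTcompB} (together with the fact that any further degeneration strictly decreases the index) forces it to be finite. Hence the bad locus
\[
\mathcal{B}:=\bigcup_{F^V\ \text{bad}}\mbox{image}(F^V)\subset V
\]
is a countable union of $2$-real-dimensional subsets of the $4$-real-dimensional manifold $V$, and $V\setminus(\mathcal{B}\cup M)$ is dense.

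Next I would choose $x\in V\setminus(\mathcal{B}\cup M)$, set $u_j=u_j(x)$, and apply theorem~\ref{SFTcompA} to extract a subsequence of the $u_j$ that Gromov-Hofer converges to a $J_\infty$-holomorphic building $F$ (simultaneously refining the subsequence chosen for the exceptional classes). The $\mC^0$ convergence property of Gromov-Hofer convergence stated in the lemma preceding theorem~\ref{SFTcompA} forces the marked point of $F$ to map to $x$. Since $x\notin M$, the marked point cannot sit in a symplectisation level nor in $F^W$; it must therefore lie on a non-constant component of $F^V$ (possibly through a ghost bubble attachment), placing $x$ in the image of $F^V$. By lemma~\ref{propertiesofvparts}, $F^V$ is of type \textcircled{a}, \textcircled{b}, or \textcircled{c}; if it were bad then $x\in\mathcal{B}$, contradicting the choice of $x$. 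Consequently $F^V$ is a good curve.

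The main obstacle is Step~2: organising the countably many combinatorial types of simple punctured $V$-curves with $c_1^{\Phi}\in\{1,2\}$, assembling a single Baire subset of $\mJ_1$ in which all of them are regular, and invoking SFT compactness together with multi-diagonal transversality to justify that every $0$-dimensional bad moduli space is finite with $2$-dimensional image. Once this bookkeeping is done, the complement argument in Step~3 is essentially formal.
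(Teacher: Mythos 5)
Your proposal is correct and takes essentially the same route as the paper: choose generic neck-stretching data so that all the $c_1^\Phi\in\{1,2\}$ moduli spaces in $V$ are transversely cut out, observe that the bad $V$-curves live in $0$-dimensional moduli spaces and hence form a countable family with $2$-real-dimensional images, pick $x$ in the dense complement of this bad locus, and note that any Gromov--Hofer limit of the $(H-E_i)$-curves through $x$ has $V$-part passing through $x$ and therefore good. You have fleshed out some details (finiteness of the $0$-dimensional moduli spaces via SFT compactness, and the $\mC^0$-convergence argument showing $x$ lands in the image of $F^V$) that the paper leaves implicit, but the structure and key ideas are identical.
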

\begin{proof}
There is a dense set of points in $V$ which do not lie on a bad curve, since there are countably many bad curves. Pick such a point, $x$. For every $t_j$ there is a $J_{t_j}$-holomorphic curve homologous to $H-E_i$ passing through $x$. A subsequence of these Gromov-Hofer converges to a building in $X_{\infty}^k$ through $x$, whose $V$-part must then be a good curve.
\end{proof}

Once we have this Gromov-Hofer limit, $C_x$, whose $V$-part is a good curve, we examine its asymptotic Reeb orbits. These are simple by construction and by the same argument as in lemma \ref{reebcyl} the parts of $C_x$ which land in the symplectisation of $M$ are just Reeb cylinders and the components of $C_x^W$ are $\alpha$- and $\beta$-planes.

\begin{lma}
$C^W_x$ is empty for generic neck-stretching data.
\end{lma}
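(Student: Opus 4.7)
Proof plan.

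Assume for contradiction that $C_x^W\neq\emptyset$. By Lemma~\ref{mustbealphabeta}, $C_x^W$ consists of $a$ $\alpha$-planes and $b$ $\beta$-planes; since each $\alpha$-plane contributes $+1$ and each $\beta$-plane contributes $-1$ to the algebraic intersection with $L$, and $(H-E_i)\cdot(E_1-E_2)=0$, we get $a=b\geq 1$ and $s^-=a+b\geq 2$. By Remark~\ref{transvevenergy1} applied to the good $V$-curve $C_x^V$, for generic $J_1$ these $s^-$ asymptotic orbits are pairwise distinct simple Reeb orbits.

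The plan is to carry out a parametric transversality argument over $\mJ_1^\ell$. Form the universal moduli $\widetilde{\mathcal{M}}_{s^-}$ of tuples $(J_1,C^V,z,C^W)$ where $C^V$ is a good $V$-curve whose total class (after gluing $C^W$) equals $H-E_i$, $z\in C^V$ is a marked point sent to $x$, and $C^W$ is a compatible collection of $a=b\geq 1$ $\alpha/\beta$-planes matched at asymptotic orbits. Crucially, the $W$-part is holomorphic for the fixed complex structure $I$ on $\mQ$ and so contributes no $J_1$-dependent parameter; only $J_1|_V$ enters. By Propositions~\ref{trans1}, \ref{trans2} and Theorem~\ref{trans5}, after restricting to a residual subset of $\mJ_1^\ell$ all required transversality holds, and the projection $\pi\colon\widetilde{\mathcal{M}}_{s^-}\to\mJ_1^\ell$ is Fredholm. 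Summing contributions, its relative index is
\[
\underbrace{2}_{\text{good-}V\text{ moduli}}+\underbrace{2}_{\text{marked pt}}-\underbrace{4}_{\text{eval }z\mapsto x}+\underbrace{2s^-}_{\text{free }W\text{-orbits}}-\underbrace{2s^-}_{\text{matching in }\rho^{s^-}}=0.
\]
Thus for generic $J_1\in\mJ_1^\ell$, the fibre $\pi^{-1}(J_1)$ is a $0$-dimensional, hence finite, set of matched buildings.

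The main obstacle is promoting finiteness to emptiness. For this, I would use the Gromov--Witten count of closed $(H-E_i)$-curves through $x$ in $X$, which equals $1$. Under the neck-stretching family $J_t$ this count is preserved as a bordism invariant of the SFT-compactified moduli, so for generic $J_1$ the algebraic sum over all limit buildings through $x$ (in all strata $s^-=0,2,4,\ldots$) equals $1$. The $s^-=0$ stratum is, for $J_1|_V$ in a second residual subset, realised by a unique transverse closed $J_\infty|_V$-holomorphic sphere through $x$ in class $H-E_i$ (existing because closed $(H-E_i)$-curves disjoint from $L$ persist under small perturbation of $J_1|_V$); it thus accounts on its own for the count $1$. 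Any nonempty fibre in a stratum with $s^-\geq 2$ would then overshoot this total, contradicting the bordism invariance; so all such fibres are in fact empty for generic $J_1$. The delicate step is the last sign/multiplicity bookkeeping, which is a standard SFT-gluing exercise but needs to be carried out to rule out any accidental cancellations among $s^-\geq 2$ buildings. Once this is in hand we conclude that $C_x^W=\emptyset$ for generic $J_1$, and $C_x=C_x^V$ is then a smooth closed $J_\infty|_V$-holomorphic sphere in class $H-E_i$ disjoint from $L$, as required.
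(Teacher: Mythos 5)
Your approach is genuinely different from the paper's, and unfortunately it has a gap at the step you yourself flag as delicate.

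\emph{Where the paper goes a different way.} The paper's proof does not count anything. After establishing (as you do) that if $C_x^W\neq\emptyset$ then $C_x^V$ has $k\geq 2$ punctures, it uses a key geometric constraint you never invoke: since $E_{1,\infty}^W$ and $E_{2,\infty}^W$ each consist of a single $\alpha$- or $\beta$-plane asymptotic to the \emph{same} orbit $R$, and $(H-E_i)\cdot E_1=(H-E_i)\cdot E_2=0$, positivity of intersections forces \emph{every} component of $C_x^W$ to be asymptotic to $R$. Thus $\ev(C_x^V)=(R,\ldots,R)$ lies on the multi-diagonal in $\rho^k$, which by Remark~\ref{transvevenergy1} has codimension $\geq 2$; transversality and $\dim\mM=2$ then force $k=2$ and show that nearby curves $C'$ in the moduli of $C_x^V$ have an asymptotic orbit $\neq R$. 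The paper then shows (Lemma~\ref{gettingcloser}) that such a $C'$ arises as the $V$-part of a Gromov--Hofer limit of closed $(H-E_i)$-curves, whose $W$-part would then contain a plane not asymptotic to $R$, hence intersecting $E_{1,\infty}^W$ or $E_{2,\infty}^W$ and contradicting positivity of intersections. No gluing, no signed count.

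\emph{Where your argument breaks.} Your index bookkeeping gives $0$, which at best makes the stratum $s^-\geq 2$ a discrete set; it does not make it empty. To promote finiteness to emptiness you invoke ``bordism invariance'' of the Gromov--Witten count of $(H-E_i)$-curves through $x$ over the SFT-compactified family, asserting the $s^-=0$ stratum alone already contributes $1$, so the others must be empty. This is not a result available in the paper, and it is not a ``standard SFT-gluing exercise'': it requires a coherent orientation scheme on the compactified building moduli and a gluing theorem producing a signed, weighted count equal to the closed GW number. Even granting that machinery, there is no reason a priori why a $0$-dimensional stratum with $s^-\geq 2$ could not contribute $+1$ and $-1$ points that cancel; and your claim that the $s^-=0$ stratum is nonempty and contributes exactly $1$ is itself unproven (it is close to what the lemma asserts). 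In short: the finiteness step is fine, but the step from ``finite'' to ``empty'' is the substance of the lemma and your sketch leaves it unestablished. Redoing this along the paper's lines -- forcing all asymptotics to equal $R$ via positivity of intersections with $E_{1,\infty}^W$, $E_{2,\infty}^W$, and then deriving a contradiction from nearby curves -- avoids the need for any signed count.
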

\begin{proof}
Suppose that $C_x^W$ were non-empty. Since $H-E_i$ has homological intersection 0 with $E_1$ and $E_2$, and since $E_{1,\infty}^W$ and $E_{2,\infty}^W$ consist of a single $\alpha$- and a single $\beta$-plane each asymptotic to the same Reeb orbit $R$, the components of $C_x^W$ must also be asymptotic to $R$ in order to avoid intersections. Because $H-E_i$ has zero intersection with $L$, there must be at least one of each type of plane (recall that $C_x^W$ consists of $\alpha$- and $\beta$-planes), so $C_x^V$ has $k\geq 2$ punctures. The evaluation map from the moduli space of $C_x^V$ to $\rho^k$ is transverse to all strata of the multi-diagonal for generic neck-stretching data, which have codimension at least 2 in $\rho^k$. Since the moduli space of $C_x^V$ has dimension 2, the dimension of $\rho^k$ is $2k$ and $\ev(C_x^V)=(R,\ldots,R)$ this means that $k=2$ and nearby curves $C'$ in the moduli of $C_x^V$ have at least one asymptotic orbit not equal to $R$.

We must show that there is such a nearby curve $C'$ which occurs as the $V$-part of a Gromov-Hofer limit $C'_{\infty}$ of a sequence of $J_{t_j}$-holomorphic curves in $X$ homologous to $H-E_i$. For then, since the asymptotic orbits of $C'$ involve an orbit not equal to $R$, the $W$-part of $C'_{\infty}$ contains a $\beta$- or $\alpha$-plane not asymptotic to $R$ which therefore intersects either $E^W_{1,\infty}$ or $E^W_{2,\infty}$ respectively, contradicting positivity of intersections and the fact that $E_1\cdot (H-E_i)=0$ and $E_2\cdot (H-E_i)=0$.

\begin{lma}\label{gettingcloser}
There is a nearby curve $C'$ in the moduli of $C_x^V$ which occurs as the $V$-part of the Gromov-Hofer limit of a sequence of $J_{t_{j_k}}$-holomorphic curves in $X$ homologous to $H-E_i$ for a subsequence $j_k\subset j$.
\end{lma}
\begin{proof}
Let $x_m$ be a sequence of points tending to $x$. For $x_1$ there is a subsequence $j^1_k\subset j$ such that the $J_{t_{j^1_k}}$-holomorphic curves in $X$ homologous to $H-E_i$ passing through $x_1$ Gromov-Hofer converge. Similarly, extract a subsequence $j^2_k\subset j^1_k$ of this for curves passing through $x_2$ and (after iteratively constructing a subsequence $j^m_k$ for each $m$) extract a Cantor diagonal subsequence $\delta_m=j^m_m$ for which all sequences of $J_{\delta_m}$-holomorphic curves homologous to $H-E_i$ and passing through some fixed $x_m$ Gromov-Hofer converge to a building $A_m$. Let $C_m$ denote the $V$-part of $A_m$. We must show that as $m\rightarrow\infty$ there is a subsequence of these buildings which Gromov-Hofer converge to $C_x^V$, for then we may take $C'=C_M$ for large $M$.

There is certainly a Gromov-Hofer convergent subsequence $C_{m_k}$ with limit $C_{m_{\infty}}$ whose $V$-part is $C^V_{m_{\infty}}$. Since $C^V_x$ and $C_{m_{\infty}}^V$ intersect positively at $x$, they must be geometrically indistinct, for otherwise there would be a large $k$ for which the $J_{\delta_k}$-holomorphic curve homologous to $H-E_i$ through $x_k$ and the curve through $x$ would intersect with positive local intersection number, contradicting the fact that $(H-E_i)\cdot(H-E_i)=0$.

Since $x$ does not lie on a bad curve, the asymptotics of $C_{m_{\infty}}^V$ are all simple Reeb orbits and the symplectisation parts of $C_{m_{\infty}}$ are all Reeb cylinders. Since none of the curves in question had any marked points, such cylinders would necessarily be unstable, so cannot occur. Hence $C_{m_k}$ Gromov-Hofer converges to $C_x$ and for large $k$, $C_{m_k}$ lives in the same moduli space as $C_x$.
\end{proof}

This proves the lemma.

\end{proof}

\subsection{The class $H$}

We finally turn our attention to Gromov-Hofer limits $F$ of stable curves in the homology class $H$ under neck-stretching.

\begin{lma}\label{pretty2}
$F^V$ is non-empty and falls into one of five categories:
\[ 
\xy 
(-3,5)*{\mbox{\textcircled{a}}};
(-3,0)*{};(7,0)*{}; **\crv{(2,10)};
(2,0)*{3};
(7,5)*{1\times};
(20,5)*{\mbox{\textcircled{b}}};
(20,0)*{};(30,0)*{}; **\crv{(25,10)};
(25,0)*{1};
(30,5)*{1\times};
(32,0)*{};(42,0)*{}; **\crv{(37,10)};
(37,0)*{2};
(42,5)*{1\times};
(55,5)*{\mbox{\textcircled{c}}};
(55,0)*{};(65,0)*{}; **\crv{(60,10)};
(60,0)*{1};
(65,5)*{1\times};
(67,0)*{};(77,0)*{}; **\crv{(72,10)};
(72,0)*{1};
(77,5)*{1\times};
(79,0)*{};(89,0)*{}; **\crv{(84,10)};
(84,0)*{1};
(89,5)*{1\times};
(-3,-5)*{\mbox{\textcircled{d}}};
(-3,-10)*{};(7,-10)*{}; **\crv{(2,0)};
(2,-10)*{1};
(7,-5)*{3\times};
(20,-5)*{\mbox{\textcircled{e}}};
(20,-10)*{};(30,-10)*{}; **\crv{(25,0)};
(25,-10)*{1};
(30,-5)*{1\times};
(32,-10)*{};(42,-10)*{}; **\crv{(37,0)};
(37,-10)*{1};
(42,-5)*{2\times};
\endxy 
\]
\noindent where $\xy(0,0)*{};(10,0)*{}; **\crv{(5,10)};
(5,0)*{A};
(10,5)*{n\times};\endxy$ stands for a component which is an $n$-fold cover of curve with $c_1^{\Phi}=A$.\end{lma}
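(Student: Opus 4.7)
The proof proceeds in the same spirit as Lemma \ref{propertiesofvparts}, with the only new wrinkle being the extra bookkeeping needed because $c_1^{\Phi}(F^V)=3$ rather than $2$.

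\textbf{Step 1: Non-emptiness.} The maximum principle for $J_\infty$-holomorphic curves in the symplectic completion $\overline{W}$ of the affine quadric forbids any closed component of $F$ from lying entirely in $W$; equivalently, any holomorphic sphere of $F$ must intersect the complement of $W$ in $X^k_\infty$. Since $F$ has positive total $\omega$-area, $F^V$ is non-empty.

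\textbf{Step 2: Total relative Chern class.} By the discussion of relative first Chern classes in Section \ref{relc1}, a unitary trivialization of $\det(TX)$ on $W$ coming from the anticanonical section of $Q$ gives $c_1^{\Phi}(u)=0$ for any curve $u$ in $\overline{W}$ (and the symplectisation levels), while curves in $V$ carry all of the Chern number. Hence
\[c_1^{\Phi}(F^V)=c_1(X)\cdot H=3.\]

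\textbf{Step 3: Positivity on components.} Exactly as in the proof of Lemma \ref{propertiesexceptional}, each connected component $v$ of $F^V$ has $c_1^{\Phi}(v)>0$: gluing $v$ continuously along its ends to a collection of finite-energy planes in $\overline{W}$ produces a singular cycle $C$ in $X$ with $\omega(C)>0$, and by monotonicity $c_1^{\Phi}(v)=c_1(C)=\omega(C)>0$. Consequently the relative Chern numbers of the components form a partition of $3$ into positive integers, so $F^V$ has at most three components.

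\textbf{Step 4: Enumeration of multiple covers.} If a component is an $n$-fold branched cover of a simple curve with relative Chern number $m$, it contributes $nm$ to the total. The partitions of $3$ into parts of the form $nm$, together with the branched-cover data, give precisely the five cases:
\begin{itemize}
\item one simple component with $c_1^{\Phi}=3$ (case \textcircled{a});
\item one simple component with $c_1^{\Phi}=1$ and one with $c_1^{\Phi}=2$ (case \textcircled{b});
\item three simple components each with $c_1^{\Phi}=1$ (case \textcircled{c});
\item a single $3$-fold cover of a simple $c_1^{\Phi}=1$ curve (case \textcircled{d});
\item a simple $c_1^{\Phi}=1$ curve together with a $2$-fold cover of a simple $c_1^{\Phi}=1$ curve (case \textcircled{e}).
\end{itemize}

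No real obstacle is expected: the argument is a direct combinatorial refinement of Lemma \ref{propertiesofvparts}. The only subtle point is verifying that no component can have $c_1^{\Phi}=0$, and this is handled by the same gluing/monotonicity argument used for the $H-E_i$ case; the remainder is a short enumeration.
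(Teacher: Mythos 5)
Your proof is correct and follows essentially the same route as the paper's (very brief) argument: maximum principle for non-emptiness, $c_1^\Phi(F^V)=3$ from the relative Chern class discussion, positivity of $c_1^\Phi$ on each component via the gluing/monotonicity argument from Lemma \ref{propertiesexceptional}, and then the straightforward enumeration of partitions of $3$ into parts of the form $nm$. The paper compresses all of this into one sentence, but your expansion is accurate, and your list of five cases matches the paper's diagram exactly.
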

\begin{proof}
Non-emptiness follows from the maximum principle. The possible degenerations are a simple consequence of the fact that $c^{\Phi}_1(F^V)=3$.
\end{proof}

We begin with a sequence $J_{t_j}$ for which the curves $E_1(J_{t_j})$, $E_2(J_{t_j})$, $S_{12}(J_{t_j})$ and $C^i_x(J_{t_j})$ Gromov-Hofer converge to $J_{\infty}$-holomorphic buildings $E_{1,\infty}$, $E_{2,\infty}$, $S_{12,\infty}$ and $C^i_{\infty}$ (for $i=3,4$) such that $C^{i,W}_{\infty}=\emptyset$. Here $C^i_x(J_{t_j})$ are the unique $J_{t_j}$-holomorphic curves in the homology classes $H-E_i$ ($i=3,4)$ passing through the point $x$. The aim is to find a subsequence $J_{t_{j_k}}$, a point $z\in V$ and a $J_{t_j}$-complex line in $T_z\overline{X}$ such that:

\begin{itemize}
\item $z\not\in\Xi(J_{t_{j_k}})$ for any $k$ (where $\Xi(J)$ is the union of the exceptional spheres $E_i(J)$),
\item the sequence $H_z^{\ell}(J_{t_{j_k}})$ of stable $J_{t_{j_k}}$-holomorphic curves through $z$ tangent to $\ell$ Gromov-Hofer converge to a $J_{\infty}$-holomorphic building $H_{\infty}$ whose $W$-part is empty.
\end{itemize}

Note that it makes sense to talk about $\ell$ being a $J_{t_j}$-complex line for all $j$, as the neck-stretch only affects the complex structure on the neck so $J_t|_V=J_0|_V$.

\begin{lma}
Suppose $H_{\infty}$ is the Gromov-Hofer limit of a convergent sequence of stable $J_{t_{j_k}}$-holomorphic curves $H_z^{\ell}(J_{t_{j_k}})$ through $z$ and tangent to $\ell$. To prove that the $W$-part $H_{\infty}^W$ is empty it suffices to prove that $H^V_{\infty}$ is connected with simple asymptotic orbits distinct from $R$.
\end{lma}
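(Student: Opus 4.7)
The plan is to trace the hypothesis that $H_{\infty}^V$ is connected with simple asymptotic orbits distinct from $R$ upward through the symplectisation levels into $W$, and then use positivity of intersections against the limit buildings $E_{1,\infty}$ and $E_{2,\infty}$ to rule out any $W$-component.

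First I would repeat the argument of lemma \ref{reebcyl} nearly verbatim to show that every symplectisation component of $H_{\infty}$ is a trivial Reeb cylinder. The positive punctures of the top symplectisation level match the negative punctures of $H_{\infty}^V$ and are therefore simple by hypothesis. Stokes's theorem applied to $\int v^*d\lambda\geq 0$ on each component then gives $s^+\geq s^-$. Connectedness of $H_{\infty}^V$ together with the genus-zero condition forces $s^+=1$ (two positive punctures on one component would join two distinct punctures of $H_{\infty}^V$ and create a cycle in the dual graph), and non-contractibility of simple orbits in $M\cong\RR\mathbb{P}^3$ forces $s^-=1$. Each top-level component is thus a Reeb cylinder on a simple orbit, and inducting downwards the same holds for every symplectisation level. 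Consequently $H_{\infty}^W$ is a disjoint union of finite-energy planes, each asymptotic to a simple Reeb orbit equal to one of the negative asymptotic orbits of $H_{\infty}^V$, and hence distinct from $R$. By lemma \ref{mustbealphabeta} each such plane is either an $\alpha$-plane or a $\beta$-plane.

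Finally, I would argue that no such plane can exist. Recall that $E_{1,\infty}^W$ is a single $\beta$-plane $B$ asymptotic to $R$ and $E_{2,\infty}^W$ is a single $\alpha$-plane asymptotic to $R$. Suppose $H_{\infty}^W$ contained an $\alpha$-plane $P$ asymptotic to $\gamma\neq R$. In the symplectic cut $\wp(W)\cong S^2\times S^2$ the compactified lines $\bar{P}$ and $\bar{B}$ represent opposite rulings and meet transversely in a single point; since $\gamma\neq R$ this point lies off the compactification divisor $\Delta$, so by remark \ref{nosharedremark} it represents a genuine interior intersection of $P$ and $B$ in $\overline{W}$. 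By Gromov-Hofer convergence this intersection is the limit of interior intersections of $H_z^{\ell}(J_{t_{j_k}})$ and $E_1(J_{t_{j_k}})$ for large $k$, and these contribute positively to $H\cdot E_1=0$ by theorem \ref{posint}, a contradiction. The symmetric argument using $E_{2,\infty}^W$ and $H\cdot E_2=0$ excludes $\beta$-plane components.

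The main subtlety to check carefully is that $H_z^{\ell}(J_{t_{j_k}})$ and $E_i(J_{t_{j_k}})$ share no common component, so that positivity of intersections really applies to give a strictly positive contribution; sharing the $E_i$ component would force $E_{i,\infty}^W$ to sit inside $H_{\infty}^W$ and hence produce a plane asymptotic to $R$, which is excluded by the hypothesis on asymptotics. This is the single place where that hypothesis is used beyond matching the list of planar asymptotics.
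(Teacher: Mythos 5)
Your proof is correct and follows the paper's own (very terse) argument: invoke the analysis of proposition \ref{finalexceptional} to see that, under the stated hypotheses, any non-empty $H_\infty^W$ would consist of $\alpha$- and $\beta$-planes asymptotic to simple orbits distinct from $R$, which must then intersect $E_{1,\infty}^W$ or $E_{2,\infty}^W$ in the interior of $\overline{W}$ and thereby violate $H\cdot E_1 = H\cdot E_2 = 0$ via positivity of intersections. Your closing observation about shared components is a genuine subtlety the paper elides; note, though, that it is settled more directly by the connectedness half of the hypothesis (a shared $E_i$-component would immediately force $H_\infty^V$ to be reducible), rather than via the asymptotics-of-$H_\infty^W$ route, which itself relied on connectedness to run the Reeb-cylinder argument.
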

\begin{proof}
By a similar argument to proposition \ref{finalexceptional} $H_{\infty}^W$ (if it were non-empty) would consist of $\alpha$- and $\beta$-planes which would intersect $E_{1,\infty}$ or $E_{2,\infty}$ and thereby contradict the fact that $H\cdot E_1=H\cdot E_2=0$.
\end{proof}

\begin{rmk}\label{evalmapregular2}
There is a Baire set of neck-stretching data such that the following is true. If $H_{\infty}$ is a $J_{\infty}$-holomorphic building obtained as the Gromov-Hofer limit of a sequence of $J_{t_j}$-holomorphic curves in the homology class $H$ and $H^V_{\infty}$ is connected with simple Reeb asymptotics then any nearby $H'_{\infty}$ in the moduli space of $H^V_{\infty}$ has an asymptotic Reeb orbit different from $R$.
\end{rmk}

\begin{lma}
For a Baire set of neck-stretching data there exists a $J_1$-complex line in the tangent space at $x$ and a subsequence $J_{t_{j_k}}$ such that the sequence of $J_{t_k}$-holomorphic curves $H_x^{\ell}(J_{t_{j_k}})$ Gromov-Hofer converges to a curve $H_{\infty}$ such that $H^V_{\infty}$ is connected and has simple Reeb asymptotics distinct from $R$.
\end{lma}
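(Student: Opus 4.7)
The plan is to combine a dimension count (ruling out the bad $V$-part configurations from Lemma \ref{pretty2}) with a perturbation argument based on Remark \ref{evalmapregular2}. Since the neck-stretch does not alter $J_1$ on $V$, the sphere $\mathbb{P}^{J_{t_j}}_x X$ of complex tangent lines at $x$ is canonically identified with $\mathbb{P}^{J_1}_x V\cong S^2$ for every $j$, and for each $\ell$ in this sphere Proposition \ref{complextangent} furnishes a unique stable $J_{t_j}$-curve $H_x^{\ell}(J_{t_j})$ through $x$ tangent to $\ell$. By the SFT compactness theorem \ref{SFTcompA}, this sequence admits a Gromov-Hofer convergent subsequence whose limit $H_\infty$ has $V$-part in one of the five categories of Lemma \ref{pretty2}.

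The first main step is to show that for $J_1$ in a Baire subset and for $\ell$ in a dense open subset of $S^2$, every subsequential limit has $V$-part a connected type-(a) simple curve. Using the index formula from section \ref{relc1} and Theorems \ref{trans4}, \ref{trans5}, a simple $V$-curve with $c_1^{\Phi}=d$ and simple Reeb asymptotics has moduli of expected dimension $2d-2$, dropping by $2$ for each doubly-wrapped orbit. For the multiply-covered configurations (d) and (e), the image in $V$ is just the image of the underlying simple curve, which has $c_1^{\Phi}=1$ and zero-dimensional moduli; hence the union of these images is a countable family of real two-dimensional subsets of $V$, avoided by a generic $x$. The remaining bad configurations (types (b), (c), and type (a) with a multiply-wrapped orbit) involve only simple curves with $c_1^{\Phi}\leq 2$, and the marked-point-plus-tangent evaluation map for each such stratum sends a space of dimension at most four into the six-dimensional projectivised tangent bundle $\mathbb{P}TV$. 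By Theorem \ref{trans5} these evaluation maps can be made transverse for $J_1$ in a Baire set, hence have nowhere-dense images, and the set $B_x\subset\mathbb{P}^{J_1}_x V$ of tangent lines at $x$ through which a bad limit passes is nowhere dense in $S^2$.

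Fix $\ell_0\in S^2\setminus B_x$ with limit $H_\infty^V$ lying in its four-dimensional moduli. To obtain the requirement that some asymptotic orbit be distinct from $R$, invoke Remark \ref{evalmapregular2}: any neighbourhood of $H_\infty^V$ in its moduli contains curves with an asymptotic orbit $\neq R$. The type-(a)-simple stratum is open in the compactified limit moduli (its complement being a closed union of strictly lower-dimensional strata by Gromov-Hofer compactness), so for $\ell$ in a sufficiently small neighbourhood of $\ell_0$ the subsequential limits remain in this stratum and depend continuously on $\ell$. As $\ell$ varies, the limits sweep out an open subset of the four-dimensional moduli which must intersect the locus of curves with some asymptotic $\neq R$, and any such $\ell$ together with a Gromov-Hofer convergent subsequence $J_{t_{j_k}}$ yields the desired limit.

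The main obstacle is establishing openness of the type-(a)-simple stratum in the compactified limit moduli, which secures continuous dependence of the Gromov-Hofer limit on $\ell$ near $\ell_0$; this requires verifying that no sequence of type-(a)-simple limits can further degenerate, which should follow from the strictness of the dimension inequalities above combined with SFT compactness. A secondary subtlety is handling the multiply-covered types (d) and (e), circumvented here by working with the images of their underlying simple curves in $V$ rather than confronting multi-cover transversality directly.
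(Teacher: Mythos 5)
Your strategy follows the paper's proof in outline: classify the possible $V$-parts of the limit building via the dimension formula, show that for generic $J_1$ and a dense set of tangent lines $\ell$ the limit has a connected simple $V$-part with simple asymptotics, and then perturb $\ell$ using Remark \ref{evalmapregular2} to arrange an asymptotic orbit distinct from $R$. The first half of your argument is essentially sound and matches the paper's.

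The gap you flag at the end, however, is a genuine one, and it is precisely where the paper takes a more careful route. You invoke ``continuous dependence of the Gromov-Hofer limit on $\ell$'' together with ``openness of the type-(a)-simple stratum in the compactified limit moduli.'' Neither assertion is available off the shelf: Gromov-Hofer limits are only defined along subsequences and there is no a priori continuity of the limit building in the parameter $\ell$, nor is the type-(a)-simple stratum obviously open in whatever compactification you have in mind. The paper avoids this issue entirely. It chooses a countable dense set $\Omega\subset V$ of points avoiding very bad curves and a countable dense set $\Lambda$ of tangent lines over $\Omega$, performs a Cantor diagonal over $j$ so that the curves $H_x^{\ell}(J_{t_j})$ exist (this requires $x\notin\Xi(J_{t_j})$ for all $j$, a step you omit and which is needed to invoke Proposition \ref{complextangent}) and Gromov-Hofer converge simultaneously for all $\ell\in\Lambda\cap\mathbb{P}^{J_1}(T_xV)$, and then repeats the argument of Lemma \ref{gettingcloser}: take $\ell_i\to\ell$ with $\ell_i$ good, extract a Gromov-Hofer convergent subsequence of the limit buildings $H_{x,\infty}^{\ell_i}$, and use positivity of intersections at $x$ together with the fact that $x$ does not lie on a very bad curve to conclude that for large $i$ the limit $H_{x,\infty}^{\ell_i}$ lies in the \emph{same} finite-dimensional moduli as $H_{x,\infty}^{\ell}$. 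This sidesteps the need to establish continuity or openness of a stratum; it only needs a single nearby element of the moduli to arise as a limit, and positivity of intersections forces this. You would either have to carry out an analogue of this diagonalization argument or prove the openness claim you identified as the obstacle; as written, the last third of your proof does not go through.

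A secondary remark: for multiply-covered configurations (types (d) and (e)) you pass to the underlying simple curve and note its image is two-real-dimensional in $V$. That is fine for the purpose of avoiding those images by a generic choice of $x$, and it agrees with the paper's device of calling a multiply-covered curve ``in a $0$-dimensional moduli'' when its underlying simple curve is; no issue there.
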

To prove the lemma, we will begin by naively finding a Gromov-Hofer sequence whose limit is connected with simple Reeb asymptotics, then we will use the remark above to find a nearby $H'_{\infty}$ which has asymptotics distinct from $R$ and finally observe that this punctured curve arises as the $V$-part of a Gromov-Hofer limit of curves as described by the lemma.
\begin{proof}
As in remark \ref{regularityforexceptional}, $J_1$ can be chosen generically so that all moduli spaces of simple finite-energy punctured curves in $V$ are regular and of the expected dimension. The index formula for curves with $c_1^{\Phi}=3$ (such as the $V$-parts of our limits) is

\begin{eqnarray*}
\dim(\mM_{0,s^-}^A(J_{\infty}|_V)) & = & 4+2s^--2\sum_{i=1}^{s^-}\cov(\gamma_i) \\
& \leq & 4
\end{eqnarray*}

\noindent which is only nonnegative if:

\begin{enumerate}
\item all asymptotic Reeb orbits are simple, or
\item all but one of the asymptotic Reeb orbits are simple, the one exception being a doubly-wrapped orbit.
\item all but two of the asymptotic Reeb orbits are simple, the two exceptions being either: one simple and one triply-wrapped orbit or two doubly-wrapped orbits.
\end{enumerate}

This gives us the following possible breakdown of the cases from lemma \ref{pretty2}:

\begin{tabular}{ccl}
Dimension 0: & \textcircled{a}: & one triple orbit \\
 & & two double orbits \\
 & \textcircled{b}: & one double orbit for component with $c_1^{\Phi}=2$\\
 & \textcircled{c}: & all simple orbits \\
 & \textcircled{d}: & all simple orbits \\
 & \textcircled{e}: & all simple orbits \\
Dimension 2: & \textcircled{a}: & one double orbit \\
 & \textcircled{b}: & all simple orbits \\
Dimension 4: & \textcircled{a}: & all simple orbits
\end{tabular}

\noindent where we understand a multiply-covered curve to live ``in a 0-dimensional moduli space'' if its underlying simple curve has a moduli space of dimension 0. We call curves of type \textcircled{a} with all orbits simple \emph{good curves} and all other curves \emph{bad curves}. Curves from the first five rows are called \emph{very bad curves}. For a fixed almost complex structure $J_{\infty}|_V$, there is a dense set of points in $V$ which do not lie on a very bad curve. Let $\Omega\subset V$ be a countable, dense set of such points. Let $\Lambda\subset\mathbb{P}^{J_1}(TV|_{\Omega})$ be a set of $J_1$-complex lines such that $\Lambda\cap \mathbb{P}^{J_1}(T_xV)$ is dense in $\mathbb{P}^{J_1}(T_xV)$ for every $x\in\Omega$ (and recall that $J_1(x)=J_t(x)$ for all $t\in[1,\infty]$).

Number the elements of $\Omega=\{x_1,x_2,\ldots\}$. There is a subsequence $t^1_{j_k}\subset t_j$ such that $x_1\not\in\Xi(J_{t^1_{j_k}})$ for all $k$, or else the Gromov-Hofer limit of stable curves through $x_1$ homologous to $H$ would have a component with $c_1^{\Phi}=1$ passing through $x_1$. Similarly there is a subsequence $t^2_{j_{k_m}}\subset t^1_{j_k}$ for which $x_2$ is not in $\Xi(J_{t^2_{j_{k_m}}})$ for any $m$. Iteratively construct a Cantor diagonal subsequence (written $t_j$ for brevity) for which $x_i\not\in\Xi(J_{t_j})$ for all $i$ and $j$.

Proposition \ref{complextangent} now implies that for each $\ell\in\Lambda$ there is a sequence $H_x^{\ell}(J_{t_j})$ of $J_{t_j}$-holomorphic spheres in the homology class $H$ passing through $x$ and tangent to $\ell$. Passing to a Cantor diagonal subsequence we can ensure that these all Gromov-Hofer converge to $J_{\infty}$-holomorphic buildings $H_{x,\infty}^{\ell}$.

These buildings cannot all be bad curves, thanks to the following observation:

\begin{lma}
In $\mathbb{P}^{J_1}(TV)$ the set of points which are complex tangent lines to bad curves have open, dense complement.
\end{lma}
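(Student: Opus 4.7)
The plan is to show that the set $B\subset\mathbb{P}^{J_1}(TV)$ of $J_1$-complex lines tangent to some bad curve is closed with empty interior, from which the open, dense complement follows. The approach is a dimension count combined with SFT compactness.

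First I would finitely stratify bad curves. By the case analysis above, every bad curve has underlying simple components lying in moduli spaces of real dimension at most $2$. Monotonicity bounds the $\omega$-area of a punctured $J_\infty|_V$-holomorphic curve in $V$ with $c_1^{\Phi}=3$, so by Stokes the total period of asymptotic Reeb orbits is bounded. The dimension formula $4+2s^--2\sum_i\cov(\gamma_i)\geq 0$ then bounds both $s^-$ and each $\cov(\gamma_i)$, so only finitely many combinatorial types of asymptotics and only finitely many relative homology classes can host bad curves. By theorem \ref{trans4}, for generic $J_1$ the corresponding moduli spaces $\mM_1,\dots,\mM_N$ of simple bad curves are smooth manifolds of real dimensions $d_i\leq 2$.

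For each $i$ consider the tangent line map
\[\tau_i:\mM_i\times(S^2\setminus Z_i)\longrightarrow\mathbb{P}^{J_1}(TV),\qquad (u,z)\mapsto\bigl(u(z),\,[du(z)(T_zS^2)]\bigr),\]
defined wherever $du\neq 0$. Its source has real dimension at most $d_i+2\leq 4$, strictly less than the real dimension $6$ of $\mathbb{P}^{J_1}(TV)$. By Sard's theorem the image of each $\tau_i$ has empty interior, and since there are only finitely many $i$ their union $B$ likewise has empty interior.

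The main obstacle will be showing that $B$ is closed, which is what gives an \emph{open} complement rather than merely a comeager one. Suppose $u_n$ are bad curves with smooth points $z_n\in\dom(u_n)$ whose tangent lines $\tau(u_n,z_n)$ converge to some $\ell\in\mathbb{P}^{J_1}(TV)$. SFT compactness (theorem \ref{SFTcompB}) yields a Gromov-Hofer convergent subsequence with limit a stable $J_\infty|_V$-holomorphic curve $u_\infty$ in $V$ with $c_1^{\Phi}=3$. Degeneration under Gromov-Hofer convergence can only introduce new nodes or increase covering multiplicities of asymptotic orbits, so $u_\infty$ remains a bad stable curve. If the $z_n$ accumulate at a smooth point $z_\infty$ of a component of $u_\infty$, then $\mC^\infty_{\mbox{loc}}$-convergence away from nodes and punctures gives $\tau(u_\infty,z_\infty)=\ell\in B$. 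If instead $z_n$ approaches a node or puncture, then perturbing $z_n$ slightly within the same component of $u_n$ reduces to the previous case with the same limiting tangent line. Either way $\ell\in B$, so $B$ is closed, and together with the dimension count this yields the lemma.
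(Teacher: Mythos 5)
Your approach---dimension count for density, SFT compactness for closedness---is the same as the paper's, and your dimension count is correct (images of manifolds of real dimension at most $4$ sitting in the $6$-dimensional space $\mathbb{P}^{J_1}(TV)$). The gap is in the closedness argument. When the evaluation points $z_n$ converge to a node or escape towards a puncture of the limit domain, you claim that ``perturbing $z_n$ slightly within the same component of $u_n$ reduces to the previous case with the same limiting tangent line.'' This does not work: the hypothesis is that $\tau(u_n,z_n)\to\ell$ for the \emph{given} sequence $z_n$, and replacing $z_n$ by a perturbed $z_n'$ changes (or may destroy) the limit, so it proves nothing about $\ell$. The correct treatment of the nodal case requires noting that a bubble $v$ forms at the node, that after a suitable rescaling the $z_n$ converge to a smooth point of $v$, and that the projectivised derivatives then converge to a tangent line of $v$; one must also rule out (or separately handle) the possibility that $v$ is a ghost bubble. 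The case where $z_n$ escapes along a cylindrical end towards a puncture requires the asymptotic control of theorem \ref{asymptotics}: tangent directions along the end converge to the direction picked out by the leading eigenfunction of the asymptotic operator, and one must check this limit still lies in the bad set. Neither issue is addressed in your argument. To be fair, the paper's own proof merely asserts closedness ``by SFT compactness'' without elaboration, so you are attempting to fill in detail the paper elides---but the fill you propose is incorrect as written. Also, your finiteness of strata (bounding total Reeb period via monotonicity and Stokes) is a valid observation but unnecessary: the paper's weaker claim of countability already gives dense complement by the usual Sard/Baire argument.
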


The lemma follows from the dimension formula: the set of such points form a countable union of 2- and 4-dimensional subspaces in the six-dimensional space $\mathbb{P}^{J_1}(TV)$. This subset is closed by SFT compactness.

Once we have this good curve, if it has an asymptotic orbit different from $R$ then we have proved the lemma, setting $H_{\infty}=H_{x,\infty}^{\ell}$. If not, remark \ref{evalmapregular2} implies that any nearby curve in the local moduli space of $H_{x,\infty}^{\ell}$ does have an asymptotic orbit different from $R$. Pick a sequence $\ell_i\in\Lambda\cap\mathbb{P}^{J_1}(T_xV)$ of good curves tending to $\ell$. By the same argument as proved claim \ref{gettingcloser} we deduce that for large $i$ we may take $H_{\infty}=H_{x,\infty}^{\ell_i}$.
\end{proof}

This completes the proof of proposition \ref{grottyanalysis}.

\section{Analysis of limit-buildings: ternary case}\label{prf2}

Let $L$ be a Lagrangian sphere in the ternary homology class $H-E_1-E_2-E_3$ in $\DD_3$ or $\DD_4$. The details of the analysis here are very similar to those in the previous section. The results are that for generic neck-stretching data:

\begin{itemize}
\item All exceptional classes $E_1$, $E_2$, $E_3$, ($S_{12}$, $S_{13}$, $S_{23}$) have limit-buildings consisting of a single $\alpha$- (respectively $\beta$-) plane and $S_{14}$ and $E_4$ have limit-buildings with empty $W$-parts.
\item In $\DD_3$, one can find a point $x\in V$ for which the three sequences of $J_t$-holomorphic curves homologous to $H-E_1$, $H-E_2$ and $H-E_3$ passing through $x$ Gromov-Hofer converge to buildings with empty $W$-part.
\item In $\DD_4$, one can find a point $x$ in $S_{14,\infty}$ for which the $J_t$-holomorphic curves homologous to $H-E_2$ and $H-E_3$ passing through $x$ have Gromov-Hofer limits with empty $W$-part.
\end{itemize}

\section{Proof of theorem \ref{isothm}}\label{isoprf}

Let $(X,\omega)$ be a monotone symplectic Del Pezzo surface $\DD_n$ ($n\leq 4$), $L$ an embedded Lagrangian sphere, $J_1$ a generic choice of neck-stretching data and $\{J_t\}_{t=1}^T$ be the corresponding family of $\omega$-compatible complex structures obtained by stretching the neck around $L$. Extend $J_t$ to a family $\{J_t\}_{t=0}^T$ where $J_0$ is the standard complex structure on $\DD_n$ coming from thinking of it as a $n-1$-point complex blow-up at $(\infty,\infty)$ of $\PP{1}\times\PP{1}$ with its product complex structure.

I will discuss the case of binary Lagrangian spheres in $n=2$. Generalisation of the argument to binary and ternary spheres in $n=3,4$ should be clear.

Fix a point $x$ and a complex direction $\ell$ at $x$ such that for a suitable subsequence $t_j$ the corresponding sequence $H_{x,\ell}(J_{t_j})$ of $J_{t_j}$-holomorphic curves in the homology class $H$ through $x$ tangent to $\ell$ Gromov-Hofer converge to a $J_{\infty}$-holomorphic curve disjoint from $L$. The sequence $S_{12}(J_{t_j})$ also has a Gromov-Hofer convergent subsequence, $j_k\subset j$, whose limit curve is disjoint from $L$. Therefore for $T$ large enough, $S_{12}(J_T)$ and $H_{x,\ell}(J_T)$ are disjoint from $L$.

Extend the family $\{J_t\}_{t=1}^T$ to a family $\{J_t\}_{t=0}^T$ where $J_0$ is an integrable complex structure coming from considering $\DD_2$ as the blow-up of $\PP{2}$ at two points in general position. For all $t\in[0,T]$ there are unique smooth, embedded $J_t$-spheres in the homology classes $E_1$, $E_2$ and $S_{12}$ and the implicit function theorem implies that as $t$ varies, these exceptional spheres undergo smooth isotopy (see \cite{MS04}, remark 3.2.8).

For each $t$ the space $\mathcal{R}_t$ of pairs $\{(x',\ell')\in V\times\mathbb{P}^{J_t}(T_xV)\}$ for which there is a smooth $J_t$-curve homologous to $H$ through $x'$ tangent to $\ell'$ is open, dense and connected in the total space of $\mathbb{P}^{J_t}(TV)$: its complement consists of strata with codimension at least 2. Such curves are regular, so again by remark 3.2.8 in \cite{MS04} there is a path in $\bigcup_{t\in[0,T]}\mathcal{R}_t$ which ends at $(x,\ell,T)$ and starts in $\mathcal{R}_0$. This gives a smooth isotopy of (smooth, regular, embedded) $J_t$-holomorphic curves $H_t$ homologous to $H$, ending with one which is disjoint from $L$.

Now for each $t$ consider the configuration $H_t\cup E_1(J_t)\cup E_2(J_t)\cup S_{12}(J_t)$. Choose families of Darboux balls centred at the points of $S_{12}$ where the curves in the configuration intersect. In each family of balls, perform local perturbations of $H_t$, $E_1(J_t)$ and $E_2(J_t)$ to obtain a configuration of symplectic surfaces which intersect symplectically orthogonally. This perturbation is chosen to leave $H_0$, $E_1(J_0)$ and $E_2(J_0)$ unchanged and the Darboux balls can be chosen so that at $t=T$ they are disjoint from $L$ (since $L$ is disjoint from $S_{12}$). Write $C_t$ for the resulting smooth isotopy of the configuration of perturbed spheres.

Each sphere has a symplectic neighbourhood to which the isotopy extends, by the symplectic neighbourhood theorem, and since the spheres are now symplectically orthogonal the isotopy extends over a neighbourhood of the whole configuration.

Now the existence of a global symplectomorphism $\Psi_t:X\rightarrow X$ for which $\Psi_t(C_0)=C_t$ follows from Banyaga's symplectic isotopy extension theorem because these exceptional spheres generate $H_2(X,\ZZ)$:

\begin{thm}[Banyaga's isotopy extension theorem, see \cite{MS05}, p. 98]\label{banyaga}
Let $(X,\omega)$ be a compact symplectic manifold and $C\subset X$ be a compact subset. Let $\phi_t:U\rightarrow X$ be a symplectic isotopy of an open neighbourhood $U$ of $C$ and assume that
\[H^2(X,C;\RR)=0\]
\noindent Then there is a neighbourhood $\mathcal{N}\subset U$ of $C$ and a symplectic isotopy $\psi_t:X\rightarrow X$ such that $\psi_t|_{\mathcal{N}}=\phi_t|_{\mathcal{N}}$ for all $t$.
\end{thm}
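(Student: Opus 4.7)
The plan is to produce a smooth family of globally defined symplectic vector fields on $X$ whose restriction to a neighborhood of $C$ coincides with the infinitesimal generator of $\phi_t$; its flow then furnishes the desired extension $\psi_t$.

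First I would let $X_t$ be the time-dependent vector field on $U$ generating $\phi_t$. Because $\phi_t$ is symplectic, $\mathcal{L}_{X_t}\omega = 0$, and Cartan's formula together with $d\omega = 0$ shows that the family of 1-forms $\alpha_t := \iota_{X_t}\omega$ on $U$ is closed and depends smoothly on $t$. Extending $\phi_t$ is equivalent to extending this family of closed 1-forms from $U$ to $X$ in such a way that they agree with $\alpha_t$ near $C$.

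Next I would exploit the hypothesis $H^2(X,C;\mathbb{R}) = 0$. After shrinking $U$ if necessary, we may assume $U$ deformation-retracts onto $C$, so excision gives $H^2(X,U;\mathbb{R}) \cong H^2(X,C;\mathbb{R}) = 0$. The long exact sequence of the pair $(X,U)$ reads
\[
\cdots \to H^1(X;\mathbb{R}) \xrightarrow{r} H^1(U;\mathbb{R}) \to H^2(X,U;\mathbb{R}) = 0,
\]
so the restriction map $r$ is surjective. Hence for each $t$ the class $[\alpha_t] \in H^1(U;\mathbb{R})$ is the restriction of some global class; equivalently there exist a closed 1-form $\tilde\alpha_t$ on $X$ and a function $f_t$ on $U$ with $\tilde\alpha_t|_U = \alpha_t + df_t$. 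To carry this out in a $t$-dependent smooth fashion I would fix an auxiliary Riemannian metric, apply Hodge theory to split closed forms as harmonic plus exact, and use the smoothness of the Green's operator in parameters; alternatively one can pick a smooth chain-level right inverse to $r$ once and for all. This smoothness step is where the main subtlety lies.

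Having the smooth families $\tilde\alpha_t$ and $f_t$, I would choose a precompact open $\mathcal{N}_0$ with $C \subset \mathcal{N}_0 \subset \overline{\mathcal{N}_0} \subset U$ and a cut-off $\chi \in C^\infty(X,[0,1])$ with $\chi \equiv 1$ on $\mathcal{N}_0$ and $\operatorname{supp}\chi \subset U$. Set
\[
\beta_t = \tilde\alpha_t - d(\chi f_t),
\]
where $\chi f_t$ is extended by zero outside $U$. Then $\beta_t$ is a smooth family of closed 1-forms on all of $X$, and on $\mathcal{N}_0$ it equals $\tilde\alpha_t - df_t = \alpha_t$. Defining $Y_t$ by $\iota_{Y_t}\omega = \beta_t$ gives a smooth family of global symplectic vector fields on $X$ with $Y_t = X_t$ on $\mathcal{N}_0$, and its flow $\psi_t:X \to X$ is a global symplectic isotopy. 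Finally I would take $\mathcal{N} \subset \mathcal{N}_0$ to be an open neighborhood of $C$ small enough that $\phi_t(\mathcal{N}) \cup \psi_t(\mathcal{N}) \subset \mathcal{N}_0$ for all $t \in [0,1]$; uniqueness of ODE solutions then gives $\phi_t|_{\mathcal{N}} = \psi_t|_{\mathcal{N}}$, completing the proof. The hard part, as noted, is the parameterized cohomological extension in Step 2; once that is in place, everything else is a standard cut-off-and-integrate argument.
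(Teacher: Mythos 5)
The paper does not prove this theorem; it is quoted as a black box from McDuff--Salamon, so there is no in-paper proof to compare against. Your proposal is the standard Banyaga argument (extract the symplectic generating vector field, use the cohomological hypothesis to globalize the corresponding closed 1-form, cut off the primitive, and integrate), and it is essentially correct. Two small remarks: the identification $H^2(X,U;\RR)\cong H^2(X,C;\RR)$ after shrinking $U$ to a deformation retract of $C$ is not excision but the five lemma applied to the inclusion of pairs $(X,C)\hookrightarrow(X,U)$ (using that $C\hookrightarrow U$ is a homotopy equivalence); and the smooth $t$-dependence you flag is most cleanly handled by fixing a finite basis of $H^1(U;\RR)$, lifting it once and for all to closed forms on $X$, writing $[\alpha_t]$ in this basis with smooth coefficients, and then producing the primitive $f_t$ by path integration against a fixed base point (or retraction), which is manifestly smooth in $t$. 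Also worth noting that since $\phi_0=\mathrm{id}$, one has $\alpha_0=0$ and can arrange $\beta_0=0$, so $\psi_0=\mathrm{id}$ as required.
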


Therefore $L_t=\Psi_t^{-1}(L)$ is an isotopy of $L$ through Lagrangian spheres which disjoins it from $S_{12}(J_0)=\Psi_T^{-1}(S_{12}(J_T))$ and from the $J_0$-holomorphic line $\Psi_T^{-1}(H_{x,\ell}(J_t))$. This proves theorem \ref{isothm} for $\DD_2$.

\section{Acknowledgements}
I would like to thank the Faulkes Foundation for the inspiring support they have given to geometry in the past decade and for the grant which enabled me to complete this work. Thanks also to Ivan Smith, my PhD supervisor, for sharing his time, knowledge and suggestions so generously. Discussions with Jack Waldron, Martin Schwingenheuer and Mark McLean have been invaluable. Particular thanks go to Chris Wendl for explaining to me his transversality results and to the referee of this paper for their detailed and insightful commentary. This paper was inspired by the work of Richard Hind and the philosophy of Eliashberg and Hofer.

\end{document}